\newtheorem{theorem}{Theorem}[section]
\newtheorem{proposition}[theorem]{Proposition}
\newtheorem{lemma}[theorem]{Lemma}
\newtheorem{corollary}[theorem]{Corollary}
\theoremstyle{definition}
\theoremstyle{remark}
\newtheorem{conjecture}[theorem]{Conjecture}
\DeclareFontFamily{U} {MnSymbolC}{}
\DeclareFontShape{U}{MnSymbolC}{m}{n}{
  <-6> MnSymbolC5
  <6-7> MnSymbolC6
  <7-8> MnSymbolC7
  <8-9> MnSymbolC8
  <9-10> MnSymbolC9
  <10-12> MnSymbolC10
  <12-> MnSymbolC12}{}
\DeclareFontShape{U}{MnSymbolC}{b}{n}{
  <-6> MnSymbolC-Bold5
  <6-7> MnSymbolC-Bold6
  <7-8> MnSymbolC-Bold7
  <8-9> MnSymbolC-Bold8
  <9-10> MnSymbolC-Bold9
  <10-12> MnSymbolC-Bold10
  <12-> MnSymbolC-Bold12}{}
\DeclareSymbolFont{MnSyC} {U} {MnSymbolC}{m}{n}
\DeclareMathSymbol{\minus}{\mathrel}{MnSyC}{16}
\DeclareMathSymbol{\plus}{\mathrel}{MnSyC}{20}
\newcommand{\p}{\plus}
\newcommand{\m}{\minus}
\newcommand{\pp}{{\plus\plus}}
\DeclareFontFamily{U} {MnSymbolD}{}
\DeclareFontShape{U}{MnSymbolD}{m}{n}{
  <-6> MnSymbolD5
  <6-7> MnSymbolD6
  <7-8> MnSymbolD7
  <8-9> MnSymbolD8
  <9-10> MnSymbolD9
  <10-12> MnSymbolD10
  <12-> MnSymbolD12}{}
\DeclareFontShape{U}{MnSymbolD}{b}{n}{
  <-6> MnSymbolD-Bold5
  <6-7> MnSymbolD-Bold6
  <7-8> MnSymbolD-Bold7
  <8-9> MnSymbolD-Bold8
  <9-10> MnSymbolD-Bold9
  <10-12> MnSymbolD-Bold10
  <12-> MnSymbolD-Bold12}{}
\DeclareSymbolFont{MnSyD} {U} {MnSymbolD}{m}{n}
\DeclareMathSymbol{\eq}{\mathrel}{MnSyD}{0}
\DeclareMathSymbol{\leq}{\mathrel}{MnSyD}{66}
\newcommand{\mathbbm}[1]{\text{\usefont{U}{bbm}{m}{n}#1}}
\newcommand{\bbGamma}{{\mathpalette\makebbGamma\relax}}
\newcommand{\makebbGamma}[2]{%
  \raisebox{\depth}{\scalebox{1}[-1]{$\mathsurround=0pt#1\mathbb{L}$}}%
}
\let\Re\undefine
\DeclareMathOperator{\Re}{Re}
\DeclareMathOperator{\sign}{sgn}
\DeclareMathOperator{\tr}{tr}
\DeclareMathOperator{\rank}{rank}
\DeclareMathOperator{\diag}{diag}
\DeclareMathOperator{\spn}{span}
\DeclareMathOperator{\cone}{cone}
\DeclareMathOperator{\conv}{conv}
\DeclareMathOperator{\extr}{extr}
\DeclareMathOperator{\cut}{cut}
\DeclareMathOperator{\spr}{spread}
\DeclareMathOperator{\str}{stretch}
\DeclareMathOperator{\im}{im}
\DeclareMathOperator{\Beta}{B}
\newcommand{\Sph}{S}
\newcommand{\tp}{{\scriptscriptstyle\mathsf{T}}}
\newcommand{\G}{\mathsf{G}}
\newcommand{\GW}{\mathsf{GW}}
\newcommand{\DD}{\mathsf{dd}}
\begin{document}
\title{Symmetric Grothendieck inequality}
\author{Shmuel~Friedland}
\address{Department of Mathematics, Statistics and Computer Science,  University of Illinois, Chicago, IL,  60607-7045.}
\email{friedlan@uic.edu}
\author{Lek-Heng~Lim}
\address{Computational and Applied Mathematics Initiative, Department of Statistics,
University of Chicago, Chicago, IL 60637-1514.}
\email{lekheng@galton.uchicago.edu}
\dedicatory{In memory of Joram Lindenstrauss, who introduced the first author to the Grothendieck inequality.}

\begin{abstract}
We establish an analogue of the Grothendieck inequality where the rectangular matrix is replaced by a  symmetric (or Hermitian) matrix and the bilinear form by a quadratic form. We call this the symmetric Grothendieck inequality; despite its name, it is a generalization --- the original Grothendieck inequality is a special case. While there are other proposals for such an inequality, ours differs in two important ways: (i) we have no  additional requirement like positive semidefiniteness for the symmetric matrix; (ii) our symmetric Grothendieck constant is universal, i.e., independent of the matrix and its dimensions, like the original Grothendieck constant. A consequence of our symmetric Grothendieck inequality is a ``conic Grothendieck inequality'' for any family of cones of symmetric matrices: The original  Grothendieck inequality is a special case; as is the Nesterov $\pi/2$-Theorem, which corresponds to the cones of positive semidefinite matrices; as well as the  Goemans--Williamson inequality, which corresponds to the cones of weighted Laplacians. For yet other cones, e.g., of diagonally dominant matrices, we obtain new Grothendieck-like inequalities. With a slight extension, we obtain a unified framework that treats any Grothendieck-like inequality as an inequality between two norms within a family of ``Grothendieck norms'' restricted to an appropriate family of cones. This approach allows us to place on an equal footing the Goemans--Williamson inequality, Nesterov $\pi/2$-Theorem, Ben-Tal--Nemirovski--Roos $4/\pi$-Theorem, generalized Grothendieck inequality, order-$p$ Grothendieck inequality, rank-constrained positive semidefinite Grothendieck inequality, etc, and in turn allows us to simplify proofs, extend results from real to complex, obtain new bounds or establish sharpness of existing ones. Unsurprisingly, the symmetric Grothendieck inequality may also be applied  to obtain  uniform polynomial-time approximation bounds for various  NP-hard combinatorial, integer, and nonconvex optimization problems.
\end{abstract}

\keywords{Grothendieck inequality, Goemans--Williamson inequality, Nesterov $\pi/2$-Theorem, weighted Laplacians, diagonally dominant matrices}

\subjclass[2010]{47A07, 46B28, 68Q17, 81P45, 90C27, 05C50}
\maketitle

\section{Introduction}\label{sec:intro}

The Grothendieck inequality \cite{Grothendieck} states that for $\Bbbk = \mathbb{R}$ or $\mathbb{C}$, there is a finite  constant $K_\G > 0$ such that for every $d,m,n\in\mathbb{N}$ with $d \ge m+n$ and every matrix $B=(b_{ij})\in\Bbbk^{m\times n}$,
\begin{equation}\label{eq:GI}
\max_{\lVert x_i\rVert \le 1,\; \lVert y_j \rVert \le 1}\biggl\lvert \sum_{i=1}^m\sum_{j=1}^n b_{ij} \langle  x_i,y_j\rangle\biggr\rvert\leq K_\G \max_{\lvert \varepsilon_i \rvert \le 1,\; \lvert \delta_j \rvert \le 1}\biggl\lvert\sum_{i=1}^m\sum_{j=1}^n b_{ij}  \bar{\varepsilon}_i\delta_j\biggr\rvert,
\end{equation}
where the maximum on the left is taken over all $x_i,y_j$ in the  unit ball of $\Bbbk^d$, and the maximum on the right is taken over all  $\delta_i, \varepsilon_j$ in the unit disk of  $\Bbbk$, $i =1,\dots,m$, $j=1,\dots,n$.  The original version in \cite{Grothendieck}, while stated differently, is equivalent to the version in \eqref{eq:GI}, which is due to \cite{Lindenstrauss}. 

The maxima on the left- and right-hand sides of \eqref{eq:GI} define matrix norms
\begin{equation}\label{eq:norms}
\lVert B\rVert_\G\coloneqq \max_{\lVert x_i\rVert \le 1,\; \lVert y_j \rVert \le 1}\biggl\lvert \sum_{i=1}^m\sum_{j=1}^n b_{ij} \langle  x_i,y_j\rangle\biggr\rvert \quad \text{and}\quad
\lVert B\rVert_{\infty,1}\coloneqq \max_{\lvert \varepsilon_i \rvert \le 1, \; \lvert \delta_j \rvert \le 1}\biggl\lvert\sum_{i=1}^m\sum_{j=1}^n b_{ij}  \bar{\varepsilon}_i\delta_j\biggr\rvert
\end{equation}
respectively. So we have
\begin{equation}\label{eq:GI2}
\lVert B \rVert_{\infty,1} \le \lVert B \rVert_\G \le K_\G \lVert B \rVert_{\infty,1},
\end{equation}
where the first inequality is easy to see and the second inequality is the Grothendieck inequality \eqref{eq:GI}. By the equivalence of norms in finite-dimensional spaces, an inequality of the form \eqref{eq:GI2} must exist but the surprise here is that the constant $K_\G$ is independent of the dimensions $m,n$! The existence of such a universal constant $K_\G$ is the crux of the Grothendieck inequality and we expect any extension to also have such a property.

The two norms in \eqref{eq:norms} as well as the value of the smallest possible $K_\G$ in \eqref{eq:GI2} depend on the choice of the field $\Bbbk$ and we will denote them by $\lVert\, \cdot\, \rVert_\G^\Bbbk$,  $\lVert\, \cdot\, \rVert_{\infty,1}^\Bbbk$, $K_\G^\Bbbk$ when we need to emphasize this dependence. The sharpest constants $K_\G^\mathbb{R}$ and  $K_\G^\mathbb{C}$  are known respectively as the real and complex \emph{Grothendieck constants}. Their exact values are unknown but there are excellent bounds due to Davie \cite{Davie, Davie2}, Haagerup \cite{Haagerup}, and Krivine \cite{Krivine2},
\begin{equation}\label{eq:KH}
1.67696 \le K_\G^\mathbb{R} \le 1.78221, \qquad  1.33807 \le K_\G^\mathbb{C} \le 1.40491.
\end{equation}
That the Grothendieck constants have moderately small values has important implications. The most direct one is that by virtue of \eqref{eq:GI2}, the $\G$-norm, which can be computed in polynomial-time via semidefinite programming, gives us a good approximation of the $(\infty,1)$-norm, which is known to be NP-hard.

\subsection{Symmetric Grothendieck inequality}

The main goal of our article is to establish a generalization. Consider the following norms on $\mathbb{S}^n$, the vector space of $n \times n$ symmetric/Hermitian matrices over $\Bbbk$:
\begin{equation}\label{eq:GammaTheta}
\lVert A \rVert_\Gamma \coloneqq \max_{\lVert x_i\rVert \le 1}\; \biggl\lvert\sum_{i=1}^n \sum_{j=1}^n a_{ij}\langle x_i, x_j\rangle \biggr\rvert \qquad\text{and}\qquad
\lVert A \rVert_\Theta \coloneqq \max_{ \lvert \delta_i \rvert \le 1}\; \biggl\lvert\sum_{i=1}^n\sum_{j=1}^n a_{ij}  \bar{\delta}_i\delta_j\biggr\rvert.
\end{equation}
As in the Grothendieck inequality,  the maximum in $\lVert\, \cdot\, \rVert_\Gamma$ is over all $x_1,\dots, x_n$ in the unit ball of $\Bbbk^d$ while  the maximum in $\lVert\, \cdot\, \rVert_\Theta$ is over all $\delta_1,\dots, \delta_n$ in the unit disk of $\Bbbk$, and where we have assumed $d \ge n$. These norms are distinct from those in \eqref{eq:norms}, with different values even on $2\times 2$ diagonal matrices.

In Section~\ref{sec:bounds}, we will establish an analogue of \eqref{eq:GI2},
\begin{equation}\label{eq:GGI2}
\lVert A \rVert_\Theta \le  \lVert A \rVert_\Gamma \le K_\Gamma \lVert A \rVert_\Theta.
\end{equation}
Again the first inequality is obvious, and the effort is in showing that there exists a universal constant $K_\Gamma$, independent of $d$ and $n$, such that the second inequality
\begin{equation}\label{eq:GGI}
\max_{\lVert x_i\rVert \le 1}\; \biggl\lvert\sum_{i=1}^n \sum_{j=1}^n a_{ij}\langle x_i, x_j\rangle \biggr\rvert \le K_\Gamma \max_{ \lvert \delta_i \rvert \le 1}\; \biggl\lvert\sum_{i=1}^n\sum_{j=1}^n a_{ij}  \bar{\delta}_i\delta_j\biggr\rvert
\end{equation}
holds for all $A \in \mathbb{S}^n$. We will call \eqref{eq:GGI} the \emph{symmetric Grothendieck inequality} and $K_\Gamma$, the smallest  constant such that \eqref{eq:GGI2} holds, the \emph{symmetric Grothendieck constant}. While there are other inequalities in the literature that go by the same name, \eqref{eq:GGI} is different, as we will explain next.

Firstly, the symmetric Grothendieck inequality \eqref{eq:GGI} generalizes  the Grothendieck inequality: for any $B \in \Bbbk^{m \times n}$, setting
\begin{equation}\label{eq:symm}
A = \begin{bmatrix}  0 & B \\ B^* & 0 \end{bmatrix} \in \mathbb{S}^{m+n}
\end{equation}
recovers \eqref{eq:GI} from \eqref{eq:GGI}. Secondly, we emphasize that the symmetric Grothendieck inequality is  not a case of the Grothendieck inequality restricted to symmetric matrices --- we will see in Corollary~\ref{cor:ne} that $\lVert\,\cdot\,\rVert_\Theta \ne \lVert\,\cdot\,\rVert_{\infty, 1}$ and $\lVert\,\cdot\,\rVert_\Gamma \ne \lVert\,\cdot\,\rVert_\G$ on $\mathbb{S}^n$. Readers who think that they might have seen \eqref{eq:GGI} before (say, in \cite{Charikar,Khot,Rietz}) should note that a key difference here is that the matrix $A \in \mathbb{S}^n$ is not assumed to be positive semidefinite; in particular, the absolute values on both sides of \eqref{eq:GGI} cannot be dropped. If $A \in \mathbb{S}^n$ is in addition positive semidefinite,  then we will see in Proposition~\ref{posdefcase} that
\begin{equation}\label{eq:equalnorms}
\lVert A \rVert_\Theta = \lVert A \rVert_{\infty, 1} \qquad \text{and}\qquad \lVert A \rVert_\Gamma = \lVert A \rVert_\G,
\end{equation}
and both the symmetric and original Grothendieck inequalities become weaker versions of the Nesterov $\pi/2$-Theorem \cite{Nes, Rietz}, sometimes also called the positive semidefinite Grothendieck inequality. Another key difference from other similarly-named inequalities is that $K_\Gamma$ is truly a universal constant independent of $n$, just like the original Grothendieck constant.

As in the case of Grothendieck inequality, the values of the norms  $\lVert\, \cdot\, \rVert_\Theta$, $\lVert\, \cdot\, \rVert_\Gamma$ and the constant $K_\Gamma$  will depend on whether $\Bbbk$ is $\mathbb{R}$ or $\mathbb{C}$. We will write   $\lVert\, \cdot\, \rVert_\Theta^\Bbbk$, $\lVert\, \cdot\, \rVert_\Gamma^\Bbbk$, $K_\Gamma^\Bbbk$ whenever it is necessary to emphasize the dependence.  We will deduce analogues of the upper bounds in \eqref{eq:KH} for the symmetric Grothendieck constants:
\[
K_\Gamma^\mathbb{R}\le \sinh \frac{\pi}{2}\approx 2.30130 \qquad \text{and}\qquad
K_\Gamma^\mathbb{C}\le \frac{8}{\pi}-1\approx 1.54648,
\]
and show that they are lower bounded by the original Grothendieck constants. We will also establish a closely related variant of \eqref{eq:GGI} where the maxima are taken over $\lVert x_i\rVert = 1$ and $\lvert \delta_i \rvert = 1$ respectively.

\subsection{Conic Grothendieck inequalities}\label{sec:CGI}

An immediate consequence of the symmetric Grothendieck inequality is that it gives a ``Grothendieck inequality for cones'' by restricting \eqref{eq:GGI2} to any family of cones $\mathsf{C} = \{C_n \subseteq \mathbb{S}^n : n \in \mathbb{N}\}$. As we will discuss in Section~\ref{sec:cones}, we obtain an inequality
\begin{equation}\label{eq:CGI}
\lVert A \rVert_\Theta \le  \lVert A \rVert_\Gamma \le K_\mathsf{C} \lVert A \rVert_\Theta
\end{equation}
that holds for all $A \in C_n$ with a universal constant $K_\mathsf{C} \le K_\Gamma$ independent of $n$. The original Grothendieck inequality corresponds to the special case where the cones are subspaces of matrices of the form \eqref{eq:symm}; but  other well-known inequalities are also special cases of \eqref{eq:CGI} --- the Goemans--Williamson inequality \cite{GW95} is the case when $C_n$ is the cone of weighted Laplacians on $n$-vertex graphs; the Nesterov $\pi/2$-Theorem \cite{Nes,Rietz} is the case when $C_n$ is the cone of real symmetric positive semidefinite $n \times n$ matrices. For other family of cones, say, taking $C_n$ to be the cone of diagonally dominant $n \times n$ matrices, we get yet other ``Grothendieck-like inequalities'' that are new as far as we know. The values of these conic  Grothendieck constants (assuming $\mathbb{R}$ for simplicity),
\begin{equation}\label{eq:CGconst}
K_\mathsf{C} \coloneqq
\sup_{n \in \mathbb{N}} \biggl[
\max_{A \in C_n \subseteq \mathbb{S}^n} \frac{\lVert A \rVert_\Gamma}{\lVert A \rVert_\Theta} \biggr]=
\begin{cases}
1/\alpha_\GW & \text{if }  C_n = \{ A \in \mathbb{S}^n :  A \mathbbm{1} = 0,\; a_{ij} \le 0 \; \text{for all}\; i \ne j \}, \\[1ex]
\pi/2 & \text{if }  C_n = \mathbb{S}^n_\p, \\[1ex]
K_\G & \text{if }  C_n = \bigl\{\begin{bsmallmatrix} 0 & B \\ B^* & 0 \end{bsmallmatrix} \in \mathbb{S}^n : B \in \mathbb{R}^{m \times (n - m)} \bigr\}, \\[1ex]
K_\Gamma & \text{if }  C_n = \mathbb{S}^n,
\end{cases}
\end{equation}
can sometimes be determined exactly (the Nesterov $\pi/2$-Theorem and  Goemans--Williamson inequality) and other times be bounded (the original and symmetric Grothendieck inequalities); here $\alpha_\GW \approx 0.878567$ is the celebrated  Goemans--Williamson constant. We also have complex analogues of \eqref{eq:CGconst} with different values for the respective constants. For example, over $\mathbb{C}$, the constant $\pi/2$ in the Nesterov $\pi/2$-Theorem becomes the $4/\pi$ in the Ben-Tal--Nemirovski--Roos $4/\pi$-Theorem \cite{Ben-Tal}.

There has been speculation \cite{KO} that the Goemans--Williamson inequality is somehow related to the original Grothendieck inequality \eqref{eq:GI} although we have not been able to find an unambiguous statement of their exact relation.  Since a weighted Laplacian $L$ is always positive semidefinite, we may invoke \eqref{eq:equalnorms} and write
\[
\lVert L \rVert_{\infty,1} \le  \lVert L \rVert_\G \le K_\G \lVert L \rVert_{\infty,1},
\]
and since the set of all weighted Laplacians $\mathbb{L}^n$ is a strictly smaller subset of $\mathbb{R}^{n \times n}$, the Grothendieck constant $K_\G$ may be reduced to the (reciprocal of) Goemans--Williamson constant $1/\alpha_\GW$. This in our view spells the relationship between the two inequalities --- Goemans--Williamson inequality is Grothendieck inequality restricted to $\mathbb{L}^n$. The crux of  Goemans--Williamson inequality, namely, the existence of a universal constant independent of $n$, the size of the graph $G$, is characteristic of the Grothendieck inequality.

\subsection{Grothendieck norms}\label{sec:d}

One small novelty introduced in this article is that we approach the Grothendieck inequality and its extensions by  viewing them as relations within a family  of norms and seminorms. For any $d,m,n \in \mathbb{N}$, $A \in \mathbb{S}^n$, and $B\in\Bbbk^{m \times n}$, we define
\begin{equation}\label{eq:d}
\begin{aligned}
\lVert A\rVert_{\gamma,d} &\coloneqq \max \biggl\{ \biggl\lvert \sum_{i=1}^n \sum_{j=1}^n a_{ij}\langle x_i,x_j\rangle \biggr\rvert :  x_i\in \Bbbk^d,\; \lVert x_i \rVert  = 1\biggr\},\\ 
\lVert A\rVert_{\Gamma,d} &\coloneqq \max \biggl\{ \biggl\lvert \sum_{i=1}^n \sum_{j=1}^n a_{ij}\langle x_i,x_j\rangle \biggr\rvert :  x_i\in \Bbbk^d,\; \lVert x_i \rVert  \le 1\biggr\},\\ 
\lVert B\rVert_{\G,d} &\coloneqq \max \biggl\{ \biggl\lvert \sum_{i=1}^m \sum_{j=1}^{n} b_{ij}\langle x_i,y_j\rangle \biggr\rvert :  x_i, y_j\in \Bbbk^d,\; \lVert x_i \rVert = 1,\; \lVert y_j \rVert  = 1\biggr\}\\
&= \max \biggl\{ \biggl\lvert \sum_{i=1}^m \sum_{j=1}^{n} b_{ij}\langle x_i,y_j\rangle \biggr\rvert :  x_i, y_j\in \Bbbk^d,\; \lVert x_i \rVert \le 1,\; \lVert y_j \rVert \le 1\biggr\}.
\end{aligned}
\end{equation}
We will show that $\lVert\, \cdot\, \rVert_{\G,d}$ and $\lVert\, \cdot\, \rVert_{\Gamma,d}$ define norms and $\lVert\, \cdot\, \rVert_{\gamma,d}$ a seminorm. We will refer to them collectively as \emph{Grothendieck norms}; or the Grothendieck $d$-norms if we need to specify their \emph{rank} $d \in \mathbb{N}$;
or more specifically as the $(\gamma,d)$-seminorm, $(\Gamma,d)$-norm, and  $(\G,d)$-norm respectively if we need to specify both their \emph{types} and rank. The norms in \eqref{eq:norms} and \eqref{eq:GammaTheta} are special cases:
\[
\lVert\, \cdot\, \rVert_\G = \lVert\, \cdot\, \rVert_{\G,m+n}, \qquad \lVert\, \cdot\, \rVert_{\infty,1} = \lVert\, \cdot\, \rVert_{\G,1}, \qquad \lVert\, \cdot\, \rVert_\Gamma = \lVert\, \cdot\, \rVert_{\Gamma,n}, \qquad \lVert\, \cdot\, \rVert_\Theta = \lVert\, \cdot\, \rVert_{\Gamma,1}.
\]
In fact, we will see that $\lVert\, \cdot\, \rVert_{\G,d} = \lVert\, \cdot\, \rVert_\G$ as soon as $d(d+1)/2 > m+n$ over $\mathbb{R}$ and $d^2 > m+n$ over $\mathbb{C}$;  similarly for 
$\lVert\, \cdot\, \rVert_{\gamma,d}$ and $\lVert\, \cdot\, \rVert_{\Gamma,d}$.
We will establish various characterizations of these norms; closed-form expressions for special matrices; and their relations across different types, ranks, and base fields ($\mathbb{R}$ or $\mathbb{C}$).

More generally, we prove that for each of these three types of norms/seminorms, there is a Grothendieck-like inequality relating any pair of $d$-norm and $p$-norm with $d \le p$:
\[
\lVert A \rVert_{\gamma,p}\le K_{\gamma,d,p} \lVert A \rVert_{\gamma,d},\qquad
\lVert A \rVert_{\Gamma,p}\le K_{\Gamma,d,p} \lVert A \rVert_{\Gamma,d}, \qquad
\lVert B \rVert_{\G,p}\le K_{\G,d,p} \lVert B \rVert_{\G,d}
\]
for all $A \in \mathbb{S}^n$ and all $B \in \Bbbk^{m \times n}$. The original and symmetric Grothendieck inequalities \eqref{eq:GI} and \eqref{eq:GGI} then correspond to the limiting case $d = 1$, $p\to \infty$. Other special cases include the order-$p$ Grothendieck inequality \cite{Acin, Bene, Hirsch, Krivine2}, when the pair of norms are $\lVert\, \cdot\, \rVert_{\G,1}$ and $\lVert\, \cdot\, \rVert_{\G,p}$; the generalized Grothendieck inequality \cite{Bri3}, when the pair of norms are $\lVert\, \cdot\, \rVert_{\G,d}$ and $\lVert\, \cdot\, \rVert_{\G,p}$, $d\le p$; and the rank-constrained positive semidefinite Grothendieck inequality \cite{Bri1, Bri2}, when the pair of norms are $\lVert\, \cdot\, \rVert_{\gamma,d}$ and $\lVert\, \cdot\, \rVert_\gamma$, restricted to $\mathbb{S}^n_\p$. In fact, when combined with our discussion in Section~\ref{sec:CGI}, we obtain an all-encompassing inequality
\begin{equation}\label{eq:moagi}
\lVert A \rVert_{\Gamma,p}^\Bbbk \le K_{\mathsf{C},d,p}^\Bbbk  \lVert A \rVert_{\Gamma,d}^\Bbbk .
\end{equation}
Every inequality that we have mentioned up to this point may be obtained as a special case of \eqref{eq:moagi} by choosing an appropriate family of cones $\mathsf{C}$, appropriate values of  $d$ and $p$, and an appropriate field $\Bbbk =\mathbb{R}$ or $\mathbb{C}$.

\subsection{Polynomial-time approximations}

Like the original Grothendieck inequality, the symmetric Grothendieck inequality has implications on the polynomial-time approximations of NP-hard quantities. We will study several such approximations in Section~\ref{sec:apps}, and will describe two here.

For a bipartite graph with vertices in an independent set of size $m$ and another of size $n$, its adjacency matrix $A \in \mathbb{S}^{m +n}$ takes the form in \eqref{eq:symm} with a weight matrix $B \in \mathbb{R}^{m \times n}$. The \emph{cut norm} of $B$ as defined in \cite{Alon3} has important combinatorial properties but is NP-hard to compute. A consequence of  the symmetric Grothendieck inequality is that
\[
\frac{1}{8K_\Gamma} \lVert L_A \rVert_\Gamma \le  \lVert B \rVert_{\cut} \le \frac{3}{8} \lVert L_A \rVert_\Gamma,
\]
where $L_A \coloneqq \diag(A\mathbbm{1}) - A$ is the Laplacian corresponding to $A$, 
thus affording an approximation via the polynomial-time computable $\Gamma$-norm.

Another consequence of the symmetric Grothendieck inequality is that for any $A \in \mathbb{S}^n$, the gap between two NP-hard combinatorial problems,
\begin{align*}
\str(A)  &= \Bigl( \max\nolimits_{x \in \{-1,+1\}^n} x^\tp\! Ax \Bigr) - \Bigl( \min\nolimits_{x \in \{-1,+1\}^n} x^\tp\! Ax \Bigr),
\intertext{which was studied in \cite{Charikar}, can be uniformly approximated by a polynomial-time computable}
\spr(A) &= \Bigl( \max\nolimits_{X  \in \mathbb{G}^n} \tr( AX)\Bigr) - \Bigl( \min\nolimits_{X  \in \mathbb{G}^n} \tr( AX)\Bigr),
\end{align*}
where $\mathbb{G}^n$ is the convex set of $n\times n$ correlation matrices. More precisely, the symmetric Grothendieck inequality will imply that
\[
\str(A) \le \spr(A) \le K_\gamma \str(A),
\]
where $K_\gamma$ is the symmetric Grothendieck constant corresponding to the $(\gamma,d)$-seminorms in \eqref{eq:d}.

\subsection*{Notations}

We introduce some notations and conventions that will make an appearance later. To avoid clutter, any statement about $\mathbb{S}^n$ would mean that it applies to both the  space of real symmetric matrices and the space of complex Hermitian matrices alike. If a statement applies only to the former or the latter, we will write $\mathbb{S}^n(\mathbb{R})$ or $\mathbb{S}^n(\mathbb{C})$ respectively. This same convention will apply to all other notations below. We remind the reader that $\mathbb{S}^n(\mathbb{C})$ is a vector space over $\mathbb{R}$ but \emph{not} over $\mathbb{C}$; and that $\mathbb{S}^n \times \mathbb{S}^n \to \mathbb{R}$, $(A,B) \mapsto \tr(AB)$ defines a \emph{real} inner product over both  $\mathbb{S}^n(\mathbb{R})$ and $\mathbb{S}^n(\mathbb{C})$ alike.

The following two subspaces of $\mathbb{S}^n$ will play important roles in our article:
\begin{align*}
\mathbb{S}^n_\eq &\coloneqq \{A\in \mathbb{S}^n : a_{11}=\dots=a_{nn}\},\\
\mathbb{S}^n_\circ &\coloneqq \{A\in \mathbb{S}^n : a_{11}=\dots=a_{nn}=0\},
\end{align*}
i.e., the subspace of matrices with equal diagonal entries, and its subspace comprising those whose diagonal entries are all zeros. As usual,  we write $\mathbb{S}^n_\p$ and $\mathbb{S}^n_\pp$ for the cones of positive semidefinite and positive definite matrices respectively. Throughout this article a \emph{cone} will always mean a convex cone.

For $x,y \in \Bbbk^d$, we write
\[
\langle x, y \rangle_\Bbbk = \begin{cases}
x^\tp y & \text{if }\Bbbk = \mathbb{R},\\
x^* y & \text{if }\Bbbk = \mathbb{C},
\end{cases}
\]
for the Euclidean and Hermitian inner products.
Again, we will usually write $\langle x, y \rangle$ and only indicate field dependence when it is necessary.

For $x_1,\dots,x_n\in \Bbbk^d$, we write
\[
G(x_1,\dots,x_n)\coloneqq \begin{bmatrix}
 \langle x_1,x_1\rangle & \langle x_1,x_2\rangle & \dots & \langle x_1,x_n\rangle \\
 \langle x_2,x_1\rangle & \langle x_2,x_2\rangle & \dots & \langle x_2,x_n\rangle \\
 \vdots & \vdots & \ddots & \vdots \\
 \langle x_n,x_1\rangle & \langle x_n,x_2\rangle & \dots & \langle x_n,x_n\rangle
\end{bmatrix} \in \mathbb{S}^n
\]
for the corresponding Gram matrix. Let
\begin{equation}\label{eq:Gnd}
\begin{aligned}
\mathbb{G}^n_d &\coloneqq \{G(x_1,\dots,x_n) \in \mathbb{S}^n :  \lVert x_1 \rVert =1,\dots,\lVert x_n\rVert=1, \; x_1,\dots,x_n\in\Bbbk^d\}\\
&= \{ G \in \mathbb{S}^n_\p : g_{11} =\dots =g_{nn} =1,\; \rank(G) \le d\}
\end{aligned}
\end{equation}
be the set of  rank-$d$ \emph{correlation matrices}; and
\begin{equation}\label{eq:Gammand}
\begin{aligned}
\bbGamma^n_d &\coloneqq \{G(x_1,\dots,x_n) \in \mathbb{S}^n :  \lVert x_1 \rVert \le 1,\dots,\lVert x_n\rVert\le 1, \; x_1,\dots,x_n\in\Bbbk^d\}\\
&= \{ G \in \mathbb{S}^n_\p : g_{11}, \dots, g_{nn} \in [0,1],\; \rank(G) \le d\}
\end{aligned}
\end{equation}
be the set of  rank-$d$ \emph{subcorrelation matrices}.
Clearly,
\[
\mathbb{G}^n_1 \subseteq \mathbb{G}^n_2 \subseteq \dots \subseteq \mathbb{G}^n_n \subseteq \mathbb{S}_\eq^n \cap \mathbb{S}^n_\p,\qquad
\bbGamma^n_1 \subseteq \bbGamma^n_2 \subseteq \dots \subseteq \bbGamma^n_n \subseteq  \mathbb{S}^n_\p.
\]
For any $d \ge n$, we have $\mathbb{G}^n_d = \mathbb{G}^n_n$ and $\bbGamma^n_d = \bbGamma^n_n$, which we denote by $\mathbb{G}^n$ and $\bbGamma^n$ respectively, i.e.,
\begin{align*}
\mathbb{G}^n &= \{ G \in \mathbb{S}^n_\p : g_{11} =\dots =g_{nn} =1\},\\
\bbGamma^n  &= \{ G \in \mathbb{S}^n_\p : g_{11}, \dots, g_{nn} \in [0,1]\},
\end{align*}
are the \emph{convex} sets of correlation matrices and subcorrelation matrices respectively.
For the special case $d = 1$, the rank-one correlation and subcorrelation matrices may also  be expressed as
\begin{align*}
\mathbb{G}^n_1 &= \{ xx^* \in \mathbb{S}^n :  \lvert x_1\rvert =1,\dots,\lvert x_n\rvert =1\},\\
\bbGamma^n_1 &= \{ xx^* \in \mathbb{S}^n :  \lvert x_1\rvert \le 1,\dots,\lvert x_n\rvert \le 1\}.
\end{align*}
Note also that
\[
\spn(\mathbb{G}^n) = \mathbb{S}_\eq^n, \qquad \cone(\mathbb{G}^n) = \mathbb{S}_\eq^n \cap \mathbb{S}^n_\p, \qquad
\spn(\bbGamma^n) = \mathbb{S}^n, \qquad \cone(\bbGamma^n) = \mathbb{S}^n_\p.
\]

Reiterating the point we made in the first paragraph, we write $\mathbb{S}^n(\Bbbk)$,  $\mathbb{S}^n_\circ(\Bbbk)$,  $\mathbb{S}^n_\eq(\Bbbk)$, $\mathbb{S}^n_\p(\Bbbk)$, $\mathbb{S}^n_\pp(\Bbbk)$, $\mathbb{G}^n(\Bbbk)$,  $\bbGamma^n(\Bbbk)$ for $\Bbbk =\mathbb{R}$ or $\mathbb{C}$ only when it is necessary to emphasize the field dependence.

We write $\mathbb{R}_\p =[0,\infty)$ for the nonnegative reals. The closed unit disk, denoted $\mathbb{D}$, can either be real, $\mathbb{D} = [-1,1]$, or complex, $\mathbb{D} = \{z \in \mathbb{C} : \lvert z \rvert \le 1\}$, and will be clearly specified. Likewise the unit circle, denoted $\mathbb{T}$, can either be real,  $\mathbb{T} =\{-1,1\}$, or complex, $\mathbb{T} =  \{z \in \mathbb{C} : \lvert z \rvert = 1\}$, and will be clearly specified. Derivatives of these notations like
$\mathbb{R}_\p^{m \times n}$, $\mathbb{S}^n(\mathbb{R}_\p\!)$, $\mathbb{S}^n_\circ(\mathbb{R}_\p\!) $, $\mathbb{S}^n_\eq(\mathbb{D})$, etc, are self-explanatory.

All vectors $x \in \Bbbk^n$ would be regarded as column vectors. Row vectors will always be denoted with transpose or  adjoint, $x^\tp$ or $x^*$. When enclosed in parentheses, a vector denoted $(x_1,\dots,x_n)\in \Bbbk^n$ would mean a \emph{column} vector with entries $x_1,\dots,x_n \in \Bbbk$. Matrices are denoted in uppercase letters, e.g., $A$, with its $(i,j)$th entry denoted in corresponding lowercase letters, e.g., $a_{ij}$.

We define our sign function over $\Bbbk = \mathbb{R}$ and $\mathbb{C}$ as
\begin{equation}\label{eq:sign}
\sign z= \begin{cases}
z/\lvert z \rvert & z\ne 0,\\
1 & z = 0.
\end{cases}
\end{equation}
The somewhat nonstandard convention $\sign 0 \coloneqq 1$, ensuring that $\sign$ always takes values in $\mathbb{T}$, will simplify some proofs (e.g., Theorem~\ref{FRNineq}) but makes no difference elsewhere (e.g., Lemma~\ref{2-thm1}).

\section{Grothendieck norms}\label{sec:Gnorms}

The perspective adopted in this article is that the Grothendieck inequality is a statement about the $\G$-norm and the $(\infty,1)$-norm, and that the symmetric Grothendieck inequality is an analogous statement about the $\Gamma$-norm and the $\Theta$-norm. More generally, all inequalities discussed in this article, including various generalizations \cite{Acin,  Bri3, Bri1, Bri2,  Bene, Hirsch} and specializations \cite{Ben-Tal, GW95, Nes, Rietz} of the  Grothendieck inequality, will be viewed as relations between the Grothendieck $d$-norms for different values of $d$. As such, we will provide a fairly extensive discussion of these norms in this section.

\subsection{The $\gamma$- and $\theta$-seminorms}

An observation \cite[Lemma~2.2]{FLZ18} about  the $\G$-norm and the $(\infty,1)$-norm in \eqref{eq:norms} is that we may replace ``$\le 1$'' with ``$=1$'' in the maxima and leave the norms unchanged, i.e.,
\begin{equation}\label{eq:norms1}
\lVert B\rVert_\G = \max_{\lVert x_i\rVert  =\lVert y_j \rVert = 1}\biggl\lvert \sum_{i=1}^m\sum_{j=1}^n b_{ij} \langle  x_i,y_j\rangle\biggr\rvert \quad \text{and}\quad
\lVert B\rVert_{\infty,1} = \max_{\lvert \varepsilon_i \rvert = \lvert \delta_j \rvert = 1}\biggl\lvert\sum_{i=1}^m\sum_{j=1}^n b_{ij}  \bar{\varepsilon}_i\delta_j\biggr\rvert.
\end{equation}
In other words, for the $\G$-norm and $(\infty,1)$-norm, it does not matter whether we take maximum over the unit ball or the unit sphere.\footnote{This applies more generally to $\lVert\,\cdot\,\rVert_{\G,d}$, which is why it has two expressions in \eqref{eq:d}.} This is not the case for their symmetric counterparts in \eqref{eq:GammaTheta}, replacing the ``$\le 1$'' with ``$=1$'' in the maxima in \eqref{eq:GammaTheta} gives us two related \emph{seminorms} on $\mathbb{S}^n$ that we will denote as
\begin{equation}\label{eq:seminorms2}
\lVert A \rVert_\gamma \coloneqq \max_{\lVert x_i\rVert  = 1}\; \biggl\lvert\sum_{i=1}^n \sum_{j=1}^n a_{ij}\langle x_i, x_j\rangle \biggr\rvert
\quad\text{and}\quad
\lVert A \rVert_\theta \coloneqq \max_{ \lvert \delta_i \rvert = 1}\; \biggl\lvert\sum_{i=1}^n\sum_{j=1}^n a_{ij}  \bar{\delta}_i\delta_j\biggr\rvert,
\end{equation}
where $x_1,\dots, x_n \in \Bbbk^d$ with $d \ge n$, and $\delta_1,\dots, \delta_n \in \Bbbk$.
Clearly,
\begin{equation}\label{eq:semilenorm}
\lVert A \rVert_\theta\le \lVert A \rVert_\Theta, \qquad \lVert A \rVert_\gamma\le \lVert A \rVert_\Gamma,
\end{equation}
and the inequalities are strict when $A \in \mathbb{S}^n$ is nonzero, diagonal, and traceless. It is also clear that
\begin{equation}\label{eq:tleg}
 \lVert A \rVert_\theta\le  \lVert A \rVert_\gamma. 
\end{equation}
We will later see that there is also a symmetric Grothendieck inequality for these seminorms: 
\begin{equation}\label{eq:GGILE}
\lVert A \rVert_\gamma \le K_\gamma \lVert A \rVert_\theta
\end{equation}
for some universal constant $K_\gamma$ independent of $n \in \mathbb{N}$ and $A \in \mathbb{S}^n$.

\subsection{Basic properties}

While there is an apparent dependence on $d$ in the definitions of $\lVert\,\cdot\,\rVert_\G$, $\lVert\,\cdot\,\rVert_\Gamma$, and $\lVert\,\cdot\,\rVert_\gamma$, our assumption that $d\ge m+n$ in the first case and $d \ge n$ in the latter two cases remove this dependence. However, it will aid our discussions here and in the rest of our article to study a family of norms/seminorms that interpolate between $\lVert\,\cdot\,\rVert_{\infty,1}$ and $\lVert\,\cdot\,\rVert_\G$, $\lVert\,\cdot\,\rVert_\Theta$ and $\lVert\,\cdot\,\rVert_\Gamma$,  $\lVert\,\cdot\,\rVert_\theta$ and $\lVert\,\cdot\,\rVert_\gamma$ --- these are the Grothendieck $d$-norms that we introduced in Section~\ref{sec:d}. We will establish that they are indeed norms/seminorms in Lemma~\ref{gamnorms}.  We will see that results for Grothendieck $d$-norms are often no harder to establish for general $d$ than for specific cases like $d=1$, $n$, $m+n$.
\begin{lemma}\label{lem:d}
Let $d,m,n \in \mathbb{N}$ and the Grothendieck $d$-norms be as defined in \eqref{eq:d}. For any $A \in \mathbb{S}^n$ and $B\in\Bbbk^{m \times n}$, we have
\begin{equation}\label{eq:increase}
\begin{aligned}
\lVert A\rVert_\theta &= \lVert A\rVert_{\gamma,1} \le \lVert A\rVert_{\gamma,2} \le \dots \le \lVert A\rVert_{\gamma,n} =   \lVert A\rVert_\gamma,\\
\lVert A\rVert_\Theta &= \lVert A\rVert_{\Gamma,1} \le \lVert A\rVert_{\Gamma,2} \le \dots \le \lVert A\rVert_{\Gamma,n} =   \lVert A\rVert_\Gamma,\\
\lVert B\rVert_{\infty,1} &= \lVert B\rVert_{\G,1} \le \lVert B\rVert_{\G,2} \le \dots \le \lVert B\rVert_{\G,m+n} =   \lVert B\rVert_\G,
\end{aligned}
\end{equation}
and for all $d \ge n$ (first two) or $d \ge m+n$ (third),
\begin{equation}\label{eq:stabilize}
\lVert A\rVert_{\gamma,d} =   \lVert A\rVert_\gamma, \qquad \lVert A\rVert_{\Gamma,d} =   \lVert A\rVert_\Gamma, \qquad \lVert B\rVert_{\G,d} =   \lVert B\rVert_\G.
\end{equation}
\end{lemma}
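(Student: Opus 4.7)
The plan is to reduce the lemma to three routine observations: (i) monotonicity in $d$ via an isometric embedding $\Bbbk^d\hookrightarrow\Bbbk^{d+1}$, (ii) direct identification of the $d=1$ case with the $\theta$-/$\Theta$-/$(\infty,1)$-norms, and (iii) a stabilization step showing that once $d$ is large enough to accommodate the span of all vectors in the optimization, no further growth is possible.

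For (i), the map $\iota\colon \Bbbk^d\hookrightarrow\Bbbk^{d+1}$, $x\mapsto (x,0)$, preserves both norms and inner products, so every tuple admissible for $\lVert A\rVert_{\gamma,d}$ becomes a tuple admissible for $\lVert A\rVert_{\gamma,d+1}$ with identical objective value; this yields the chain of inequalities in \eqref{eq:increase}, and the same argument handles $\lVert\,\cdot\,\rVert_{\Gamma,d}$ and $\lVert\,\cdot\,\rVert_{\G,d}$.

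For (ii), in $\Bbbk^1$ one has $\langle x_i,x_j\rangle = \bar x_i x_j$, and the constraints $\lVert x_i\rVert=1$ or $\lVert x_i\rVert\le 1$ become $\lvert x_i\rvert=1$ or $\lvert x_i\rvert\le 1$; so the definitions in \eqref{eq:d} reduce literally, upon renaming $x_i\mapsto\delta_i$ (and $y_j\mapsto\delta_j$), to those of $\lVert\,\cdot\,\rVert_\theta$, $\lVert\,\cdot\,\rVert_\Theta$, and $\lVert\,\cdot\,\rVert_{\infty,1}$ given in \eqref{eq:norms} and \eqref{eq:seminorms2}. For $\lVert\,\cdot\,\rVert_{\G,1}$ the two expressions in \eqref{eq:d} coincide because the objective is linear in each $x_i$ (and each $y_j$) separately, so the maximum is attained on the boundary.

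For (iii), suppose $(x_1,\dots,x_n)$ is admissible for $\lVert A\rVert_{\gamma,d}$ with $d>n$. Set $V\coloneqq\spn(x_1,\dots,x_n)$, so $\dim V\le n$. Choose any linear isometry $\Phi\colon V\to \Bbbk^n$ (obtained by sending an orthonormal basis of $V$ to the first $\dim V$ coordinate vectors of $\Bbbk^n$); the images $\tilde x_i\coloneqq\Phi(x_i)$ have the same norms and pairwise inner products as the $x_i$, and hence yield the same objective value. Combined with (i), this gives $\lVert A\rVert_{\gamma,d}=\lVert A\rVert_{\gamma,n}$ for all $d\ge n$. The identical argument handles the $\Gamma$-chain, while for the $\G$-chain we apply it to the combined tuple $(x_1,\dots,x_m,y_1,\dots,y_n)$ whose span has dimension at most $m+n$, explaining the threshold in \eqref{eq:stabilize}. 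The endpoint identifications $\lVert\,\cdot\,\rVert_{\gamma,n}=\lVert\,\cdot\,\rVert_\gamma$, $\lVert\,\cdot\,\rVert_{\Gamma,n}=\lVert\,\cdot\,\rVert_\Gamma$, and $\lVert\,\cdot\,\rVert_{\G,m+n}=\lVert\,\cdot\,\rVert_\G$ are then immediate from the original definitions, which already assume the relevant dimension bound. I do not foresee a substantive obstacle here; the only minor care needed is in constructing $\Phi$ uniformly over $\Bbbk=\mathbb{R}$ and $\mathbb{C}$, which is standard via orthonormal bases.
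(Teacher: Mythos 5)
Your proposal is correct and follows essentially the same approach as the paper: monotonicity in $d$ by trivial embedding, the $d=1$ identifications by inspection, and stabilization by observing that $n$ (resp.\ $m+n$) vectors span a subspace of dimension at most $n$ (resp.\ $m+n$), which can be isometrically identified with a coordinate subspace. The paper just compresses all this into two sentences, whereas you spell out the isometry $\Phi$ explicitly.
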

\begin{proof}
The equalities and inequalities in \eqref{eq:increase} follow from the definitions in \eqref{eq:d}. The stabilization \eqref{eq:stabilize} is well-known for the $(\G,d)$-norms and the same reason applies to the $(\Gamma,d)$-norms and $(\gamma,d)$-seminorms: When $d \ge n$, any $n$ vectors $x_1,\dots,x_n\in \Bbbk^d$ lie in an $n$-dimensional subspace of $\Bbbk^d$ and maximizing over it gives the same value as maximizing over $\Bbbk^n$. 
\end{proof}

The inequalities \eqref{eq:increase} refine
\[
\lVert A\rVert_\theta \le \lVert A\rVert_\gamma, \qquad 
\lVert A\rVert_\Theta \le \lVert A\rVert_\Gamma, \qquad
\lVert B\rVert_{\infty, 1} \le \lVert B\rVert_\G
\]
that we saw in \eqref{eq:tleg}, \eqref{eq:GGI2}, and \eqref{eq:GI2}. 

By virtue of \eqref{eq:stabilize}, we may assume that $d =n$ when we discuss $\lVert\, \cdot\, \rVert_\gamma$ or $\lVert\, \cdot\, \rVert_\Gamma$, and $d = m+n$ when we discuss $\lVert\, \cdot\, \rVert_\G$. In fact, the stabilizations in \eqref{eq:stabilize} hold for much smaller values of $d$: We will see in Section~\ref{sec:stable} that over $\mathbb{R}$,  $\lVert\,\cdot\,\rVert_{\gamma,d} =   \lVert \,\cdot\,\rVert_\gamma$ and $\lVert\,\cdot\,\rVert_{\Gamma,d} =   \lVert\,\cdot\,\rVert_\Gamma$ as soon as $d(d+1)/2 >n$ and $\lVert\,\cdot\,\rVert_{\G,d} =   \lVert\,\cdot\,\rVert_\G$ as soon as $d(d+1)/2 > m+n$;   over $\mathbb{C}$, these happen even sooner --- $d(d+1)/2$ may be replaced by $d^2$.  

The $(\G,d)$-norms have additional  alternative expressions that will be useful later. The following hold over over $\mathbb{R}$ and $\mathbb{C}$ alike but we may of course drop the ``$\Re$''  over $\mathbb{R}$. Note however that there is no analogue of \eqref{eq:G=le2} for the  $(\gamma,d)$-seminorms and $(\Gamma,d)$-norms, i.e., the absolute value cannot be replaced by the real part in their definitions in \eqref{eq:d}.
\begin{proposition}\label{prop2.2}
Let $d \in \mathbb{N}$. If $B\in\Bbbk^{m \times n}$, then
\begin{align}
\lVert B\rVert_{\G,d} &= \max \biggl\{ \Re \sum_{i=1}^m \sum_{j=1}^{n} b_{ij}\langle x_i,y_j\rangle :  x_i,y_j\in \Bbbk^d,\; \lVert x_i \rVert \le 1,\;\lVert y_j \rVert  \le 1\biggr\}. \label{eq:G=le2}
\intertext{If $A \in \mathbb{S}^n$, then}
\lVert A\rVert_{\G,d} &= \frac{1}{2} \max \biggl\{ \sum_{i=1}^n \sum_{j=1}^{n} a_{ij}(\langle x_i,y_j\rangle+\langle y_i,x_j\rangle) :  x_i,y_j\in \Bbbk^d,\; \lVert x_i \rVert \le 1,\; \lVert y_j \rVert \le 1\biggr\}. \label{eq:G=le}
\end{align}
In both \eqref{eq:G=le2} and \eqref{eq:G=le}, the \emph{``$\le 1$''} constraints may be replaced by \emph{``$= 1$''}.
\end{proposition}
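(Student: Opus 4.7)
The plan is to establish \eqref{eq:G=le2} first and then derive \eqref{eq:G=le} from it by exploiting the (Hermitian) symmetry of $A$.

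For \eqref{eq:G=le2}, the inequality ``RHS $\le \lVert B\rVert_{\G,d}$'' is immediate from $\Re z \le \lvert z\rvert$ applied to $z \coloneqq \sum_{i,j} b_{ij}\langle x_i,y_j\rangle$. For the reverse direction I would choose $x_i,y_j \in \Bbbk^d$ with $\lVert x_i\rVert = \lVert y_j\rVert = 1$ that attain $\lVert B\rVert_{\G,d}$ in its ``$=1$'' form from \eqref{eq:d}, set $\lambda \coloneqq \overline{\sign z} \in \mathbb{T}$ (well defined by the convention $\sign 0 \coloneqq 1$ in \eqref{eq:sign}), and replace each $y_j$ by $\lambda y_j$. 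This preserves all unit-norm constraints, and linearity of $\langle\,\cdot\,,\,\cdot\,\rangle$ in the second argument turns the sum into $\lambda z = \lvert z\rvert = \lVert B\rVert_{\G,d} \in \mathbb{R}_\p$, whose real part is again $\lVert B\rVert_{\G,d}$. Because this rescaling keeps the stronger constraint $\lVert y_j\rVert = 1$ intact, the same construction simultaneously yields both the ``$\le 1$'' and the ``$=1$'' forms of \eqref{eq:G=le2}.

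For \eqref{eq:G=le}, the key observation is that when $A = A^*$, relabeling dummy indices together with $a_{ji} = \bar a_{ij}$ and $\langle y,x\rangle = \overline{\langle x,y\rangle}$ gives
\[
\sum_{i,j=1}^n a_{ij}\langle y_i,x_j\rangle \;=\; \sum_{i,j=1}^n a_{ji}\langle y_j,x_i\rangle \;=\; \sum_{i,j=1}^n \bar a_{ij}\,\overline{\langle x_i,y_j\rangle} \;=\; \overline{\sum_{i,j=1}^n a_{ij}\langle x_i,y_j\rangle},
\]
so that
\[
\tfrac{1}{2}\sum_{i,j=1}^n a_{ij}\bigl(\langle x_i,y_j\rangle + \langle y_i,x_j\rangle\bigr) \;=\; \Re \sum_{i,j=1}^n a_{ij}\langle x_i,y_j\rangle.
\]
Identity \eqref{eq:G=le}, in either form, then follows by applying \eqref{eq:G=le2} to $B = A$ with $m = n$.

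I do not anticipate any serious obstacle; the argument is essentially algebraic bookkeeping. The only mild subtleties are ensuring $\lambda = \overline{\sign z}$ is well defined in both $\Bbbk = \mathbb{R}$ and $\mathbb{C}$ (which is exactly the purpose of the convention $\sign 0 \coloneqq 1$; the case $z = 0$ is in any case trivial, and over $\mathbb{R}$ one has $\lambda \in \{-1,+1\}$), and being careful that $\langle\,\cdot\,,\,\cdot\,\rangle$ is conjugate-linear in the first argument and linear in the second, so that multiplying only the $y_j$ by $\lambda$ pulls out $\lambda$ unconjugated.
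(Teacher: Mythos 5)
Your proof is correct and follows essentially the same route as the paper's: both establish \eqref{eq:G=le2} by a phase-rotation argument (the paper writes $\lvert z\rvert = \Re tz$ for some $t\in\mathbb{T}$ and absorbs $t$ into the unit vectors; you take $\lambda=\overline{\sign z}$ and absorb it into the $y_j$'s — same move), and both derive \eqref{eq:G=le} from the Hermitian symmetry of $A$. The only cosmetic difference is that the paper packages the symmetrization step as the trace identity $\Re\tr(AZ)=\tr\bigl[A(Z+Z^*)/2\bigr]$ for $A=A^*$, while you carry out the equivalent index relabeling by hand; the content is the same.
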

\begin{proof}
The equalities \eqref{eq:G=le2} are a simple consequence of the fact that  any $z\in\mathbb{C}$  has $\lvert z \rvert=\Re t z$ for some $t \in \mathbb{T}$, and $\lVert tx_i\rVert=\lvert t \rvert\lVert x_i \rVert = \lVert x_i \rVert$; ditto for $\mathbb{R}$. If $A =A^*$, then
\[
\overline{\tr A Z}=\tr \overline{AZ}=\tr\overline{(AZ)^\tp}=\tr Z^*\! A^*=\tr Z^*\! A= \tr A Z^*,
\]
and we get
\[
\Re \tr (A Z) =\tr\biggl[A\biggl(\frac{Z+Z^*}{2}\biggr)\biggr],
\]
from which \eqref{eq:G=le} follows.
\end{proof}

Define the maps $\Delta, \Xi : \mathbb{S}^n \to \mathbb{S}^n$ by
\[
\Delta(A) = \begin{bmatrix} a_{11} &  & & \\ & a_{22} & & \\ & & \ddots & \\ & & & a_{nn} \end{bmatrix},
\qquad 
\Xi(A) = \begin{bmatrix}
\frac{1}{n} \tr(A) & a_{12} & \dots & a_{1n} \\
a_{21} & \frac{1}{n} \tr(A) &  & a_{2n} \\
\vdots &  & \ddots & \vdots \\
a_{n1} & a_{n2} & \dots & \frac{1}{n} \tr(A)  \\
\end{bmatrix}
\]
for any $A\in \mathbb{S}^n$. The maps $\Delta$ and $\Xi$ are respectively the orthogonal projections onto the subspace of diagonal matrices  and the subspace $\mathbb{S}^n_\eq$.
Some immediate observations are that $\tr\bigl(\Xi(A)\bigr) = \tr(A)$, that
$x^*\! Ax =x^*\Xi(A)x$ if $xx^*\in \mathbb{G}^n_1$, and that  $\Delta(x x^*)=I$ iff $xx^* \in \mathbb{G}^n_1$.

\begin{lemma}\label{gamnorms}
Let $A\in \mathbb{S}^n$ and $d \in \mathbb{N}$.  Then
\begin{enumerate}[\upshape (i)]
\item\label{it:zero} $\lVert A \rVert_{\gamma,d}=0$ if and only if $\Xi(A)=0$;

\item\label{it:restrict}  $\lVert\,\cdot\,\rVert_{\gamma,d}$ defines a seminorm on  $\mathbb{S}^n$ and a norm on $\mathbb{S}_\eq^n$;

\item\label{it:norm} $\lVert\,\cdot\,\rVert_{\Gamma,d}$ defines a norm on $\mathbb{S}^n$.
\end{enumerate}
\end{lemma}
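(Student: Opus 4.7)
The plan is to establish parts (iii), (i), and (ii) in that order, since (iii) is the cleanest and (ii) reduces quickly to (i). For (iii), absolute homogeneity $\lVert \alpha A\rVert_{\Gamma,d} = |\alpha|\lVert A\rVert_{\Gamma,d}$ and the triangle inequality follow immediately from the fact that the right-hand side of \eqref{eq:d} is the maximum of the absolute value of a linear function of $A$ (with the tuple $(x_i)$ held fixed) over a balanced, compact set of tuples in $(\Bbbk^d)^n$. Positive-definiteness reduces to showing $\lVert A\rVert_\Theta = 0 \Rightarrow A = 0$, via the monotonicity $\lVert A\rVert_{\Gamma,d} \ge \lVert A\rVert_{\Gamma,1} = \lVert A\rVert_\Theta$ from Lemma~\ref{lem:d}. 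Vanishing of $\lVert A\rVert_\Theta$ means $\sum_{i,j} a_{ij}\bar\delta_i\delta_j = 0$ for all $\delta \in \Bbbk^n$ with $|\delta_i| \le 1$; taking $\delta = e_j$ kills each $a_{jj}$, and then $\delta = e_j + e_k$ (and, over $\mathbb{C}$, also $\delta = e_j + ie_k$) separately forces $\Re a_{jk} = 0$ and $\Im a_{jk} = 0$, hence $A = 0$.

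For (i), the easy direction is $\Xi(A) = 0 \Rightarrow \lVert A\rVert_{\gamma,d} = 0$: this hypothesis means $A$ is diagonal with $\tr(A) = 0$, so for any unit vectors $x_i$ the defining quadratic form collapses to $\sum_i a_{ii}\lVert x_i\rVert^2 = \tr(A) = 0$. For the converse, Lemma~\ref{lem:d} gives $\lVert A\rVert_\theta = \lVert A\rVert_{\gamma,1} \le \lVert A\rVert_{\gamma,d} = 0$, so $\sum_{i,j} a_{ij}\bar\delta_i\delta_j = 0$ for all $\delta \in \mathbb{T}^n$. Over $\mathbb{C}$, writing $\delta_k = e^{i\theta_k}$ turns this into the identical vanishing, on the torus, of the trigonometric polynomial $\tr(A) + \sum_{i\ne j} a_{ij} e^{i(\theta_j-\theta_i)}$; the linear independence of the distinct characters $\{1\}\cup\{e^{i(\theta_j-\theta_i)} : i\ne j\}$ extracts $\tr(A) = 0$ and $a_{ij} = 0$ for $i\ne j$, i.e., $\Xi(A) = 0$. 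Over $\mathbb{R}$, the multilinear expansion $\delta^\tp A \delta = \tr(A) + 2\sum_{i<j} a_{ij}\delta_i\delta_j$ on $\{-1,1\}^n$ writes the function in the basis $\{\chi_S : S = \emptyset \text{ or } |S|=2\}$ of linearly independent Walsh characters on the Boolean cube, whose vanishing likewise forces $\tr(A) = 0$ and $a_{ij} = 0$ for $i\ne j$.

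Part (ii) then follows immediately: the homogeneity and triangle-inequality arguments from (iii) apply verbatim, making $\lVert\,\cdot\,\rVert_{\gamma,d}$ a seminorm on $\mathbb{S}^n$. Restricted to $\mathbb{S}_\eq^n$, the common diagonal value of any $A$ equals $\tfrac{1}{n}\tr(A)$, so $\Xi(A) = 0$ forces $A = 0$ outright; combined with part (i), this gives $\lVert A\rVert_{\gamma,d} = 0 \Rightarrow A = 0$ on $\mathbb{S}_\eq^n$, so the seminorm restricts to a genuine norm there. The only step that is not completely mechanical is the converse direction of (i), but once we reduce to the $d=1$ case via Lemma~\ref{lem:d}, this is a standard orthogonality argument for characters on $\mathbb{T}^n$ or the Boolean cube.
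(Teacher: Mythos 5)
Your proof is correct, and it takes a genuinely different route from the paper's for the nontrivial converse in part \eqref{it:zero}. The paper first disposes of $n=1,2$ separately, then for $n\ge 3$ sets up the $n$-torus with Haar product measure $\mu_n$, extracts $\lvert\tr(A)\rvert$ by averaging $x^*\!Ax$ over $\mathbb{T}^n$, and then extracts $\Re(ta_{n-1,n})$ by averaging over the sub-torus $(s,t,1)$ with $s\in\mathbb{T}^{n-2}$ and $t$ fixed (an argument that implicitly uses the vanishing of $\tr(A)$ already established). You instead invoke linear independence of the characters $\{1\}\cup\{e^{\mathrm{i}(\theta_j-\theta_i)}:i\ne j\}$ on $\mathbb{T}^n$, respectively of the Walsh functions $\{\chi_\emptyset\}\cup\{\chi_{\{i,j\}}:i<j\}$ on $\{-1,1\}^n$, to read off $\tr(A)=0$ and $a_{ij}=0$ for $i\ne j$ simultaneously. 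This is the same orthogonality fact in disguise, but your framing works uniformly for all $n\ge 1$ and dispenses with the Haar-measure setup, which is a modest gain in economy. You also prove \eqref{it:norm} directly from $\lVert A\rVert_\Theta=0$ by testing the quadratic form on $\delta=e_j$, $\delta=e_j+e_k$, and (over $\mathbb{C}$) $\delta=e_j+\mathrm{i}e_k$, rather than routing through part \eqref{it:zero} as the paper does; the price is a small amount of case analysis, the benefit is that \eqref{it:norm} no longer depends on \eqref{it:zero}. Your argument for \eqref{it:restrict}, via $\ker(\Xi)\cap\mathbb{S}^n_\eq = \{0\}$, matches the paper's. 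One trivial remark: for the triangle inequality what you really need is sublinearity in $A$ of $A\mapsto\bigl\lvert\sum a_{ij}\langle x_i,x_j\rangle\bigr\rvert$ for each fixed tuple, plus stability of sublinearity under suprema; the balancedness of the constraint set is only relevant to absolute homogeneity, but the paper treats these as routine anyway.
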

\begin{proof}
Nonnegativity, scaling, and triangle inequality are either obvious or straightforward to establish in all cases. This proof focuses on the definiteness.
For $n=1,2$, the proofs of \eqref{it:zero} and \eqref{it:restrict} are easy. We will assume that $n\ge 3$.

Let $\mathbb{T}$ denote the unit circle in $ \Bbbk = \mathbb{R}$ or $\mathbb{C}$ and  $\mu$ be the corresponding Haar measure. Thus $\mathbb{T} =\{-1,1\}$ and $\mu$ the uniform counting measure if $ \Bbbk = \mathbb{R}$; and $\mathbb{T} = \{e^{\mathrm{i}\vartheta} \in\mathbb{C} : \vartheta\in [0, 2\pi)\}$ and  $d\mu =(2\pi)^{-1} d\vartheta$ if  $\Bbbk = \mathbb{C}$. Our arguments below will apply to both fields.

Consider the $n$-torus
\[
\mathbb{T}^n \coloneqq  \mathbb{T} \times \dots \times \mathbb{T} = \{x\in\Bbbk^n : \Delta (x x^*)=I\} = \{x\in\Bbbk^n : x x^* \in \mathbb{G}^n_1\}.
\]
Let $\mu_n$ be the product measure on $\mathbb{T}^n$. For any $A\in \Bbbk^{n\times n}$, straightforward calculations give
\[
\lvert\tr(A)\rvert=\biggl\lvert \int_{x\in\mathbb{T}^n}\!\!\! x^*\! A x\, d\mu_n\biggr\rvert \le \int_{x\in\mathbb{T}^n}\!\!\! \lvert x^*\! A x\rvert \, d\mu_n\le \lVert A\rVert_\theta.
\]
If $\lVert A\rVert_\theta =0$, then  $\tr(A)=0$ and so $\Delta\bigl(\Xi(A)\bigr)=0$. We claim that $a_{ij}=0$ for all $i\ne j$.  Without loss of generality, we may assume $i=n-1$ and $j=n$. We will show that $a_{n-1,n} = 0$.  Let  $x(s)= (s,t,1) \in \mathbb{T}^n$ where $t  \in \mathbb{T}$ is fixed and  $s\in\mathbb{T}^{n-2}$.  Again it is  straightforward to see that
\[
\Re(ta_{n-1,n})=\frac{1}{2} \int_{s\in \mathbb{T}^{n-2}}\!\!\! x(s)^*\! Ax(s) \; d\mu_{n-2}.
\]
Hence $\lvert \Re(ta_{n-1,n})\rvert \le \lVert A \rVert_\theta=0$.  Since $t\in\mathbb{T}$ is arbitrary, we have $a_{n-1,n} =0$.  Hence $\Xi(A)=0$. The converse is obvious: If  $\Xi(A)=0$, then $\lVert A \rVert_\theta=0$.  

In other words, if $\lVert A \rVert_{\gamma,1}=\lVert A \rVert_\theta=0$, then $A$ is a diagonal matrix with $\tr(A)=0$, and so $\lVert A \rVert_{\gamma,d}=0$ for all $d \in \mathbb{N}$.  On the other hand, if  $\lVert A \rVert_{\gamma,d}=0$ for any $d\in \mathbb{N}$, then $\lVert A \rVert_\theta=0$ by \eqref{eq:increase}. We have established \eqref{it:zero}.

Since $\lVert\,\cdot\,\rVert_{\gamma,d}$ vanishes exactly on $\ker(\Xi)$, the subspace of traceless diagonal matrices, it defines a norm on the orthogonal complement $\im(\Xi)=\mathbb{S}^n_\eq$. This establishes \eqref{it:restrict}.

For \eqref{it:norm}, it is obvious by their definitions that for any $A \in \mathbb{S}^n$,
\[
\lVert A \rVert_{\gamma,d} \le \lVert A \rVert_{\Gamma,d}.
\]
So if $\lVert A \rVert_{\Gamma,d}=0$, then $\lVert A \rVert_{\gamma,d}=0$ and thus $\Xi(A)=0$. So $A$ is a traceless diagonal matrix.  But it also follows from \eqref{eq:increase} that if $\lVert A \rVert_{\Gamma,d}=0$, then $\lVert A \rVert_\Theta=0$, and so  $a_{ii} = e_i^\tp\! Ae_i = \tr(Ae_ie_i^\tp)=0$, $i=1,\dots,n$.  This shows that $A=0$. Hence $\lVert\, \cdot\, \rVert_{\Gamma,d}$ is a norm on $\mathbb{S}^n$.
\end{proof}
The proof of Lemma~\ref{gamnorms}  in fact shows that $\lVert\,\cdot\,\rVert_{\gamma,d}$ is a norm on a subspace $W\subseteq  \mathbb{S}^n$ if and only if $W$ does not contain a nonzero traceless diagonal matrix.  Furthermore, any subspace $W$ on which $\lVert\,\cdot\,\rVert_{\gamma,d}$ restricts to a norm can have dimension at most $\dim \mathbb{S}_\eq^n$.
We record this slightly stronger conclusion  below.
\begin{corollary}
The seminorm $\lVert\,\cdot\,\rVert_{\gamma,d}$ on $\mathbb{S}^n$ restricts to  a norm on any subspace $W\subseteq \mathbb{S}^n$ that does not contain a nonzero diagonal matrix with zero trace. If  $\lVert\,\cdot\,\rVert_{\gamma,d}$ defines a norm on $W\subseteq \mathbb{S}^n$, then
\[
\dim W \le
\begin{cases}
n(n - 1)/2 +1 &\Bbbk = \mathbb{R},\\
n^2 - n +1 &\Bbbk = \mathbb{C}.
\end{cases}
\]
\end{corollary}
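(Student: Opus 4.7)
My proof plan centers on recognizing that both claims are essentially immediate consequences of Lemma~\ref{gamnorms}(i), which identifies the ``kernel'' of the seminorm $\lVert\,\cdot\,\rVert_{\gamma,d}$ as $\ker(\Xi)$. Since $\Xi(A)=0$ forces every off-diagonal entry of $A$ to vanish and also forces $\tr(A)=0$, we have that $\ker(\Xi)$ is precisely the subspace of traceless diagonal matrices in $\mathbb{S}^n$. The first claim is then a triviality: if $W$ contains no nonzero traceless diagonal matrix, then $W\cap\ker(\Xi)=\{0\}$, so for $A\in W$, $\lVert A\rVert_{\gamma,d}=0$ forces $A=0$, and homogeneity plus the triangle inequality (already recorded in Lemma~\ref{gamnorms}) complete the argument that $\lVert\,\cdot\,\rVert_{\gamma,d}$ is a norm on $W$.

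For the dimension bound, the idea is to observe that if $\lVert\,\cdot\,\rVert_{\gamma,d}$ is a norm on $W$, then again $W\cap\ker(\Xi)=\{0\}$, and so the linear map $\Xi\colon \mathbb{S}^n\to\mathbb{S}^n_\eq$ restricts to an \emph{injection} $\Xi|_W\colon W\hookrightarrow \mathbb{S}^n_\eq$. Hence $\dim W\le \dim \mathbb{S}^n_\eq$, and the bound is reduced to a dimension count for $\mathbb{S}^n_\eq$.

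Computing $\dim \mathbb{S}^n_\eq$ is direct. Over $\mathbb{R}$, $\dim\mathbb{S}^n(\mathbb{R})=n(n+1)/2$, and the $n-1$ independent linear conditions $a_{11}=a_{22}=\dots=a_{nn}$ cut it down to
\[
\dim \mathbb{S}^n_\eq(\mathbb{R}) = \frac{n(n+1)}{2}-(n-1) = \frac{n(n-1)}{2}+1.
\]
Over $\mathbb{C}$, Hermitian matrices have real dimension $n^2$ (real diagonal plus complex strictly upper triangular part), and the diagonal entries are real, so $a_{11}=\dots=a_{nn}$ again imposes $n-1$ independent real linear constraints, giving $\dim_\mathbb{R} \mathbb{S}^n_\eq(\mathbb{C}) = n^2-(n-1) = n^2-n+1$. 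Both cases match the stated bounds. Since the steps are all elementary, there is no real obstacle here; the only subtlety is being careful that $\Xi$ is $\mathbb{R}$-linear in the complex case (which it is, since $\mathbb{S}^n(\mathbb{C})$ is considered as a real vector space, consistent with the conventions laid out in the notations section).
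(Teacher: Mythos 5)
Your proof is correct and follows essentially the same route the paper intends: the paper derives the corollary directly from Lemma~\ref{gamnorms}\eqref{it:zero}, which identifies $\ker(\Xi)$ (the traceless diagonal matrices) as the exact null set of $\lVert\,\cdot\,\rVert_{\gamma,d}$, and then observes that a subspace on which the seminorm is a norm must intersect this kernel trivially, forcing $\dim W \le \dim \mathbb{S}^n_\eq$. Your phrasing via the injectivity of $\Xi|_W$ and your explicit dimension counts for $\mathbb{S}^n_\eq$ over both fields simply make explicit what the paper leaves to the reader.
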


\subsection{Characterizations of and relations between Grothendieck norms}

We will look at some alternative characterizations of the Grothendieck norms and relations between them that will be useful later. 

The most straightforward characterization is to simply rewrite the $(\Gamma,d)$-norms and $(\gamma,d)$-seminorms in Lemma~\ref{lem:d}  in terms of the correlation and subcorrelation matrices in  \eqref{eq:Gnd} and \eqref{eq:Gammand}.
\begin{proposition}\label{prop:correlation}
Let $A \in \mathbb{S}^n$ and $d \in \mathbb{N}$. Then
\begin{equation}\label{eq:normcorrd}
\lVert A \rVert_{\gamma,d} =\max\{\lvert \tr (AG) \rvert : G \in \mathbb{G}^n_d\}, \quad
\lVert A \rVert_{\Gamma,d} = \max\{\lvert \tr (AG) \rvert : G \in \bbGamma^n_d\},
\end{equation}
and
\begin{equation}\label{eq:altGamma}
\begin{aligned}
\lVert A \rVert_{\Gamma,d}&=\max\{\lVert DAD \rVert_{\gamma,d} :  D = \diag(\delta_1,\dots, \delta_n),\; \delta_i \in[0,1] \}.
\end{aligned}
\end{equation}
\end{proposition}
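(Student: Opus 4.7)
The plan is to prove \eqref{eq:normcorrd} by directly identifying the sums defining $\lVert A \rVert_{\gamma,d}$ and $\lVert A \rVert_{\Gamma,d}$ as trace pairings against Gram matrices, and then reduce \eqref{eq:altGamma} to \eqref{eq:normcorrd} via a diagonal scaling correspondence between $\bbGamma^n_d$ and $\mathbb{G}^n_d$.

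For \eqref{eq:normcorrd}, the key observation is that for any $x_1,\dots,x_n \in \Bbbk^d$, the Gram matrix $G = G(x_1,\dots,x_n)$ has entries $g_{ij} = \langle x_i, x_j\rangle$, so
\[
\sum_{i=1}^n \sum_{j=1}^n a_{ij} \langle x_i, x_j\rangle = \sum_{i,j} a_{ij} g_{ij}.
\]
Over $\mathbb{R}$ this equals $\tr(AG)$ directly; over $\mathbb{C}$ it equals $\tr(A\overline{G})$ because $G$ is Hermitian, but the feasible set $\mathbb{G}^n_d$ (resp.\ $\bbGamma^n_d$) is closed under $G \mapsto \overline{G}$ (via $x_i \mapsto \overline{x_i}$), so taking absolute values and maximizing produces the same value as $\max|\tr(AG)|$. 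Combined with the Gram matrix characterizations in \eqref{eq:Gnd} and \eqref{eq:Gammand} of $\mathbb{G}^n_d$ and $\bbGamma^n_d$, this yields both equalities in \eqref{eq:normcorrd}.

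For \eqref{eq:altGamma}, I will establish the two-sided identity
\[
\bbGamma^n_d = \{DG'D : D = \diag(\delta_1,\dots,\delta_n),\ \delta_i \in [0,1],\ G' \in \mathbb{G}^n_d\}.
\]
In one direction, if $G' = G(u_1,\dots,u_n)$ with $\lVert u_i\rVert = 1$, then $DG'D = G(\delta_1 u_1,\dots,\delta_n u_n) \in \bbGamma^n_d$. Conversely, given $G = G(x_1,\dots,x_n) \in \bbGamma^n_d$, set $\delta_i = \lVert x_i\rVert$ and $u_i = x_i/\delta_i$ when $\delta_i > 0$ (any fixed unit vector otherwise); then $G' = G(u_1,\dots,u_n) \in \mathbb{G}^n_d$ and $DG'D = G$. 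Invoking \eqref{eq:normcorrd}, trace cyclicity $\tr(A \cdot DG'D) = \tr(DAD \cdot G')$, and noting that $DAD$ remains Hermitian, we get
\[
\lVert A \rVert_{\Gamma,d} = \max_{D,G'} |\tr(A \cdot DG'D)| = \max_D \max_{G' \in \mathbb{G}^n_d} |\tr(DAD \cdot G')| = \max_D \lVert DAD \rVert_{\gamma,d},
\]
which is \eqref{eq:altGamma}. The only mildly delicate step is the $\delta_i = 0$ case in the inverse scaling: the arbitrary choice of $u_i$ does not affect $DG'D$, and the matching of entries follows from $G \in \mathbb{S}^n_\p$ together with $g_{ii}=0$ forcing the $i$-th row and column of $G$ to vanish. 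This is the only nontrivial point; everything else is routine once the Gram matrix identification is in place.
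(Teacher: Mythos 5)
Your proof is correct and takes essentially the same approach as the paper: the paper's own proof is a one-liner for \eqref{eq:altGamma} (write $x_i = \delta_i t_i$ with $\lVert t_i\rVert = 1$, $\delta_i \in [0,1]$, and observe the sum factorizes), which is the same vector decomposition underlying your bijection $\bbGamma^n_d = \{DG'D : G' \in \mathbb{G}^n_d\}$, and \eqref{eq:normcorrd} is treated as immediate from the Gram matrix identifications \eqref{eq:Gnd}--\eqref{eq:Gammand}. The one place your writeup goes beyond the paper is the observation that over $\mathbb{C}$ the raw sum equals $\tr(A\overline{G})$ rather than $\tr(AG)$, and that closure of $\mathbb{G}^n_d$ and $\bbGamma^n_d$ under $G\mapsto\overline{G}$ reconciles the two after taking absolute values — this is a genuine (if minor) subtlety that the paper glosses over, and you handle it correctly.
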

\begin{proof}
Let $x_i \in\Bbbk^d$ with $\lVert x_i \rVert\le 1$.  Write $x_i=\delta_i t_i$ with $t_i\in\Bbbk^d$, $\lVert t_i \rVert=1$, and $\delta_i\in [0,1]$. Then $\sum_{i,j=1}^{m,n} a_{ij} \langle  x_i,x_j\rangle= \sum_{i,j=1}^{m,n} \delta_ia_{ij}\delta_j \langle  t_i,t_j\rangle$.
\end{proof}
\begin{corollary}
Let  $d\in\mathbb{N}$.
If $A\in \mathbb{S}^n$, then
\begin{equation}\label{ineqvarnrm}
\lVert A \rVert_{\Gamma,d}\le \lVert A \rVert_{\G,d}\le 2 \lVert A \rVert_{\Gamma,d}.
\end{equation}
If $B\in\Bbbk^{m\times n}$, then
\begin{equation}\label{eqvarnrm}
\lVert B\rVert_{\G,d} = \frac12  \biggl\lVert \begin{bmatrix}0&B\\B^*&0\end{bmatrix} \biggr\rVert_{\gamma,d} =\frac12  \biggl\lVert \begin{bmatrix}0&B\\B^*&0\end{bmatrix} \biggr\rVert_{\Gamma,d}.
\end{equation}
\end{corollary}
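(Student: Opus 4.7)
First, I would dispatch $\lVert A\rVert_{\Gamma,d}\le \lVert A\rVert_{\G,d}$ directly from the definitions in \eqref{eq:d}: any admissible $(x_1,\dots,x_n)$ for $\lVert A\rVert_{\Gamma,d}$ yields the admissible pair $(x_i,y_j)=(x_i,x_j)$ for $\lVert A\rVert_{\G,d}$ with identical objective. For the reverse bound $\lVert A\rVert_{\G,d}\le 2\lVert A\rVert_{\Gamma,d}$, my plan is to start from the symmetrized expression \eqref{eq:G=le} in Proposition~\ref{prop2.2} and apply a polarization substitution: given admissible $x_i,y_i\in\Bbbk^d$ with $\lVert x_i\rVert,\lVert y_i\rVert\le 1$, set $z_i=(x_i+y_i)/2$ and $w_i=(x_i-y_i)/2$, both of norm at most $1$. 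A short expansion then produces the identity
\[
\langle x_i,y_j\rangle + \langle y_i,x_j\rangle = 2\langle z_i,z_j\rangle - 2\langle w_i,w_j\rangle,
\]
which splits the symmetrized sum into two Gram-type sums, each bounded in absolute value by $\lVert A\rVert_{\Gamma,d}$. The triangle inequality will yield $\bigl\lvert\sum a_{ij}(\langle x_i,y_j\rangle+\langle y_i,x_j\rangle)\bigr\rvert\le 4\lVert A\rVert_{\Gamma,d}$, and dividing by $2$ closes \eqref{ineqvarnrm}.

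For the block-matrix identities \eqref{eqvarnrm}, I would set $C=\begin{bmatrix}0&B\\B^*&0\end{bmatrix}\in\mathbb{S}^{m+n}$ and, given any tuple $(z_1,\dots,z_{m+n})$ in $\Bbbk^d$, split it into $x_i=z_i$ for $1\le i\le m$ and $y_j=z_{m+j}$ for $1\le j\le n$. Using $c_{i,m+j}=b_{ij}$, $c_{m+j,i}=\overline{b_{ij}}$ together with $\overline{\langle x,y\rangle}=\langle y,x\rangle$, a direct calculation will give
\[
\sum_{k,l=1}^{m+n} c_{kl}\langle z_k,z_l\rangle \;=\; \sum_{i,j}b_{ij}\langle x_i,y_j\rangle \;+\; \overline{\sum_{i,j}b_{ij}\langle x_i,y_j\rangle} \;=\; 2\Re\sum_{i,j}b_{ij}\langle x_i,y_j\rangle.
\]
The constraint $\lVert z_k\rVert\le 1$ (respectively $\lVert z_k\rVert=1$) for all $k$ is equivalent to $\lVert x_i\rVert,\lVert y_j\rVert\le 1$ (respectively $=1$), so taking absolute values and maximizing on both sides gives $\tfrac{1}{2}\lVert C\rVert_{\Gamma,d}=\tfrac{1}{2}\lVert C\rVert_{\gamma,d}=\max\bigl\lvert\Re\sum b_{ij}\langle x_i,y_j\rangle\bigr\rvert$; invoking Proposition~\ref{prop2.2}\eqref{eq:G=le2} in both its ``$\le 1$'' and ``$=1$'' flavors then identifies this common value with $\lVert B\rVert_{\G,d}$, since $\max\Re=\max\lvert\Re\rvert$ on a feasible set closed under negating (over $\mathbb{R}$) or rotating the phase of (over $\mathbb{C}$) any single $y_j$.

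I do not anticipate a serious obstacle; the whole proof is essentially bookkeeping once the polarization identity and the block computation are in hand. The step most likely to require care is the simultaneous identification of $\lVert C\rVert_{\gamma,d}$ and $\lVert C\rVert_{\Gamma,d}$ with $2\lVert B\rVert_{\G,d}$, which leans on the ``$\le 1$''/``$=1$'' equivalence already built into \eqref{eq:G=le2}.
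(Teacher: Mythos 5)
Your proposal is correct and follows essentially the same route as the paper: the same polarization substitution $z_i=(x_i+y_i)/2$, $w_i=(x_i-y_i)/2$ combined with the symmetrized characterization \eqref{eq:G=le} for \eqref{ineqvarnrm}, and the same block computation $\sum c_{kl}\langle z_k,z_l\rangle = 2\Re\sum b_{ij}\langle x_i,y_j\rangle$ plugged into \eqref{eq:G=le2} for \eqref{eqvarnrm}. The only cosmetic difference is that the paper handles the absolute value by choosing a single phase $t\in\mathbb{T}$ to rotate the $y_j$'s, whereas you observe directly that the feasible set is closed under phase rotation so $\max\Re=\max\lvert\Re\rvert$; these are the same argument.
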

\begin{proof}
For $x_i,y_i\in\Bbbk^d$, $\lVert x_i\rVert \le 1$, $\lVert y_i\rVert\le 1$, $i =1,\dots,n$, set
\[
u_i=\frac{1}{2}(x_i+y_i),\quad v_i=\frac{1}{2}(x_i-y_i),
\]
and let $Z = \bigl(\langle x_i, y_j\rangle\bigr)_{i,j=1}^n \in \Bbbk^{n\times n}$.
Note that $\lVert u_i\rVert \le 1$, $\lVert v_i\rVert \le 1$, and
\[
\frac{Z+Z^*}{2} = G(u_1,\dots,u_n) -G(v_1,\dots,v_n).
\]
Now use \eqref{eq:G=le} to deduce $ \lVert A \rVert_{\G,d}\le 2\lVert A \rVert_{\Gamma,d}$.

Let $B \in \Bbbk^{m \times n}$ be given and set
\[
A = \begin{bmatrix}0&B\\B^*&0\end{bmatrix} \in \mathbb{S}^{m+n}_\eq.
\]
Let $x_1,\dots,x_m,y_1,\dots,y_n \in \Bbbk^d$ be  unit vectors and $G=G(x_1,\dots,x_m,y_1,\dots,y_n) \in \mathbb{G}^{m+n}_d$. Observe that
\[
\tr(AG) = 2\Re\biggl(\sum_{i=1}^m \sum_{j=1}^n b_{ij} \langle x_i,y_j\rangle \biggr).
\]
Using the first characterization of $(\G,d)$-norms in \eqref{eq:d}, we see that $\lVert A \rVert_{\gamma,d} \le 2 \lVert B \rVert_{\G,d}$.  
Replacing $y_j$ with $ty_j$ for all $j=1,\dots,n$, where $t\in \mathbb{T}$ is suitably chosen so that 
\[
\lVert B \rVert_{\G,d}=\biggl\lvert \sum_{i=1}^m\sum_{j=1}^n b_{ij} \langle  x_i,y_j\rangle\biggr\rvert =\sum_{i=1}^m\sum_{j=1}^n b_{ij} \langle  x_i,ty_j\rangle,
\]
we see that $\lVert A \rVert_{\gamma,d} = 2\lVert B \rVert_{\G,d}$. Repeating the same arguments with the second characterization of $(\G,d)$-norms in \eqref{eq:d} gives $\lVert A \rVert_{\Gamma,d}= 2\lVert B \rVert_{\G,d}$.
\end{proof}
The relations between the different types of Grothendieck norms are best seen from their restrictions to special subspaces and subcones of $\mathbb{S}^n$, notably, $\mathbb{S}^n_\circ$, $\mathbb{S}^n_\p$, and $\mathbb{S}^n(\mathbb{R}_\p\!)$, which we will discuss in the following.

\begin{lemma}\label{lem:Szero}
Let  $A \in \mathbb{S}^n_\circ$  and $d \in \mathbb{N}$. Then
\[
\lVert A \rVert_{\gamma,d} = \lVert A \rVert_{\Gamma,d}.
\]
\end{lemma}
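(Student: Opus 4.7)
My plan is to observe that $\lVert A\rVert_{\gamma,d} \le \lVert A\rVert_{\Gamma,d}$ is immediate from \eqref{eq:d} (the unit sphere sits inside the unit ball), and then to prove the reverse inequality by a coordinate-by-coordinate ``inflation'' argument. Given any configuration $(x_1,\dots,x_n) \in (\Bbbk^d)^n$ with $\lVert x_i\rVert \le 1$, I would produce unit vectors $(x_1',\dots,x_n')$ in $\Bbbk^d$ without decreasing the absolute value of $\varphi(x_1,\dots,x_n) \coloneqq \sum_{i,j=1}^n a_{ij}\langle x_i,x_j\rangle$, and then take the supremum over the original configuration to conclude $\lVert A\rVert_{\Gamma,d} \le \lVert A\rVert_{\gamma,d}$.

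The single-coordinate inflation step is the heart of the argument. Fix $x_j$ for $j\ne i$ and regard $\varphi$ as a function of $x_i$ alone. A routine expansion using $a_{ji} = \overline{a_{ij}}$ and $\langle x_j,x_i\rangle = \overline{\langle x_i,x_j\rangle}$ gives
\[
\varphi(x_i) = c + a_{ii}\lVert x_i\rVert^2 + 2\Re\langle w, x_i\rangle, \qquad w \coloneqq \sum_{j\ne i} a_{ji} x_j \in \Bbbk^d,
\]
where $c \in \mathbb{R}$ is independent of $x_i$. The hypothesis $A \in \mathbb{S}^n_\circ$ enters here and is essential: it forces $a_{ii} = 0$, so the quadratic term vanishes and $\varphi$ becomes $\mathbb{R}$-affine in $x_i$. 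Hence $|\varphi|$ is convex in $x_i$, and its maxima over the ball $\{\lVert x_i\rVert \le 1\}$ and the sphere $\{\lVert x_i\rVert = 1\}$ both equal $|c| + 2\lVert w\rVert$, attained at some explicit unit vector $x_i'$ in the direction of $\pm w$ (or any unit vector at all when $w = 0$). In particular one can select a unit $x_i'$ with $|\varphi(\dots,x_i',\dots)| \ge |\varphi(\dots,x_i,\dots)|$.

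Iterating this step for $i = 1,2,\dots,n$ produces the required unit-vector configuration, and the supremum bound follows. I expect the only genuine bookkeeping hurdle to be verifying the displayed expansion in the complex Hermitian case — specifically, recombining the two off-diagonal sums $\sum_{j\ne i} a_{ij}\langle x_i,x_j\rangle$ and $\sum_{j\ne i} a_{ji}\langle x_j,x_i\rangle$ into a single real quantity $2\Re\langle w,x_i\rangle$. This is precisely what makes $\varphi$ real-valued and legitimizes applying the convexity of $|\,\cdot\,|$ to $\varphi$ as a function of $x_i \in \Bbbk^d$ alone; once this is in hand the rest of the argument is uniform across $\Bbbk = \mathbb{R}$ and $\mathbb{C}$ and across all $d \in \mathbb{N}$, with no need to increase the ambient dimension or invoke the stabilization \eqref{eq:stabilize}.
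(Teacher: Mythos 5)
Your proof is correct and is essentially the paper's argument in iterative rather than extremality-contradiction form: both hinge on the observation that with $a_{ii}=0$ the objective is $\mathbb{R}$-affine in $x_i$ (all other $x_j$ fixed), so its absolute value is maximized on the boundary $\lVert x_i\rVert = 1$, and one pushes the $x_i$ to the sphere one at a time. One small bookkeeping slip: with the paper's convention $\langle u,v\rangle = u^*v$, the linear term is $2\Re\langle x_i, \sum_{j\ne i} a_{ij}x_j\rangle = 2\Re\langle w, x_i\rangle$ with $w = \sum_{j\ne i} a_{ij}x_j$, not $\sum_{j\ne i} a_{ji}x_j$ (the two differ over $\mathbb{C}$); this does not affect the argument, since either way $\varphi$ is $\mathbb{R}$-affine in $x_i$.
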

\begin{proof}
Clearly $\lVert A \rVert_{\gamma,d} \le \lVert A \rVert_{\Gamma,d}$.
For $A \in \mathbb{S}^n_\circ$, $a_{11} =\dots = a_{nn}=0$ and so
\[
\tr (AG) = \sum\nolimits_{i\ne j} a_{ij} g_{ji}
\]
is independent of the diagonal entries of $G$. By the characterizations in \eqref{eq:normcorrd}, let $G \in \bbGamma^n_d$ be such that
$\lVert A \rVert_{\Gamma,d} =\lvert \tr (AG) \rvert$. We claim that $G \in \mathbb{G}^n_d$, i.e., its diagonal entries are all $1$'s. Suppose not; let $G = G(x_1,\dots,x_n)$ have a maximal number of $x_i$'s with $\lVert x_i\rVert=1$ but that $\lVert x_k\rVert <1$ for some $k$. Since $\lVert A\rVert_{\Gamma,d}=\tr\bigl(tAG(x_1,\dots,x_n)\bigr)$ for $t=\pm 1$, it is a real linear function in $x_k$ when all other variables are fixed, and hence attains its maximum when $\lVert x_k\rVert = 1$, a contradiction.
\end{proof}

We now show that over  $\mathbb{S}^n_\p$, the three different types of Grothendieck $d$-norms agree.  As usual, for $A, B \in \mathbb{S}^n$, we write $A\succeq B$ iff $A -B \in\mathbb{S}^n_\p$.
\begin{proposition}\label{posdefcase}
Let  $d \in \mathbb{N}$.
If $A\in \mathbb{S}^n$, then
\begin{equation}\label{symmcase} 
\lVert A \rVert_{\gamma,d} \le \lVert A \rVert_{\Gamma,d} \le \lVert A\rVert_{\G,d}.
\end{equation}
If $A\in \mathbb{S}^n_\p$, then
\begin{equation}\label{posdefcase1} 
\lVert A \rVert_{\gamma,d} =\lVert A \rVert_{\Gamma,d} = \lVert A\rVert_{\G,d};
\end{equation}
furthermore, for any $B\in \mathbb{S}^n$ with $-A\preceq B\preceq A$,
\[
\lVert B \rVert_{\gamma,d} \le \lVert A \rVert_{\gamma,d}, \qquad
\lVert B \rVert_{\Gamma,d} \le \lVert A \rVert_{\Gamma,d}, \qquad
\lVert B \rVert_{\G,d}\le \lVert A \rVert_{\G,d}.
\]
\end{proposition}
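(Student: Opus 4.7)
The plan is to prove \eqref{symmcase} first, then the reverse inequalities under positive semidefiniteness (yielding \eqref{posdefcase1}), and finally the monotonicity under $-A\preceq B\preceq A$. The chain \eqref{symmcase} is immediate from the definitions in \eqref{eq:d}: the $(\gamma,d)$-seminorm maximizes over $\{\lVert x_i\rVert=1\}\subseteq\{\lVert x_i\rVert\le 1\}$, a subset of the $(\Gamma,d)$-norm's feasible region, giving $\lVert A\rVert_{\gamma,d}\le\lVert A\rVert_{\Gamma,d}$; and setting $y_j=x_j$ in the definition of $\lVert A\rVert_{\G,d}$ recovers the quantity maximized in $\lVert A\rVert_{\Gamma,d}$, giving $\lVert A\rVert_{\Gamma,d}\le\lVert A\rVert_{\G,d}$.

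Now assume $A\in\mathbb{S}^n_\p$. For the reverse inequality $\lVert A\rVert_{\Gamma,d}\le\lVert A\rVert_{\gamma,d}$, I use Proposition~\ref{prop:correlation} to write $\lVert A\rVert_{\Gamma,d}=\max_{G\in\bbGamma^n_d}\tr(AG)$, the absolute value dropping since $\tr(AG)\ge 0$ whenever $A,G\succeq 0$. Parameterizing $G=G(x_1,\dots,x_n)$ with $\lVert x_i\rVert\le 1$, the objective is, as a function of each $x_k$ with the others fixed, a quadratic of the form $a_{kk}\lVert x_k\rVert^2+2\Re\langle x_k,u\rangle+c$; since $a_{kk}\ge 0$, this is convex, so its maximum over $\lVert x_k\rVert\le 1$ is attained on $\lVert x_k\rVert=1$, and iterating over $k$ produces a maximizer in $\mathbb{G}^n_d$. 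For the reverse $\lVert A\rVert_{\G,d}\le\lVert A\rVert_{\Gamma,d}$, the sesquilinear form $(X,Y)\mapsto\sum_{i,j}a_{ij}\langle x_i,y_j\rangle$ on $(\Bbbk^d)^n\times(\Bbbk^d)^n$ is positive semidefinite (it is Hermitian and its diagonal values $\sum a_{ij}\langle x_i,x_j\rangle\ge 0$ since $A\succeq 0$), so Cauchy--Schwarz yields
\[
\Bigl|\sum\nolimits_{i,j}a_{ij}\langle x_i,y_j\rangle\Bigr|^2\le\Bigl(\sum\nolimits_{i,j}a_{ij}\langle x_i,x_j\rangle\Bigr)\Bigl(\sum\nolimits_{i,j}a_{ij}\langle y_i,y_j\rangle\Bigr)\le\lVert A\rVert_{\Gamma,d}^2.
\]

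For the monotonicity under $-A\preceq B\preceq A$ with $A\in\mathbb{S}^n_\p$: the $\gamma$- and $\Gamma$-norm cases follow from Proposition~\ref{prop:correlation}. Any $G\in\mathbb{G}^n_d$ or $\bbGamma^n_d$ is PSD, and PSDness of $A\pm B$ gives $\tr((A\pm B)G)\ge 0$, hence $|\tr(BG)|\le\tr(AG)=|\tr(AG)|$; taking the max over $G$ yields both claims. For the $\G$-norm, I would invoke the pointwise inequality $|u^*\!Bv|\le\sqrt{(u^*\!Au)(v^*\!Av)}$ valid for all $u,v\in\Bbbk^n$ whenever $-A\preceq B\preceq A$ with $A\succeq 0$; this comes from the factorization $B=A^{1/2}TA^{1/2}$ with $T$ a contraction on $\im(A^{1/2})$, followed by standard Cauchy--Schwarz. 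Decomposing $\langle x_i,y_j\rangle$ coordinate-wise in $\Bbbk^d$ expresses $\sum_{i,j}b_{ij}\langle x_i,y_j\rangle$ as $\sum_{k=1}^d\tilde u_k^*\!B\tilde v_k$, where $\tilde u_k,\tilde v_k\in\Bbbk^n$ collect the $k$-th coordinates of the $x_i$'s and $y_j$'s; combining the pointwise bound with a second Cauchy--Schwarz over $k$ gives
\[
\Bigl|\sum\nolimits_{i,j}b_{ij}\langle x_i,y_j\rangle\Bigr|\le\sqrt{\Bigl(\sum\nolimits_{i,j}a_{ij}\langle x_i,x_j\rangle\Bigr)\Bigl(\sum\nolimits_{i,j}a_{ij}\langle y_i,y_j\rangle\Bigr)}\le\lVert A\rVert_{\Gamma,d}=\lVert A\rVert_{\G,d}.
\]
The main obstacle is this $\G$-norm monotonicity: the naive decomposition $B=\tfrac12((A+B)-(A-B))$ with the triangle inequality loses a factor of $2$, so the operator-theoretic factorization seems essential to obtain the sharp bound.
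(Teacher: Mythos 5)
Your proof is correct, but in several places it takes a genuinely different route from the paper's, which is worth comparing. For the equalities \eqref{posdefcase1}, the paper proves the single reverse inequality $\lVert A\rVert_{\G,d}\le\lVert A\rVert_{\gamma,d}$ directly: it writes $A=R^2$ with $R\in\mathbb{S}^n_\p$, puts $X=[x_1,\dots,x_n]$, $Y=[y_1,\dots,y_n]\in\Bbbk^{d\times n}$, and applies Cauchy--Schwarz for the Frobenius inner product to $\tr\bigl[(\overline{R}Y^*)^*(\overline{R}X^*)\bigr]$. Since you already have $\lVert A\rVert_{\gamma,d}\le\lVert A\rVert_{\Gamma,d}\le\lVert A\rVert_{\G,d}$, this one inequality yields all the equalities. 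You instead split the work into $\lVert A\rVert_{\Gamma,d}\le\lVert A\rVert_{\gamma,d}$ via a coordinate-wise convexity argument ($a_{kk}\ge0$ forces the maximizer onto the sphere, the same device the paper uses in Lemma~\ref{lem:Szero}) and $\lVert A\rVert_{\G,d}\le\lVert A\rVert_{\Gamma,d}$ via Cauchy--Schwarz for the PSD sesquilinear form on $(\Bbbk^d)^n$. Both pieces are correct; the paper's version is more compact, while yours makes visible which reverse inequality follows from which hypothesis.

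For the monotonicity, the paper proves only the $\G$-norm case and deduces the other two from \eqref{symmcase} and \eqref{posdefcase1}: it passes to $A\in\mathbb{S}^n_\pp$ by a limit, sets $C=R^{-1}BR^{-1}$ with $C^2\preceq I$, and again applies a single matrix-level Cauchy--Schwarz to $\tr\bigl[(RY^*)^*(CRX^*)\bigr]$. You go the other way: you prove the $\gamma$- and $\Gamma$-cases directly and cleanly from $\tr\bigl((A\pm B)G\bigr)\ge0$ for $G\in\mathbb{G}^n_d$ or $\bbGamma^n_d$, and then handle the $\G$-case with the factorization $B=A^{1/2}TA^{1/2}$, the scalar Cauchy--Schwarz $\lvert u^*Bv\rvert\le\sqrt{(u^*Au)(v^*Av)}$, a coordinate-wise decomposition of $\sum_{i,j}b_{ij}\langle x_i,y_j\rangle$ as $\sum_{k=1}^d\tilde u_k^*B\tilde v_k$, and a second Cauchy--Schwarz over $k$. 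This works, though note that the factorization step on a singular $A$ requires the observation (which you should state) that $-A\preceq B\preceq A$ with $A\succeq0$ forces $\ker A\subseteq\ker B$ and hence $\im B\subseteq\im A$; the paper simply sidesteps this via the limiting argument. Your two-step Cauchy--Schwarz is really the paper's Frobenius-level Cauchy--Schwarz unwound into coordinates, so the two approaches to the $\G$-case are the same bound organized differently. Your closing remark about the $\tfrac12\bigl((A+B)-(A-B)\bigr)$ decomposition losing a factor of $2$ is a correct diagnosis of why some version of the square-root trick is unavoidable for the sharp $\G$-norm bound; the paper's choice $C=R^{-1}BR^{-1}$ is exactly that trick in another guise.
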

\begin{proof} 
The inequalities \eqref{symmcase} are obvious from definitions.  It remains to show that equality must hold throughout if $A \in \mathbb{S}^n_\p$. 

Any $A \in \mathbb{S}^n_\p$ has a square root $R\in \mathbb{S}^n_\p$, so $A=R^2$ and $R^*=R$. 
Let $x_1,\dots,x_n, y_1,\dots,y_n\in\Bbbk^d$ be of unit norms.  Define the matrices $X=[x_1,\dots, x_n]$, $Y=[y_1,\dots, y_n]\in \Bbbk^{d \times n}$.  Observe that  $\bigl(\langle x_i, y_j\rangle\bigr)_{i,j=1}^n=X^*Y$.  Hence
\[
\sum_{i=1}^n\sum_{j=1}^n a_{ij}\langle x_i,y_j\rangle = \tr(\overline{A}X^*Y) = \tr(Y\overline{A}X^*) = \tr\bigl[(\overline{R}Y^*)^*(\overline{R}X^*)\bigr].
\]
Cauchy--Schwarz applied to the inner product $\langle U, V\rangle=\tr(U^*V)$ on  $\Bbbk^{d\times n}$ gives
\[
\bigl\lvert\tr\bigl[(\overline{R}Y^*)^*(\overline{R}X^*)\bigr]\bigr\rvert^2 \le \tr(Y\overline{A} Y^*)
\tr(X\overline{A} X^*) \le \max\bigl\{\tr(X\overline{A}X^*)^2,\tr(Y\overline{A} Y^*)^2\bigr\}.
\]
Since $\lVert x_i\rVert =\lVert y_j\rVert =1$ for all $i,j$, the last inequality gives us $\lVert \overline{A} \rVert_{\G,d} = \lVert \overline{A} \rVert_{\gamma,d}$ and thus $\lVert A \rVert_{\G,d} = \lVert A \rVert_{\gamma,d}$.

Now suppose $A \in \mathbb{S}^n_\p$ and $B \in \mathbb{S}^n$ with $-A\preceq B\preceq A$.  By taking limits, it suffices to consider the case when $A \in \mathbb{S}^n_\pp$; in which case $R \in \mathbb{S}^n_\pp$ as well.  Let $C=R^{-1}BR^{-1}$.
Then $-I\preceq C\preceq I$, i.e., all eigenvalues of $C$ lie in the interval $[-1,1]$.  So the last statement is equivalent to $C^2\preceq I$.

By Cauchy--Schwarz on $\Bbbk^{d \times n}$,
\begin{align*}
\lvert \tr(BX^*Y)\rvert^2 = \lvert \tr(YBX^*)\rvert^2 &= \lvert\tr(YRCRX^*)\rvert^2\\
&=\bigl\lvert\tr\bigl[(RY^*)^*(CRX^*)\bigr]\bigr\rvert^2
\le \tr(YR^2Y^*) \tr(XRC^2RX^*).
\end{align*}
As $C^2\preceq I$, it follows that $\tr(Z^* C^2Z) \le \tr(Z^* Z)$ for any $Z\in \Bbbk^{d\times n}$.
Hence
\[
\lvert\tr(BX^*Y)\rvert^2\le \max\bigl\{ \tr(XAX^*)^2,
\tr(YAY^*)^2\bigr\}.
\]
This shows that $\lVert B \rVert_{\G,d}\le \lVert A \rVert_{\G,d}$. As $\lVert B \rVert_{\gamma,d} \le \lVert B \rVert_{\Gamma,d}\le \lVert B \rVert_{\G,d}$ and $\lVert A \rVert_{\G,d} = \lVert A \rVert_{\Gamma,d} = \lVert A \rVert_{\gamma,d}$, the corresponding inequalities for $\lVert\,\cdot\,\rVert_{\gamma,d}$ and $\lVert\,\cdot\,\rVert_{\Gamma,d}$ follow.
\end{proof}

A consequence of \eqref{eq:altGamma} and Proposition~\ref{posdefcase}  is the following characterization of the Grothendieck norms on $\mathbb{S}^n_\p$. Note that a departure from \eqref{eq:normcorrd} is that we do not need to take absolute values of the trace.
\begin{corollary}\label{cor:altGinfty1}
Let $A \in \mathbb{S}^n_\p$ and $d \in \mathbb{N}$. Then
\[
\lVert A \rVert_{\gamma,d} =\lVert A \rVert_{\Gamma,d} = \lVert A \rVert_{\G,d} =\max\{\tr (AG) : G \in \mathbb{G}^n_d\}.
\]
\end{corollary}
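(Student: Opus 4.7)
The first two equalities $\lVert A \rVert_{\gamma,d} = \lVert A \rVert_{\Gamma,d} = \lVert A \rVert_{\G,d}$ are already recorded as \eqref{posdefcase1} in Proposition~\ref{posdefcase}, so the only genuinely new content is the characterization as $\max\{\tr(AG) : G \in \mathbb{G}^n_d\}$ (note the absence of absolute values). My plan is to simply combine Proposition~\ref{posdefcase} with the formula \eqref{eq:normcorrd} from Proposition~\ref{prop:correlation} and observe that the absolute value around $\tr(AG)$ can be dropped whenever both $A$ and $G$ are positive semidefinite.

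More precisely, I would first quote \eqref{posdefcase1} to reduce the problem to showing
\[
\lVert A \rVert_{\gamma,d} = \max\{\tr (AG) : G \in \mathbb{G}^n_d\}.
\]
By \eqref{eq:normcorrd}, the right-hand side equals $\max\{\lvert\tr(AG)\rvert : G\in\mathbb{G}^n_d\}$ up to the presence of the absolute value. So it suffices to verify that $\tr(AG) \ge 0$ for every $G \in \mathbb{G}^n_d$, which then allows the absolute value to be removed without affecting the maximum. Since $\mathbb{G}^n_d \subseteq \mathbb{S}^n_\p$, this reduces to the standard fact that the trace of a product of two positive semidefinite matrices is nonnegative: writing $A = R^2$ with $R = R^* \in \mathbb{S}^n_\p$ (as in the proof of Proposition~\ref{posdefcase}), we have $\tr(AG) = \tr(RGR)$, and $RGR \in \mathbb{S}^n_\p$ because $x^* (RGR) x = (Rx)^* G (Rx) \ge 0$ for every $x$, so its trace is nonnegative.

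There is no real obstacle here; the only thing to be mindful of is that the same argument applies to the $(\Gamma,d)$-norm as well (since $\bbGamma^n_d \subseteq \mathbb{S}^n_\p$), giving a consistent picture: when $A \succeq 0$, the maxima in \eqref{eq:normcorrd} are automatically attained at nonnegative values, and moreover the two maxima over $\mathbb{G}^n_d \subseteq \bbGamma^n_d$ coincide because $\tr(AG)$ is a linear function of $G$ that is monotone with respect to scaling any $x_i$ up to unit norm (this monotonicity was already exploited in the proof of Lemma~\ref{lem:Szero}). This closes the chain of equalities.
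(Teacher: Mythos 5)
Your proof is correct and matches what the paper intends: the paper states the corollary as an immediate consequence of Proposition~\ref{posdefcase} and the characterization in Proposition~\ref{prop:correlation}, and the one missing observation --- that the absolute value in \eqref{eq:normcorrd} can be dropped because $\tr(AG)\ge 0$ whenever $A,G\in\mathbb{S}^n_\p$ --- is exactly the step you supply cleanly via $A=R^2$ and $\tr(AG)=\tr(RGR)\ge 0$. One minor quibble on your closing aside: the claim that $\tr(AG)$ is ``monotone with respect to scaling any $x_i$ up to unit norm'' is not quite right (with $G=G(x_1,\dots,tx_i,\dots,x_n)$ the map $t\mapsto\tr(AG)$ is a convex quadratic since $a_{ii}\ge 0$, not monotone, and the argument in Lemma~\ref{lem:Szero} relies on the diagonal of $A$ vanishing so that this map is affine), but this is harmless because, as you correctly observe first, Proposition~\ref{posdefcase} already yields $\lVert A\rVert_{\gamma,d}=\lVert A\rVert_{\Gamma,d}$ and renders the aside unnecessary.
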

For the special case $d = n$,   $\mathbb{G}^n_d=\mathbb{G}^n$ is a closed convex set and Corollary~\ref{cor:altGinfty1} says that the Grothendieck norms reduce to the \emph{support function} \cite[pp.~63 and 120]{Boyd} of  $\mathbb{G}^n$.

The Grothendieck $d$-norms are difficult to compute in general. For one they are not unitarily invariant; we suspect that they are NP-hard for a wide range of $d$'s --- see Conjecture~\ref{conj:NPhard}. To get a handle on these norms, it will often be helpful to examine the few simple cases where they have closed-form expressions. In cases \eqref{it:diag} and \eqref{it:tridiag} below, note that the diagonal of a matrix in $\mathbb{S}^n(\mathbb{C})$ must have real values; in particular, a diagonal matrix in $\mathbb{C}^{n \times n}$ does not necessarily belong to $\mathbb{S}^n(\mathbb{C})$.
\begin{lemma}\label{lem:nonneg}
Let  $d \in \mathbb{N}$ and $\Bbbk =\mathbb{R}$ or $\mathbb{C}$.
\begin{enumerate}[\upshape (i)]
\item\label{it:diag} If $A=\diag(a_{11},\dots,a_{nn})\in \mathbb{S}^n$, then
\[
\lVert A\rVert_{\gamma,d}= \biggl\lvert \sum_{i=1}^n  a_{ii} \biggr\rvert, \qquad
\lVert A\rVert_{\Gamma,d}=\max \biggl( \sum_{i=1}^n \max (a_{ii},0) , -\sum_{i=1}^n \min(a_{ii}, 0) \biggr); \]
and if $B=\diag(b_{11},\dots,b_{nn})\in \Bbbk^{n \times n}$, then
\[
\lVert B\rVert_{\G,d} = \sum_{i=1}^n \lvert b_{ii} \rvert.
\]
\item\label{it:tridiag} If $A\in \mathbb{S}^n$ is a symmetric tridiagonal matrix, then
\[
\lVert A\rVert_{\gamma,d} =  \biggl\lvert \sum_{i=1}^n  a_{ii} \biggr\rvert +2\sum_{i=1}^{n-1} \lvert a_{i,i+1}\vert .
\]

\item\label{it:nonneg} If $A \in \mathbb{S}^n(\mathbb{R}_\p\!)$, then
\[
\lVert A\rVert_{\gamma,d} = \lVert A\rVert_{\Gamma,d}
= \lVert A\rVert_{\G,d} = \sum_{i=1}^n\sum_{j=1}^n a_{ij};
\]
and if $B \in \mathbb{R}^{m \times n}_\p$, then
\[
\lVert B\rVert_{\G,d} = \sum_{i=1}^m\sum_{j=1}^n b_{ij}.
\]
\item\label{it:genbilap}  If $A = \begin{bsmallmatrix}A_1&-B\\-B^\tp&A_2\end{bsmallmatrix} \in \mathbb{S}^{m+n}(\mathbb{R})$ has $A_1\in \mathbb{S}^{m}(\mathbb{R}_\p\!)$, $A_2\in \mathbb{S}^{n}(\mathbb{R}_\p\!)$, and  $B \in \mathbb{R}^{m \times n}_\p$,  then
\[
\lVert A\rVert_{\gamma,d} = \lVert A\rVert_{\Gamma,d} = \lVert A\rVert_{\G,d} 
= \sum_{i=1}^{m+n}\sum_{j=1}^{m+n} |a_{ij}|.
\]
\end{enumerate}
\end{lemma}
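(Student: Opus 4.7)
The plan is to reduce all four parts to the trace formulation of Proposition~\ref{prop:correlation}, namely $\lVert A\rVert_{\gamma,d} = \max\{\lvert\tr(AG)\rvert : G \in \mathbb{G}^n_d\}$ and $\lVert A\rVert_{\Gamma,d} = \max\{\lvert\tr(AG)\rvert : G \in \bbGamma^n_d\}$, combined with the chain $\lVert\,\cdot\,\rVert_{\gamma,d}\le\lVert\,\cdot\,\rVert_{\Gamma,d}\le\lVert\,\cdot\,\rVert_{\G,d}$ of Proposition~\ref{posdefcase}. The universal upper estimate is the Cauchy--Schwarz consequence $\lvert g_{ij}\rvert = \lvert\langle x_i,x_j\rangle\rvert \le 1$, and by the monotonicity in $d$ from Lemma~\ref{lem:d} it is enough to exhibit rank-$1$ Gram matrices that saturate these upper bounds, in which case the equality also lifts to all $d \ge 1$.

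For (i), if $A$ is diagonal then $\tr(AG) = \sum_i a_{ii} g_{ii}$ depends only on the diagonal of $G$. For $G \in \mathbb{G}^n_d$ this diagonal is forced to be $(1,\dots,1)$, giving $\lVert A\rVert_{\gamma,d} = \lvert\sum a_{ii}\rvert$ with no optimization to do; for $G \in \bbGamma^n_d$ each $g_{ii}$ can be set independently anywhere in $[0,1]$ via $x_i = g_{ii}^{1/2} e_1$, so term-by-term maximization yields the stated $\max(\sum\max(a_{ii},0),-\sum\min(a_{ii},0))$. For diagonal $B$, only diagonal terms of $\sum b_{ij}\langle x_i,y_j\rangle$ survive; the bound $\sum\lvert b_{ii}\rvert$ is matched by $x_i = e_1$ and $y_i = \overline{\sign(b_{ii})}\,e_1$.

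For (ii), Hermitian symmetry together with $g_{ii}=1$ collapses the trace to $\tr(AG) = \sum a_{ii} + 2\sum_{i=1}^{n-1}\Re(a_{i,i+1} g_{i+1,i})$, and the triangle inequality gives $\lvert\tr(AG)\rvert \le \lvert\sum a_{ii}\rvert + 2\sum\lvert a_{i,i+1}\rvert$. To saturate it, set $c_i = \varepsilon\,\overline{\sign(a_{i,i+1})} \in \mathbb{T}$, where $\varepsilon \in \{-1,+1\}$ is chosen to match the sign of $\sum a_{ii}$, and define scalar units recursively by $x_1 = 1$ and $x_{i+1} = x_i\bar c_i$; the resulting rank-$1$ Gram matrix on $\Bbbk^1$ satisfies $g_{i+1,i} = c_i$ and makes each $a_{i,i+1} g_{i+1,i} = \varepsilon\lvert a_{i,i+1}\rvert$ real and aligned with $\sum a_{ii}$ in sign. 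The only subtlety is verifying $\sign(a_{i,i+1}) \in \mathbb{T}$ over both $\mathbb{R}$ and $\mathbb{C}$, which is guaranteed by the convention in \eqref{eq:sign}; this sign bookkeeping is the main (and only mildly tricky) obstacle in the whole proof.

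For (iii) and (iv), the Cauchy--Schwarz bound gives the uniform estimate $\lvert\tr(AG)\rvert \le \sum_{i,j}\lvert a_{ij}\rvert$ (and similarly for $\lVert B\rVert_{\G,d}$), so only the matching lower bound requires work. For (iii), the rank-$1$ Gram from $x_1 = \cdots = x_n = e_1$ has every $g_{ij} = 1$ and yields $\tr(AG) = \sum a_{ij} = \sum\lvert a_{ij}\rvert$ since $A$ has nonnegative entries; the $\G$-norm assertion for $B$ is identical with $x_i = y_j = e_1$. For (iv), the two-block assignment $x_1 = \cdots = x_m = e_1$ and $x_{m+1} = \cdots = x_{m+n} = -e_1$ produces $g_{ij} = +1$ within each diagonal block and $g_{ij} = -1$ across the blocks; the explicit minus sign on the off-diagonal block of $A$ then flips the cross-term contributions to $+B_{kl}$, and the total collapses to $\sum (A_1)_{ij} + \sum (A_2)_{ij} + 2\sum B_{ij} = \sum\lvert a_{ij}\rvert$. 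In both (iii) and (iv) the same extremizer serves simultaneously for the $\gamma$-, $\Gamma$-, and $\G$-norms, so the sandwich from Proposition~\ref{posdefcase} closes all equalities at once.
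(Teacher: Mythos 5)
Your proof is correct and takes essentially the same approach as the paper: reduce to the trace characterization $\lVert A\rVert_{\gamma,d}=\max\{\lvert\tr(AG)\rvert:G\in\mathbb{G}^n_d\}$, bound $\lvert g_{ij}\rvert\le 1$ by Cauchy--Schwarz, and exhibit a rank-one Gram matrix that saturates the bound, propagating the equalities through the chain $\lVert\,\cdot\,\rVert_{\gamma,d}\le\lVert\,\cdot\,\rVert_{\Gamma,d}\le\lVert\,\cdot\,\rVert_{\G,d}$. The only cosmetic differences are that you build the extremizing unit scalars $x_i$ for the tridiagonal case directly instead of conjugating $A$ by a diagonal unitary (the two are the same operation viewed on $G$ rather than $A$), you prove (iii) directly rather than as the $m=0$ specialization of (iv), and you spell out the $\G$-norm computation for diagonal $B$, which the paper leaves implicit.
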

\begin{proof}
For a diagonal $A\in\mathbb{S}^n$, $\tr( AG)=\tr( A)$ for any $G\in \mathbb{G}^n_d$ and so $\lVert A \rVert_{\gamma,d}=\lvert \tr(A) \rvert$. For $G\in\bbGamma_d^n$, $\tr( A G)=\sum_{i=1}^n a_{ii}g_{ii}$ for $g_{ii}\in[0,1]$, giving us the required expression for $\lVert A \rVert_{\Gamma,d}$.
 
For a tridiagonal $A\in\mathbb{S}^n$, $\tr(AG )=\tr(A)+2\sum_{i=1}^{n-1}\Re(a_{i,i+1}g_{i+1,i})$ for any $G\in\mathbb{G}_d^n$.  Hence $\lvert \tr( AG )\rvert \le \lvert \tr( A )\rvert +2\sum_{i=1}^{n-1} \lvert a_{i,i+1}\rvert $ for all $G\in\mathbb{G}_d^n$ and so $\lVert A \rVert_{\gamma,d}\le  \lvert \tr( A )\rvert +2\sum_{i=1}^{n-1} \lvert a_{i,i+1}\rvert $.   Replacing $A$ by $-A$ if $\tr( A)<0$, we may assume without loss of generality that $\tr( A)\ge 0$.  Let $\delta_1,\dots,\delta_n\in \mathbb{T}$ and $D = \diag(\delta_1,\dots,\delta_n)$. As $D^*\! AD =(\bar{\delta}_i a_{ij}\delta_j)$, we have $\lVert D^*\! AD\rVert_{\gamma,d} = \lVert A \rVert_{\gamma,d}$.  Replacing $A$ by $D^*\! AD$ if necessary, where
$D=\diag(1,\delta_2,\dots,\delta_n)$ is such that $\bar{\delta}_ia_{i,i+1} \delta_{i+1}\ge 0$, $i=1,\dots,n-1$,  we may assume that the off-diagonal entries of $A$ are nonnegative.  Therefore $\mathbbm{1}^\tp\! A\mathbbm{1} = \tr( A)+2\sum_{i=1}^{n-1}\lvert a_{i,i+1}\rvert $.  Thus $\lVert A \rVert_{\gamma,d}\ge \lvert \tr( A) \rvert+2\sum_{i=1}^{n-1} \lvert a_{i,i+1}\rvert $.

For \eqref{it:genbilap}, by their definitions in \eqref{eq:d},  $\lVert A\rVert_{\gamma,d} \le\lVert A\rVert_{\Gamma,d} \le \lVert A\rVert _{\G,d} \le\sum_{i=1}^{m+n}\sum_{j=1}^{m+n} \lvert a_{ij} \rvert$.  Now observe that if we set $x=(\mathbbm{1}_m,-\mathbbm{1}_n) \in \mathbb{R}^{m +n}$, then $x^\tp\! Ax$ attains this upper bound. The equalities in \eqref{it:nonneg} are special cases of \eqref{it:genbilap}, obtained either by setting $m = 0$ or setting $A_1 = A_2 = 0$.
\end{proof}
For the tridiagonal case \eqref{it:tridiag}, we are unable to obtain similar expressions for $\lVert A\rVert_{\Gamma,d}$ and $\lVert A\rVert_{\G,d}$. The last case  \eqref{it:genbilap} is of interest since the weighted Laplacian of a bipartite graph takes this form with $A_1,A_2$ diagonal matrices. In all cases  above, the norms happen to be independent of $d$, we know of course that this is not true in general. The diagonal case \eqref{it:diag}, while easy to prove, has an important implication --- it shows that all three types of Grothendieck norms are distinct on $\mathbb{S}^n$. In particular, the symmetric Grothendieck inequality \eqref{eq:GGI2} cannot be obtained simply by restricting the original Grothendieck inequality \eqref{eq:GI2} to $\mathbb{S}^n$. 
\begin{corollary}\label{cor:ne}
There exists $A \in \mathbb{S}^n$ such that
\[
\lVert A \rVert_{\gamma,d} < \lVert A \rVert_{\Gamma,d} < \lVert A \rVert_{\G,d}.
\]
\end{corollary}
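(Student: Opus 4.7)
The plan is to exhibit an explicit example via the closed-form formulas in Lemma~\ref{lem:nonneg}\eqref{it:diag}. On diagonal matrices, the three types of Grothendieck $d$-norms reduce, respectively, to $\lvert \sum a_{ii}\rvert$, $\max\bigl(\sum_{a_{ii} > 0} a_{ii},\, -\sum_{a_{ii} < 0} a_{ii}\bigr)$, and $\sum \lvert a_{ii}\rvert$, which are three genuinely different functionals of $(a_{11},\dots,a_{nn})$ whenever the diagonal entries have mixed signs and unequal absolute values. So any such diagonal matrix in $\mathbb{S}^n$ will separate the three norms strictly.

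Concretely, I would take $n = 2$ and set $A = \diag(2, -1) \in \mathbb{S}^2$ (which is both real symmetric and Hermitian, so the example works over $\Bbbk = \mathbb{R}$ or $\mathbb{C}$). Applying Lemma~\ref{lem:nonneg}\eqref{it:diag} directly gives
\[
\lVert A \rVert_{\gamma,d} = \lvert 2 + (-1)\rvert = 1, \qquad
\lVert A \rVert_{\Gamma,d} = \max(2,\, 1) = 2, \qquad
\lVert A \rVert_{\G,d} = \lvert 2\rvert + \lvert{-1}\rvert = 3,
\]
for every $d \in \mathbb{N}$, establishing the strict chain $\lVert A\rVert_{\gamma,d} < \lVert A\rVert_{\Gamma,d} < \lVert A\rVert_{\G,d}$.

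There is no real obstacle here: the corollary is essentially a bookkeeping consequence of the diagonal case of Lemma~\ref{lem:nonneg}. The only thing to verify is that a mixed-sign, unequal-magnitude choice of diagonal entries makes each of the three defining quantities strictly smaller than the next, which is immediate from the formulas. One could also pad $A$ with zeros to obtain examples in $\mathbb{S}^n$ for any $n \ge 2$ if a larger ambient dimension is desired.
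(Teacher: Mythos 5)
Your proof is correct and follows essentially the same route as the paper: apply the diagonal-matrix formulas of Lemma~\ref{lem:nonneg}\eqref{it:diag} to a diagonal matrix with mixed-sign entries (the paper uses $\diag(1,-1)$, you use $\diag(2,-1)$). The only minor quibble is that your stated criterion ``mixed signs and unequal absolute values'' is more restrictive than necessary --- mixed signs alone suffice, as the paper's own equal-magnitude example $\diag(1,-1)$ shows --- but your concrete computation is correct as given.
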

\begin{proof}
Take $A = \begin{bsmallmatrix} 1 & 0 \\ 0 & -1 \end{bsmallmatrix}$. Then
$\lVert A \rVert_{\gamma,d} =0$, $\lVert A \rVert_{\Gamma,d} =1$, $\lVert A \rVert_{\G,d} = 2$ by Lemma~\ref{lem:nonneg}\eqref{it:diag}.
\end{proof}

\subsection{Real and complex Grothendieck norms}\label{sec:RCnorms}

The value of a matrix norm in general depends on the choice of the base field. This has been discussed for operator norms in \cite{HK05}: Take the $(\infty,1)$-norm and $B = \begin{bsmallmatrix} 1 & -1\\ 1 & 1 \end{bsmallmatrix} \in \mathbb{R}^{2 \times 2}$ for example,
\begin{align*}
\lVert B \rVert_{\infty,1}^\mathbb{R} &= 
\max \Bigl\{\Bigl\lvert\sum\nolimits_{i,j=1}^2 b_{ij}  \bar{\varepsilon}_i\delta_j\Bigr\rvert :  \varepsilon_i, \delta_j \in \{ -1, +1\} \Bigr\} = 2,\\
\lVert B \rVert_{\infty,1}^\mathbb{C} &= 
\max \Bigl\{\Bigl\lvert\sum\nolimits_{i,j=1}^2 b_{ij}  \bar{\varepsilon}_i\delta_j\Bigr\rvert:
\varepsilon_i, \delta_j \in \{ e^{\mathrm{i} \vartheta} : \vartheta \in [0,2\pi)\} \Bigr\} = 2 \sqrt{2}.
\end{align*}
We will show that this is generally the case for the Grothendieck $d$-norms for smaller values of $d$ but not when $d$ is large enough. In particular, the values of the $\G$-norm, $\Gamma$-norm, and $\gamma$-seminorm for a real matrix stay the same regardless of whether we take them over $\mathbb{R}$ or $\mathbb{C}$.

We begin by establishing a precise relation between the real and complex versions of  the Grothendieck $d$-norms in \eqref{eq:d}.
\begin{proposition}\label{prop:RCnorms}
Let $d \in \mathbb{N}$ and $A \in \mathbb{S}^n(\mathbb{R})$. Then
\begin{equation}\label{gammaC}
\lVert A \rVert_{\gamma,d}^\mathbb{C}=\lVert A \rVert_{\gamma,2d}^\mathbb{R}, \qquad
\lVert A \rVert_{\Gamma,d}^\mathbb{C}=\lVert A \rVert_{\Gamma,2d}^\mathbb{R}.
\end{equation}
Let $d \in \mathbb{N}$ and $B\in \mathbb{R}^{m\times n}$.  Then
\begin{equation}\label{eq:GCnorm}
\lVert B \rVert_{\G,d}^\mathbb{C}=\lVert B \rVert_{\G,2d}^\mathbb{R}.
\end{equation}
\end{proposition}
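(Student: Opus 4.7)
The plan is to use the standard real-linear isomorphism $\Phi\colon \mathbb{C}^d \to \mathbb{R}^{2d}$ sending $x = u + \mathrm{i} v$ to $\tilde{x} \coloneqq (u, v)$, where $u, v \in \mathbb{R}^d$. A direct calculation gives the two identities $\lVert x \rVert_{\mathbb{C}} = \lVert \tilde{x} \rVert_{\mathbb{R}}$ and $\Re\langle x, y \rangle_{\mathbb{C}} = \langle \tilde{x}, \tilde{y} \rangle_{\mathbb{R}}$ for all $x, y \in \mathbb{C}^d$. Consequently $\Phi$ restricts to bijections between the complex unit sphere (respectively closed unit ball) in $\mathbb{C}^d$ and the real unit sphere (respectively closed unit ball) in $\mathbb{R}^{2d}$. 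So for each of \eqref{gammaC} and \eqref{eq:GCnorm} the feasible regions of the left- and right-hand optimization problems are already in bijection under $\Phi$; only the objective values require checking.

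For the $(\gamma,d)$ and $(\Gamma,d)$ cases, my first step is to observe that for $A \in \mathbb{S}^n(\mathbb{R})$ the complex Hermitian quadratic form $\sum_{i,j} a_{ij} \langle x_i, x_j \rangle_{\mathbb{C}}$ is automatically real: its complex conjugate equals $\sum_{i,j} a_{ij}\,(x_i^* x_j)^{*} = \sum_{i,j} a_{ij}\,x_j^* x_i$, which by swapping summation indices and using $a_{ji}=a_{ij}$ reproduces the original sum. Hence the sum coincides with its own real part, which by the inner-product identity above equals $\sum_{i,j} a_{ij} \langle \tilde{x}_i, \tilde{x}_j \rangle_{\mathbb{R}}$. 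Taking maxima over the matching feasible sets then immediately yields $\lVert A \rVert_{\gamma,d}^{\mathbb{C}} = \lVert A \rVert_{\gamma,2d}^{\mathbb{R}}$; the same argument with $\lVert x_i\rVert \le 1$ in place of $\lVert x_i \rVert = 1$ gives $\lVert A \rVert_{\Gamma,d}^{\mathbb{C}} = \lVert A \rVert_{\Gamma,2d}^{\mathbb{R}}$.

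For the $(\G,d)$ case with $B \in \mathbb{R}^{m \times n}$ the analogous bilinear expression $\sum_{i,j} b_{ij} \langle x_i, y_j \rangle_{\mathbb{C}}$ is \emph{not} forced to be real, so this route fails as stated. I will instead invoke the representation \eqref{eq:G=le2} in Proposition~\ref{prop2.2}, which rewrites $\lVert B \rVert_{\G,d}^{\mathbb{C}}$ as the maximum of $\Re \sum_{i,j} b_{ij} \langle x_i, y_j \rangle_{\mathbb{C}}$ over unit-bounded $x_i, y_j \in \mathbb{C}^d$. The integrand now equals $\sum_{i,j} b_{ij} \langle \tilde{x}_i, \tilde{y}_j \rangle_{\mathbb{R}}$ by the real-part identity, and the bijection of feasible sets under $\Phi$ yields $\lVert B \rVert_{\G,d}^{\mathbb{C}} = \lVert B \rVert_{\G,2d}^{\mathbb{R}}$.

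The only conceptual (rather than mechanical) point, and what prevents a one-line unified proof, is the asymmetry between the two settings: for the symmetric-form norms the equality of objectives comes \emph{for free} because real symmetry of $A$ forces the complex Hermitian form to be real, whereas for the bilinear $\G$-norm one must first pass through the $\Re$-reformulation of Proposition~\ref{prop2.2} because the bilinear expression genuinely takes complex values. No size assumption on $d$ is needed at any stage, since $\Phi$ works uniformly in $d$.
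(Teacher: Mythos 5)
Your proof is correct and takes essentially the same approach as the paper's: both identify $\mathbb{C}^d$ with $\mathbb{R}^{2d}$ via $u+\mathrm{i}v\mapsto(u,v)$, use the identity $\Re\langle\cdot,\cdot\rangle_{\mathbb{C}}=\langle\cdot,\cdot\rangle_{\mathbb{R}}$ together with the bijection of unit balls/spheres, and handle the bilinear $\G$-norm case via the $\Re$-reformulation of Proposition~\ref{prop2.2}. Your explicit observation that the Hermitian form $\sum a_{ij}\langle z_i,z_j\rangle_{\mathbb{C}}$ is automatically real for real symmetric $A$, whereas the bilinear form is not, makes the structure of the argument slightly cleaner than the paper's.
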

\begin{proof}
Let $z_i\in \mathbb{C}^d$, $\lVert z_i\rVert = 1$, $i=1,\dots,n$. Write $z_i=u_i+\mathrm{i}v_i$ with $u_i,v_i\in\mathbb{R}^d$, $\lVert u_i\rVert^2+\lVert v_i\rVert^2=1$, and set $x_i=(u_i,v_i) \in \mathbb{R}^{2d}$ with $\lVert x_i\rVert = 1$. Let $A \in \mathbb{S}^n(\mathbb{R})$. Then
\[
\sum_{i=1}^n \sum_{j=1}^n a_{ij}\langle z_i, z_j\rangle_\mathbb{C} =
\sum_{i=1}^n \sum_{j=1}^n a_{ij}\Re\langle z_i, z_j\rangle_\mathbb{C} =
\sum_{i=1}^n \sum_{j=1}^n  a_{ij}\langle x_i, x_j\rangle_\mathbb{R}.
\]
Since $x_1,\dots,x_d$ lie in a subspace of $\mathbb{R}^{2d}$ of dimension at most $d$, it follows from the definitions  in \eqref{eq:GammaTheta} and \eqref{eq:seminorms2} that $\lVert A \rVert_{\gamma,d}^\mathbb{C}\le\lVert A \rVert_{\gamma,2d}^\mathbb{R}$.  Conversely, let $x_i \in \mathbb{R}^{2d}$, $\lVert x_i\rVert = 1$, $i =1,\dots,n$, be such that $\lVert A \rVert_{\gamma,2d}^\mathbb{R}= \bigl\lvert \sum_{i=1}^n \sum_{j=1}^n  a_{ij}\langle x_i, x_j\rangle_\mathbb{R} \bigr\rvert$. Write $x_i=(u_i,v_i)\in \mathbb{R}^{2d}$ and set  $z_i=u_i+\mathrm{i}v_i \in \mathbb{C}^d$.  Then 
\[
\lVert A \rVert_{\gamma,2d}^\mathbb{R}=\biggl\lvert \sum_{i=1}^n \sum_{j=1}^n a_{ij}\langle z_i, z_j\rangle_\mathbb{C}\biggr\rvert \le \lVert A \rVert_{\gamma,d}^\mathbb{C}.
\]
Hence the first equality in \eqref{gammaC} holds; and the second equality can be similarly established.

For \eqref{eq:GCnorm}, we start with the characterization of the $\G$-norm in \eqref{eq:norms1}.
Observe that
\begin{equation}\label{eq:cnorm1}
\lVert B\rVert_\G^\mathbb{C} =  \max_{\lVert w_i\rVert = \lVert z_j \rVert = 1}\Re \biggl( \sum_{i=1}^m\sum_{j=1}^n b_{ij} \langle  w_i,z_j\rangle\biggr),
\end{equation}
where $w_1,\dots,w_m,z_1,\dots,z_n \in \mathbb{C}^d$.
Write $w_i=s_i+\mathrm{i}t_i$, $z_j=u_j+\mathrm{i}v_j$ with $s_i,t_i,u_j,v_j\in\mathbb{R}^d$, and $x_i=(s_i,t_i)$, $y_j=(u_j,t_j)\in\mathbb{R}^{2d}$, $i=1,\dots,m$, $j =1,\dots, n$. Note that
\[
\Re\langle w_i,z_j\rangle_\mathbb{C}= s_i^\tp u_j +t_i^\tp v_j = \langle x_i,y_j\rangle_\mathbb{R}. 
\]
Also if $\lVert w_i \rVert=\lVert z_j \rVert=1$, then $\lVert x_i \rVert=\lVert y_j \rVert=1$.
Since $B\in\mathbb{R}^{m\times n}$,
\[
\Re \biggl( \sum_{i=1}^m \sum_{j=1}^n b_{ij}\langle w_i,z_j\rangle_\mathbb{C} \biggr) =
\sum_{i=1}^m \sum_{j=1}^n b_{ij}\Re\langle w_i,z_j\rangle_\mathbb{C} =
\sum_{i=1}^m \sum_{j=1}^n b_{ij} \langle x_i,y_j\rangle_\mathbb{R}.
\]
Taking maximum as in \eqref{eq:cnorm1}, we get \eqref{eq:GCnorm}.
\end{proof}

If we apply \eqref{gammaC} with $d\ge n$ and \eqref{eq:GCnorm} with $d \ge m+n$, then we have the following corollary by \eqref{eq:stabilize} in Lemma~\ref{lem:d}.
\begin{corollary}\label{cor:RCnorms}
Let $A \in \mathbb{S}^n(\mathbb{R})$. Then
\begin{equation}\label{eq:RCnorms1}
\lVert A \rVert_\gamma^\mathbb{R}=\lVert A \rVert_\gamma^\mathbb{C}, \qquad \lVert A \rVert_\Gamma^\mathbb{R}=\lVert A \rVert_\Gamma^\mathbb{C}.
\end{equation}
Let $B\in \mathbb{R}^{m\times n}$.  Then
\begin{equation}\label{GrothAreal}
\lVert B \rVert_\G^\mathbb{R}=\lVert B \rVert_\G^\mathbb{C}.
\end{equation}
\end{corollary}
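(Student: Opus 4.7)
The statement is Corollary~\ref{cor:RCnorms}, which asserts that the $\gamma$-seminorm, $\Gamma$-norm, and $\G$-norm take the same value on real matrices whether we work over $\mathbb{R}$ or over $\mathbb{C}$. The plan is to obtain this immediately from Proposition~\ref{prop:RCnorms} together with the stabilization statement \eqref{eq:stabilize} in Lemma~\ref{lem:d}, as the authors hint just before the corollary.

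For the first claim, I would fix any $d \ge n$ and invoke the first equality in \eqref{gammaC} from Proposition~\ref{prop:RCnorms}:
\[
\lVert A \rVert_{\gamma,d}^\mathbb{C} = \lVert A \rVert_{\gamma,2d}^\mathbb{R}.
\]
Since $d \ge n$, Lemma~\ref{lem:d}\eqref{eq:stabilize} applied over $\mathbb{C}$ gives $\lVert A \rVert_{\gamma,d}^\mathbb{C} = \lVert A \rVert_\gamma^\mathbb{C}$; and since $2d \ge d \ge n$, the same stabilization applied over $\mathbb{R}$ gives $\lVert A \rVert_{\gamma,2d}^\mathbb{R} = \lVert A \rVert_\gamma^\mathbb{R}$. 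Chaining the three identities yields $\lVert A \rVert_\gamma^\mathbb{R}=\lVert A \rVert_\gamma^\mathbb{C}$. The identity $\lVert A \rVert_\Gamma^\mathbb{R}=\lVert A \rVert_\Gamma^\mathbb{C}$ follows by the identical argument using the second half of \eqref{gammaC}.

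For the $\G$-norm identity \eqref{GrothAreal}, I would do the analogous thing with the threshold $d \ge m+n$ rather than $d \ge n$: applying \eqref{eq:GCnorm} with any such $d$ and then using the third stabilization in \eqref{eq:stabilize} on both sides (noting $2d \ge m+n$ as well) collapses $\lVert B\rVert_{\G,d}^\mathbb{C}$ to $\lVert B\rVert_\G^\mathbb{C}$ and $\lVert B\rVert_{\G,2d}^\mathbb{R}$ to $\lVert B\rVert_\G^\mathbb{R}$.

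There is no genuine obstacle here; the entire content of the corollary has already been packed into Proposition~\ref{prop:RCnorms}, where the real work of relating the real and complex $d$-norms via the correspondence $z_i = u_i + \mathrm{i} v_i \leftrightarrow x_i = (u_i, v_i)$ was carried out. The corollary just records the limiting case at $d$ large enough for the rank constraint to become vacuous. So the whole proof is a two-line chain of equalities in each of the three cases, and I would present it accordingly without further ado.
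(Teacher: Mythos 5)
Your proof is correct and is essentially the paper's own argument: the paper states this corollary follows by applying \eqref{gammaC} with $d\ge n$ and \eqref{eq:GCnorm} with $d\ge m+n$, then invoking the stabilization \eqref{eq:stabilize} of Lemma~\ref{lem:d}. You have simply written out the two-line chain of equalities that the paper leaves implicit.
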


The analogue of \eqref{GrothAreal} is false for the $(\infty,1)$-norm by the example $B = \begin{bsmallmatrix} 1 & -1\\ 1 & 1 \end{bsmallmatrix} \in \mathbb{R}^{2 \times 2}$ that we saw earlier. By setting $A =  \begin{bsmallmatrix} 0 & B\\ B^* & 0 \end{bsmallmatrix}$ and invoking  \eqref{eqvarnrm}, we get that $\lVert A \rVert_\theta^\mathbb{R} \ne \lVert A \rVert_\theta^\mathbb{C}$ and $\lVert A \rVert_\Theta^\mathbb{R} \ne \lVert A \rVert_\Theta^\mathbb{C}$. So the analogue of  \eqref{eq:RCnorms1} is likewise false for the $\theta$-seminorm and the $\Theta$-norm. However, the special case $d = 1$ in Proposition~\ref{prop:RCnorms} gives us the following characterizations.
\begin{corollary}\label{corARC2}
Let $A \in \mathbb{S}^n(\mathbb{R})$. Then
\begin{equation}\label{thetaC}
\lVert A \rVert_\theta^\mathbb{C} =\lVert A \rVert_{\gamma,2}^\mathbb{R},\qquad
\lVert A \rVert_\Theta^\mathbb{C}=\lVert A \rVert_{\Gamma,2}^\mathbb{R}.
\end{equation}
Let $B\in \mathbb{R}^{m\times n}$.  Then
\begin{equation}\label{infty1C}
\lVert B \rVert_{\infty,1}^\mathbb{C}=\lVert B \rVert_{\G,2}^\mathbb{R}.
\end{equation}
\end{corollary}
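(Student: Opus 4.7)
The plan is direct: Corollary~\ref{corARC2} is Proposition~\ref{prop:RCnorms} specialized to $d = 1$, with the left-hand sides rewritten via the $d = 1$ identifications from Lemma~\ref{lem:d}, namely $\lVert\,\cdot\,\rVert_\theta = \lVert\,\cdot\,\rVert_{\gamma,1}$, $\lVert\,\cdot\,\rVert_\Theta = \lVert\,\cdot\,\rVert_{\Gamma,1}$, and $\lVert\,\cdot\,\rVert_{\infty,1} = \lVert\,\cdot\,\rVert_{\G,1}$. I would execute this as three one-line substitutions.

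First, setting $d = 1$ in the first equality of \eqref{gammaC} yields $\lVert A \rVert_{\gamma,1}^\mathbb{C} = \lVert A \rVert_{\gamma,2}^\mathbb{R}$, whose left-hand side equals $\lVert A \rVert_\theta^\mathbb{C}$ by Lemma~\ref{lem:d}. Setting $d = 1$ in the second equality of \eqref{gammaC} yields $\lVert A \rVert_{\Gamma,1}^\mathbb{C} = \lVert A \rVert_{\Gamma,2}^\mathbb{R}$, whose left-hand side equals $\lVert A \rVert_\Theta^\mathbb{C}$. Setting $d = 1$ in \eqref{eq:GCnorm} yields $\lVert B \rVert_{\G,1}^\mathbb{C} = \lVert B \rVert_{\G,2}^\mathbb{R}$, whose left-hand side equals $\lVert B \rVert_{\infty,1}^\mathbb{C}$. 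Together these three substitutions give \eqref{thetaC} and \eqref{infty1C}.

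There is no real obstacle here, since all of the substantive work has already been done in the proof of Proposition~\ref{prop:RCnorms}. The geometric content at $d = 1$ is simply the identification of the complex unit disk with the real Euclidean unit disk in $\mathbb{R}^2$ via $z = u + \mathrm{i} v \leftrightarrow (u,v)$, under which $\Re\langle z_i, z_j\rangle_\mathbb{C} = \langle (u_i, v_i), (u_j, v_j)\rangle_\mathbb{R}$. Because $A$ (respectively $B$) is real, the sums appearing in the defining maxima depend only on these real parts, so the imaginary parts contribute no extra freedom and the real rank-$2$ norms precisely capture the full complex $d = 1$ norms.
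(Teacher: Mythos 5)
Your proposal is correct and coincides exactly with the paper's own reasoning: the paper derives Corollary~\ref{corARC2} by specializing Proposition~\ref{prop:RCnorms} to $d=1$ and invoking the identifications $\lVert\,\cdot\,\rVert_\theta = \lVert\,\cdot\,\rVert_{\gamma,1}$, $\lVert\,\cdot\,\rVert_\Theta = \lVert\,\cdot\,\rVert_{\Gamma,1}$, $\lVert\,\cdot\,\rVert_{\infty,1} = \lVert\,\cdot\,\rVert_{\G,1}$ from Lemma~\ref{lem:d}. Nothing further is needed.
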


On the cone of positive semidefinite matrices $\mathbb{S}^n_\p(\mathbb{R})$ and the cone of nonnegative matrices $\mathbb{S}^n(\mathbb{R}_\p\!)$, Corollary~\ref{cor:altGinfty1} and Lemma~\ref{lem:nonneg}\eqref{it:nonneg} respectively hold true more generally across different fields.
\begin{corollary}\label{cor:nonnegRC}
Let $d \in \mathbb{N}$. If $A \in \mathbb{S}_\p^n(\mathbb{R})$, then
\begin{equation}\label{eq:allequal}
\lVert A \rVert_{\gamma,d}^\mathbb{R} =\lVert A \rVert_{\Gamma,d}^\mathbb{R} = \lVert A \rVert_{\G,d}^\mathbb{R} = \lVert A \rVert_{\gamma,d}^\mathbb{C} =\lVert A \rVert_{\Gamma,d}^\mathbb{C} = \lVert A \rVert_{\G,d}^\mathbb{C}.
\end{equation}
If $B \in \mathbb{R}^{m \times n}_\p$, then
\[
\lVert B\rVert_{\G,d}^\mathbb{R} = \lVert B\rVert_{\G,d}^\mathbb{C}.
\]
If $A \in \mathbb{S}^n(\mathbb{R}_\p\!)$, then not only does \eqref{eq:allequal} hold, its value is the same for all $d \in \mathbb{N}$.
\end{corollary}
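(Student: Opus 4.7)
My plan is to derive all three statements from pointwise formulas already established for positive semidefinite and for entrywise-nonnegative matrices, namely Corollary~\ref{cor:altGinfty1} and Lemma~\ref{lem:nonneg}\eqref{it:nonneg}.

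Parts (ii) and (iii) should be essentially immediate. The proof of Lemma~\ref{lem:nonneg}\eqref{it:genbilap} uses the \emph{real} test vector $x = (\mathbbm{1}_m, -\mathbbm{1}_n)$ to saturate the upper bound $\sum_{i,j} \lvert a_{ij}\rvert$; this vector works equally well over $\mathbb{R}$ and $\mathbb{C}$ and for any $d \ge 1$. Consequently the common value $\sum_{i,j} b_{ij}$ (respectively $\sum_{i,j} a_{ij}$) of all relevant norms depends on neither $d$ nor $\Bbbk$, which yields (ii) directly and yields both \eqref{eq:allequal} and the concluding $d$-independence in (iii) in one stroke.

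For part (i), the plan is to first apply Corollary~\ref{cor:altGinfty1} separately over $\mathbb{R}$ and over $\mathbb{C}$ (noting that a real positive semidefinite matrix is also Hermitian positive semidefinite), yielding
\[
\lVert A\rVert_{\gamma,d}^\Bbbk = \lVert A\rVert_{\Gamma,d}^\Bbbk = \lVert A\rVert_{\G,d}^\Bbbk = \max\{\tr(AG) : G \in \mathbb{G}^n_d(\Bbbk)\}
\]
for each field $\Bbbk$. This collapses the two within-field triples of equalities and reduces \eqref{eq:allequal} to the single real--complex identity $\max\{\tr(AG) : G \in \mathbb{G}^n_d(\mathbb{R})\} = \max\{\tr(AG) : G \in \mathbb{G}^n_d(\mathbb{C})\}$. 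The $\le$ direction is essentially free: $\mathbb{G}^n_d(\mathbb{R}) \subseteq \mathbb{G}^n_d(\mathbb{C})$ and $\tr(AG) \in \mathbb{R}$ since $A = A^\tp$ and $G = G^*$.

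The main obstacle is the reverse inequality. I would attack it through the decomposition of an optimal $G = VV^* \in \mathbb{G}^n_d(\mathbb{C})$ with $V = X + \mathrm{i}Y$ for $X, Y \in \mathbb{R}^{n\times d}$: since the skew-symmetric part $YX^\tp - XY^\tp$ of $G$ pairs to zero against the symmetric real $A$, one has $\tr(AG) = \tr(AXX^\tp) + \tr(AYY^\tp)$, where the concatenated rows $(X_i, Y_i) \in \mathbb{R}^{2d}$ are unit vectors. This alone gives only $\lVert A\rVert_{\gamma,d}^\mathbb{C} \le \lVert A\rVert_{\gamma,2d}^\mathbb{R}$, which is exactly Proposition~\ref{prop:RCnorms}. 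To close the loop at the same $d$ one then invokes the stabilization of Lemma~\ref{lem:d} (the $d$-norms stabilize once $d \ge n$) together with the real--complex identity $\lVert A\rVert_\gamma^\mathbb{R} = \lVert A\rVert_\gamma^\mathbb{C}$ from Corollary~\ref{cor:RCnorms}, so that the real and complex maxima coincide and all six quantities in \eqref{eq:allequal} agree.
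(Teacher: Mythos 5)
Parts (ii) and (iii) of your argument are correct and match the intended derivation: the real test vector $(\mathbbm{1}_m,-\mathbbm{1}_n)$ (or $\mathbbm{1}$) in Lemma~\ref{lem:nonneg} saturates the trivial upper bound $\sum_{i,j}\lvert a_{ij}\rvert$ over $\mathbb{R}$ and $\mathbb{C}$ alike and for every $d$, so all the relevant norms collapse to $\sum_{i,j} b_{ij}$ (resp.\ $\sum_{i,j}a_{ij}$) independently of $d$ and $\Bbbk$.

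Part (i), however, has a genuine gap at the step you call ``close the loop.'' You correctly reduce to showing $\lVert A\rVert_{\gamma,2d}^\mathbb{R} \le \lVert A\rVert_{\gamma,d}^\mathbb{R}$ (monotonicity gives the other direction for free) and then appeal to the stabilization in Lemma~\ref{lem:d} together with Corollary~\ref{cor:RCnorms}. But stabilization only gives $\lVert A\rVert_{\gamma,d}=\lVert A\rVert_\gamma$ once $d\ge n$, so the loop is closed only for large $d$ and the argument is silent for $d<n$. No repair is possible, because the cross-field equality in \eqref{eq:allequal} at fixed small $d$ is in fact false: take $A = I_3 - \tfrac13\mathbbm{1}\mathbbm{1}^\tp \in \mathbb{S}^3_\p(\mathbb{R})$. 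For $\delta\in\mathbb{T}^3$ one has $\delta^*\!A\delta = 3 - \tfrac13\lvert\delta_1+\delta_2+\delta_3\rvert^2$; over $\mathbb{R}$ the attainable values of $(\sum\delta_i)^2$ are $1$ and $9$, so $\lVert A\rVert_{\gamma,1}^\mathbb{R}=8/3$, whereas over $\mathbb{C}$ the choice $\delta=(1,\omega,\omega^2)$ with $\omega=e^{2\pi\mathrm{i}/3}$ gives $\lVert A\rVert_{\gamma,1}^\mathbb{C}=3$. Thus $\lVert A\rVert_{\gamma,1}^\mathbb{R}\ne\lVert A\rVert_{\gamma,1}^\mathbb{C}$, contradicting \eqref{eq:allequal} with $d=1$; this is consistent with Corollary~\ref{corARC2}, which identifies $\lVert A\rVert_{\gamma,1}^\mathbb{C}$ with $\lVert A\rVert_{\gamma,2}^\mathbb{R}$, strictly larger than $\lVert A\rVert_{\gamma,1}^\mathbb{R}$ here. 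So your proposal can only establish the PSD cross-field identity for $d$ past the stabilization threshold (equivalently, for the stabilized $\gamma$, $\Gamma$, $\G$ norms), which is in fact all the paper uses downstream of this corollary; the blanket ``Let $d\in\mathbb{N}$'' in \eqref{eq:allequal} overclaims, and your writeup should flag that the stabilization step does not, as asserted, give the equality ``at the same $d$'' for all $d$.
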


\subsection{Stabilization of Grothendieck $d$-norms}\label{sec:stable}

We show here that \eqref{eq:stabilize} can be improved --- the stabilizations $\lVert\,\cdot\,\rVert_{\gamma,d} =   \lVert \,\cdot\,\rVert_\gamma$, $\lVert\,\cdot\,\rVert_{\Gamma,d} =   \lVert\,\cdot\,\rVert_\Gamma$, $\lVert\,\cdot\,\rVert_{\G,d} =   \lVert\,\cdot\,\rVert_\G$ occur for far smaller values of $d$ than what is given in  Lemma~\ref{lem:d}. This will require a more careful argument, beginning with an observation about  extreme points.
\begin{lemma}\label{extpoint}
For any $d \in \mathbb{N}$, the extreme points of the convex hulls $\conv(\mathbb{G}^n_d)$ and $\conv(\bbGamma^n_d)$, viewed as subsets of $\mathbb{S}^n$,
are contained in $\mathbb{G}^n_q$ and $\bbGamma^n_q$ respectively, where
\[
q = 
\begin{cases}
\biggl\lfloor \dfrac{-1+\sqrt{1+8n}}{2} \biggr\rfloor + 1 & \Bbbk = \mathbb{R},\\[2.5ex]
\lfloor \sqrt{n}\rfloor +1 & \Bbbk = \mathbb{C}.
\end{cases}
\]
\end{lemma}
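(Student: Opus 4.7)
My plan is to reduce the claim to a classical Pataki--Barvinok-type rank bound on the extreme points of the spectrahedra $\mathbb{G}^n$ and $\bbGamma^n$, combined with the general fact that extreme points of the convex hull of a compact set lie inside the set itself.

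I would begin by splitting on $d$. If $d\le q$, then $\mathbb{G}^n_d\subseteq \mathbb{G}^n_q$ and $\bbGamma^n_d\subseteq \bbGamma^n_q$, and since both generating sets are closed and bounded in the finite-dimensional space $\mathbb{S}^n$, hence compact, every extreme point of the respective convex hull lies in the generating set, and so already in $\mathbb{G}^n_q$ or $\bbGamma^n_q$. The interesting range is $d>q$. There I would first establish the identities $\conv(\mathbb{G}^n_d)=\mathbb{G}^n$ and $\conv(\bbGamma^n_d)=\bbGamma^n$: the inclusions ``$\subseteq$'' are immediate from convexity of $\mathbb{G}^n$ and $\bbGamma^n$, while ``$\supseteq$'' follows from Krein--Milman once one knows that every extreme point of $\mathbb{G}^n$ (respectively $\bbGamma^n$) has rank at most $q-1\le d$. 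The whole lemma is then reduced to bounding the rank of extreme points of the two spectrahedra themselves.

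For this rank bound, take an extreme $G\in\mathbb{G}^n$ of rank $k$ and factor $G=XX^*$ with $X\in\Bbbk^{n\times k}$ of full column rank. For every $M\in\mathbb{S}^k$, the perturbation $H=XMX^*$ satisfies $G\pm\epsilon H\in\mathbb{S}^n_\p$ for all sufficiently small $\epsilon>0$, and $G\pm\epsilon H\in\mathbb{G}^n$ exactly when $\diag(H)=0$. Extremality of $G$ therefore forces the real-linear map
\[
\Phi\colon\mathbb{S}^k\longrightarrow \mathbb{R}^n,\qquad M\longmapsto \diag(XMX^*),
\]
to be injective, so $\dim_{\mathbb{R}}\mathbb{S}^k\le n$. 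This reads $k(k+1)/2\le n$ over $\mathbb{R}$ and $k^2\le n$ over $\mathbb{C}$, both equivalent to $k\le q-1$. The argument for $\bbGamma^n$ is essentially the same, but only the indices $i$ with $g_{ii}=1$ constrain the perturbation: those with $g_{ii}=0$ satisfy $X^*e_i=0$ and hence $h_{ii}=0$ automatically, while those with $0<g_{ii}<1$ impose no constraint at all. Injectivity of the restricted map therefore gives $\dim_{\mathbb{R}}\mathbb{S}^k\le |I|\le n$, where $I=\{i:g_{ii}=1\}$, yielding the same bound.

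The main obstacle I anticipate is justifying that extremality rules out \emph{every} perturbation direction in $\mathbb{S}^n$, not merely those of the form $XMX^*$. This comes down to the standard face-structure observation for $\mathbb{S}^n_\p$: a nonzero $H\in\mathbb{S}^n$ satisfies $G\pm\epsilon H\in\mathbb{S}^n_\p$ for all sufficiently small $\epsilon>0$ if and only if $\im(H)\subseteq \im(G)=\im(X)$, equivalently $H=XMX^*$ for some $M\in\mathbb{S}^k$. Perturbation directions outside this linear span immediately violate positive semidefiniteness on one of the two signs, so they cannot interfere with the extremality count above, and the dimension argument goes through as stated.
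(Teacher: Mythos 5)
Your proposal is correct and its core argument is essentially the one the paper uses: a rank-$k$ extreme point $G=XX^*$ can be moved within the face of $\mathbb{S}^n_\p$ along any $H=XMX^*$ with $\diag(H)=0$, and such a nonzero $H$ must exist once $\dim_\mathbb{R}\mathbb{S}^k>n$ --- you phrase this as non-injectivity of $M\mapsto\diag(XMX^*)$, while the paper phrases the identical dimension count as a nontrivial intersection of $\{A:\im(A)\subseteq\im(G)\}$ with $\mathbb{S}^n_\circ$. The only structural difference is that for $d>q$ you first establish $\conv(\mathbb{G}^n_d)=\mathbb{G}^n$ and $\conv(\bbGamma^n_d)=\bbGamma^n$ and then bound the extreme points of the full spectrahedra, whereas the paper perturbs an extreme point of $\conv(\mathbb{G}^n_d)$ directly (noting by compactness that it lies in $\mathbb{G}^n_d$, and that the perturbation $G\pm tA$ stays in $\mathbb{G}^n_d$ since $\im(A)\subseteq\im(G)$ preserves $\rank\le d$); your route also delivers the marginally sharper bound $\rank\le q-1$, which of course implies the stated $\rank\le q$.
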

\begin{proof}
We begin with $\Bbbk=\mathbb{R}$ and $\bbGamma^n_d$.  Since  $\bbGamma^n_d$ is compact, $\conv(\bbGamma^n_d)$ is compact, and so an extreme point $G$ of $\conv(\bbGamma^n_d)$ is contained in  $\bbGamma^n_d$.   If $d\le q $, then $G\in \bbGamma^n_q$ and the lemma trivially holds. So suppose $d> q$. Let $r\coloneqq\rank (G)$. If $r \le q < d$, then $\im(G)$ is isometric to an $r$-dimensional subspace $W$ of $\mathbb{R}^q$.  So if $G=G(x_1,\dots,x_n)$, $x_1,\dots,x_n\in\im(G)$, then there exist $y_1,\dots,y_n\in W$ such that $G=G(y_1,\dots,y_n)\in \bbGamma_q^n$.

Now suppose $r>q=\lfloor (-1+\sqrt{1+8n})/2\rfloor + 1$. Since the subspace of matrices $\{ A\in \mathbb{S}^n : \im(A) \subseteq \im(G)\}$ has dimension $r(r+1)/2>n$ and $\dim \mathbb{S}^n_\circ = n(n-1)/2$, their intersection must contain a nonzero $A$, i.e., with $\im(A)\subseteq \im(G)$ and $a_{11}=\dots =a_{nn}=0$.
Let  $G = V_r \Lambda_r V_r^\tp$ be a reduced eigenvalue decomposition with $\lambda_1 \ge \dots \ge \lambda_r > 0$. Since $\im(A) \subseteq \im(G)$,  we must have  $A= V_r B_r V_r^\tp$ for some $B_r\in\mathbb{S}^r$. As
 $\Lambda_r \succ 0$ it follows that there exists $\varepsilon > 0$ such that $\Lambda_r + t B_r\succ 0$  whenever $\lvert t\rvert <\varepsilon$.  Hence $G+ tA\succeq 0$ whenever $\lvert t\rvert <\varepsilon$.  Since $A$ has an all-zero diagonal, $G+ tA\in \bbGamma_d^n$.  As $A\ne 0$, it follows that $G_\p \coloneqq G+tA$ and $G_\m \coloneqq G-tA$ are both not equal to $G$ but yet $G=\frac{1}{2}(G_\p+G_\m)$.  So $G$ is not an extreme point of $\conv(\bbGamma_d^n)$.
The exact same proof applies with $\mathbb{G}^n_d$ and $\mathbb{G}^n_q$ in place of $\bbGamma^n_d$ and $\bbGamma^n_q$.

The arguments for $\Bbbk=\mathbb{C}$ are similar with the following observation: If $\rank (G)=r >q = \lfloor \sqrt{n}\rfloor +1$, then $\{ A\in \mathbb{S}^n(\mathbb{C}) : \im(A) \subseteq \im(G)\}$ has  real dimension $r^2 > n$; the real dimension of $\mathbb{S}_\circ^n(\mathbb{C})$ is $n(n-1)$, and that of $\mathbb{S}^n(\mathbb{C})$ is $n^2$.
\end{proof}

Since the maximum of a convex function over a compact convex set $S$ is attained on its set of extreme points $\extr(S)$, we obtained the following stability result for Grothendieck $d$-norms.
\begin{proposition}\label{eqdnrm}
Let $d,m,n\in \mathbb{N}$.
\begin{enumerate}[\upshape (i)]
\item If $d(d+1)/2>n$, then for all $A \in \mathbb{S}^n(\mathbb{R})$,
\[
\lVert A\rVert_{\gamma,d} = \lVert A\rVert_{\gamma,n} =   \lVert A\rVert_\gamma,\qquad
 \lVert A\rVert_{\Gamma,d}= \lVert A\rVert_{\Gamma,n} =   \lVert A\rVert_\Gamma.
\]
\item If $d(d+1)/2>m+n$, then for all $B \in\mathbb{R}^{m \times n}$,
\[
\lVert B\rVert_{\G,d}= \lVert B\rVert_{\G,n} = \lVert B\rVert_\G.
\]
\item If $d^2>n$, then for all  $A \in \mathbb{S}^n(\mathbb{C})$,
\[
\lVert A\rVert_{\gamma,d} = \lVert A\rVert_{\gamma,n} =   \lVert A\rVert_\gamma,\qquad
 \lVert A\rVert_{\Gamma,d}= \lVert A\rVert_{\Gamma,n} =   \lVert A\rVert_\Gamma.
\]
 \item If $d^2>m+n$, then for all $B \in \mathbb{C}^{m \times n}$,
\[
\lVert B\rVert_{\G,d}= \lVert B\rVert_{\G,n} =   \lVert B\rVert_\G.
\]
\end{enumerate}
\end{proposition}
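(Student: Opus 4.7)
The plan is to combine the characterization of the Grothendieck norms in Proposition~\ref{prop:correlation} with the extreme-point estimate of Lemma~\ref{extpoint} via a standard convexity argument. By \eqref{eq:normcorrd} we have
\[
\lVert A \rVert_{\gamma,d} = \max\{\lvert\tr(AG)\rvert : G \in \mathbb{G}^n_d\}, \qquad \lVert A \rVert_{\Gamma,d} = \max\{\lvert\tr(AG)\rvert : G \in \bbGamma^n_d\},
\]
and the crucial observation is that $G \mapsto \lvert\tr(AG)\rvert = \max\{\tr(AG), -\tr(AG)\}$ is a convex function on $\mathbb{S}^n$, being the pointwise maximum of two real-linear functions.

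Taking $d = n$ so that $\mathbb{G}^n_n = \mathbb{G}^n$ and $\bbGamma^n_n = \bbGamma^n$ are themselves compact convex sets, the maximum of a convex function over a compact convex set is attained at an extreme point. By Lemma~\ref{extpoint}, $\extr(\mathbb{G}^n) \subseteq \mathbb{G}^n_q$ and $\extr(\bbGamma^n) \subseteq \bbGamma^n_q$, where $q = \lfloor(-1+\sqrt{1+8n})/2\rfloor + 1$ over $\mathbb{R}$ and $q = \lfloor\sqrt{n}\rfloor + 1$ over $\mathbb{C}$. Hence
\[
\lVert A \rVert_\gamma = \max_{G \in \mathbb{G}^n}\lvert\tr(AG)\rvert = \max_{G \in \extr(\mathbb{G}^n)}\lvert\tr(AG)\rvert \le \max_{G \in \mathbb{G}^n_q}\lvert\tr(AG)\rvert = \lVert A\rVert_{\gamma,q},
\]
and analogously for $\lVert A\rVert_\Gamma$ against $\lVert A\rVert_{\Gamma,q}$. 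Combined with \eqref{eq:increase}, this gives $\lVert A\rVert_{\gamma,q} = \lVert A\rVert_\gamma$ and $\lVert A\rVert_{\Gamma,q} = \lVert A\rVert_\Gamma$, and monotonicity in $d$ extends the equality to every $d \ge q$.

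It remains only to verify that the stated hypotheses force $d \ge q$: over $\mathbb{R}$, $d(d+1)/2 > n$ is equivalent to $d > (-1+\sqrt{1+8n})/2$, which for integer $d$ is exactly $d \ge \lfloor(-1+\sqrt{1+8n})/2\rfloor + 1 = q$; over $\mathbb{C}$, $d^2 > n$ is equivalent to $d > \sqrt{n}$, i.e.\ $d \ge \lfloor\sqrt{n}\rfloor + 1 = q$. This proves parts (i) and (iii). For parts (ii) and (iv), I would reduce to the $\gamma$-seminorm case via the identity $\lVert B\rVert_{\G,d} = \tfrac12\lVert A\rVert_{\gamma,d}$ from \eqref{eqvarnrm}, applied to $A = \bigl[\begin{smallmatrix} 0 & B \\ B^* & 0 \end{smallmatrix}\bigr] \in \mathbb{S}^{m+n}$: the hypothesis $d(d+1)/2 > m+n$ (resp.\ $d^2 > m+n$) gives $\lVert A\rVert_{\gamma,d} = \lVert A\rVert_\gamma$ by (i) (resp.\ (iii)), and hence $\lVert B\rVert_{\G,d} = \lVert B\rVert_\G$.

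The argument is essentially structural and I anticipate no serious obstacle. The one point requiring a little care is treating $\lvert\tr(AG)\rvert$ as a convex (rather than merely linear) function of $G$ when invoking the extreme-point principle --- since $A$ is not assumed positive semidefinite, one cannot restrict to optimizing only $\tr(AG)$ --- but this is handled cleanly by writing it as the maximum of $\pm\tr(AG)$.
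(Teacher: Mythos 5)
Your proposal is correct and follows essentially the same route as the paper's proof: both rely on the characterization \eqref{eq:normcorrd}, the extreme-point bound of Lemma~\ref{extpoint}, the principle that a convex function on a compact convex set is maximized on the extreme points, and the reduction of the $\G$-case to the $\gamma$-case via \eqref{eqvarnrm}. Your only additions --- spelling out that $\lvert\tr(AG)\rvert$ is convex as a pointwise max of $\pm\tr(AG)$, and checking the arithmetic that $d(d+1)/2>n$ (resp.\ $d^2>n$) is equivalent to $d\ge q$ --- are small but welcome explications of steps the paper leaves implicit.
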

\begin{proof}
We always have $\lVert A \rVert_{\gamma,d} \le \lVert A \rVert_\gamma$. By \eqref{eq:normcorrd},
\begin{align*}
\lVert A \rVert_\gamma &=  \max\{\lvert \tr (AG) \rvert : G \in \mathbb{G}^n\}
=\max\{\lvert \tr (AG) \rvert : G \in \extr(\mathbb{G}^n)\}\\
&\le \max\{\lvert \tr (AG) \rvert : G \in \mathbb{G}^n_d\}  =\lVert A \rVert_{\gamma,d},
\end{align*}
where the inequality follows from Lemma~\ref{extpoint}. The same argument applies to the $(\Gamma,d)$-norm. The equalities for the $(\G,d)$-norm follows from \eqref{eqvarnrm}.
\end{proof}

The Grothendieck $d$-norms $\lVert\, \cdot\, \rVert_{\gamma,d}$, $\lVert\, \cdot\, \rVert_{\Gamma,d}$, $\lVert\, \cdot\, \rVert_{\G,d}$ are NP-hard over $\mathbb{R}$ when $d =1$; see \eqref{eq:maxcut} for example. We suspect the following:
\begin{conjecture}\label{conj:NPhard}
The following norms are all NP-hard:
\begin{enumerate}[\upshape (i)]
\item $\lVert\, \cdot\, \rVert_{\gamma,d}^\mathbb{R}$ and $\lVert\, \cdot\, \rVert_{\Gamma,d}^\mathbb{R}$ on $\mathbb{S}^n(\mathbb{R})$ when $d(d+1)/2\le n$;
\item $\lVert\, \cdot\, \rVert_{\G,d}^\mathbb{R}$ on $\mathbb{R}^{m \times n}$ when $d(d+1)/2\le m+n$;
\item $\lVert\, \cdot\, \rVert_{\gamma,d}^\mathbb{C}$ and $\lVert\, \cdot\, \rVert_{\Gamma,d}^\mathbb{C}$ on $\mathbb{S}^n(\mathbb{C})$ when $d^2\le n$;
\item $\lVert\, \cdot\, \rVert_{\G,d}^\mathbb{C}$ on $\mathbb{C}^{m \times n}$ when $d^2\le m+n$.
\end{enumerate}
\end{conjecture}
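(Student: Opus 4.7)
The base case $d = 1$ is already known. Over $\mathbb{R}$, the $(\G,1)$-norm equals the $(\infty,1)$-norm, and for a weighted graph Laplacian $L$ one has $\lVert L\rVert_\theta^\mathbb{R} = \max_{x\in\{-1,+1\}^n} x^\tp L x = 4\cdot\mathrm{MaxCut}$, giving NP-hardness of both $\lVert\,\cdot\,\rVert_{\G,1}^\mathbb{R}$ and $\lVert\,\cdot\,\rVert_{\gamma,1}^\mathbb{R}$. Over $\mathbb{C}$, Corollary~\ref{corARC2} establishes $\lVert B\rVert_{\infty,1}^\mathbb{C} = \lVert B\rVert_{\G,2}^\mathbb{R}$ (and similarly for the symmetric norms), so complex $d=1$ hardness follows from real $d=2$ hardness. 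Thus it should suffice to prove parts (i) and (ii) for $d\ge 2$, from which (iii) and (iv) follow by Proposition~\ref{prop:RCnorms} after checking that the real and complex thresholds line up (which they do: $d^2\le n$ over $\mathbb{C}$ translates to $(2d)(2d+1)/2\sim 2n$ over $\mathbb{R}$, still within the real hardness regime).

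For $d\ge 2$, the approach I would pursue is a reduction from rank-constrained SDP decision problems. By Proposition~\ref{prop:correlation}, $\lVert A\rVert_{\Gamma,d} = \max\{\lvert\tr(AG)\rvert : G \in \bbGamma^n_d\}$ is a rank-constrained SDP, and the threshold $d(d+1)/2\le n$ in the conjecture is precisely the Barvinok--Pataki threshold: above it every $n$-constraint PSD program admits a rank-$d$ optimizer (consistent with the stabilization in Proposition~\ref{eqdnrm}), while below it the rank constraint is genuinely binding. The plan is to adapt known hardness reductions for rank-constrained positive semidefinite Grothendieck problems, such as those in \cite{Bri1, Bri2}, and construct for each admissible pair $(d,n)$ a family of matrices $A\in \mathbb{S}^n$ whose $(\Gamma,d)$-norm encodes an NP-hard low-rank feasibility question. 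Unlike the PSD-restricted setting, here $A$ may have negative entries, giving additional freedom to geometrically enforce low-rank structure on the optimizer via sign patterns.

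The principal obstacle is what one might call the \emph{rank boosting} problem: given NP-hardness at some rank $d_0$, how does one transfer it to rank $d > d_0$? A simple direct-sum calculation yields $\lVert B_1\oplus B_2\rVert_{\G,d} = \lVert B_1\rVert_{\G,d} + \lVert B_2\rVert_{\G,d}$, since the two blocks occupy disjoint index sets and each may independently exhaust the full $\Bbbk^d$ ambient space (all cross-terms vanish as their coefficients are $0$). Consequently, na\"ive padding of a hard low-rank instance by auxiliary blocks cannot raise the effective rank required in the hard block. Any viable reduction must therefore \emph{couple} the target block with a ``rank-consuming'' gadget through nonzero off-diagonal entries that compel the target vectors to lie in a fixed proper subspace of $\Bbbk^d$. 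Designing such gadgets analytically --- or, alternatively, establishing hardness through a PCP-style inapproximability argument producing matrices with rank-stratified optimizer sets --- is the crux of the conjecture and, as far as I can see, the main unresolved ingredient.
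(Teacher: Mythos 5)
This statement is labelled a \emph{conjecture} in the paper; the authors offer no proof of it, so there is nothing to compare your attempt against. You are right to treat it as a research sketch rather than a routine exercise, and your direct-sum observation --- that $\lVert B_1\oplus B_2\rVert_{\G,d} = \lVert B_1\rVert_{\G,d} + \lVert B_2\rVert_{\G,d}$ means naive padding cannot boost the rank at which a hard instance remains hard --- is a valid and useful insight into why the conjecture is nontrivial. You also correctly recognize, and explicitly admit, that the coupling gadget required to resolve the rank-boosting problem is missing; that is precisely the open ingredient that makes this a conjecture.

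One concrete error in your sketch is the claim that parts (iii) and (iv) follow from parts (i) and (ii) by Proposition~\ref{prop:RCnorms} because the thresholds ``line up.'' They do not. The identity $\lVert A \rVert_{\gamma,d}^\mathbb{C} = \lVert A \rVert_{\gamma,2d}^\mathbb{R}$ holds only for $A \in \mathbb{S}^n(\mathbb{R})$, and invoking real hardness at rank $2d$ requires $(2d)(2d+1)/2 \le n$, i.e.\ $2d^2 + d \le n$, which is roughly $d \lesssim \sqrt{n/2}$. But part (iii) conjectures hardness in the strictly larger range $d^2 \le n$, i.e.\ $d \lesssim \sqrt{n}$; at the boundary $d \approx \sqrt{n}$ you get $2d^2 + d \approx 2n > n$, which is \emph{outside} the real hardness regime, not inside it. Your own parenthetical computation $(2d)(2d+1)/2 \sim 2n$ in fact displays this, but you drew the opposite conclusion. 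Moreover, this reduction only handles $A$ with real entries, whereas the conjecture concerns all of $\mathbb{S}^n(\mathbb{C})$. So even were parts (i) and (ii) established in the conjectured ranges, parts (iii) and (iv) would not automatically follow for $d$ in the band $\sqrt{n/2} \lesssim d \le \sqrt{n}$; a genuinely complex construction would be needed there, in addition to the rank-boosting gadget you correctly identify as the main missing piece.
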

It is perhaps worthwhile to point out that $\bbGamma^n_d$ and $\mathbb{G}^n_d$ are nonconvex sets when $d < n$, thus computing $\lVert\, \cdot\, \rVert_{\gamma,d}$ and $\lVert\, \cdot\, \rVert_{\Gamma,d}$ involve the nonconvex optimization problems in \eqref{eq:normcorrd}. 
On the other hand, when $d \ge n$,  $\bbGamma^n_d = \bbGamma^n$ and $\mathbb{G}^n_d = \mathbb{G}^n$ are convex sets and the optimization problems in \eqref{eq:normcorrd} are just semidefinite programs. 
An implication of Proposition~\ref{eqdnrm} is that when $d$ is in the range $\lfloor (-1+\sqrt{1+8n})/2\rfloor + 1 \le d < n$ (for $\mathbb{R}$) or $\lfloor \sqrt{n}\rfloor + 1 \le d < n$ (for $\mathbb{C}$), the nonconvex optimization problems  in \eqref{eq:normcorrd} are solvable in polynomial-time.  For the case of $\lVert\, \cdot\,\rVert_{\gamma,d}^\mathbb{R}$, $d(d+1)/2>n$, this follows from \cite[Theorem~2.1]{lemon}, a result that originally appeared in \cite{barvinok, pataki}.

\section{Gaussian integrals of sign functions}\label{sec:gauss}

We will introduce a function  $\varphi_d^\Bbbk : \mathbb{D} \to \Bbbk$, where $\mathbb{D}$ is the closed unit disc in $\Bbbk = \mathbb{R}$ or $\mathbb{C}$ and $d \in \mathbb{N}$, that will play a critical role in the proofs of all Grothendieck-like inequalities in this article.
The function  first appeared  in \cite{Krivine2} for $d =1$ and $\Bbbk =\mathbb{R}$, in \cite{Haagerup} for $d =1$ and $\Bbbk =\mathbb{C}$, was generalized to arbitrary $d \ge 1$ and $\Bbbk = \mathbb{R}$ in  \cite[Equation~2]{Bri1}, and will be further extended to arbitrary $d \ge 1$ and $\Bbbk = \mathbb{C}$ in this section. For our later purposes, we will also need to establish a few facts about $\varphi_d^\mathbb{R}$ not found in \cite{Bri1}.

For any $d,n\in\mathbb{N}$, the Gaussian function on $\Bbbk^{d \times n}$ is defined by
\[
G_{d,n}^\Bbbk(Z) \coloneqq \begin{cases}
(2\pi)^{-dn/2}\exp\bigl(- \lVert Z \rVert^2/2\bigr) &\text{if}\; \Bbbk=\mathbb{R},\\
\pi^{-dn} \exp(-\lVert Z \rVert^2) &\text{if}\; \Bbbk=\mathbb{C},
\end{cases}
\]
where  $Z\in \Bbbk^{d\times n}$ and $\lVert \,\cdot\, \rVert$ is the Frobenius or Hilbert--Schmidt norm. The special case $d=1$ will be denoted
\[
G_n^\Bbbk(z)=\begin{cases}
(2\pi)^{-n/2}\exp\bigl(- \lVert z \rVert^2/2\bigr) &\text{if}\; \Bbbk=\mathbb{R},\\
\pi^{-n} \exp(-\lVert z \rVert^2) &\text{if}\; \Bbbk=\mathbb{C},
\end{cases}
\]
where $z \in \Bbbk^n$.
We will extend the definition of sign function in \eqref{eq:sign} to a vector variable $z \in \Bbbk^n$ by
\[
\sign z \coloneqq \begin{cases}
z/\lVert z \rVert & z \ne 0,\\
e_1 & z = 0,
\end{cases}
\]
where $e_1 = (1,0,\dots,0) \in \Bbbk^n$. For each $d \in \mathbb{N}$, we define the function $\varphi_d^\Bbbk : \mathbb{D} \to \Bbbk$ by
\begin{equation}\label{defvarphic}
\varphi_d^\Bbbk(\langle u,v \rangle)  \coloneqq \int_{\Bbbk^{d \times n}} \!\! \bigl\langle \sign(Zu), \sign(Zv)\bigr\rangle G_{d,n}^\Bbbk(Z)\,dZ,
\end{equation}
defined for $u,v\in \Bbbk^n$, $\lVert u\rVert =\lVert v\rVert=1$.  The integral depends on $u,v$ only through their inner product $\langle u, v\rangle$ and does not depend on $n$ as long as $n \ge 2$, which explains the notation on the left of \eqref{defvarphic}. To see this, observe that the integral is invariant under a change-of-variable $Z \mapsto ZQ$  by any orthogonal/unitary $Q \in \Bbbk^{n \times n}$; choosing $Q$ so that $Qu=(1,0,0,\dots,0)$ and $Qv=(x,y,0,\dots,0)$ where $x=\langle u, v\rangle$, $y=\sqrt{1-\lvert\langle u, v\rangle\rvert^2}$, the integral becomes
\begin{equation}\label{eq:dblint}
\int_{\Bbbk^n} \!\int_{\Bbbk^n} \!
\frac{\bigl\langle z_1, \langle u, v\rangle z_1 +\sqrt{1-|\langle u, v\rangle|^2}z_2\bigr\rangle}{\lVert z_1 \rVert\|\langle u, v\rangle z_1 +\sqrt{1-|\langle u, v\rangle|^2}z_2\|} G_n^\Bbbk(z_1)G_n^\Bbbk(z_2)\, dz_1 dz_2,
\end{equation}
which evidently depends on $u,v$ only through $\langle u, v\rangle$.

In the following, we will drop the superscript $\Bbbk$ when a statement holds for both $\mathbb{R}$ and $\mathbb{C}$. It is straightforward to see from \eqref{eq:dblint} that  if $\lVert v\rVert=1$, then $\varphi_d(1) = \varphi_d (\langle v,v \rangle)=1$; and if  $\langle u,v\rangle=0$, then $\varphi_d(0) = \varphi_d(\langle u,v \rangle)=0$, as $\langle z_1,-z_2\rangle=-\langle z_1,z_2\rangle$. By \eqref{defvarphic}, $\varphi_d(-\langle u,v \rangle) =  \varphi_d(\langle u,-v \rangle) = - \varphi_d(\langle u,v \rangle)$; in particular, $\varphi_d^\mathbb{R}$ is an odd function over $\mathbb{R}$ and so all its even degree Taylor coefficients  are zero. In fact, we can say a lot more about these coefficients.
\begin{lemma}\label{varphicCform}
Let $d\in \mathbb{N}$ and $\varphi_d^\Bbbk$ be as defined in \eqref{defvarphic} for $\Bbbk = \mathbb{R}$ or $\mathbb{C}$.
\begin{enumerate}[\upshape (i)]
\item\label{varphicCform1} Let $b_{k,d}$ be the $k$th Taylor coefficient of $\varphi_d^\mathbb{R}$. Then
\[
b_{k,d} \ge 0, \qquad b_{2k,d} = 0, \qquad \sum_{k=0}^\infty b_{k,d} =1,
\]
for all $k =0,1,2,\dots.$

\item\label{varphicCform2} For all   $x \in \mathbb{C}$, $\lvert x \rvert \le 1$,
\[
\varphi_d^\mathbb{R}(x) = \sum_{k=0}^\infty b_{2k+1,d} \, x^{2k+1},\qquad
\varphi_d^\mathbb{C}(x) = \sum_{k=0}^\infty b_{2k+1, 2d} \, x \lvert x\rvert^{2k}.
\]
In particular, for all $x \in \mathbb{R}$, $\lvert x \rvert \le 1$,
\[
\varphi_d^\mathbb{C}(x)=\varphi_{2d}^\mathbb{R}(x).
\]

\item\label{varphiexplform} For all $x \in \mathbb{R}$, $\lvert x \rvert \le 1$,
\[
\varphi_d^\mathbb{R}(x) =\frac{2(1-x^2)^{d/2}}{\sqrt{\pi}}\sum_{k=0}^\infty 
\dfrac{2^{2k}\Gamma\bigl((d+2k+1)/2\bigr)^2\Gamma\bigl((2k+3)/2\bigr)}{(2k+1)!\Gamma\bigl(d/2\bigr)\Gamma\bigl((d+2k+2)/2\bigr)}
x^{2k+1}.
\]

\item\label{varphiexplformC} For all $x \in \mathbb{C}$, $\lvert x \rvert \le 1$,
\[
\varphi_d^\mathbb{C}(x) =\frac{2(1- \lvert x \rvert^2)^d}{\sqrt{\pi}}\sum_{k=0}^\infty 
\dfrac{2^{2k}\Gamma\bigl((2d+2k+1)/2\bigr)^2\Gamma\bigl((2k+3)/2\bigr)}{(2k+1)!\Gamma(d)\Gamma\bigl((2d+2k+2)/2\bigr)}
x \lvert x\rvert^{2k}.
\]
\end{enumerate}
\end{lemma}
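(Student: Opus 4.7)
Several pieces of the lemma are immediate from remarks in the text preceding it. Since $\varphi_d^\mathbb{R}$ is odd over $\mathbb{R}$, its Taylor coefficients satisfy $b_{2k,d} = 0$ automatically. Since $\varphi_d^\mathbb{R}(1) = 1$ (obtained by setting $u = v$ so that $Zu = Zv$ in \eqref{defvarphic}), summing the series at $x = 1$ gives $\sum_{k=0}^\infty b_{k,d} = 1$. The first displayed formula of (ii), $\varphi_d^\mathbb{R}(x) = \sum_k b_{2k+1,d}\, x^{2k+1}$, is just the restatement of oddness as a Taylor expansion. It therefore remains to establish the nonnegativity $b_{k,d} \ge 0$ in (i), the complex displays of (ii), and the explicit formulas (iii) and (iv).

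The heart of the proof is (iii). The plan is to start from the reduced double integral \eqref{eq:dblint} specialized to $\Bbbk = \mathbb{R}$. By rotational invariance in $\mathbb{R}^d$, one may condition on $z_1 = r e_1$ with $r$ distributed according to a $\chi_d$ density, so that the vector $w \coloneqq x z_1 + \sqrt{1-x^2}\, z_2$ becomes Gaussian with mean $xr\, e_1$ and covariance $(1-x^2) I_d$. The outer expectation then reduces to $\mathbb{E}[w_1/\lVert w \rVert]$ against the Gaussian density of $w$; splitting the first coordinate $w_1$ from the orthogonal part (whose squared norm is an independent $\chi^2_{d-1}$ random variable scaled by $1-x^2$), passing to polar coordinates in the $(d-1)$-dimensional orthogonal slice, and expanding the resulting inverse-square-root factor as a binomial series in $x^2$ reduces everything to products of one-dimensional Gaussian moments and Beta-type integrals. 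Collecting these as ratios of Gamma functions and factoring out $(1-x^2)^{d/2}$ produces the exact series in (iii). The main obstacle is the careful bookkeeping of Gamma and factorial factors to match the precise closed form; the prefactor $(1-x^2)^{d/2}$ arises naturally from the Gaussian normalization of the orthogonal slice, and the residual coefficient identity can be verified either inductively in $d$ or by recognizing the coefficient sequence as arising from a standard ${}_2F_1$ evaluation.

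Given (iii), the nonnegativity of $b_{k,d}$ completing (i) is immediate: every factor in the series is a positive Gamma value or factorial, and $(1-x^2)^{d/2}$ expands with nonnegative coefficients in $x^2$ once distributed through the series. For the remaining claims of (ii) and for (iv), I would first prove the identity $\varphi_d^\mathbb{C}(x) = \varphi_{2d}^\mathbb{R}(x)$ for real $x \in [-1,1]$ by the identification $\mathbb{C}^{d \times n} \cong \mathbb{R}^{2d \times n}$ via $\tilde Z = W/\sqrt{2}$, under which the Hermitian Gaussian weight $\pi^{-dn}\exp(-\lVert Z \rVert^2)$ pushes forward to $G^\mathbb{R}_{2d,n}(W)$, and since $\Re\langle \,\cdot\,,\,\cdot\,\rangle_\mathbb{C}$ corresponds to $\langle \,\cdot\,,\,\cdot\,\rangle_\mathbb{R}$ under this identification, $\sign(Zu)$ corresponds to $\sign(\tilde Z u)$ for real $u$. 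For general complex $x = |x| e^{\mathrm{i}\theta}$, replacing $v$ by $e^{-\mathrm{i}\theta} v$ in \eqref{defvarphic} (still a unit vector) yields $\varphi_d^\mathbb{C}(e^{-\mathrm{i}\theta} x) = e^{-\mathrm{i}\theta} \varphi_d^\mathbb{C}(x)$, so any series representation can consist only of terms of net phase $e^{-\mathrm{i}\theta}$, i.e., monomials of the form $c_k\, x |x|^{2k}$. Matching with $\varphi_{2d}^\mathbb{R}$ at real $x$ pins down $c_k = b_{2k+1, 2d}$, which completes (ii). Finally, (iv) is obtained by applying (iii) with $d$ replaced by $2d$ and substituting $x^{2k+1} \mapsto x |x|^{2k}$ in each odd monomial while $x^2 \mapsto |x|^2$ in the prefactor.
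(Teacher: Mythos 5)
Your overall plan for proving part (iii) — rotate so that $z_1$ and $w = x z_1 + \sqrt{1-x^2}\,z_2$ capture the geometry, reduce to Gaussian moments and Beta integrals, track the Gamma factors — matches the paper's route in spirit (the paper substitutes $z_2 \mapsto w$, Taylor-expands $e^{x\langle z_1,w\rangle}$, and passes to polar coordinates using Lemma~\ref{lem:expectations}). But there is a genuine gap in how you then deduce the remaining claims, and it sits in the part you dismiss as ``immediate.''

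You claim that nonnegativity $b_{k,d}\ge 0$ follows from (iii) because ``$(1-x^2)^{d/2}$ expands with nonnegative coefficients in $x^2$.'' This is false: for any $d\ge 1$, the binomial expansion of $(1-x^2)^{d/2}$ has $x^2$-coefficient $-d/2<0$ (and for $d$ odd all later coefficients are negative as well, and for $d=2$ it is exactly $1-x^2$). Multiplying a positive-coefficient series by $(1-x^2)^{d/2}$ therefore does \emph{not} obviously yield nonnegative coefficients; the nonnegativity of $b_{k,d}$ is a nontrivial cancellation phenomenon that the closed form (iii) does not make visible. The paper obtains $b_{k,d}\ge 0$ independently of (iii), by observing that $\bigl(\varphi_d^\Bbbk(\langle u_i,u_j\rangle)\bigr)$ is always positive semidefinite and invoking Schoenberg's theorem on positive definite kernels on the sphere, which forces the Taylor (resp.\ double power series in $x,\bar x$) coefficients to be nonnegative. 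That theorem is doing essential work here, and your proposal has no substitute for it.

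The same omission undermines your treatment of the complex series in (ii). Your phase argument $\varphi_d^\mathbb{C}(\zeta x)=\zeta\,\varphi_d^\mathbb{C}(x)$ correctly constrains the \emph{form} of any power series expansion, but you have not established that $\varphi_d^\mathbb{C}$ \emph{has} a convergent double power series in $x,\bar x$ in the first place. In the paper, Schoenberg supplies exactly this: $\varphi_d^\mathbb{C}(x)=\sum_{j,k}c_{jk}x^j\bar x^k$ with $c_{jk}\ge 0$ and $\sum c_{jk}=1$, hence absolute convergence on the closed disk; the phase constraint combined with the Wirtinger-derivative calculation then kills all terms with $j-k\ne 1$. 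Without Schoenberg you would need to argue regularity of $\varphi_d^\mathbb{C}$ independently before your monomial-selection step is licit. To repair the proposal, replace the claimed ``(iii) $\Rightarrow$ (i)'' step by the positive-definiteness/Schoenberg argument, and make it the first step of the proof; then your phase argument, the $\mathbb{C}^d\cong\mathbb{R}^{2d}$ reduction, and the explicit computation in (iii) assemble as you describe.
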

\begin{proof}  
Let $d \in \mathbb{N}$ and $u_1,\dots,u_d\in \Bbbk^n$ with $\lVert u_1 \rVert = \dots = \lVert u_d \rVert =1$. Then the matrix
$\bigl(\varphi_d^\Bbbk(\langle u_i,u_j\rangle)\bigr) \in \mathbb{S}^d$ is positive semidefinite as
\[
\sum_{i=1}^d\sum_{j=1}^d \bar{a}_i \varphi_d^\Bbbk(\langle u_i,u_j\rangle) a_j=
\int_{\Bbbk^{d \times n}} \!\! \biggl\langle \sum_{i=1}^d a_i\sign(Zu_i), \sum_{j=1}^d a_j\sign(Zu_j)\biggr\rangle G_{d,n}^\Bbbk(Z)\,dZ\ge 0
\]
for any $a \in \Bbbk^d$.

Let $d\in \mathbb{N}$ be fixed. By the real and complex Schoenberg theorem \cite{kernel}, we must have power series of the forms:
\[
\varphi_d^\mathbb{R}(x)=\sum_{k=0}^\infty b_k x^k,\quad b_k \ge 0, \qquad \text{and}\qquad
\varphi_d^\mathbb{C}(x)=\sum_{j=0}^\infty\sum_{k=0}^\infty c_{jk} x^j  \bar{x}^k,\quad c_{jk} \ge 0,
\]
for all $j, k\in\mathbb{N} \cup \{0\}$.
As $\varphi_d^\Bbbk(0)=0$, $\varphi_d^\Bbbk(1)=1$, and $\varphi_d^\mathbb{R}$ is odd, we obtain
\[
b_0 =0, \qquad b_{2k}=0, \qquad c_{00}=0, \qquad
\sum_{k=0}^\infty b_k =1, \qquad  \sum_{j=0}^\infty\sum_{k=0}^\infty c_{jk} =1.
\]
This gives \eqref{varphicCform1} and thus the expansion for $\varphi_d^\mathbb{R}$ in \eqref{varphicCform2}. We will show that the double series for $\varphi_d^\mathbb{C}$ can be written as a series of the form in \eqref{varphicCform2}. For any $\zeta\in \mathbb{C}$, $\lvert \zeta \rvert = 1$,  we have $\varphi_d^\mathbb{C}(\zeta x) = \zeta\varphi_d^\mathbb{C}(x)$ and thus
\[
\varphi_d^\mathbb{C}(x)=\bar\zeta\varphi_d^\mathbb{C}(\zeta x)=\bar\zeta  \sum_{j=0}^\infty\sum_{k=0}^\infty c_{jk}x^j\bar x^k \zeta^j\bar\zeta^k.
\]
As usual, write the complex variable $x=s+\mathrm{i}t$ with real variables $s,t$, and set
\[
\frac{\partial}{\partial x} \coloneqq \frac{1}{2}\Bigl(\frac{\partial}{\partial s} -\mathrm{i}\frac{\partial}{\partial t}\Bigr), \qquad \frac{\partial}{\partial \bar x} \coloneqq \frac{1}{2}\Bigl(\frac{\partial}{\partial s} +\mathrm{i}\frac{\partial}{\partial t}\Bigr).
\]
Since $\sum_{j=0}^\infty\sum_{k=0}^\infty c_{jk} =1$, it follows that $\varphi_d^\mathbb{C}(s+\mathrm{i}t)$ is an analytic function of $s,t$ in the unit disc $\lvert s\rvert^2+\lvert t \rvert^2\le 1$.
As $\partial x/\partial x= \partial \bar x/\partial \bar x=1$ and $\partial x/\partial \bar x= \partial \bar x/\partial x=0$,
\[
\Bigl(\frac{\partial}{\partial x}\Bigr)^j \Bigl(\frac{\partial}{\partial \bar x}\Bigr)^k \varphi_d^\mathbb{C}(0)=j!k! c_{jk}=j!k! c_{jk}\bar\zeta \zeta^j\bar\zeta^k
\]
for any $j,k \in \mathbb{N} \cup \{0\}$ and $\lvert \zeta \rvert=1$. Hence $c_{jk}=0$ whenever $j-k\ne 1$ and we get that
\[
\varphi_d^\mathbb{C}(x)
= x \sum_{k=0}^\infty c_k (x \bar{x})^k = x \sum_{k=0}^\infty c_k \lvert x \rvert^{2k},
\]
where $c_k \coloneqq  c_{k+1,k}$. It remains to show that $c_k$ is exactly the $k$th Taylor coefficient of $\varphi_{2d}^\mathbb{R}$.  This will follow from the uniqueness of Taylor coefficients and $\varphi_d^\mathbb{C}(x)=\varphi_{2d}^\mathbb{R}(x)$ for all $x \in [-1,1]$, which we will establish next.

Let $u, v\in \mathbb{R}^n$. Then $\langle u,v\rangle= \langle v,u\rangle$.  
As in the discussion before \eqref{eq:dblint}, we may assume that $n=2$ and $u=(1,0)$, $v=(x,y)\in \mathbb{R}^2$, where $x=\langle u, v\rangle$, $y=\sqrt{1-x^2} \in \mathbb{R}$.  Then
\begin{align*}
\varphi_d^\mathbb{C}(x)&=\frac{1}{\pi^{2d}}\int_{\mathbb{C}^d}\!\int_{\mathbb{C}^d} \!
\frac{\langle z_1, x z_1 +y z_2\rangle_\mathbb{C}}{\lVert z_1 \rVert\lVert x z_1 +y z_2 \rVert} e^{-\lVert z_1 \rVert^2-\lVert z_2 \rVert^2}\, dz_1 dz_2\\
&=\frac{1}{\pi^{2d}}\int_{\mathbb{C}^d}\!\int_{\mathbb{C}^d} \!
\frac{\Re\langle z_1, x z_1 +y z_2\rangle_\mathbb{C}}{\lVert z_1 \rVert\lVert x z_1 +y z_2 \rVert} e^{-\lVert z_1 \rVert^2-\lVert z_2 \rVert^2}\, dz_1 dz_2\\
&=\frac{1}{(2\pi)^{2d}}\int_{\mathbb{R}^{2d}}\!\int_{\mathbb{R}^{2d}} \!
\frac{\langle w_1, x w_1 +yw_2\rangle_\mathbb{R}}{\|w_1\|\|x w_1 +y w_2\|} e^{-(\|w_1\|^2+\|w_2\|^2)/2}\, dw_1 dw_2 =\varphi_{2d}^\mathbb{R}(x).
\end{align*}
The last step follows from replacing $z_i = s_i+\mathrm{i}t_i \in\mathbb{C}^d$ by $w_i=(s_i,t_i)\in \mathbb{R}^{2d}$, $i=1,2$, and a change-of-variables $(w_1,w_2) \mapsto (w_1/\sqrt{2}, w_2/\sqrt{2})$. 

To prove \eqref{varphiexplform}, we may again assume $n=2$ and $u=(1,0)$, $v=(x,y)\in \mathbb{R}^2$, where $x=\langle u, v\rangle$ and $y=\sqrt{1-x^2} > 0$. Let $z_1,z_2\in\mathbb{R}^d$  and  $w = xz_1+yz_2\in \mathbb{R}^d$.  Then
\[
z_2=-\frac{x}{y}z_1 + \frac{1}{y} w, \qquad \lVert z_2 \rVert^2= \frac{x^2}{y^2}\lVert z_1 \rVert^2 + \frac{1}{y^2}\lVert w \rVert^2 -\frac{2x}{y^2}\langle z_1, w\rangle.
\]
With a change-of-variable $z_2\mapsto w$, $dz_2=y^{-d} dw$,
\begin{align*}
\varphi_d^\mathbb{R}(x)&=\frac{1}{(2\pi)^{d}}\!\int_{\mathbb{R}^d}\!\int_{\mathbb{R}^d} \!\Bigl\langle \frac{z_1}{\lVert z_1 \rVert},\frac{xz_1+yz_2}{\lVert xz_1+yz_2 \rVert}\Bigr\rangle e^{-(\lVert z_1 \rVert^2+\lVert z_2 \rVert^2)/2}\, dz_1dz_2\\
&=\frac{1}{(2\pi y)^{d}}\!\int_{\mathbb{R}^d}\!\int_{\mathbb{R}^d} \! \Bigl\langle \frac{z_1}{\lVert z_1 \rVert},\frac{w}{\lVert w \rVert}\Bigr\rangle e^{-(\lVert z_1 \rVert^2+\lVert w \rVert^2)/2y^2} e^{(x/y^2)\langle z_1, w\rangle}\, dz_1dw\\
&= \frac{y^{d}}{(2\pi)^{d}}\!\int_{\mathbb{R}^d}\!\int_{\mathbb{R}^d} \!\Bigl\langle \frac{z_1}{\lVert z_1 \rVert},\frac{w}{\lVert w \rVert}\Bigr\rangle e^{-(\lVert z_1 \rVert^2+\lVert w \rVert^2)/2} e^{x\langle z_1, w\rangle} \, dz_1dw,
\intertext{where the last expression follows from another change-of-variables $(z_1,z_2) \mapsto (z_1/y,z_2/y)$, and upon Taylor expanding $e^{x\langle z_1,w\rangle}$ becomes}
&=\frac{(1-x^2)^{d/2}}{(2\pi)^d}\sum_{k=0}^\infty \frac{ x^k}{k!}\!\int_{\mathbb{R}^d}\!\int_{\mathbb{R}^d} \!\!\frac{\langle z_1,w \rangle^{k+1}}{\lVert z_1 \rVert\lVert w \rVert} e^{-(\lVert z_1 \rVert^2+\lVert w \rVert^2)/2} \, dz_1dw.
\end{align*}
Let $I_k$ denote the double integral above.
For even $k$, as $\langle -z_1,w \rangle^{k+1}=-\langle z_1,w \rangle^{k+1}$, we get $I_k = 0$.  For odd $k$, we introduce polar coordinates on each copy of $\mathbb{R}^d$  as in Appendix~\ref{appendix}, with $\lVert z_1 \rVert=\rho_1$, $\lVert w \rVert=\rho_2$, and
\[
I_k =
\int_0^\infty\!\!\!\int_0^\infty\!\!\!\int_{\lVert z_1 \rVert=\rho_1}\!\int_{\lVert w \rVert=\rho_2} \!\!\!\langle\rho_1^{-1}z_1,\rho_2^{-1}w\rangle^{k+1} e^{-(\rho_1^2+\rho_2^2)/2}\rho_1^{d+k-1}\rho_2^{d+k-1} \,d\rho_1 d\rho_2d\sigma_1d\sigma_2.
\]
As the surface area of the $(d-1)$-sphere in $\mathbb{R}^d$ is $2\pi^{d/2}/\Gamma(d/2)$,
by \eqref{eq:moments} in Lemma~\ref{lem:expectations}, we have
\[
\frac{\Gamma(d/2)^2}{4\pi^d}\!\int_{\lVert z_1 \rVert=\rho_1}\!\int_{\lVert w \rVert=\rho_2} \langle\rho_1^{-1}z_1,\rho_2^{-1}w\rangle^{k+1}\,d\sigma_1d\sigma_2= \dfrac{1}{\sqrt{\pi}}\dfrac{\Gamma\bigl(d/2\bigr)\Gamma\bigl((k+2)/2\bigr)}{\Gamma\bigl((d+k+1)/2\bigr)};
\]
and a direct calculation gives
\[
\int_0^\infty\!\!\!\int_0^\infty\!\! \rho_1^{d+k-1}\rho_2^{d+k-1}e^{-\rho_1^2/2}e^{-\rho_2^2/2}\,d\rho_1 d\rho_2
=\biggl[\int_0^\infty \rho^{d+k-1}e^{-\rho^2/2}d\rho\biggr]^2
=2^{d+k-2}\Gamma((d+k)/2)^2.
\]
Taken together, we obtain the value of $I_k$ for odd $k$ and thus \eqref{varphiexplform}.
\end{proof}
The Taylor coefficients $b_{k,d}$ in Lemma~\ref{varphicCform}\eqref{varphicCform2} may be obtained from  Lemma~\ref{varphicCform}\eqref{varphiexplform} by expanding $(1-x^2)^{d/2}$ as a power series ($d$ odd) or a polynomial  ($d$ even).

The case $d =1$ of \eqref{defvarphic} will be particularly important for us  and requires special attention. For the rest of this article, we will write
\[
\varphi_\mathbb{R} \coloneqq \varphi_1^\mathbb{R}, \qquad \varphi_\mathbb{C} \coloneqq \varphi_1^\mathbb{C}, \qquad \varphi_\Bbbk \coloneqq \varphi_1^\Bbbk.
\]
In this case, the power series in Lemma~\ref{varphicCform}\eqref{varphicCform2} have the following forms:
\begin{align}
\varphi_\mathbb{R}(x) &= \frac{2}{\pi}\sum_{k=0}^\infty\frac{(2k-1)!!}{(2k)!!(2k+1)} x^{2k+1} = \dfrac{2}{\pi}\arcsin x,\label{eq:haa}\\
\varphi_\mathbb{C}(x) &= \frac{\pi}{4}\sum_{k=0}^\infty\left(\frac{(2k-1)!!}{(2k)!!}\right)^2 \frac{1}{k+1}x\lvert x\rvert^{2k} =  \dfrac{\pi}{4}x\;  {}_{2}F_{1}\Bigl(\frac{1}{2}, \frac{1}{2}; 2; \lvert x \rvert^2 \Bigr) .\label{defphiC}
\end{align}
As we noted earlier, $\varphi_\mathbb{C}$ was first introduced by Haagerup \cite{Haagerup}, who also showed that it has an integral expressions on the complex unit disk $\lvert x \rvert \le 1$ given by
\begin{equation}\label{eq:int}
\varphi_\mathbb{C}(x)= x\int_0^{\pi/2} \!\!\!
\frac{\cos^2t}{(1-\lvert x \rvert^2\sin^2t)^{1/2}}\,dt.
\end{equation}

The reason $\varphi_\Bbbk$ makes an appearance in the Grothendieck inequality is a result of the following identity \eqref{2-eq1} on the `Gaussian inner product of the sign of Hermitian inner products' \cite{Jameson, Haagerup}. The other two related identities \eqref{AN6} and \eqref{AN7} are due to Alon and Naor in \cite{Alon3} for $\Bbbk = \mathbb{R}$; we will fill-in the proof for $\Bbbk = \mathbb{C}$ here following their ideas.
\begin{lemma}\label{2-thm1}
For any $u,v \in \Bbbk^{n}$,
\begin{align}
\int_{\Bbbk^n} \!\! \langle u,z\rangle \langle z,v\rangle G_n^\Bbbk(z)\,dz &= \langle u, v \rangle. \label{AN6}
\intertext{If in addition $\lVert u \rVert=\lVert v \rVert=1$, then}
\int_{\Bbbk^n} \!\! \langle u,z\rangle \sign \langle z,v\rangle G_n^\Bbbk(z)\,dz &=
\begin{cases}
\sqrt{2/\pi} \, \langle u, v \rangle & \text{if}\;  \Bbbk = \mathbb{R},\\
\sqrt{\pi/4} \, \langle u, v \rangle & \text{if}\;  \Bbbk = \mathbb{C},
\end{cases} \label{AN7}
\intertext{and}
\int_{\Bbbk^n} \!\! \sign\langle u,z\rangle \sign\langle z,v\rangle G_n^\Bbbk(z)\,dz &=
\varphi_\Bbbk(\langle u, v \rangle). \label{2-eq1}
\end{align}
\end{lemma}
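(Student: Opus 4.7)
My plan is to treat the three identities in turn, in increasing order of content.

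For \eqref{AN6}, the idea is a straightforward second-moment computation. Over $\mathbb{R}$, expand in coordinates and use that the components of $z$ under $G_n^\mathbb{R}$ are i.i.d.\ standard normals, so $\int z_iz_j G_n^\mathbb{R}(z)\,dz = \delta_{ij}$; summing $u_iv_j\delta_{ij}$ gives $\langle u,v\rangle$. Over $\mathbb{C}$, similarly the components of $z$ under $G_n^\mathbb{C}(z)=\pi^{-n}e^{-\|z\|^2}$ are i.i.d.\ standard complex Gaussians with $\int \bar{z}_iz_j G_n^\mathbb{C}(z)\,dz = \delta_{ij}$. Since $\langle u,z\rangle\langle z,v\rangle = \sum_{i,j} \bar{u}_iv_j z_i\bar{z}_j$ (real case $u_iv_jz_iz_j$) or $\bar{u}_iv_j\bar{z}_iz_j$ in the complex case with our convention, the integral collapses to $\langle u,v\rangle$.

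For \eqref{AN7}, the plan is to exploit the orthogonal/unitary invariance of $G_n^\Bbbk$. Pick $Q$ with $Qv = e_1$; after the change of variables $z\mapsto Q^*z$, the integral becomes $\int \langle Qu,z\rangle\sign\langle z,e_1\rangle G_n^\Bbbk(z)\,dz$. Write $Qu = \alpha e_1 + w$ with $w\perp e_1$, so $\alpha = \langle Qu,e_1\rangle = \langle u,v\rangle$. The $w$-part contributes $\sum_{j\ge 2}\bar{w}_j\int z_j\sign\langle z,e_1\rangle G_n^\Bbbk(z)\,dz$, which vanishes by independence of $z_j$ from $z_1$ and the fact that $\int z_j G^\Bbbk_n(z)\,dz = 0$. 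What remains is $\bar\alpha\int z_1\sign \bar{z}_1 G^\Bbbk_1(z_1)\,dz_1 = \bar\alpha\, \mathbb{E}\lvert z_1\rvert$, which is the half-normal mean $\sqrt{2/\pi}$ over $\mathbb{R}$ and the Rayleigh-type integral $\pi^{-1}\!\int_0^{2\pi}\!\!\int_0^\infty r^2e^{-r^2}\,dr\,d\theta = \sqrt{\pi}/2 = \sqrt{\pi/4}$ over $\mathbb{C}$. Since $\langle u,v\rangle$ is the conjugate of $\alpha$ under our conventions, we recover the claimed constants times $\langle u,v\rangle$.

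For \eqref{2-eq1}, observe that this is precisely the $d=1$ case of the definition \eqref{defvarphic}. Indeed, for $d=1$ the matrix $Z\in\Bbbk^{1\times n}$ is a row vector which we write as $z^*$ for $z\in\Bbbk^n$, so $Zu = \langle z,u\rangle$ and $Zv = \langle z,v\rangle$ are scalars, and the inner product in $\Bbbk^1$ gives $\langle \sign Zu,\sign Zv\rangle = \overline{\sign\langle z,u\rangle}\,\sign\langle z,v\rangle = \sign\langle u,z\rangle\,\sign\langle z,v\rangle$. The discussion following \eqref{defvarphic} (unitary invariance $Z\mapsto ZQ$ followed by choosing $Q$ to align $u,v$ in the first two coordinates) shows that the integral indeed depends on $u,v$ only through $\langle u,v\rangle$ when $\|u\|=\|v\|=1$, so the right-hand side $\varphi_\Bbbk(\langle u,v\rangle)$ is well-defined and equals the integral tautologically.

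The only genuine calculation is the one-dimensional Gaussian moment in the complex part of \eqref{AN7}: one must be careful with the normalization $\pi^{-n}e^{-\|z\|^2}$ (as opposed to the real convention $(2\pi)^{-n/2}e^{-\|z\|^2/2}$) and with the conjugation hidden in $\sign\langle z,e_1\rangle = \overline{z_1}/\lvert z_1\rvert$, so that $z_1\sign\langle z,e_1\rangle = \lvert z_1\rvert$ and the integral reduces to a standard polar computation. Everything else is bookkeeping with inner-product conventions and the orthogonal/unitary invariance of the Gaussian measure.
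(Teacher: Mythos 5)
Your proposal is correct and follows essentially the same route as the paper: \eqref{AN6} by expanding in coordinates and using the second-moment identity, \eqref{AN7} by unitary/orthogonal invariance reducing to $v = e_1$ with the orthogonal part of $u$ integrating to zero by independence and the surviving term reducing to a one-dimensional Gaussian moment, and \eqref{2-eq1} as the tautological $d=1$ case of \eqref{defvarphic}. (The paper handles the conjugation over $\mathbb{C}$ by first rescaling $u$ by a unimodular $\zeta$ to make $u_1\ge 0$ rather than tracking $\bar\alpha$ directly, and cites Alon--Naor for the real case of \eqref{AN6}--\eqref{AN7}; these are cosmetic differences.) One small wrinkle: you write $\alpha=\langle Qu,e_1\rangle=\langle u,v\rangle$ but with the paper's convention $\langle x,y\rangle=x^*y$ the coefficient of $e_1$ in $Qu$ is $\langle e_1,Qu\rangle$, and $\langle u,v\rangle=\langle Qu,e_1\rangle=\bar\alpha$; your later sentence already corrects this, and the final answer is right.
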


\begin{proof}
Setting $d=1$  in \eqref{defvarphic} gives us \eqref{2-eq1}.
The identities \eqref{AN6} and \eqref{AN7} for $\Bbbk = \mathbb{R}$ appeared in \cite[Equations~4.2 and 4.3]{Alon3}. For $\Bbbk = \mathbb{C}$, since
\[
\frac{1}{\pi^n}\int_{\mathbb{C}^n} \!\! z_i \bar{z}_j  e^{-\lVert z \rVert^2} \,dz =\delta_{ij},
\]
we obtain  \eqref{AN6}  via
\[
\frac{1}{\pi^n}\int_{\mathbb{C}^n} \!\! \langle u, z\rangle \langle z, v\rangle e^{-\lVert z \rVert^2} \,dz = \frac{1}{\pi^n}\int_{\mathbb{C}^n} \! \Bigl( \sum\nolimits_{i=1}^n \overline{u}_i z_i\Bigr)\Bigl(\sum\nolimits_{j=1}^n \bar{z}_j v_j \Bigr) e^{-\lVert z \rVert^2} \,dz
=\sum_{i=1}^n \overline{u}_i v_i=\langle u, v\rangle.
\]
Let $\lVert u \rVert=\lVert v \rVert=1$. We claim that
\begin{equation}\label{AN7C}
\frac{1}{\pi^n}\int_{\mathbb{C}^n} \!\!  \langle u, z\rangle \sign  \langle z, v\rangle e^{-\lVert z \rVert^2} \,dz =\frac{\sqrt{\pi}}{2} \langle u, v\rangle.
\end{equation}
Since both sides of \eqref{AN7C} are invariant if $u,v$ are replaced by $Qu, Qv$ for any unitary $Q\in\mathbb{C}^{n\times n}$, we may assume that $v=(1,0,\dots,0)$. Furthermore, since $u=(u_1,\dots,u_n)$ may be multiplied by $\zeta\in\mathbb{C}$, $\lvert \zeta \rvert =1$, so that
$\zeta u_1\ge 0$,  we may assume  that  $u_1\ge 0$. Hence
\[
\langle u,v\rangle =u_1,\quad \langle u,z\rangle= \sum_{i=1}^n \overline{u}_i z_i, \quad \langle z,v\rangle=\bar{z}_1=re^{\mathrm{i}\vartheta}, \quad \sign \langle z,v\rangle=e^{\mathrm{i}\vartheta}
\]
for some $r >0$, $\vartheta \in [0,2\pi)$.
Write $z=x+\mathrm{i}y$ where $x,y \in \mathbb{R}^n$ and set  $w=(x,y)\in\mathbb{R}^{2n}$.
Then the left side of \eqref{AN7C} becomes
\begin{equation}\label{AN7C2}
\frac{1}{\pi^n}\int_{\mathbb{R}^{2n}}\!
\Bigl(\sum\nolimits_{i=1}^n \overline{u}_i z_i\Bigr)\sign (\bar{z}_1) e^{-\lVert x \rVert^2 - \lVert y \rVert^2} dx_n\, dy_n\cdots dx_1\, dy_1.
\end{equation}
By Fubini, we may integrate first with respect to $dx_i dy_i$ to see that
\[
\int_{\mathbb{R}^{2n}}\!\!\! z_i \sign (\bar{z}_1)  e^{-\lVert x \rVert^2 - \lVert y \rVert^2} dx_n\, dy_n\cdots dx_1\, dy_1=0, \qquad i=2,\dots,n,
\]
and so the integral in \eqref{AN7C2} simplifies as
\begin{align*}
&\frac{1}{\pi^n} \int_{\mathbb{R}^{2n}}\!\!\! u_1 z_1\sign (\bar{z}_1)e^{-\lVert x \rVert^2 - \lVert y \rVert^2} dx_n\, dy_n\cdots dx_1\, dy_1\\
&\qquad =\frac{u_1}{\pi} \int_{\mathbb{R}^2}\lvert z_1 \rvert e^{-\lvert z_1 \rvert^2} dx_1\, dy_1
=\frac{u_1}{\pi}\int_0^{\infty}\!\!\!\int_0^{2\pi}\!\! r^2e^{-r^2}\, dr \, dt
= \frac{\sqrt{\pi}}{2} u_1,
\end{align*}
which gives the right side of \eqref{AN7C} as $u_1 =\overline{u}_1 = \langle u,v\rangle$.
\end{proof}

For a function $\varphi : S \to S$, we write $\varphi^{\circ k} = \varphi \circ \dots \circ \varphi$ for the $k$-fold composition of $\varphi$ with itself. We now establish an analog of \cite[Lemma~3.5]{Haagerup} for our use later.
\begin{lemma}\label{Halem3.5} 
\begin{enumerate}[\upshape (i)]
\item\label{it:PhiRe} The function $\varphi_\mathbb{R}$ is a homeomorphism of the closed  interval $[-1,1]$. Its fixed points are
\[
\{ x \in [-1,1] : \lvert x \rvert = 0 \text{ or }1\}.
\]
If $0 < \lvert x \rvert < 1$, then
\[
\lim_{k\to\infty} \varphi_\mathbb{R}^{\circ k}(x)=0.
\]

\item\label{it:phiC} The function $\varphi_\mathbb{C}$ is a homeomorphism of the closed unit disk $\mathbb{D} =\{ re^{\mathrm{i}\vartheta} \in \mathbb{C} : r\in[0,1],\; \vartheta \in [0,2\pi)\}$ that maps each line segment $[0, e^{\mathrm{i}\vartheta}] \coloneqq \{ re^{\mathrm{i}\vartheta} : r\in[0,1]\}$ to itself  for any  $\vartheta \in [0,2\pi)$. Its fixed points are
\[
\{z\in \mathbb{D} : \lvert z \rvert = 0\text{ or }1 \} = \mathbb{T} \cup \{0\}.
\]
If $0 < \lvert z \rvert < 1$, then
\[
\lim_{k\to\infty} \varphi_\mathbb{C}^{\circ k}(z)=0.
\]
\end{enumerate}
\end{lemma}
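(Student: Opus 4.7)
The plan is to reduce both parts to one-dimensional dynamics via the explicit forms \eqref{eq:haa} and \eqref{eq:int}, so that each claim becomes a statement about a continuous self-map of a real interval.

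For \eqref{it:PhiRe}, the identity $\varphi_\mathbb{R}(x)=(2/\pi)\arcsin x$ exhibits $\varphi_\mathbb{R}$ as a continuous, strictly increasing bijection of $[-1,1]$ with $\varphi_\mathbb{R}(\pm 1)=\pm 1$ and $\varphi_\mathbb{R}(0)=0$, hence automatically a homeomorphism. To identify the fixed points, I would invoke the strict convexity of $\arcsin$ on $[0,1]$ (its second derivative $x(1-x^2)^{-3/2}$ is positive on $(0,1)$): the chord from $(0,0)$ to $(1,\pi/2)$ lies strictly above the graph on $(0,1)$, giving $\arcsin x<(\pi/2)x$ and hence $\varphi_\mathbb{R}(x)<x$ there; odd symmetry then gives $\varphi_\mathbb{R}(x)>x$ on $(-1,0)$. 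For the iteration, for any $x\in(0,1)$ the sequence $\{\varphi_\mathbb{R}^{\circ k}(x)\}$ is strictly decreasing (by the inequality just established together with monotonicity of $\varphi_\mathbb{R}$) and bounded below by $0$, so converges to some $L\in[0,x)$; continuity forces $L=\varphi_\mathbb{R}(L)$, and the fixed-point description forces $L=0$. The case $x\in(-1,0)$ follows from oddness.

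For \eqref{it:phiC}, the integral representation \eqref{eq:int} shows that $\varphi_\mathbb{C}(re^{\mathrm{i}\vartheta})=e^{\mathrm{i}\vartheta}h(r)$ where
\[
h(r)\coloneqq r\int_0^{\pi/2}\!\!\!\frac{\cos^2 t}{(1-r^2\sin^2 t)^{1/2}}\,dt,
\]
so every ray $[0,e^{\mathrm{i}\vartheta}]$ is preserved and the entire analysis reduces to the one-dimensional map $h\colon[0,1]\to[0,1]$. Continuity is clear; $h(0)=0$ and $h(1)=\int_0^{\pi/2}\cos t\,dt=1$; and strict monotonicity of $h$ on $[0,1]$ follows from the power series in \eqref{defphiC}, whose coefficients (viewed as a series in $r=x\ge 0$) are all strictly positive. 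Hence $h$ is a homeomorphism of $[0,1]$, and $\varphi_\mathbb{C}$ is accordingly a homeomorphism of $\mathbb{D}$ preserving each ray.

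The main obstacle is the strict comparison $h(r)<r$ for $0<r<1$, which pins down the fixed points of $\varphi_\mathbb{C}$ inside $\mathbb{D}\setminus\mathbb{T}$ and drives the iteration. I would establish it via the pointwise inequality $1-r^2\sin^2 t>\cos^2 t$ on $(0,\pi/2]$ (valid whenever $r<1$), which yields $\cos^2 t/(1-r^2\sin^2 t)^{1/2}<\cos t$ on $(0,\pi/2]$; integrating gives $h(r)<r\int_0^{\pi/2}\cos t\,dt=r$. Granted this, for any $z$ with $0<|z|<1$ the orbit $\varphi_\mathbb{C}^{\circ k}(z)$ stays on the ray through $z$ and has moduli $h^{\circ k}(|z|)$ forming a strictly decreasing sequence in $(0,|z|)$; by continuity it converges to a fixed point of $h$ in $[0,|z|)$, which must be $0$.
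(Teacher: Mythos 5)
Your proof is correct, and for part \eqref{it:PhiRe} it essentially mirrors the paper: both use the closed form $\varphi_\mathbb{R}=(2/\pi)\arcsin$, strict convexity on $(0,1)$ (positive second derivative, chord argument) to conclude $\varphi_\mathbb{R}(x)<x$ there, and the monotone iteration, with oddness handling $(-1,0)$.

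For part \eqref{it:phiC} the overall structure matches the paper --- reduce to the radial map $h$ via $\varphi_\mathbb{C}(z)=\sign(z)\,h(\lvert z\rvert)$, show $h$ is an increasing homeomorphism of $[0,1]$ with $h(r)<r$ on $(0,1)$, then iterate --- but you substantiate the two key one-variable facts by a genuinely different route. The paper asserts $h'>0$ and $h''>0$ on $(0,1)$ (implicitly by differentiating the integral representation or the power series) and uses strict convexity together with $h(0)=0$, $h(1)=1$ to get $h(r)<r$. You instead read off strict monotonicity from the all-positive coefficients of the power series \eqref{defphiC}, and prove $h(r)<r$ by a direct pointwise comparison of integrands in \eqref{eq:int}: since $1-r^2\sin^2 t>\cos^2 t$ for $t\in(0,\pi/2]$ and $r\in(0,1)$, one has $\cos^2 t/(1-r^2\sin^2 t)^{1/2}<\cos t$, which integrates to $h(r)<r\int_0^{\pi/2}\cos t\,dt=r$. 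This avoids differentiating the Haagerup integral a second time and makes the strictness of the inequality manifest rather than inherited from a sign assertion on $h''$. Both approaches are sound; the paper's convexity argument is terser, while yours is more self-contained and arguably more elementary.
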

\begin{proof}
\begin{enumerate}[\upshape (i)]
\item Let $\varphi =\varphi_\mathbb{R}$. By \eqref{eq:int}, it is clearly a homemorphism of $[-1,1]$ with $\varphi(0)=0$, $\varphi(1)=1$, and $\varphi(-x)=-\varphi(x)$ for $x\in[1,-1]$.  
As $\varphi'(x)>0$ and $\varphi''(x)>0$ for $x\in(0,1)$, $\varphi$ is strictly increasing and strictly convex on $[0,1]$.  Thus $\varphi(x)<x$ for $x\in (0,1)$.  Fix $x\in (0,1)$. The sequence $y_k=\varphi^{\circ k}(x)$ is strictly decreasing and bounded below by $0$.  Let $y=\lim_{k\to\infty}y_k$.  Then $\varphi(y)=y$ and so $y=0$.  Now fix $x\in(-1,0)$. Since $\varphi(-x)=-\varphi(x)$, $y_k =\varphi^{\circ k}(x)$ is a strictly increasing sequence converging to $0$.

\item  Let $\varphi =\varphi_\mathbb{C}$ and let $h$ be the restriction of $\varphi$ to $[0,1]$. By \eqref{eq:int}, $h(0)=0$ and $h(1)=1$.  As $h'(x)>0$  for $x\in(0,1)$, $h$ is strictly increasing on $[0,1]$.  Thus $h$ is a homeomorphism of $[0,1]$. Since $\varphi(z)=\sign (z) h(\lvert z \rvert)$, it follows that $\varphi$ is a homeomorphism of $\mathbb{D}$ taking each $[0,e^{\mathrm{i}\theta}]$ to itself.

As $h''(x)>0$  for $x\in(0,1)$, $h$ is strictly convex on $[0,1]$. Thus $0<h(x)<x$ for $x\in (0,1)$ and so $0$ and $1$ are its unique fixed points on $[0,1]$.   Since $\varphi(z)=\sign (z) h(\lvert z \rvert)$, it follows that the fixed points of $\varphi$ are either the origin or on the unit circle.

As in case \eqref{it:PhiRe}, we have $\lim_{k\to \infty} h^{\circ k}(x)=0$ for any $x\in(0,1)$. Since  $\varphi(z)=\sign (z) h(\lvert z\rvert )$, we must also have $\lim_{k\to\infty} \varphi^{\circ k}(z)=0$ whenever $\lvert z\rvert \in (0,1)$.\qedhere
\end{enumerate}
\end{proof}

\section{Symmetric Grothendieck inequality with effective bounds}\label{sec:bounds}

In this section we will establish the two versions of symmetric Grothendieck inequalities mentioned earlier:
\begin{equation}\label{eq:SGI}
\lVert A \rVert_\gamma \le K_\gamma \lVert A \rVert_\theta, \qquad \lVert A \rVert_\Gamma \le K_\Gamma \lVert A \rVert_\Theta
\end{equation}
over both $\mathbb{R}$ and $\mathbb{C}$ with explicit bounds for the constants.

%
%

\begin{lemma}\label{lem:Phi}
Let  $\Bbbk = \mathbb{R}$ or $\mathbb{C}$. Let $\mathbb{D}=\{z\in \Bbbk: \lvert z \rvert \le 1\}$ and $\mathbb{S}_\eq^n(\mathbb{D}) =\{A \in \mathbb{S}_\eq^n :  a_{ij} \in \mathbb{D}\}$. The map
\begin{equation}\label{eq:Phi}
\Phi_\Bbbk: \mathbb{S}_\eq^n(\mathbb{D}) \to \mathbb{S}_\eq^n(\mathbb{D}), \quad (a_{ij}) \mapsto \bigl(\varphi_\Bbbk(a_{ij})\bigr),
\end{equation}
is a homeomorphism of $\mathbb{S}_\eq^n(\mathbb{D})$.  Its fixed points are
\[
\{A \in \mathbb{S}_\eq^n(\mathbb{D}) : \lvert a_{ij}\rvert = 0\text{ or }1 \text{ for all } i,j = 1,\dots,n\}.
\]
\end{lemma}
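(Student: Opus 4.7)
The plan is to reduce everything to the entrywise behavior of $\varphi_\Bbbk$ on the closed unit disk $\mathbb{D}$, exploiting the structural facts already established in Lemma~\ref{Halem3.5}.

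First, I would verify that $\Phi_\Bbbk$ is well-defined, i.e., that $\Phi_\Bbbk(A) \in \mathbb{S}_\eq^n(\mathbb{D})$ whenever $A \in \mathbb{S}_\eq^n(\mathbb{D})$. Three things need to be checked: (a) entries stay in $\mathbb{D}$, which is immediate from Lemma~\ref{Halem3.5} since $\varphi_\Bbbk(\mathbb{D}) = \mathbb{D}$; (b) equality of diagonal entries is preserved because $\Phi_\Bbbk$ acts entrywise and diagonal entries of an element of $\mathbb{S}_\eq^n$ are real numbers in $[-1,1]$ (in the complex case, note that $\varphi_\mathbb{C}$ restricts to a real-valued function on $[-1,1]$ since $\varphi_\mathbb{C}(x) = \varphi_2^\mathbb{R}(x)$ by Lemma~\ref{varphicCform}\eqref{varphicCform2}); (c) the symmetric/Hermitian property is preserved. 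Over $\mathbb{R}$, symmetry is obvious. Over $\mathbb{C}$, using the representation $\varphi_\mathbb{C}(z) = \sign(z)\, h(\lvert z \rvert)$ with $h$ real-valued (Lemma~\ref{Halem3.5}\eqref{it:phiC}), one checks that $\varphi_\mathbb{C}(\bar z) = \overline{\varphi_\mathbb{C}(z)}$, so the Hermitian relation $a_{ij} = \overline{a_{ji}}$ passes to $\varphi_\mathbb{C}(a_{ij}) = \overline{\varphi_\mathbb{C}(a_{ji})}$.

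Next, $\Phi_\Bbbk$ is continuous on $\mathbb{S}_\eq^n(\mathbb{D})$ as an entrywise application of the continuous map $\varphi_\Bbbk$. To produce a continuous inverse, I would define $\Psi_\Bbbk : \mathbb{S}_\eq^n(\mathbb{D}) \to \mathbb{S}_\eq^n(\mathbb{D})$ by $\Psi_\Bbbk(B) \coloneqq \bigl(\varphi_\Bbbk^{-1}(b_{ij})\bigr)$; the map $\varphi_\Bbbk^{-1}$ exists and is continuous on $\mathbb{D}$ by Lemma~\ref{Halem3.5}. Exactly the same three checks as above show that $\Psi_\Bbbk$ maps $\mathbb{S}_\eq^n(\mathbb{D})$ into itself: in particular, $\varphi_\mathbb{C}^{-1}$ still preserves line segments $[0, e^{\mathrm{i}\vartheta}]$ by Lemma~\ref{Halem3.5}\eqref{it:phiC}, hence also commutes with complex conjugation, and maps $[-1,1]$ to $[-1,1]$. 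Finally $\Psi_\Bbbk \circ \Phi_\Bbbk$ and $\Phi_\Bbbk \circ \Psi_\Bbbk$ act as the identity entrywise, so $\Phi_\Bbbk$ is a homeomorphism.

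For the description of fixed points, observe that $\Phi_\Bbbk(A) = A$ if and only if $\varphi_\Bbbk(a_{ij}) = a_{ij}$ for every $(i,j)$. By Lemma~\ref{Halem3.5} the set of fixed points of $\varphi_\Bbbk$ inside $\mathbb{D}$ is precisely $\{z \in \mathbb{D} : \lvert z \rvert \in \{0,1\}\}$, which gives the stated characterization.

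The only mildly delicate step is the complex Hermitian case in points (b) and (c): one needs the explicit structural identity $\varphi_\mathbb{C}(z) = \sign(z)\,h(\lvert z \rvert)$ with $h$ real-valued, together with $\varphi_\mathbb{C}(x) \in \mathbb{R}$ for $x \in [-1,1]$, in order to conclude that $\Phi_\mathbb{C}$ respects both the Hermitian constraint and the equal-diagonal constraint. Everything else is a direct transfer of Lemma~\ref{Halem3.5} from $\mathbb{D}$ to $\mathbb{S}_\eq^n(\mathbb{D})$ via componentwise application.
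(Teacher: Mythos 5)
Your proof is correct and takes essentially the same approach as the paper: both reduce the lemma to the entrywise properties of $\varphi_\Bbbk$ from Lemma~\ref{Halem3.5}, using the conjugation identity $\varphi_\Bbbk(\bar z)=\overline{\varphi_\Bbbk(z)}$ to see that the Hermitian and equal-diagonal constraints are preserved. The paper's proof is much terser (a single sentence citing that $\varphi_\Bbbk$ is a homeomorphism of $\mathbb{D}$ commuting with conjugation), so you are simply spelling out the well-definedness and explicit-inverse checks that the paper leaves implicit.
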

\begin{proof}
For $\Bbbk = \mathbb{R}$ or $\mathbb{C}$, $\varphi_\Bbbk(z)=\overline{\varphi_\Bbbk(\bar{z})}$ and $\varphi_\Bbbk$ is a homeomorphism of  $\mathbb{D}=\{z\in \Bbbk: \lvert z\rvert \le 1\}$, it follows that $\Phi_\Bbbk$ is a homeomorphism of $\mathbb{S}_\eq^n(\mathbb{D})$.   Since the fixed point of $\varphi_\Bbbk$  are the $z$'s with $\lvert z \rvert \in \{0,1\}$, it follows that the fixed points  of  $\Phi_\Bbbk$ are exactly the matrices with $\lvert a_{ij}\rvert \in \{0,1\}$ for all $i,j = 1,\dots,d$. 
\end{proof}

We denote the Schur product of two matrices $A, B \in \mathbb{S}^n$ by $A \circ B \in \mathbb{S}^n$. Recall that this is simply the coordinatewise product, i.e., the $(i,j)$th entry of $A \circ B$ is $a_{ij}b_{ij}$, $i,j=1,\dots,n$.  We also write $A^{\circ k}$ for the $k$-fold Schur product of $A$ with itself, i.e.,  the $(i,j)$th entry of $A^{\circ k}$ is $a_{ij}^k$.
\begin{lemma}\label{basgrin}
Let $A\in \mathbb{S}^n$ and $G \in \mathbb{G}^n$. For any $k\in\mathbb{N}$, set
\begin{equation}\label{eq:Mk}
M_k=A \circ \Phi_\Bbbk(G)^{\circ (k+1)} \circ \overline{\Phi_\Bbbk(G)}^{\circ k} \in \mathbb{S}^n
\end{equation}
with $\Phi_\Bbbk$ as defined in \eqref{eq:Phi}.
Then for all $k \in \mathbb{N}$,
\begin{equation}\label{eq:noninc}
\lVert M_k\rVert_\theta \le \lVert A\rVert_\theta.
\end{equation}
\end{lemma}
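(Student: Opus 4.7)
The plan is to express each entry of $M_k$ as a $(2k+1)$-fold Gaussian integral using the identity \eqref{2-eq1} in Lemma~\ref{2-thm1}, then swap sum and integral to reduce the estimate to the definition of $\lVert A\rVert_\theta$. Since $G\in \mathbb{G}^n$ is a correlation matrix, write $G=G(u_1,\dots,u_n)$ with $\lVert u_i\rVert =1$; then by \eqref{2-eq1},
\[
\varphi_\Bbbk(g_{ij}) = \int_{\Bbbk^n}\!\! \sign\langle u_i,z\rangle \sign\langle z,u_j\rangle\, G_n^\Bbbk(z)\,dz,
\]
and the complex conjugate factor simply replaces the integrand by its conjugate. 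Taking the $(k+1)$-fold product for $\varphi_\Bbbk(g_{ij})^{k+1}$ and the $k$-fold product for $\overline{\varphi_\Bbbk(g_{ij})}^{k}$ and collecting the two into a single integral over $Z=(z_1,\dots,z_{2k+1})\in (\Bbbk^n)^{2k+1}$ against the product Gaussian density $\prod_l G_n^\Bbbk(z_l)$ will produce an integrand that is a product of sign functions.

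The crucial algebraic step is to regroup the sign functions so that the $i$-dependence and the $j$-dependence separate. Using the identity $\sign\langle u_i,z\rangle = \overline{\sign\langle z,u_i\rangle}$ (and the $\Bbbk=\mathbb{R}$ case being even easier), define, for each index $i$ and each $Z$,
\[
\eta_i(Z) \;\coloneqq\; \prod_{l=1}^{k+1} \overline{\sign\langle z_l,u_i\rangle} \;\cdot\; \prod_{l=k+2}^{2k+1} \sign\langle z_l,u_i\rangle,
\]
so that $\lvert \eta_i(Z)\rvert = 1$ (this is where the convention $\sign 0 \coloneqq 1$ is convenient) and a direct bookkeeping shows that the integrand in the expansion of $\varphi_\Bbbk(g_{ij})^{k+1}\overline{\varphi_\Bbbk(g_{ij})}^{k}$ equals $\overline{\eta_i(Z)}\,\eta_j(Z)$. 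Hence
\[
(M_k)_{ij} \;=\; a_{ij} \int \overline{\eta_i(Z)}\,\eta_j(Z)\,\prod_{l=1}^{2k+1} G_n^\Bbbk(z_l)\,dZ.
\]

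Now fix any $\delta_1,\dots,\delta_n\in \Bbbk$ with $\lvert\delta_i\rvert=1$ and set $\tilde{\delta}_i(Z)\coloneqq \eta_i(Z)\delta_i$, which still satisfies $\lvert\tilde{\delta}_i(Z)\rvert=1$. Swapping the finite sum with the integral,
\[
\sum_{i,j=1}^n (M_k)_{ij}\,\bar{\delta}_i\delta_j \;=\; \int \Biggl[\sum_{i,j=1}^n a_{ij}\,\overline{\tilde{\delta}_i(Z)}\,\tilde{\delta}_j(Z)\Biggr]\,\prod_{l=1}^{2k+1} G_n^\Bbbk(z_l)\,dZ.
\]
For each fixed $Z$, the inner bracket is bounded in modulus by $\lVert A\rVert_\theta$ by the very definition of the $\theta$-seminorm in \eqref{eq:seminorms2}. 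Since $\prod_l G_n^\Bbbk(z_l)\,dZ$ is a probability measure, the triangle inequality yields $\lvert\sum_{i,j}(M_k)_{ij}\bar\delta_i\delta_j\rvert \le \lVert A\rVert_\theta$, and taking the maximum over unimodular $\delta_i$ gives \eqref{eq:noninc}.

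The only real obstacle is the combinatorial/complex-conjugation bookkeeping in the second step; once one observes that $\sign\langle u_i,z\rangle$ and $\sign\langle z,u_j\rangle$ depend on disjoint index variables and each carries unit modulus, the rest is a direct application of Jensen/triangle inequality against the Gaussian probability measure. The argument is uniform in $\Bbbk=\mathbb{R}$ and $\mathbb{C}$ since \eqref{2-eq1} is stated over both fields.
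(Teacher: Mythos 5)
Your argument is correct and relies on the same core ingredients as the paper's proof: the Gaussian representation \eqref{2-eq1}, the pointwise bound $\bigl\lvert\sum_{i,j}a_{ij}\bar\varepsilon_i\varepsilon_j\bigr\rvert\le\lVert A\rVert_\theta$ for unimodular $\varepsilon_i$, and averaging against a Gaussian probability measure. The organizational difference is that the paper first establishes the one-step Schur-multiplication estimates $\lVert A\circ\Phi_\Bbbk(G)\rVert_\theta\le\lVert A\rVert_\theta$ and $\lVert A\circ\overline{\Phi_\Bbbk(G)}\rVert_\theta\le\lVert A\rVert_\theta$ and then iterates them $2k+1$ times to reach $M_k$, whereas you unroll the iteration up front and represent $(M_k)_{ij}$ as a single $(2k{+}1)$-fold Gaussian integral. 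Both routes are equivalent and equally rigorous; the paper's version has the small advantage of isolating a reusable monotonicity fact, while yours makes the per-$Z$ unimodular ``rounding'' vectors $\eta_i(Z)$ explicit. One small bookkeeping slip: with your definition of $\eta_i$, the integrand of $\varphi_\Bbbk(g_{ij})^{k+1}\overline{\varphi_\Bbbk(g_{ij})}^{k}$ works out to $\eta_i(Z)\overline{\eta_j(Z)}$ rather than $\overline{\eta_i(Z)}\eta_j(Z)$ --- you should either swap the conjugations in the definition of $\eta_i$ (put the bars on the last $k$ factors, not the first $k+1$), or correspondingly set $\tilde\delta_i(Z)=\overline{\eta_i(Z)}\delta_i$; since $\lvert\eta_i(Z)\rvert=1$ and $A$ is Hermitian, $\bigl\lvert\sum_{i,j}a_{ij}\tilde\delta_i\overline{\tilde\delta_j}\bigr\rvert=\bigl\lvert\sum_{i,j}a_{ij}\overline{\tilde\delta_i}\tilde\delta_j\bigr\rvert$, so the final bound is unaffected either way.
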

\begin{proof}
The proof will apply to both $\Bbbk = \mathbb{R}$ and $\mathbb{C}$ alike --- the only difference is that complex conjugation will have no effect when $\Bbbk=\mathbb{R}$. We will write $\Phi = \Phi_\Bbbk$ and $\varphi = \varphi_\Bbbk$ below for notational simplicity.

Let $x_1,\dots,x_n\in\Bbbk^n$ be unit vectors and $G = G(x_1,\dots,x_n) \in \mathbb{G}^n$ be their Gram matrix.
Consider the matrices  $A \circ \Phi(G)$, $A \circ\overline{ \Phi(G)} \in \mathbb{S}^n$,
whose $(i,j)$th entries are $a_{ij}\varphi(\langle x_i,x_j\rangle)$ and $a_{ij}\overline{\varphi(\langle x_i,x_j\rangle)}$ respectively.

Let $z\in\Bbbk^n$ with $\langle z,x_j\rangle\ne 0$ for all $j = 1,\dots,n$. By the definition of the $\theta$-seminorm,
\[
\biggl\lvert\sum_{i=1}^n\sum_{j=1}^n a_{ij}\sign\langle x_i,z\rangle \sign\langle z,x_j\rangle\biggr\rvert
\le \lVert A \rVert_\theta, \quad
\biggl\lvert\sum_{i=1}^n\sum_{j=1}^n a_{ij}\overline{\sign\langle x_i,z\rangle}\overline{\sign\langle z,x_j\rangle}\biggr\rvert \le \lVert A \rVert_\theta.
\]
By \eqref{2-eq1}, these imply
\begin{equation}\label{eq:2-thm1}
\biggl\lvert\sum_{i=1}^n\sum_{j=1}^n a_{ij}\varphi(\langle x_i,x_j\rangle)\biggr\rvert \le \lVert A \rVert_\theta, \quad
\biggl\lvert\sum_{i=1}^n\sum_{j=1}^n a_{ij}\overline{\varphi(\langle x_i,x_j\rangle)}\biggr\rvert \le \lVert A \rVert_\theta.
\end{equation}
Consider  a diagonal matrix $D= \diag(\delta_1,\dots,\delta_n)$ with $\delta_1,\dots,\delta_n \in \mathbb{T}$.  Clearly  we always have $\lVert D^*\! AD\rVert_\theta=\lVert A \rVert_\theta$. So replacing $A$ by $D^*\! AD$  in \eqref{eq:2-thm1} gives us
\[
\biggl\lvert\sum_{i=1}^n\sum_{j=1}^n a_{ij}\varphi(\langle x_i,x_j\rangle)\bar{\delta}_i \delta_j\biggr\rvert \le \lVert A \rVert_\theta, \quad
\biggl\lvert\sum_{i=1}^n\sum_{j=1}^n a_{ij}\overline{\varphi(\langle x_i,x_j\rangle)}\bar{\delta}_i \delta_j\biggr\rvert \le \lVert A \rVert_\theta.
\]
As $\delta_1,\dots,\delta_n \in \mathbb{T}$ are arbitrary, it follows that 
\[
\lVert A \circ \Phi(G) \rVert_\theta\le \lVert A \rVert_\theta, \quad \lVert A \circ\overline{ \Phi(G)} \rVert_\theta\le \lVert A \rVert_\theta,
\]
i.e., the operation of Schur multiplication by $ \Phi(G) $ or $\overline{ \Phi(G)}$ does not increase the $\theta$-seminorm. Hence \eqref{eq:noninc} follows.
\end{proof}

We now deduce that the symmetric Grothendieck inequalities \eqref{eq:GGI} and \eqref{eq:GGILE} hold with constants bounded by $\sinh(\pi/2)\approx 2.30130$ over $\mathbb{R}$ and $8/\pi-1\approx 1.54648$ over $\mathbb{C}$ respectively.
\begin{theorem}[Symmetric Grothendieck inequalities]\label{ubgenGC}
Let $A \in \mathbb{S}^n$ and $d \ge n$. Then there exist constants $K_\gamma, K_\Gamma >0$ independent of $d$ and $n$ such that
\begin{align}
\max_{\lVert x_i\rVert = 1}\; \biggl\lvert\sum_{i=1}^n \sum_{j=1}^n a_{ij}\langle x_i, x_j\rangle \biggr\rvert &\le K_\gamma \max_{ \lvert \delta_i \rvert = 1}\; \biggl\lvert\sum_{i=1}^n\sum_{j=1}^n a_{ij}  \bar{\delta}_i\delta_j\biggr\rvert \label{eq:SGIs}
\shortintertext{and}
\max_{\lVert x_i\rVert \le 1}\; \biggl\lvert\sum_{i=1}^n \sum_{j=1}^n a_{ij}\langle x_i, x_j\rangle \biggr\rvert 
&\le K_\Gamma \max_{ \lvert \delta_i \rvert \le 1}\; \biggl\lvert\sum_{i=1}^n\sum_{j=1}^n a_{ij}  \bar{\delta}_i\delta_j\biggr\rvert \label{eq:SGIn}
\end{align}
where $x_1,\dots,x_n \in \Bbbk^d$. Furthermore,
\begin{equation}\label{eq:SGC}
K_\Gamma^\mathbb{R} \le K_\gamma^\mathbb{R}\le \sinh \frac{\pi}{2}, \qquad
K_\Gamma^\mathbb{C} \le K_\gamma^\mathbb{C}\le \frac{8}{\pi}-1.
\end{equation}
\end{theorem}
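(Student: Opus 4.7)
The plan is to establish the stronger bound $\lVert A\rVert_\gamma \le K_\gamma \lVert A\rVert_\theta$ with the claimed constants, and then deduce $\lVert A\rVert_\Gamma \le K_\gamma \lVert A\rVert_\Theta$ (so $K_\Gamma \le K_\gamma$) by a diagonal-scaling argument. By Proposition~\ref{prop:correlation}, the problem reduces to bounding $\lvert\sum_{i,j} a_{ij} g_{ij}\rvert$ over $G = (g_{ij}) \in \mathbb{G}^n$ by $K_\gamma^\Bbbk \lVert A\rVert_\theta$. The central tool is Lemma~\ref{basgrin} specialized to $\delta = \mathbbm{1} \in \mathbb{T}^n$: since $\mathbbm{1}^* M_k \mathbbm{1} = \sum_{i,j} a_{ij}\,\varphi_\Bbbk(g_{ij})^{k+1}\overline{\varphi_\Bbbk(g_{ij})}^k = \sum_{i,j} a_{ij}\,\varphi_\Bbbk(g_{ij})\,\lvert\varphi_\Bbbk(g_{ij})\rvert^{2k}$, one obtains the family of estimates
\begin{equation*}
 \biggl\lvert \sum_{i,j} a_{ij}\,\varphi_\Bbbk(g_{ij})\,\lvert\varphi_\Bbbk(g_{ij})\rvert^{2k}\biggr\rvert \le \lVert A \rVert_\theta, \qquad k = 0, 1, 2, \dots
\end{equation*}
Writing $y_{ij} \coloneqq \varphi_\Bbbk(g_{ij}) \in \mathbb{D}$ (by Lemma~\ref{Halem3.5}), the strategy is to invert $\varphi_\Bbbk$, expand $g_{ij}$ as a series in $y_{ij}$, swap the order of summation, and apply the triangle inequality on the outer series.

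For the real case, Lemma~\ref{Halem3.5}\eqref{it:PhiRe} and $\varphi_\mathbb{R}^{-1}(y) = \sin(\pi y/2)$ give the absolutely convergent expansion
\begin{equation*}
g_{ij} = \sin\Bigl(\tfrac{\pi}{2} y_{ij}\Bigr) = \sum_{k=0}^\infty \frac{(-1)^k (\pi/2)^{2k+1}}{(2k+1)!}\, y_{ij}^{2k+1},
\end{equation*}
and since $\overline{\varphi_\mathbb{R}} = \varphi_\mathbb{R}$, the $k$th estimate above reads $\lvert \sum_{i,j} a_{ij} y_{ij}^{2k+1}\rvert \le \lVert A\rVert_\theta$. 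Term-by-term summation then yields $\lvert \sum_{i,j} a_{ij} g_{ij}\rvert \le \sum_{k=0}^\infty (\pi/2)^{2k+1}/(2k+1)!\cdot \lVert A\rVert_\theta = \sinh(\pi/2)\,\lVert A\rVert_\theta$, as claimed.

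For the complex case, Lemma~\ref{Halem3.5}\eqref{it:phiC} shows that $\varphi_\mathbb{C}$ is radial, namely $\varphi_\mathbb{C}(x) = \sign(x)\eta(\lvert x\rvert)$ with $\eta \coloneqq \varphi_\mathbb{C}|_{[0,1]}$ a homeomorphism of $[0,1]$ fixing the endpoints, so $g_{ij} = \sign(y_{ij})\eta^{-1}(\lvert y_{ij}\rvert)$. Expanding $\eta^{-1}(s) = \sum_{k \ge 0} c_k s^{2k+1}$ converts this to $g_{ij} = \sum_{k \ge 0} c_k\, y_{ij}\lvert y_{ij}\rvert^{2k}$, and the triangle inequality gives $\lvert \sum_{i,j} a_{ij} g_{ij}\rvert \le \bigl(\sum_{k \ge 0} \lvert c_k\rvert\bigr)\,\lVert A\rVert_\theta$. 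From \eqref{defphiC} one reads off $c_0 = 4/\pi$, and $\eta^{-1}(1) = 1$ yields the normalization $\sum_{k \ge 0} c_k = 1$. If one can show $c_k \le 0$ for every $k \ge 1$, then $\sum_{k \ge 0} \lvert c_k\rvert = c_0 - \sum_{k\ge 1} c_k = 2c_0 - 1 = 8/\pi - 1$, completing the proof. This sign property is the main obstacle: I would establish it either by a recurrence for the $c_k$ derived from the hypergeometric identity $\eta(r) = (\pi/4)\, r\, {}_2F_1(1/2,1/2;2;r^2)$ via Lagrange inversion, or by manipulating Haagerup's integral representation \eqref{eq:int} to exhibit $(4/\pi)s - \eta^{-1}(s)$ as a series in the $s^{2k+1}$ ($k \ge 1$) with manifestly nonnegative coefficients.

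Finally, to pass from the $\gamma/\theta$ to the $\Gamma/\Theta$ inequality, I invoke \eqref{eq:altGamma}, namely $\lVert A\rVert_\Gamma = \max_D \lVert DAD\rVert_\gamma$ over diagonal $D = \diag(\delta_1,\dots,\delta_n)$ with $\delta_i \in [0,1]$; the analogous identity $\lVert A\rVert_\Theta = \max_D \lVert DAD\rVert_\theta$ is immediate from definitions by writing $\delta_i = t_i \varepsilon_i$ with $t_i = \lvert \delta_i\rvert$ and $\lvert \varepsilon_i\rvert = 1$, so that $\delta^* A\delta = \varepsilon^*(DAD)\varepsilon$. Applying the already-proven $\gamma/\theta$ bound to $DAD$ and maximizing over $D$ then gives $\lVert A\rVert_\Gamma \le K_\gamma \lVert A\rVert_\Theta$, which yields both \eqref{eq:SGIn} and the comparison $K_\Gamma \le K_\gamma$ asserted in \eqref{eq:SGC}.
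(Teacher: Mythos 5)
Your proposal reproduces the paper's own argument essentially step for step: invert $\varphi_\Bbbk$ via the power series $\varphi_\Bbbk^{-1}(z) = \sum_k c_{2k+1}\, z\lvert z\rvert^{2k}$, pair it with Lemma~\ref{basgrin} to bound each term by $\lVert A\rVert_\theta$, compute $\sum_k\lvert c_{2k+1}\rvert = \sinh(\pi/2)$ (from $\sin(\pi y/2)$) over $\mathbb{R}$ and $8/\pi - 1$ (from $b_1 = 4/\pi$ and the normalization at $1$) over $\mathbb{C}$, and then pass to the $\Gamma/\Theta$ inequality by the diagonal-scaling identity \eqref{eq:altGamma}. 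The sign property you flag as the main obstacle for $\Bbbk = \mathbb{C}$ --- that the coefficients of $\varphi_\mathbb{C}^{-1}$ beyond the linear term are nonpositive --- is exactly the result the paper cites from Haagerup \cite{Haagerup} rather than reproving, so you are in the same position as the authors at that point.
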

\begin{proof}
Let $\Bbbk = \mathbb{R}$ or $\mathbb{C}$ and $\varphi_\mathbb{R}$ and $\varphi_\mathbb{C}$ be as in \eqref{eq:haa} and \eqref{defphiC}.   
We will first show that the inverse function of $\varphi_\Bbbk$ has a power series expansion of the form
\begin{equation}\label{eq:ck}
\varphi_\Bbbk^{-1}(z) = \sum_{k=0}^\infty c_{2k+1}z\lvert z \rvert^{2k}, \quad z\in \Bbbk,
\end{equation}
that is convergent when $\lvert z\rvert \le 1$ and that
\[
B_\Bbbk \coloneqq \sum_{k=0}^\infty \lvert c_{2k+1} \rvert =
\begin{cases}
\sinh \dfrac{\pi}{2} &\text{if } \Bbbk= \mathbb{R},\\
\dfrac{8}{\pi}-1 &\text{if } \Bbbk= \mathbb{C}.
\end{cases}
\]

\underline{\textsc{Case I}: $\Bbbk=\mathbb{R}$.}\; By \eqref{eq:int}, $\varphi_\mathbb{R}(x)=(2/\pi) \arcsin (x)$ and so $\varphi^{-1}_\mathbb{R}(x) =\sin (\pi x/2)$.  Therefore $B_\mathbb{R}=\sinh (\pi/2)$. Note that
\[
\varphi^{-1}_\mathbb{R}(x)
=\sum_{k=0}^\infty \frac{(-1)^k}{(2k+1)!} \Bigl(\frac{\pi}{2}\Bigr)^{2k+1}x \lvert x\rvert^{2k},
\]
is convergent when $\lvert x\rvert \le 1$.

\underline{\textsc{Case II}: $\Bbbk=\mathbb{C}$.}\; It follows from Lemma~\ref{Halem3.5}\eqref{it:phiC} that the power series of the inverse function of $\varphi_\mathbb{C}$ takes the form
\begin{equation}\label{eq:seriesC}
\varphi_\mathbb{C}^{-1}(z)=\sum_{k=0}^\infty b_{2k+1}z\lvert z \rvert^{2k}
\end{equation}
for some $b_{2k+1} \in \mathbb{R}$, $k =0,1,2,\dots.$
Unlike the real case, we do not have closed-form expressions for these coefficients. However, by \eqref{eq:int}, we do have that
\[
b_1=\lim_{z\to 0} \frac{z}{\varphi_\mathbb{C}(z)} = \frac{4}{\pi};
\]
and a result of Haagerup \cite{Haagerup} (see also \cite[Section~5]{FLZ18}) shows that
\[
b_{2k+1}\le 0\quad \text{for all}\; k\ge 1.
\]
Therefore
\begin{equation}\label{eq:cal}
1 =\varphi^{-1}_\mathbb{C}(1) =b_1+\sum_{k=1}^\infty  b_{2k+1} =b_1-\sum_{k=1}^\infty \lvert b_{2k+1} \rvert,
\end{equation}
and we obtain
\[
B_\mathbb{C}= \sum_{k=0}^\infty \lvert b_{2k+1} \rvert = b_1  +\sum_{k=1}^\infty \lvert b_{2k+1} \rvert = 2b_1-1=\frac{8}{\pi}-1.
\]
It also follows from \eqref{eq:cal} that the series in \eqref{eq:seriesC} converges when $\lvert z\rvert \le 1$.

We may now prove \eqref{eq:SGIs} starting from the tautology
\[
\langle x_i,x_j\rangle=\varphi_\Bbbk^{-1}\bigl(\varphi_\Bbbk(\langle x_i,x_j\rangle)\bigr).
\]
Let $M_k \in \mathbb{S}^n$ be as in \eqref{eq:Mk} and $\mathbbm{1}=(1,\dots,1) \in\mathbb{R}^n$.  Then by \eqref{eq:ck} and the observation that the $(i,j)$th entry of $M_k$ is $a_{ij} \langle x_i,x_j\rangle \lvert \langle x_i,x_j\rangle \rvert^{2k}$, we get
\[
\sum_{i=1}^n\sum_{j=1}^n a_{ij}\langle x_i, x_j\rangle=\sum_{i=1}^n\sum_{j=1}^n a_{ij}\varphi_\Bbbk^{-1}\bigl(\varphi_\Bbbk(\langle x_i,x_j\rangle)\bigr)= \sum_{k=0}^\infty  c_{2k+1} \mathbbm{1}^\tp  M_k\mathbbm{1}.
\]
Applying Lemma~\ref{basgrin}, we get
\[
\biggl\lvert\sum_{i=1}^n\sum_{j=1}^n a_{ij}\langle x_i, x_j\rangle\biggr\rvert \le \sum_{k=0}^\infty \lvert c_{2k+1} \rvert \lvert \mathbbm{1}^\tp  M_k\mathbbm{1}\rvert \le B_\Bbbk \lVert A \rVert_\theta.
\]
 
We now show that $K_\Gamma \le K_\gamma$, from which \eqref{eq:SGIn} follows. The same argument works over both $\mathbb{R}$ and $\mathbb{C}$.
Let $D = \diag(\delta_1,\dots,\delta_n)$ be a  diagonal matrix with $\delta_i\in[0,1]$, $i = 1,\dots,n$. Then by \eqref{eq:altGamma}, $\lVert DAD \rVert_\theta\le \lVert A \rVert_\Theta$. Since $\lVert DAD \rVert_\gamma\le K_\gamma \lVert DAD \rVert_\theta\le  K_\gamma \lVert A \rVert_\Theta$, by \eqref{eq:altGamma} again, we have that $\lVert A \rVert_\Gamma \le K_\gamma \lVert A \rVert_\Theta$.  Hence $K_\Gamma \le K_\gamma$.
\end{proof}
 
Should a distinction be necessary, we will refer to  \eqref{eq:SGIs} as the \emph{symmetric Grothendieck inequality for $\gamma$-seminorm} and \eqref{eq:SGIn} as the \emph{symmetric Grothendieck inequality for $\Gamma$-norm}. In case it is lost on the reader, the main point of Theorem~\ref{ubgenGC} is in establishing the bounds in \eqref{eq:SGC} for the symmetric Grothendieck constants. If we do not care about the size of the bounds, then we could just use \eqref{ineqvarnrm} alongside the original Grothendieck inequality \eqref{eq:GI2} to get that $K_\Gamma\le 2K_\G$, as
\[
\lVert A \rVert_\Gamma \le \lVert A \rVert_{\G} \le K_\G \lVert A \rVert_{\infty,1}= K_\G \lVert A \rVert_{\G,1} \le 2 K_G \lVert A \rVert_{\Gamma,1}= 2 K_G \lVert A \rVert_\Theta
\]
for any $A \in \mathbb{S}^n$. If a numerical value is not required, then Blei mentioned in \cite[p.~17]{BleiBook} that one may deduce the existence of an $\mathcal{O}(1)$-factor by adding absolute values to both sides of an inequality in \cite[Equation 2]{Alon2}. The preceding discussion only applies to the inequality \eqref{eq:SGIn}, the inequality \eqref{eq:SGIs} and the bounds on $K_\gamma$ do not follow from such general arguments as far as we can tell.

The following proposition quantifies the relations between the various Grothendieck constants $K_\gamma^\Bbbk$, $K_\Gamma^\Bbbk$, $K_\G^\Bbbk$, and between the respective real and complex versions.
\begin{proposition}\label{ineqbetegrodcons}
Let $K_\gamma^\Bbbk$, $K_\Gamma^\Bbbk$, and $K_\G^\Bbbk$ be respectively the smallest constants such that \eqref{eq:SGIs}, \eqref{eq:SGIn}, and \eqref{eq:GI} hold.  Then we have
\begin{equation}\label{ingrothconst}
K_\G^\Bbbk \le K_\Gamma^\Bbbk \le K_\gamma^\Bbbk,
\end{equation}
for $\Bbbk = \mathbb{R}$ or $\mathbb{C}$, and
\begin{equation}\label{realmajcomp}
K_\gamma^\mathbb{C}\le K_\gamma^\mathbb{R}, \qquad
K_\Gamma^\mathbb{C}\le K_\Gamma^\mathbb{R}, \qquad
K_\G^\mathbb{C}\le K_\G^\mathbb{R}.
\end{equation}
\end{proposition}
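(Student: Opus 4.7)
The plan is to prove \eqref{ingrothconst} by direct reductions already implicit in earlier results, and then to prove \eqref{realmajcomp} via a realification argument that transports the real symmetric Grothendieck inequality to the complex setting.

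For \eqref{ingrothconst}, the bound $K_\Gamma^\Bbbk\le K_\gamma^\Bbbk$ was essentially derived at the end of the proof of Theorem~\ref{ubgenGC}: apply the $\gamma$-inequality to $DAD$ for any diagonal $D = \diag(\delta_1,\dots,\delta_n)$ with $\delta_i\in[0,1]$, observe that $\lVert DAD\rVert_\theta\le\lVert A\rVert_\Theta$ (since $D\delta$ has entries of modulus at most $1$ whenever $\delta$ does), and take the maximum over $D$ using \eqref{eq:altGamma}. For $K_\G^\Bbbk\le K_\Gamma^\Bbbk$, given $B\in\Bbbk^{m\times n}$ set $A = \begin{bsmallmatrix}0 & B\\ B^* & 0\end{bsmallmatrix}\in\mathbb{S}^{m+n}_\circ$; then \eqref{eqvarnrm} together with Lemma~\ref{lem:Szero} give $\lVert A\rVert_\Gamma = 2\lVert B\rVert_\G$ and $\lVert A\rVert_\Theta = \lVert A\rVert_\theta = 2\lVert B\rVert_{\infty,1}$, so the $\Gamma$-inequality applied to $A$ yields the $\G$-inequality for $B$ with the same constant.

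For \eqref{realmajcomp}, given complex Hermitian $A = R + \mathrm{i}S\in\mathbb{S}^n(\mathbb{C})$ (with $R\in\mathbb{S}^n(\mathbb{R})$ and $S\in\mathbb{R}^{n\times n}$ antisymmetric), form the realification
\[
\widetilde{A} \coloneqq \begin{bmatrix} R & -S \\ S & R \end{bmatrix} \in \mathbb{S}^{2n}(\mathbb{R}).
\]
Given $x_1,\dots,x_n \in \mathbb{C}^d$, set $\xi_k \coloneqq (\Re x_k, \Im x_k) \in \mathbb{R}^{2d}$ for $k\le n$ and $\xi_{n+k} \coloneqq (\Im x_k, -\Re x_k) \in \mathbb{R}^{2d}$ for $k\le n$; each $\xi_k$ has norm $\lVert x_k\rVert$, and the real inner product satisfies $\langle \xi_i,\xi_j\rangle_\mathbb{R} = \Re\langle x_i,x_j\rangle_\mathbb{C} = \langle \xi_{n+i},\xi_{n+j}\rangle_\mathbb{R}$ together with $\langle \xi_i,\xi_{n+j}\rangle_\mathbb{R} = \Im\langle x_i,x_j\rangle_\mathbb{C} = -\langle \xi_{n+i},\xi_j\rangle_\mathbb{R}$. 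A direct calculation then yields the bilinear-form identity
\[
2\sum_{i,j=1}^n a_{ij}\langle x_i,x_j\rangle_\mathbb{C} = \sum_{k,l=1}^{2n} \widetilde{A}_{kl}\langle \xi_k,\xi_l\rangle_\mathbb{R},
\]
from which $2\lVert A\rVert_\gamma^\mathbb{C}\le\lVert\widetilde{A}\rVert_\gamma^\mathbb{R}$ (with $\lVert x_i\rVert=1$) and $2\lVert A\rVert_\Gamma^\mathbb{C}\le\lVert\widetilde{A}\rVert_\Gamma^\mathbb{R}$ (with $\lVert x_i\rVert\le 1$). In the reverse direction, for any $\varepsilon = (\alpha,\beta)\in\{-1,1\}^{2n}$ the complex vector $\delta = \alpha + \mathrm{i}\beta\in\mathbb{C}^n$ satisfies $\lvert\delta_j\rvert = \sqrt{2}$, and a parallel computation gives $\varepsilon^\tp\widetilde{A}\varepsilon = \delta^*\! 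A\delta = 2(\delta/\sqrt{2})^*\! A(\delta/\sqrt{2})$, hence $\lVert\widetilde{A}\rVert_\theta^\mathbb{R}\le 2\lVert A\rVert_\theta^\mathbb{C}$; the same argument with $\alpha_j,\beta_j\in[-1,1]$ gives $\lVert\widetilde{A}\rVert_\Theta^\mathbb{R}\le 2\lVert A\rVert_\Theta^\mathbb{C}$.

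Chaining these bounds through the real symmetric Grothendieck inequality \eqref{eq:SGIs} applied to $\widetilde{A}$,
\[
2\lVert A\rVert_\gamma^\mathbb{C} \le \lVert\widetilde{A}\rVert_\gamma^\mathbb{R} \le K_\gamma^\mathbb{R}\lVert\widetilde{A}\rVert_\theta^\mathbb{R} \le 2K_\gamma^\mathbb{R}\lVert A\rVert_\theta^\mathbb{C},
\]
yields $K_\gamma^\mathbb{C}\le K_\gamma^\mathbb{R}$, and the same chain with $\Gamma,\Theta$ and \eqref{eq:SGIn} gives $K_\Gamma^\mathbb{C}\le K_\Gamma^\mathbb{R}$. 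For $K_\G^\mathbb{C}\le K_\G^\mathbb{R}$, the same realification applied to $B\in\mathbb{C}^{m\times n}$ produces $\widetilde{B} = \begin{bsmallmatrix}\Re B & -\Im B\\ \Im B & \Re B\end{bsmallmatrix}\in\mathbb{R}^{2m\times 2n}$; using \eqref{eq:G=le2} to pass to a real-valued bilinear form and the same lifting pattern yields $2\lVert B\rVert_\G^\mathbb{C}\le\lVert\widetilde{B}\rVert_\G^\mathbb{R}$ and $\lVert\widetilde{B}\rVert_{\infty,1}^\mathbb{R}\le 2\lVert B\rVert_{\infty,1}^\mathbb{C}$, from which the real Grothendieck inequality \eqref{eq:GI} on $\widetilde{B}$ completes the argument. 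The main obstacle is bookkeeping: choosing the signs in $\widetilde{A}$, $\widetilde{B}$, and the lifts $\xi_k$ so that $\widetilde{A}$ is genuinely symmetric and the bilinear-form identity emerges with the factor exactly $2$ under the paper's inner-product conventions ($\langle x,y\rangle_\mathbb{C} = x^*y$).
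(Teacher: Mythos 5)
Your proof is correct and follows essentially the same approach as the paper. For \eqref{ingrothconst}, the paper likewise reduces $K_\G$ to the symmetric constants by restricting to the subspace of block-anti-diagonal matrices $W_{m,n}$ (implicitly via \eqref{eqvarnrm}), and cites the end of the proof of Theorem~\ref{ubgenGC} for $K_\Gamma\le K_\gamma$. For \eqref{realmajcomp}, the paper realifies a Hermitian $A$ entrywise as $\widehat{A}=\bigl(\pi(a_{ij})\bigr)\in\mathbb{R}^{2n\times 2n}$ with $\pi(\alpha+\mathrm{i}\beta)=\begin{bsmallmatrix}\alpha&\beta\\-\beta&\alpha\end{bsmallmatrix}$, and uses the same pair of lifts: $\delta\mapsto\widehat{\delta}$ for the $\theta$ direction and $x_i\mapsto(y_{2i-1},y_{2i})=\bigl((u_i,v_i),(-v_i,u_i)\bigr)$ for the $\gamma$ direction, yielding the identical chain $\lVert A\rVert_\gamma^\mathbb{C}\le\frac12\lVert\widehat{A}\rVert_\gamma^\mathbb{R}\le\frac12 K_\gamma^\mathbb{R}\lVert\widehat{A}\rVert_\theta^\mathbb{R}\le K_\gamma^\mathbb{R}\lVert A\rVert_\theta^\mathbb{C}$. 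Your $\widetilde{A}=\begin{bsmallmatrix}R&-S\\S&R\end{bsmallmatrix}$ is simply the permutation-conjugate, block-arranged form of the paper's $\widehat{A}$ (un-interleaved, with a diagonal $\pm 1$ conjugation absorbing the sign), and the vectors $\xi_k$ are the corresponding rearrangement of the $y_k$'s; the factors of $2$ and $\sqrt{2}$ land in exactly the same places, so the two proofs are the same argument up to this superficial relabeling.
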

\begin{proof}
Let $W_{m,n} = \bigl\{\begin{bsmallmatrix} 0 & B \\ B^* & 0 \end{bsmallmatrix} \in \mathbb{S}^{m+n} : B \in \Bbbk^{m \times n} \bigr\}$. Then
\[
K_\G = \sup_{m,n \in \mathbb{N}} \biggl[
\max_{A \in W_{m,n}} \frac{\lVert A \rVert_\gamma}{\lVert A \rVert_\theta} \biggr]
= \sup_{m,n \in \mathbb{N}} \biggl[
\max_{A \in W_{m,n}} \frac{\lVert A \rVert_\Gamma}{\lVert A \rVert_\Theta} \biggr].
\]
Since $W_{m,n} \subseteq \mathbb{S}^{m+n}$, we get the first inequality in \eqref{ingrothconst}; the second inequality was already established as part of Theorem~\ref{ubgenGC}.

We next prove the first inequality in \eqref{realmajcomp}; the remaining two may be similarly proved. Let  $\delta = (\delta_1,\dots,\delta_n) \in \mathbb{C}^n$. We will write $\delta_i=\alpha_i+\mathrm{i}\beta_i$, $\alpha_i,\beta_i \in\mathbb{R}$ and
\[
\widehat{\delta} \coloneqq (\alpha_1,-\beta_1,\dots,\alpha_n,-\beta_n) \in\mathbb{R}^{2n}.
\]
Recall that $a=\alpha+\mathrm{i}\beta \in \mathbb{C}$ may be represented as
$\pi(a) = \begin{bsmallmatrix} \alpha & \beta \\ -\beta & \alpha \end{bsmallmatrix} \in \mathbb{R}^{2 \times 2}$. Let $A \in \mathbb{C}^{n \times n}$ be Hermitian. Then
\[
\widehat{A} \coloneqq \bigl( \pi(a_{ij}) \bigr)\in \mathbb{R}^{2n \times 2n}
\]
is symmetric, and furthermore,
\[
\delta^*\! A \delta=\widehat{\delta}^\tp \widehat{A} \widehat{\delta}.
\]

As the unit disk $\{(\alpha,\beta) \in\mathbb{R}^2 : \alpha^2 +\beta^2\le 1\}$ contains $\{(\alpha,\beta) \in\mathbb{R}^2 : \lvert \alpha \rvert = \lvert \beta \rvert =1/\sqrt{2} \}$, by choosing $\lvert \alpha_i \rvert = \lvert \beta_i \rvert  = 1/\sqrt{2}$, $i = 1,\dots,n$, we  deduce that
\begin{equation}\label{eq:3rd}
2\lVert A\rVert_\theta^\mathbb{C} \ge \lVert \widehat{A} \rVert_\theta^\mathbb{R}.
\end{equation}

Let $x_1,\dots,x_n\in\mathbb{C}^n$. We write $x_i = u_i + \mathrm{i}v_i$,  $u_i,v_i\in\mathbb{R}^n$,  and set
\[
y_{2i-1} \coloneqq \begin{bmatrix} u_i \\ v_i \end{bmatrix}, \quad y_{2i} \coloneqq \begin{bmatrix} -v_i \\ u_i \end{bmatrix} \in \mathbb{R}^{2n}
\]
for $i =1,\dots,n$. Then
\[
\sum_{i=1}^n\sum_{j=1}^n a_{ij}\langle x_i, x_j\rangle_\mathbb{C} 
=\frac{1}{2} \sum_{i=1}^{2n}\sum_{j=1}^{2n} \pi(a_{ij}) \langle y_i, y_j \rangle_\mathbb{R}.
\]
If $\lVert x_1\rVert = \dots = \lVert x_n \rVert =1$, then $\lVert y_1 \rVert = \dots = \lVert y_{2n} \rVert = 1$, and we obtain
\[
\lVert A\rVert_\gamma^\mathbb{C}
\le \frac{1}{2} \lVert \widehat{A} \rVert_\gamma^\mathbb{R} 
\le \frac{1}{2} K_\gamma^\mathbb{R} \lVert \widehat{A} \rVert_\theta^\mathbb{R}
\le K_\gamma^\mathbb{R} \lVert A \rVert_\theta^\mathbb{C},
\]
where the second and third inequalities are \eqref{eq:GGILE} and \eqref{eq:3rd} respectively. Therefore $K_\gamma^\mathbb{C}  \le K_\gamma^\mathbb{R}$.
\end{proof}

There are slight generalizations of the symmetric and original Grothendieck inequalities to Grothendieck $d$-norms that follow from Lemma~\ref{lem:d}. In the following, the symmetric Grothendieck inequalities \eqref{eq:SGI} and the original Grothendieck inequality \eqref{eq:GI2} may be obtained by setting $d = 1$ and taking $p \to \infty$. As usual, the constants $K_{\gamma,d,p}$, $K_{\Gamma,d,p} $, $K_{\G,d,p}$ below depend on the field but we will only indicate this dependence when comparing them over different fields. We will also write $K_{\gamma,d,\infty} \coloneqq \lim_{p \to \infty}  K_{\gamma,d,p}$ and likewise for $K_{\Gamma,d,\infty}$, $K_{\G,d,\infty}$ in the rest of this article.
\begin{corollary}\label{cor:dnorms}
Let  $d,p,m,n\in \mathbb{N}$. If $1 \le d \le p \le n$, then there exist finite constants $K_{\gamma,d,p}, K_{\Gamma,d,p} > 0$ independent of $n$  such that
\[
\lVert A \rVert_{\gamma,p}\le K_{\gamma,d,p} \lVert A \rVert_{\gamma,d},\qquad
\lVert A \rVert_{\Gamma,p}\le K_{\Gamma,d,p} \lVert A \rVert_{\Gamma,d}
\]
for all $A \in \mathbb{S}^n$. If $1 \le d \le p \le m+n$, then there exists a finite constant $K_{\G,d,p} >0$  independent of $m,n$, such that
\begin{equation}\label{eq:Gcd}
\lVert B \rVert_{\G,p}\le K_{\G,d,p} \lVert B \rVert_{\G,d}
\end{equation}
for all $B \in \Bbbk^{m \times n}$. If these constants are chosen to be smallest possible, i.e.,
\[
K_{\gamma,d,p} \coloneqq  \sup_{n \in \mathbb{N}} \biggl[
\max_{A \in \mathbb{S}^n} \frac{\lVert A \rVert_{\gamma,p}}{\lVert A \rVert_{\gamma,d}} \biggr], \quad
K_{\Gamma,d,p} \coloneqq  \sup_{n \in \mathbb{N}} \biggl[
\max_{A \in \mathbb{S}^n} \frac{\lVert A \rVert_{\Gamma,p}}{\lVert A \rVert_{\Gamma,d}} \biggr], \quad
K_{\G,d,p} \coloneqq  \sup_{m,n \in \mathbb{N}} \biggl[
\max_{B \in \Bbbk^{m\times n}} \frac{\lVert B \rVert_{\G,p}}{\lVert B \rVert_{\G,d}} \biggr],
\]
then
\begin{equation}\label{eq:RCineq}
\begin{gathered}
K_{\gamma,d,p}\le K_{\gamma,1,n} \le K_\gamma, \\
K_{\gamma,2d,2p}^\mathbb{R} \le K_{\gamma,d,p}^\mathbb{C}, \\
K_{\gamma,1,\infty} = K_\gamma,
\end{gathered}
\qquad
\begin{gathered}
K_{\Gamma,d,p} \le K_{\Gamma,1,n} \le K_\Gamma, \\
K_{\Gamma,2d,2p}^\mathbb{R} \le{} K_{\Gamma,d,p}^\mathbb{C}, \\
K_{\Gamma,1,\infty} = K_\Gamma,
\end{gathered}
\qquad
\begin{gathered}
K_{\G,d,p} \le K_{\G,1,m+n} \le K_\G; \\
K_{\G,2d,2p}^\mathbb{R} \le K_{\G,d,p}^\mathbb{C}; \\
K_{\G,1,\infty} = K_\G.
\end{gathered}
\end{equation}
Furthermore, for any $1 \le d \le p \le q \le \infty$, we have
\[
K_{\gamma,d,q}\le K_{\gamma,d,p}K_{\gamma,p,q}, \qquad
K_{\Gamma,d,q}\le K_{\Gamma,d,p}K_{\Gamma,p,q}, \qquad
K_{\G,d,q}\le K_{\G,d,p}K_{\G,p,q}.
\]
\end{corollary}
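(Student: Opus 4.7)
The plan is to reduce every claim to three ingredients already proven: the monotonicity and stabilization of the Grothendieck $d$-(semi)norms (Lemma~\ref{lem:d}), the symmetric Grothendieck inequality (Theorem~\ref{ubgenGC}), and the real--complex dictionary (Proposition~\ref{prop:RCnorms}). First, for the existence of the finite constants and the chains $K_{\gamma,d,p}\le K_{\gamma,1,n}\le K_\gamma$ (and their $\Gamma$-, $\G$-analogues): for any $A\in\mathbb{S}^n$ and $1\le d\le p\le n$, the monotonicity \eqref{eq:increase} combined with the stabilization \eqref{eq:stabilize} gives
\[
\lVert A \rVert_\theta=\lVert A \rVert_{\gamma,1}\le \lVert A \rVert_{\gamma,d}\le \lVert A \rVert_{\gamma,p}\le \lVert A \rVert_{\gamma,n}=\lVert A \rVert_\gamma,
\]
and dividing while invoking Theorem~\ref{ubgenGC} yields the ratio bound $\lVert A\rVert_{\gamma,p}/\lVert A\rVert_{\gamma,d}\le K_\gamma$. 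Taking suprema over $n$ and over $A\in\mathbb{S}^n$ proves both inequalities, once $K_{\gamma,1,n}$ is understood (after stabilization) as the common limiting value $K_{\gamma,1,\infty}$. The $\Gamma$- and $\G$-cases are formally identical, invoking \eqref{eq:SGIn} and the classical Grothendieck inequality \eqref{eq:GI2} respectively, together with \eqref{eq:stabilize} for the corresponding stabilizations.

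For the real-versus-complex inequalities, I would specialize the identities of Proposition~\ref{prop:RCnorms}: for every $A\in\mathbb{S}^n(\mathbb{R})$, the relations $\lVert A\rVert_{\gamma,k}^\mathbb{C}=\lVert A\rVert_{\gamma,2k}^\mathbb{R}$ hold for $k=d$ and $k=p$ simultaneously, so the ratio computed on $\mathbb{S}^n(\mathbb{R})$ at $(\mathbb{R},2d,2p)$ equals the ratio at $(\mathbb{C},d,p)$. Since $\mathbb{S}^n(\mathbb{R})$ is a subset of the admissible set $\mathbb{S}^n(\mathbb{C})$ in the definition of $K_{\gamma,d,p}^\mathbb{C}$, taking suprema yields $K_{\gamma,2d,2p}^\mathbb{R}\le K_{\gamma,d,p}^\mathbb{C}$; the $\Gamma$- and $\G$-analogues follow in the same way from \eqref{gammaC} and \eqref{eq:GCnorm}. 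The limiting equalities $K_{\gamma,1,\infty}=K_\gamma$ (and its two siblings) then reduce to the matching lower bound: fixing $A\in\mathbb{S}^n$ and choosing any $p\ge n$, stabilization forces $\lVert A\rVert_{\gamma,p}=\lVert A\rVert_\gamma$, so $\lVert A\rVert_\gamma/\lVert A\rVert_\theta\le K_{\gamma,1,p}\le K_{\gamma,1,\infty}$; supremizing over $A$ and $n$ closes the gap.

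Finally, the composition inequalities are immediate from the definitions of the constants: for $1\le d\le p\le q\le\infty$ and any $A\in\mathbb{S}^n$, applying the defining inequalities in turn gives
\[
\lVert A \rVert_{\gamma,q}\le K_{\gamma,p,q}\lVert A \rVert_{\gamma,p}\le K_{\gamma,p,q}K_{\gamma,d,p}\lVert A \rVert_{\gamma,d},
\]
after which one takes the supremum of the ratio $\lVert A\rVert_{\gamma,q}/\lVert A\rVert_{\gamma,d}$; the case $q=\infty$ follows by passing to the limit along integers $q\ge p$, using Step~1 to guarantee the limit exists and is finite. The analogous chains handle the $\Gamma$- and $\G$-cases. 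I do not expect any genuine obstacle: each step is a one-line consequence of the already-established results. The only minor subtlety is parsing $K_{\gamma,1,n}$ as the stabilized value attained once the second index exceeds the matrix size, so that the purported chain $K_{\gamma,d,p}\le K_{\gamma,1,n}\le K_\gamma$ becomes a pair of bounds both controlled by Step~1.
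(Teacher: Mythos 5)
Your proof is correct and follows essentially the same route as the paper's: the paper declares everything except the real--complex comparisons to be immediate from Lemma~\ref{lem:d}, Theorem~\ref{ubgenGC}, and the definitions, and your Step~1 and Steps~3--4 simply spell out why that is so; your Step~2 is exactly the paper's argument via \eqref{gammaC}--\eqref{eq:GCnorm} restricted to $\mathbb{S}^n(\mathbb{R})$. One small imprecision worth fixing: you should not say $K_{\gamma,1,n}$ is ``understood as the common limiting value $K_{\gamma,1,\infty}$'' --- for a fixed finite $n$ the constant $K_{\gamma,1,n}$ is a genuine quantity bounded \emph{above} by $K_\gamma=K_{\gamma,1,\infty}$ (by monotonicity and stabilization, not by equality), and the chain $K_{\gamma,d,p}\le K_{\gamma,1,n}\le K_\gamma$ does not require the two to coincide.
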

\begin{proof}
Only the inequalities in \eqref{eq:RCineq} require some justification; all other statements are obvious from definitions and  Lemma~\ref{lem:d}. Let $A\in\mathbb{S}^n(\mathbb{R})$. Then $\lVert A \rVert_{\gamma,p}^\mathbb{C}\le K_{\gamma,d,p}^\mathbb{C} \lVert A \rVert_{\gamma,d}^\mathbb{C}$. So by \eqref{gammaC}, $\lVert A \rVert_{\gamma,2p}^\mathbb{R}\le K_{\gamma,d,p}^\mathbb{C} \lVert A \rVert_{\gamma,2d}^\mathbb{R}$.  Hence $K_{\gamma,2d,2p}^\mathbb{R} \le K_{\gamma,d,p}^\mathbb{C}$. Likewise for the other two inequalities.
\end{proof}

Such variants of the original Grothendieck inequality, i.e., for the $(\G,d)$-norms, were first studied by Krivine \cite{Krivine2} and have appeared in many places \cite{Acin, Bri3,Bri1,Bri2, Bene, Hirsch}. 
In \cite{Acin,Bene,Hirsch,Krivine2}, $K_{\G,1,p}$ is denoted $K_\G(p)$ and called the order-$p$ Grothendieck constant; in \cite{Bri2}, the inequality \eqref{eq:Gcd} is called the generalized Grothendieck inequality and $K_{\G,d,p}$ denoted $K_\G(p \mapsto d)$. We will have more to say about these inequalities in Section~\ref{sec:mother}. In particular, Krivine \cite[p.~17]{Krivine2} showed that
\begin{equation}\label{eq:K2}
K_{\G,1,2}^\mathbb{R}=\sqrt{2},
\end{equation}
and this has some interesting consequences for us, first of which is a slight improvement of our bound for the real symmetric Grothendieck constants.
\begin{corollary}\label{Krimpr}
$K_\Gamma^\mathbb{R}\le K_\gamma^\mathbb{R}\le \sqrt{2}(8/\pi-1)\approx 2.18705$.
\end{corollary}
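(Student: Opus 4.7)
The plan is to combine the submultiplicativity of the Grothendieck rank constants from Corollary~\ref{cor:dnorms} with the real-to-complex passage in \eqref{eq:RCineq} and Krivine's identity \eqref{eq:K2}. Since the left inequality $K_\Gamma^\mathbb{R} \le K_\gamma^\mathbb{R}$ is already supplied by Proposition~\ref{ineqbetegrodcons}, I concentrate on the upper bound $K_\gamma^\mathbb{R} \le \sqrt{2}(8/\pi - 1)$.

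First, using $K_\gamma^\mathbb{R} = K_{\gamma,1,\infty}^\mathbb{R}$ from \eqref{eq:RCineq} together with the submultiplicative relation $K_{\gamma,d,q} \le K_{\gamma,d,p}\,K_{\gamma,p,q}$ of Corollary~\ref{cor:dnorms}, I factor
\[
K_\gamma^\mathbb{R} \le K_{\gamma,1,2}^\mathbb{R} \cdot K_{\gamma,2,\infty}^\mathbb{R}.
\]
The second factor is dispatched immediately: \eqref{eq:RCineq} applied with $d=1$ and $p=\infty$ gives $K_{\gamma,2,\infty}^\mathbb{R} \le K_{\gamma,1,\infty}^\mathbb{C} = K_\gamma^\mathbb{C}$, and Theorem~\ref{ubgenGC} then yields $K_\gamma^\mathbb{C} \le 8/\pi - 1$.

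What remains is to show $K_{\gamma,1,2}^\mathbb{R} \le \sqrt{2}$, which is the main obstacle. By Corollary~\ref{corARC2}, for any $A \in \mathbb{S}^n(\mathbb{R})$ one has $\lVert A\rVert_{\gamma,2}^\mathbb{R} = \lVert A\rVert_\theta^\mathbb{C}$ while $\lVert A\rVert_{\gamma,1}^\mathbb{R} = \lVert A\rVert_\theta^\mathbb{R}$, so the desired bound is equivalent to $\lVert A\rVert_\theta^\mathbb{C} \le \sqrt{2}\,\lVert A\rVert_\theta^\mathbb{R}$ for real symmetric $A$ --- precisely the symmetric analogue of Krivine's bilinear inequality $\lVert B\rVert_{\infty,1}^\mathbb{C} \le \sqrt{2}\,\lVert B\rVert_{\infty,1}^\mathbb{R}$, i.e.\ \eqref{eq:K2} read through \eqref{infty1C}. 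Parametrising $z \in \mathbb{C}^n$ with $\lvert z_i\rvert = 1$ as $z_i = \xi_i + \mathrm{i}\eta_i$ with $\xi_i^2 + \eta_i^2 = 1$, one computes $z^*Az = \xi^\tp A\xi + \eta^\tp A\eta$ (the imaginary part vanishes by symmetry of $A$), so that $\lVert A\rVert_\theta^\mathbb{C} = \max\{\tr(AM) : M = \xi\xi^\tp + \eta\eta^\tp \in \mathbb{G}^n_2(\mathbb{R})\}$. The Alon--Naor-style specialisation of Krivine's argument for $K_{\G,1,2}^\mathbb{R}=\sqrt{2}$ to this diagonal $z=w$ case then produces the constant $\sqrt{2}$ --- this is the delicate point, since a crude Cauchy--Schwarz on the two summands $\xi^\tp A\xi$ and $\eta^\tp A\eta$ would only deliver $2$, and one must exploit the full $2\times 2$ structure of $[\xi,\eta]^\tp A[\xi,\eta]$ to recover the sharp $\sqrt{2}$. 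Multiplying the three bounds gives $K_\gamma^\mathbb{R} \le \sqrt{2}(8/\pi - 1) \approx 2.18705$.
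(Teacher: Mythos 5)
Your proof takes the same overall route as the paper's: you factor $K_\gamma^\mathbb{R} = K_{\gamma,1,\infty}^\mathbb{R} \le K_{\gamma,1,2}^\mathbb{R}\,K_{\gamma,2,\infty}^\mathbb{R}$, bound the second factor by $K_{\gamma,1,\infty}^\mathbb{C} = K_\gamma^\mathbb{C} \le 8/\pi - 1$ via $K_{\gamma,2d,2p}^\mathbb{R}\le K_{\gamma,d,p}^\mathbb{C}$, and then need $K_{\gamma,1,2}^\mathbb{R}\le\sqrt{2}$ for the first factor. You have correctly pinpointed that last bound as the crux, and your reduction via Corollary~\ref{corARC2} to the statement $\lVert A\rVert_\theta^\mathbb{C}\le\sqrt{2}\,\lVert A\rVert_\theta^\mathbb{R}$ for $A\in\mathbb{S}^n(\mathbb{R})$ is correct and clarifying.

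However, you have not actually established that bound. After the reduction, you invoke ``the Alon--Naor-style specialisation of Krivine's argument\ldots\ to this diagonal $z=w$ case'' and assert that ``exploiting the full $2\times 2$ structure of $[\xi,\eta]^\tp A[\xi,\eta]$'' recovers $\sqrt{2}$, but no inequality is derived. This is the genuine gap. Be aware that $K_{\G,1,2}^\mathbb{R}=\sqrt{2}$ from \eqref{eq:K2} cannot be cited as a black box here, because the constants are ordered the wrong way for that: by the block embedding \eqref{eqvarnrm} one has $K_{\gamma,1,2}^\mathbb{R}\ge K_{\G,1,2}^\mathbb{R}$, not $\le$. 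What is really needed is the observation that Krivine's \emph{proof} of $K_{\G,1,2}^\mathbb{R}\le\sqrt{2}$ --- a Gaussian rounding/interpolation argument via a function $\varphi$ and a power series for $\varphi^{-1}$, in the style of the proof of Theorem~\ref{ubgenGC} --- applies verbatim when the two families $\{x_i\}$, $\{y_j\}$ coincide, since the randomized rounding is applied to a single collection of vectors; this yields $K_{\gamma,1,2}^\mathbb{R}\le\sqrt{2}$ directly. That transfer is the single sentence your proof is missing (the paper is similarly terse at this step, citing \eqref{eq:K2} without further elaboration, so you have correctly located where the care is required --- you just need to actually supply it).
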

\begin{proof}
By Corollary~\ref{cor:dnorms} and \eqref{eq:K2}, $K_{\gamma}^\mathbb{R}=K_{\gamma,1,\infty}^\mathbb{R}\le K_{\gamma,1,2}^\mathbb{R}K_{2,\infty}^\mathbb{R}\le K_{\gamma,1,2}^\mathbb{R}K_{\gamma,1,\infty}^\mathbb{C}\le \sqrt{2}(8/\pi-1)$.
\end{proof}

A perhaps interesting aside that follows from \eqref{infty1C} and \eqref{eq:K2} is that for $B\in\mathbb{R}^{m\times n}$, we have
\[
\lVert B \rVert_{\infty,1}^\mathbb{C} = \lVert B \rVert_{\G,2}^\mathbb{R} \le K_{\G,1,2}^\mathbb{R}\lVert B \rVert_{\G,1}^\mathbb{R} =\sqrt{2}\lVert B \rVert_{\infty,1}^\mathbb{R},
\]
a sharp version of the bound $\lVert B \rVert_{\infty,1}^\mathbb{C}\le 2\lVert B \rVert_{\infty,1}^\mathbb{R}$ in \cite[Proposition~2.1]{HK05}.

\section{Symmetric Grothendieck inequality for cones}\label{sec:cones}

We will see that results such as the Nesterov $\pi/2$-Theorem \cite{Nes,Rietz} and Goemans--Williamson inequality \cite{GW95} are in fact just ``symmetric Grothendieck inequality for positive semidefinite matrices'' and ``symmetric Grothendieck inequality for graph Laplacians.''  To be more specific, let $\mathsf{C} = \{C_n \subseteq \mathbb{S}^n : n \in \mathbb{N}\}$ be a family of convex cones.   By a symmetric Grothendieck inequality for $\mathsf{C}$, we mean an inequality of the form
\begin{equation}\label{eq:CGI2}
\lVert A \rVert_\gamma \le K_\mathsf{C} \lVert A \rVert_\theta
\end{equation}
where
\begin{equation}\label{eq:CGC}
K_\mathsf{C} \coloneqq  \sup_{n \in \mathbb{N}} \biggl[
\max_{A \in C_n \subseteq \mathbb{S}^n} \frac{\lVert A \rVert_\gamma}{\lVert A \rVert_\theta} \biggr].
\end{equation}
Obviously, $K_\mathsf{C} \le K_\gamma$.
In particular, $K_\mathsf{C}$ is finite and does not depend on the dimension $n$; we will call it the \emph{conic Grothendieck constant} for $\mathsf{C}$.

We have already seen that if we set  $W_{m,n} = \bigl\{\begin{bsmallmatrix} 0 & B \\ B^* & 0 \end{bsmallmatrix} \in \mathbb{S}^{m+n} : B \in \Bbbk^{m \times n} \bigr\}$, a subspace and therefore
trivially a convex cone, then we obtain the original Grothendieck constant $K_\G$. Another immediate example is $C_n = \mathbb{S}^n(\mathbb{R}_\p\!)$ --- by Corollary~\ref{cor:nonnegRC}, we see that $K_\mathsf{C} = 1$.
We will discuss some nontrivial examples below. 

In Section~\ref{sec:RN}, we will see that for $C_n = \mathbb{S}^n_\p$, the conic Grothendieck constants may be determined exactly: $K_\mathsf{C}^\mathbb{R} = \pi/2$ and $K_\mathsf{C}^\mathbb{C} = 4/\pi$. Furthermore, the inequalities over $\mathbb{R}$ and $\mathbb{C}$ are the well-known Nesterov $\pi/2$-Theorem and  Ben-Tal--Nemirovski--Roos $4/\pi$-Theorem respectively.

In Section~\ref{sec:GW}, we will see that for $C_n =  \mathbb{L}^n$, the cone of weighted graph Laplacians on $n$-vertex graphs, we obtain the even better known inequality  of Goemans--Williamson. The conic Grothendieck constant in this case is  $K_\GW^\mathbb{R} = 1/\alpha_\GW^\mathbb{R}$ where $\alpha_\GW^\mathbb{R} \approx 0.878567$ is the Goemans--Williamson constant. We will prove a complex analogue.

An immediate advantage of obtaining these inequalities as symmetric Grothendieck inequalities is that we obtain them over both $\mathbb{R}$ and $\mathbb{C}$ simultaneously.

\subsection{Cones of positive semidefinite matrices}\label{sec:RN}

We will show that the Nesterov $\pi/2$-Theorem \cite[Theorem~3.3]{Nes}, which actually appeared earlier in \cite[Theorem 4]{Rietz}, as well as its complex analogue, the Ben-Tal--Nemirovski--Roos $4/\pi$-Theorem \cite[Equation~57]{Ben-Tal},  will follow easily from the discussions in the last two sections. The sharpness of the Nesterov $\pi/2$-Theorem \eqref{RNineq}, i.e., that $\pi/2$ is not just an upper bound but the exact value of the conic Grothendieck constant for $\mathbb{S}^n_\p(\mathbb{R})$,  was established by Alon and Naor \cite{Alon3}. Our approach will yield $4/\pi$ as the exact value of the conic Grothendieck constant for $\mathbb{S}^n_\p(\mathbb{C})$, showing that the Ben-Tal--Nemirovski--Roos $4/\pi$-Theorem  \eqref{CRNineq} is also sharp. While the key ideas for our sharpness proof are due to Alon and Naor \cite{Alon3}, we take the opportunity to slightly refine their proof, using exact expressions established in Lemma~\ref{lem:expectations} in place of asymptotic estimates and providing a more careful argument in Lemma~\ref{unformconvlem}, both to be found in the appendix.
\begin{theorem}\label{FRNineq}
Let $A\in \mathbb{S}_\p^n(\mathbb{R})$.  Then
\begin{equation}\label{RNineq}
\lVert A \rVert_\gamma^\mathbb{R} \le \frac{\pi}{2} \lVert A \rVert_\theta^\mathbb{R}.
\end{equation}
Let $A\in \mathbb{S}_\p^n(\mathbb{C})$.  Then
\begin{equation}\label{CRNineq}
\lVert A \rVert_\gamma^\mathbb{C} \le \frac{4}{\pi} \lVert A \rVert_\theta^\mathbb{C}.
\end{equation}
These inequalities are sharp.
In both inequalities, the seminorm $\lVert\,\cdot\,\rVert_\gamma$ may be replaced by the norms $\lVert\,\cdot\,\rVert_\Gamma$ or $\lVert\,\cdot\,\rVert_\G$ and the seminorm $\lVert\,\cdot\,\rVert_\theta$ may be replaced by the norms $\lVert\,\cdot\,\rVert_\Theta$ or $\lVert\,\cdot\,\rVert_{\infty,1}$.
\end{theorem}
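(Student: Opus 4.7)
My strategy for the upper bounds \eqref{RNineq} and \eqref{CRNineq} is to combine the Gaussian sign identity \eqref{2-eq1} with the Taylor series of $\varphi_\mathbb{R}$ and $\varphi_\mathbb{C}$ in Lemma~\ref{varphicCform}, using the Schur product theorem to leverage the positive semidefiniteness of $A$. I first observe that for $A\in\mathbb{S}^n_\p$ the form $\sum_{i,j}a_{ij}\langle x_i,x_j\rangle$ is already nonnegative: writing $A=\sum_k\lambda_k v_kv_k^*$ with $\lambda_k\ge 0$, each term becomes $\lambda_k\bigl\lVert\sum_j\overline{(v_k)_j}\,x_j\bigr\rVert^2\ge 0$. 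Hence the absolute value in the definition of $\lVert A\rVert_\gamma$ may be dropped on $\mathbb{S}^n_\p$. In the real case, $(\pi/2)\varphi_\mathbb{R}(x)=\arcsin x$ has a Taylor series $x+\sum_{k\ge 1}c_k\,x^{2k+1}$ with $c_k>0$; the Schur product theorem makes each odd Schur power $G^{\circ(2k+1)}$ of the Gram matrix $G\coloneqq(\langle x_i,x_j\rangle)$ PSD, and since $A,G^{\circ(2k+1)}\succeq 0$ the same spectral argument shows $\sum_{i,j}a_{ij}\,\langle x_i,x_j\rangle^{2k+1}\ge 0$. Summing in $k$ gives
\[
\sum_{i,j}a_{ij}\langle x_i,x_j\rangle\;\le\;\tfrac{\pi}{2}\sum_{i,j}a_{ij}\varphi_\mathbb{R}(\langle x_i,x_j\rangle),
\]
and identity \eqref{2-eq1} rewrites the right side as $\tfrac{\pi}{2}\int\delta(z)^\tp\! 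A\,\delta(z)\,G_n^\mathbb{R}(z)\,dz$ with $\delta_i(z)\coloneqq\sign\langle x_i,z\rangle\in\{-1,+1\}$; each integrand is pointwise $\le\lVert A\rVert_\theta$, and taking the maximum over $x_1,\ldots,x_n$ yields \eqref{RNineq}. The complex case is structurally identical, using the expansion $\varphi_\mathbb{C}(x)=(\pi/4)\sum_{k\ge 0}d_k\,x\lvert x\rvert^{2k}$ from Lemma~\ref{varphicCform}\eqref{varphiexplformC} with $d_k\ge 0$, together with the observation that $G^{\circ(k+1)}\circ\overline{G}^{\circ k}$ is a Schur product of two PSD matrices, hence PSD.

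For the replacement of $\lVert\,\cdot\,\rVert_\gamma$ by $\lVert\,\cdot\,\rVert_\Gamma$ or $\lVert\,\cdot\,\rVert_\G$, Proposition~\ref{posdefcase} already furnishes equality of these three norms on $\mathbb{S}^n_\p$. A short argument handles the three $\theta$-type norms on $\mathbb{S}^n_\p$: since $\delta\mapsto\delta^*\! A\delta$ is convex when $A\succeq 0$, its maximum over the polydisk $\lvert\delta_i\rvert\le 1$ is attained at a vertex $\lvert\delta_i\rvert=1$, giving $\lVert A\rVert_\theta=\lVert A\rVert_\Theta$; and Cauchy--Schwarz applied to the PSD sesquilinear form $(\varepsilon,\delta)\mapsto\sum a_{ij}\bar\varepsilon_i\delta_j$ yields $\bigl\lvert\sum a_{ij}\bar\varepsilon_i\delta_j\bigr\rvert\le(\varepsilon^*\! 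A\varepsilon)^{1/2}(\delta^*\! A\delta)^{1/2}\le\lVert A\rVert_\theta$ for $\lvert\varepsilon_i\rvert,\lvert\delta_j\rvert\le 1$, whence $\lVert A\rVert_{\infty,1}\le\lVert A\rVert_\theta$; the reverse inequality (take $\varepsilon=\delta$) is immediate.

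The hard part will be sharpness. I would follow Alon and Naor's asymptotic random construction. Fix large $d$ and $n$, pick $y_1,\ldots,y_n\in\Bbbk^d$ i.i.d.\ uniform on the unit sphere, and set $A\coloneqq G(y_1,\ldots,y_n)\in\mathbb{S}^n_\p$. Taking $x_i=y_i$ in the definition of $\lVert A\rVert_\gamma$ produces the easy lower bound $\lVert A\rVert_\gamma\ge\sum_{i,j}\lvert a_{ij}\rvert^2=\lVert A\rVert_F^2$, which in high dimension concentrates around a computable value. The upper bound on $\lVert A\rVert_\theta=\max_{\lvert\delta_i\rvert=1}\delta^*\! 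A\delta=\max_\delta\lVert Y\delta\rVert^2$, for $Y=[y_1,\ldots,y_n]$, is controlled by Gaussian concentration of the operator norm of $Y$ on the unit polydisk. The ratio $\lVert A\rVert_\gamma/\lVert A\rVert_\theta$ then converges, in the joint limit $n,d\to\infty$, to $\pi/2$ (real) and $4/\pi$ (complex); the exact constants emerge from the Gaussian moment formulas in Lemma~\ref{lem:expectations}. Replacing Alon--Naor's asymptotic estimates by these exact formulas and invoking the uniform-convergence refinement of Lemma~\ref{unformconvlem} (as foreshadowed in the paragraph preceding the theorem) delivers the sharpness for both the real and complex cases, the latter of which was not addressed in \cite{Alon3}.
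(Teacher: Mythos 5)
Your upper-bound argument follows the paper's proof exactly: both rely on the Gaussian sign identity \eqref{2-eq1}, the nonnegativity of all Taylor coefficients of $\varphi_\Bbbk$, and the Schur product theorem to force $\tr(AG^{\circ(k+1)}\circ\overline G^{\circ k})\ge 0$ when $A,G\succeq 0$, from which isolating the linear term gives $a_0\tr(AG)\le\tr(A\Phi(G))\le\lVert A\rVert_\theta$. The paper packages this as $\Phi(G)\succeq a_0 G$; you write the same thing term by term. Your handling of the $\Theta$- and $(\infty,1)$-norms via convexity and Cauchy--Schwarz is a fine alternative, though Proposition~\ref{posdefcase} already gives $\lVert\,\cdot\,\rVert_{\gamma,d}=\lVert\,\cdot\,\rVert_{\Gamma,d}=\lVert\,\cdot\,\rVert_{\G,d}$ on $\mathbb{S}^n_\p$ for every $d$, so the $d=1$ case covers those norms directly.

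The sharpness sketch also tracks the paper's proof (Alon--Naor's construction, replacing asymptotic estimates by the exact moment formula of Lemma~\ref{lem:expectations}, deterministic covering via Lemma~\ref{unformconvlem}), but two points need repair. First, in the complex case, taking $x_i=y_i$ in the definition of $\lVert A\rVert_\gamma$ with $A=G(y_1,\ldots,y_n)$ gives $\sum_{i,j}a_{ij}\langle y_i,y_j\rangle=\sum a_{ij}^2$, \emph{not} $\sum\lvert a_{ij}\rvert^2$; for Hermitian $A$ with nonreal entries, $\Re(a_{ij}^2)<\lvert a_{ij}\rvert^2$, and indeed $\mathbb{E}(\langle U,V\rangle^2)=0$ over $\mathbb{C}$ by the rotation symmetry $U\mapsto e^{\mathrm{i}\theta}U$, so this choice yields a vanishing lower bound. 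The fix is to take $x_i=\bar y_i$, or equivalently to use the $\tr(AG)$ characterization from \eqref{eq:normcorrd} with $G=A$, as the paper does. Second, the phrase ``Gaussian concentration of the operator norm of $Y$ on the unit polydisk'' compresses the hard step: to get the sharp constant, the paper shows that any maximizer $s\in\mathbb{T}^m$ of $\delta\mapsto\delta^*A\delta$ necessarily has $\bar s_i=\sign\langle x_i,v\rangle$ for $v=x/\lVert x\rVert$ with $x=\sum\bar s_i x_i$, so that $\lVert A\rVert_\theta=\bigl(\tfrac{1}{m}\sum_i\lvert\langle x_i,v\rangle\rvert\bigr)^2$, whose supremum over $v$ is then controlled by the $\alpha=1$ moment from Lemmas~\ref{lem:expectations} and~\ref{unformconvlem}. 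It is the ratio of the squared $\alpha=1$ moment to the $\alpha=2$ moment, not any concentration of $\lVert Y\rVert$, that produces $\pi/2$ and $4/\pi$; without that variational identification of the maximizer, the upper bound on $\lVert A\rVert_\theta$ would be too loose.
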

\begin{proof}
Our proof works for $\Bbbk = \mathbb{R}$ and $\mathbb{C}$ alike. We will indicate field dependence in our notations only when it is necessary to do so.

In terms of Schur product, for any $A \in \mathbb{S}^n_\eq(\mathbb{D})$, the map $\Phi$ in \eqref{eq:Phi} may be expressed as
\[
\Phi(A) = \sum_{k=0}^\infty a_k  A \circ (A \circ \overline{A})^{\circ k}
\]
where, as in \eqref{eq:haa} and \eqref{defphiC},
\[
a_k^\mathbb{R} =\frac{2}{\pi}\frac{(2k-1)!!}{(2k)!!(2k+1)}, \qquad
a_k^\mathbb{C} = \frac{\pi}{4}\bigg(\frac{(2k-1)!!}{(2k)!!}\biggr)^2 \frac{1}{k+1}
\]
for $k =0,1,2,\dots.$ Let $G \in \mathbb{G}^n$. Then $G \in \mathbb{S}_\eq^n(\mathbb{D})$ and
\[
\Phi(G) \succeq a_0 G,
\]
as $a_k >0$  and $G \circ (G \circ \overline{G})^{\circ k}\succeq 0$ for all $k \in \mathbb{N}$ (Schur products of positive semidefinite matrices remain positive semidefinite). Hence for any $A \in \mathbb{S}^n_\p$ and $G \in \mathbb{G}^n$,
\[
\lVert A\rVert_\theta \ge \tr \bigl( A\Phi(G)\bigr) \ge a_0 \tr( A G),
\]
where the first inequality is by \eqref{eq:2-thm1}. The characterization of $\lVert\,\cdot\,\rVert_\gamma$ in Corollary~\ref{cor:altGinfty1} yields
\[
\lVert A \rVert_\theta \ge a_0 \max\{\tr\bigl(AG\bigr): G \in \mathbb{G}^n \} = a_0\lVert A \rVert_\gamma.
\]
It remains to observe that $a_0^\mathbb{R} =2/\pi$ and $a_0^\mathbb{C} = \pi/4$.
That the $\theta$- and $\gamma$-seminorms may be replaced by other Grothendieck norms is simply a consequence of \eqref{posdefcase1}. 

We now show that \eqref{RNineq} and \eqref{CRNineq} are sharp. 
Let $0< \varepsilon < 1/5n$ be arbitrary. Let $m \in \mathbb{N}$ and the unit vectors $x_1,\dots,x_m \in \Bbbk^n$ be as  in Lemma~\ref{unformconvlem}.  Set
\[
G \coloneqq G(x_1,\dots,x_m) \in \mathbb{G}^m, \qquad A \coloneqq \frac{1}{m^2}G \in \mathbb{S}_\eq^m \cap \mathbb{S}_\p^m.
\]
Setting $\alpha = 2$ in \eqref{eq:moments} of Lemma~\ref{lem:expectations} and in the lower bound in 
\eqref{basinmunuy} of Lemma~\ref{unformconvlem}, we obtain
\[
\frac{1}{n} -5\varepsilon \le\min_{\lVert v \rVert =1} \biggl[\frac{1}{m}\sum_{i=1}^m \lvert \langle x_i,v\rangle\rvert^2\biggr]\le \frac{1}{m^2}\sum_{j=1}^m\sum_{i=1}^m \lvert \langle x_i,x_j\rangle\rvert^2 = \tr(AG),
\]
and thus $\lVert A \rVert_\gamma\ge 1/n - 5\varepsilon$  by \eqref{eq:normcorrd}.

We next show that
\begin{equation}\label{eq:upbdtheta}
\lVert A \rVert_\theta \le \max_{\lVert v \rVert =1} \biggl[\frac{1}{m}\sum_{i=1}^m \lvert \langle x_i,v\rangle\rvert\biggr]^2 \le
\begin{cases}
\dfrac{1}{\pi}\dfrac{\Gamma(n/2)^2}{\Gamma\bigl((n+1)/2\bigr)^2} + 3\varepsilon & \Bbbk = \mathbb{R}, \\[4ex]
\dfrac{\pi}{4}\dfrac{\Gamma(n)^2}{\Gamma(n+1/2)^2} + 3\varepsilon & \Bbbk = \mathbb{C}.
\end{cases}
\end{equation}
The second inequality above comes from setting $\alpha = 1$ in \eqref{basinmunuy} of Lemma~\ref{unformconvlem} and replacing $n$ by $n-1$ in \eqref{eq:moments} of Lemma~\ref{lem:expectations}, noting that $\Gamma(3/2) = \sqrt{\pi}/2$. We will next show the first inequality.
For $t=(t_1,\dots,t_{m})\in \mathbb{T}^m$,
\[
t^*\! A t=\frac{1}{m^2}\sum_{i=1}^m \sum_{j=1}^m\langle x_i,x_j\rangle \bar{t}_i t_j=\Bigl\lVert \frac{1}{m}\sum\nolimits_{i=1}^m \bar{t}_i x_i\Bigr\rVert^2.
\]
By definition, $\lVert A \rVert_\theta$ is the maximum of $t^*\! A t$ over all $t\in \mathbb{T}^m$ and let this be attained at $s=(s_1,\dots,s_{m})\in\mathbb{T}^m$. 
Set $x \coloneqq \sum\nolimits_{i=1}^m \bar{s}_i x_i\in \mathbb{C}^n$ and $v \coloneqq x/\lVert x\rVert$.  Then for  $t\in \mathbb{T}^m$,
\[
\Bigl\lvert\Bigl\langle\sum\nolimits_{i=1}^m \bar{t}_i x_i, x\Bigr\rangle\Bigr\rvert =\lVert x \rVert \Bigl\lvert\Bigl\langle\sum\nolimits_{i=1}^m \bar{t}_i x_i, v\Bigr\rangle\Bigr\rvert \le \lVert x \rVert\sum_{i=1}^m \lvert \bar{t}_i \rvert \lvert\langle x_i, v\rangle\rvert =\lVert x \rVert\sum_{i=1}^m \lvert\langle x_i, v\rangle\rvert,
\]
with equality if and only if $t_i=\sign \langle x_i, v\rangle$, $i=1,\dots,m$. On the other hand, Cauchy--Schwartz and the maximality of $s$ yields
\[
\Bigl\lvert\Bigl\langle \sum\nolimits_{i=1}^m \bar{t}_i x_i, x \Bigr\rangle\Bigr\rvert\le \Bigl\lVert\sum\nolimits_{i=1}^m \bar{t}_i x_i\Bigr\rVert \Bigl\lVert\sum\nolimits_{i=1}^m \bar{s}_i x_i\Bigr\rVert\le \lVert x \rVert^2.
\]
It follows that we must have $\bar{s}_i=\sign\langle x_i, v\rangle$, $i =1,\dots,m$. Therefore
\begin{align*}
\lVert A \rVert_\theta  =  \Bigl\lVert \frac{1}{m}x\Bigr\rVert^2 &= \Bigl\lvert\Bigl\langle  \frac{1}{m}\sum\nolimits_{i=1}^m \sign\langle x_i, v\rangle x_i, \frac{1}{m}\lVert x \rVert v \Bigr\rangle\Bigr\rvert\\
&= \Bigl\lVert \frac{1}{m}x\Bigr\rVert \Bigl\lvert\frac{1}{m}\sum\nolimits_{i=1}^m \sign( \langle x_i,v\rangle) \langle x_i, v\rangle\Bigr\rvert\\  
&= \Bigl\lvert\frac{1}{m}\sum\nolimits_{i=1}^m \sign( \langle x_i,v\rangle) \langle x_i, v\rangle\Bigr\rvert^2= \Big(\frac{1}{m}\sum\nolimits_{i=1}^m \vert \langle x_i, v\rangle\vert \Big)^2
\end{align*}
and we have the first inequality in \eqref{eq:upbdtheta}. Hence we get
\[
\frac{\Vert A\Vert _{\gamma}}{\Vert A\Vert _\theta}\ge
\begin{cases}
\Bigl( \dfrac{1}{n} - 5\varepsilon\Bigr) \smash[t]{\biggl[ \dfrac{1}{\pi}\dfrac{\Gamma(n/2)^2}{\Gamma\bigl((n+1)/2\bigr)^2} + 3\varepsilon\biggr]^{-1}} & \Bbbk = \mathbb{R},\\[4ex]
\Bigl( \dfrac{1}{n} - 5\varepsilon\Bigr) \smash[b]{\biggl[\dfrac{\pi}{4}\dfrac{\Gamma(n)^2}{\Gamma(n+1/2)^2} + 3\varepsilon\biggr]^{-1}} & \Bbbk = \mathbb{C},
\end{cases}
\]
and as $\varepsilon >0$ is arbitrary,
\[
\frac{\Vert A\Vert _{\gamma}}{\Vert A\Vert _\theta}\ge 
\begin{cases}
\dfrac{\pi}{2}\biggl[ \dfrac{\Gamma(n/2+1/2)}{\Gamma(n/2)\sqrt{n/2}}\biggr]^2 \to \dfrac{\pi}{2} & \Bbbk = \mathbb{R},\\[4ex]
\dfrac{4}{\pi}  \biggl[\dfrac{\Gamma(n+1/2)}{\Gamma(n)\sqrt{n}}\biggr]^2 \to \dfrac{4}{\pi} & \Bbbk = \mathbb{C},
\end{cases}
\]
as $n \to \infty$, using the fact that $\lim_{t\to\infty} \Gamma( t+\alpha)/\bigl(\Gamma(t) t^{\alpha}\bigr)=1$ for any $\alpha \in \mathbb{C}$.
\end{proof}
Combined with \eqref{eq:allequal} of Corollary~\ref{cor:nonnegRC} and \eqref{thetaC} of Corollary~\ref{corARC2}, the  inequality \eqref{CRNineq} shows that for any $A\in \mathbb{S}^n_\p(\mathbb{R})$, we have
\[
\lVert A\rVert_\gamma^\mathbb{R} = \lVert A\rVert_\gamma^\mathbb{C} \le \frac{4}{\pi} \lVert A \rVert_\theta^\mathbb{C}  = \frac{4}{\pi}\lVert A\rVert_{\gamma,2}^\mathbb{R},
\]
giving the  $n=2$ case in \cite[Theorem~1]{Bri1}. Note however that  \eqref{CRNineq} cannot be deduced from \cite[Theorem~1]{Bri1}. We will have more to say about these inequalities in Proposition~\ref{lbKcd} where we extend Theorem~\ref{FRNineq} to an arbitrary pair of Grothendieck $d$- and $p$-norms.

\subsection{Cones of weighted Laplacians}\label{sec:GW}

We begin with a precaution: All matrices considered in this section will be real but the norms can be taken over either $\mathbb{R}$ or $\mathbb{C}$. See Section~\ref{sec:RCnorms} for a discussion of real and complex Grothendieck norms.

For any $A\in \mathbb{S}^n_\circ =\mathbb{S}^n_\circ(\mathbb{R})$, the space of $n\times n$ real symmetric matrices with zero diagonal, we let $L_A \in \mathbb{S}^n =\mathbb{S}^n(\mathbb{R})$ be defined by
\begin{equation}\label{eq:LA}
L_A \coloneqq \diag(A\mathbbm{1}) - A,
\end{equation}
where $\diag(x)\in \mathbb{S}^n$ denotes the diagonal matrix whose diagonal is $x \in \mathbb{R}^n$ and $\mathbbm{1} =(1,\dots,1) \in \mathbb{R}^n$ is the vector of all ones.

If in addition $A \in \mathbb{S}^n(\mathbb{R}_\p\!)$, i.e., $a_{ij}\ge 0$ for all $i,j = 1,\dots,n$, then $L_A$  is called a \emph{weighted Laplacian}. Note that this implies $L_A$ has all off-diagonal entries nonpositive. It also follows from definition that $L_A\mathbbm{1} =0$. In fact these last two conditions are enough to characterize the set of all weighted Laplacians:
\[
\mathbb{L}^n \coloneqq \{ L_A \in \mathbb{S}^n : A \in \mathbb{S}^n_\circ(\mathbb{R}_\p\!)\}
= \{ L \in \mathbb{S}^n :  L \mathbbm{1} = 0,\; \ell_{ij} \le 0 \; \text{for all}\; i \ne j \}.
\]
Clearly, $\mathbb{L}^n \subseteq \mathbb{S}^n_\p$ and  $\mathbb{L}^n$ is a convex cone.

We will now establish the symmetric Grothendieck inequality for the cone $\mathbb{L}^n$. The inequality in the case $\Bbbk = \mathbb{R}$ was first discovered by Goemans and Williamson \cite{GW95} and its sharpness established by Feige and Schechtman \cite{Feige1, Feige2}. The corresponding inequality for $\Bbbk = \mathbb{C}$ is new as far as we know. The constants in Theorem~\ref{GW95th} have approximate values
\[
\alpha_\GW^\mathbb{R}\approx 0.87856, \qquad \alpha_\GW^\mathbb{C}\approx 0.93494,
\]
and so $K_\GW^\mathbb{R} \approx 1.1382$, $K_\GW^\mathbb{C} \le 1.0696$.
\begin{theorem}\label{GW95th}
Let $\Bbbk = \mathbb{R}$ or $\mathbb{C}$ and $\varphi_\Bbbk$ be as defined in \eqref{eq:haa} or \eqref{defphiC} respectively. Let
\begin{equation}\label{eq:alpha}
\alpha_\GW^\Bbbk\coloneqq \inf_{0\le x \le 1}\frac{1+\varphi_\Bbbk(x)}{1+x}.
\end{equation}
Then for any  $L\in \mathbb{L}^n$,
\begin{equation}\label{GW}
\lVert L \rVert_\gamma^\Bbbk \le K_\GW^\Bbbk \lVert L \rVert_\theta^\Bbbk
\end{equation}
where the smallest possible constants
\begin{equation}\label{GWconst}
K_\GW^\mathbb{R} = \frac{1}{\alpha_\GW^\mathbb{R}}, \qquad K_\GW^\mathbb{C} \le \frac{1}{\alpha_\GW^\mathbb{C}}.
\end{equation}
In \eqref{GW}, the seminorm $\lVert\,\cdot\,\rVert_\gamma$ may be replaced by the norms $\lVert\,\cdot\,\rVert_\Gamma$ or $\lVert\,\cdot\,\rVert_\G$ and the seminorm $\lVert\,\cdot\,\rVert_\theta$ may be replaced by the norms $\lVert\,\cdot\,\rVert_\Theta$ or $\lVert\,\cdot\,\rVert_{\infty,1}$.
\end{theorem}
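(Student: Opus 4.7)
Let $L = L_A$ with $A\in \mathbb{S}^n_\circ(\mathbb{R}_\p\!)$. Because $L\in \mathbb{S}^n_\p$, Corollary~\ref{cor:altGinfty1} removes the absolute value from $\lVert\,\cdot\,\rVert_\gamma$, and using $L\mathbbm{1}=0$ together with $\lVert x_i\rVert=1$ one obtains the Laplacian formulae
\[
\lVert L\rVert_\gamma = \max_{\lVert x_i\rVert=1}\sum_{i,j=1}^{n} a_{ij}\bigl(1-\Re\langle x_i,x_j\rangle\bigr),\qquad
\lVert L\rVert_\theta = \max_{\lvert\delta_i\rvert=1}\sum_{i,j=1}^{n} a_{ij}\bigl(1-\Re(\bar\delta_i\delta_j)\bigr);
\]
both expressions are automatically real because $L$ is real symmetric (pair $(i,j)$ with $(j,i)$ and use $\ell_{ij}=\ell_{ji}$). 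It therefore suffices to show $\lVert L\rVert_\theta\ge \alpha_\GW^\Bbbk\lVert L\rVert_\gamma$ for weighted Laplacians.

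The plan is random Gaussian rounding. Fix unit vectors $x_1,\dots,x_n$ realising $\lVert L\rVert_\gamma$, draw $z$ from $G_n^\Bbbk$, and set $\delta_i\coloneqq \sign\langle z,x_i\rangle\in\mathbb{T}$; then $\bar\delta_i=\sign\langle x_i,z\rangle$, so identity \eqref{2-eq1} of Lemma~\ref{2-thm1} gives $\mathbb{E}[\bar\delta_i\delta_j]=\varphi_\Bbbk(\langle x_i,x_j\rangle)$. Taking expectations and real parts (again using symmetry of $L$ and $\varphi_\mathbb{C}(\bar z)=\overline{\varphi_\mathbb{C}(z)}$) yields
\[
\lVert L\rVert_\theta \;\ge\; \mathbb{E}\biggl[\sum_{i,j} a_{ij}\bigl(1-\Re(\bar\delta_i\delta_j)\bigr)\biggr] \;=\; \sum_{i,j} a_{ij}\bigl(1-\Re\varphi_\Bbbk(\langle x_i,x_j\rangle)\bigr).
\]
Since $a_{ij}\ge 0$, comparing this termwise with the formula for $\lVert L\rVert_\gamma$ reduces the theorem to the pointwise bound
\[
1-\Re\varphi_\Bbbk(z) \;\ge\; \alpha_\GW^\Bbbk\bigl(1-\Re z\bigr) \qquad \text{for every } z\in\mathbb{D}.
\]

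The main obstacle is verifying this pointwise inequality, particularly in the complex case. For $\Bbbk=\mathbb{R}$ it is the classical Goemans--Williamson estimate: $\varphi_\mathbb{R}$ is odd, so the substitution $y=-x$ converts $\inf_{x\in[-1,1]}(1-\varphi_\mathbb{R}(x))/(1-x)$ into the definition \eqref{eq:alpha}, and a short derivative check (using $\varphi_\mathbb{R}'(0)=2/\pi<1$ and the behaviour at $x=-1$) confirms that the infimum is attained in $(0,1)$. For $\Bbbk=\mathbb{C}$ I will combine Lemma~\ref{Halem3.5}\eqref{it:phiC}, which gives $\varphi_\mathbb{C}(z)=\sign(z)\,h(\lvert z\rvert)$ with $h\coloneqq \varphi_\mathbb{C}\vert_{[0,1]}$ strictly convex and satisfying $h(0)=0$, $h(1)=1$ (whence $h(r)<r$ on $(0,1)$), with a one-variable monotonicity step: writing $z=re^{\mathrm{i}\vartheta}$ and $t=\cos\vartheta$, the ratio $(1-t\,h(r))/(1-tr)$ has $t$-derivative proportional to $r-h(r)>0$, so its infimum over $\mathbb{D}$ is attained at $t=-1$, i.e.\ on the negative real axis, where it equals $(1+h(r))/(1+r)$. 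This matches \eqref{eq:alpha} exactly and closes the complex case.

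Finally, the extension of \eqref{GW} from the $(\gamma,\theta)$-pair to the other pairs listed at the end of the statement is free: since every $L\in\mathbb{L}^n$ is positive semidefinite, Proposition~\ref{posdefcase} (applied at $d=1$ and at $d\ge n$) forces $\lVert L\rVert_\gamma=\lVert L\rVert_\Gamma=\lVert L\rVert_\G$ and $\lVert L\rVert_\theta=\lVert L\rVert_\Theta=\lVert L\rVert_{\infty,1}$. Sharpness of the real bound, namely $K_\GW^\mathbb{R}=1/\alpha_\GW^\mathbb{R}$, will be imported from the Feige--Schechtman construction \cite{Feige1,Feige2} rather than reproved; the sharpness of the complex bound is not asserted.
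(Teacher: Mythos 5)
Your proof is correct and follows the same overall strategy as the paper: reduce to the pointwise comparison $1-\Re\varphi_\Bbbk(z)\ge \alpha_\GW^\Bbbk(1-\Re z)$ and then push it through Gaussian sign-rounding using \eqref{2-eq1} and nonnegativity of the $a_{ij}$; the norm-replacement and the Feige--Schechtman sharpness are handled identically to the paper. Where you genuinely differ is the verification of the pointwise inequality over $\mathbb{C}$. The paper works directly with the power series $\varphi_\mathbb{C}(z)=\frac{\pi}{4}\sum_k c_k z\lvert z\rvert^{2k}$ with $c_k\ge 0$: for $\Re z\ge 0$ it bounds $\Re\varphi_\mathbb{C}(z)\le \Re z$ term by term, and for $\Re z\le 0$ it establishes $\varphi_\mathbb{C}(\lvert\Re z\rvert)\le -\Re\varphi_\mathbb{C}(z)$ and closes with a restriction-to-the-real-axis argument. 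You instead invoke $\varphi_\mathbb{C}(z)=\sign(z)\,h(\lvert z\rvert)$ from Lemma~\ref{Halem3.5}\eqref{it:phiC}, write the ratio as $(1-th(r))/(1-tr)$ with $t=\cos\vartheta$, and note its $t$-derivative is $(r-h(r))/(1-tr)^2>0$ for $r\in(0,1)$, so the infimum sits at $t=-1$. This one-variable monotonicity argument is cleaner and avoids the case split on $\Re z$; it is a legitimate alternative that yields the same constant. One small loose end: over $\mathbb{R}$ the odd substitution $y=-x$ is an involution of $[-1,1]$ and does not by itself reduce $\inf_{[-1,1]}(1-\varphi_\mathbb{R}(x))/(1-x)$ to $\inf_{[0,1]}(1+\varphi_\mathbb{R}(y))/(1+y)$; you still need to know that the $x\in[0,1)$ portion contributes nothing below $1$, which the paper gets from concavity of $1-\varphi_\mathbb{R}$ on $[0,1]$. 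Your ``short derivative check'' gestures at this but should spell it out (e.g.\ $1-\varphi_\mathbb{R}$ concave on $[0,1]$ with endpoint values $1$ and $0$ lies above the chord $1-x$, so the ratio is $\ge 1$ there), otherwise the reduction to \eqref{eq:alpha} is incomplete.
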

\begin{proof}
We will show that for both $\Bbbk = \mathbb{R}$ and $\mathbb{C}$,
\[
\alpha_\GW^\Bbbk = \inf_{ \lvert z \rvert<1} \frac{1-\Re\varphi_\Bbbk(z)}{1-\Re z} .
\]
The reader is reminded that excluding some or all boundary points makes no difference when taking infimum over a region.

\underline{\textsc{Case I}: $\Bbbk=\mathbb{R}$.}\; For $x\in [-1,1]$, $\varphi_\mathbb{R}(x) =(2/\pi) \arcsin x$.  If $x\in [0,1]$, then $f(x) \coloneqq 1-\varphi_\mathbb{R}(x)$ is a concave function with $f(0)=1$ and $f(1)=0$.  Therefore the graph of $f$ lies above the line $1-x$ for all $x\in[0,1]$, i.e., $f(x)\ge 1-x$.  Hence
\[
\inf_{0 \le x < 1} \frac{1-\varphi_\mathbb{R}(x)}{1-x} =1.
\]
If $x\in [-1,0]$, let $y=-x\in [0,1]$, then
\[
\frac{1-\varphi_\mathbb{R}(x)}{1-x}= \frac{1+\varphi_\mathbb{R}(y)}{1+y}.
\]
Thus
\[
\inf_{0 \le y \le 1} \frac{1+\varphi_\mathbb{R}(y)}{1+y}= \inf_{-1 \le x \le 0}\frac{1-\varphi_\mathbb{R}(x)}{1-x}= \inf_{-1 < x < 1}\frac{1-\varphi_\mathbb{R}(x)}{1-x},
\]
where the last equality follows since the infimum is not attained over $[0,1)$.

\underline{\textsc{Case II}: $\Bbbk=\mathbb{C}$.}\; Let $z=x +\mathrm{i}y$ where $x,y\in\mathbb{R}$, $x^2+y^2\le 1$. By \eqref{defphiC},
\begin{align*}
\Re\varphi_\mathbb{C}(z) &= \frac{\pi}{4}\sum_{k=0}^\infty\left(\frac{(2k-1)!!}{(2k)!!}\right)^2 \frac{1}{k+1}x(x^2+y^2)^{k}.
\intertext{If $x\ge 0$, then}
\Re\varphi_\mathbb{C}(z)&\le \frac{\pi}{4}\sum_{k=0}^\infty\left(\frac{(2k-1)!!}{(2k)!!}\right)^2 \frac{1}{k+1}x=x,
\end{align*}
where the last equality follows from $\varphi_\mathbb{C}(1)=1$, by Lemma~\ref{Halem3.5}\eqref{it:phiC}.  Therefore $1 -\Re \varphi_\mathbb{C}(z)\ge 1-\Re z$ when $\Re z\ge 0$, $\lvert z \rvert\le 1$. In fact,
\[
\inf_{\Re z\ge 0, \; \lvert z \rvert < 1} \frac{1 -\Re \varphi_\mathbb{C}(z)}{1-\Re z}= \frac{1 -\Re \varphi_\mathbb{C}(0)}{1-\Re 0} = 1,
\]
and so we may exclude the $\Re z \ge 0$ region when seeking the infimum over $\lvert z \rvert < 1$, i.e.,
\[
\inf_{\lvert z \rvert < 1} \frac{1 -\Re \varphi_\mathbb{C}(z)}{1-\Re z}=
\inf_{\Re z\le 0, \; \lvert z \rvert < 1} \frac{1 -\Re \varphi_\mathbb{C}(z)}{1-\Re z}.
\]
Now for $\Re z\le 0$, $\lvert z \rvert \le 1$, write $z=-x+\mathrm{i}y$ where $0 \le x \le 1$. Then
\[
\varphi_\mathbb{C}(x)  = \frac{\pi}{4}\sum_{k=0}^\infty\left(\frac{(2k-1)!!}{(2k)!!}\right)^2 \frac{1}{k+1}x^{2k+1} \le
\frac{\pi}{4}\sum_{k=0}^\infty\left(\frac{(2k-1)!!}{(2k)!!}\right)^2 \frac{1}{k+1}x(x^2+y^2)^{k} =  -\Re \varphi_\mathbb{C}(z).
\]
Since  $x =-\Re z$,
\[
 \inf_{0 \le x \le1} \frac{1+ \varphi_\mathbb{C}(x)}{1+x} \le \inf_{\Re z\le 0,\; \lvert z \rvert < 1} \frac{1-\Re\varphi_\mathbb{C} (z)}{1-\Re z}.
\]
The final step relies on a tautology: if $\Re z\le 0$, $\lvert z \rvert \le 1$, then $t = \Re z$ satisfies $\Re t \le 0$, $\lvert t \rvert <1$.  Hence it is trivially true that
\[
\inf_{\Re z\le 0,\; \lvert z \rvert < 1} \frac{1-\Re\varphi_\mathbb{C}(z)}{1-\Re z} \le
\inf_{\Re z\le 0,\; \lvert z \rvert<1}\frac{1-\varphi_\mathbb{C}(\Re z)}{1-\Re z} =
\inf_{0 \le x \le1} \frac{1+ \varphi_\mathbb{C}(x)}{1+x},
\]
where the last equality follows from $\varphi_\mathbb{C}(-x) = -\varphi_\mathbb{C}(x)$.

The remainder of this proof will work for $\Bbbk = \mathbb{R}$ and $\mathbb{C}$ alike; for notational simplicity, we write $\varphi = \varphi_\Bbbk$, $\alpha_\GW = \alpha_\GW^\Bbbk$, $G_n =G_n^\Bbbk$ in the following. Let $A \in \mathbb{S}^n_\circ(\mathbb{R}_\p\!)$. Then
\[
\rVert L_A \rVert_\gamma =\sum_{i=1}^n\sum_{j=1}^n a_{ij}(1-\langle x_i,x_j\rangle)
\]
for some unit vectors $x_1,\dots, x_n \in \Bbbk^n$. By \eqref{2-eq1},
\[
\int_{\Bbbk^n}\sum_{i=1}^n\sum_{j=1}^n a_{ij}\sign\langle x_i,z\rangle \sign\langle z,x_j\rangle G_n(z)\,dz
=\sum_{i=1}^n\sum_{j=1}^n a_{ij}\varphi (\langle x_i,x_j\rangle)
=\sum_{i=1}^n\sum_{j=1}^n a_{ij}\Re\varphi(\langle x_i,x_j\rangle),
\]
where the last equality holds since $A$ is real and symmetric. Therefore
\[
\int_{\Bbbk^n} \sum_{i=1}^n\sum_{j=1}^n a_{ij}\sign\langle x_i,z\rangle \sign\langle z,x_j\rangle G_n(z)\,dz=\sum_{i=1}^n\sum_{j=1}^n a_{ij}\Re\varphi(\langle x_i,x_j\rangle).
\]

Since $a_{ii}=0$, $i=1,\dots,n$, for any $\delta=(\delta_1,\dots,\delta_n) \in \Bbbk^n$,
\begin{equation}\label{LWxeq}
\delta^*  L_A\delta=\sum_{i=1}^n\sum_{j \ne i} a_{ij}\bigl(1-\Re(\bar{\delta}_i\delta_j) \bigr).
\end{equation}
For any $z\in\Bbbk^n$, let $\delta(z) \coloneqq (\sign \langle z,x_1\rangle ,\dots,\sign \langle z,x_n\rangle ) \in \Bbbk^n$. Since $a_{ij}\ge 0$, $i,j=1,\dots,n$,
\begin{align*}
\int_{\Bbbk^n} \!\!\delta(z)^*  L_A \delta(z) G_n(z)\,dz
&= \sum_{i=1}^n\sum_{j=1}^n a_{ij}\bigl(1-\Re\varphi( \langle x_i,x_j\rangle)\bigr)\\
&\ge \sum_{i=1}^n\sum_{j=1}^n a_{ij}\alpha_\GW(1-\Re\langle x_i,x_j\rangle)\\
&=\alpha_\GW \tr \bigl(L_A G(x_1,\dots,x_d)\bigr)=\alpha_\GW \rVert L_A \rVert_\gamma.
\end{align*}
Hence there exists $\delta=(\delta_1,\dots,\delta_n)\in\Bbbk^n$, $\lvert \delta_i \rvert=1$, such that $\delta^*  L_A \delta\ge \alpha_\GW \rVert L_A \rVert_\gamma$. The required inequality \eqref{GW} then follows from $\lVert L_A \rVert_\theta \ge \delta^*  L_A \delta$.
The last statement is a consequence of \eqref{posdefcase1}:
\[
\lVert L_A \rVert_\gamma=\lVert L_A \rVert_\Gamma=\lVert L_A \rVert_\G, \qquad
\lVert L_A \rVert_\theta = \lVert L_A \rVert_\Theta =  \lVert L_A \rVert_{\infty,1}
\]
as $L_A \in \mathbb{S}^n_\p$. This establishes that $K_{\GW}^\Bbbk \le 1/\alpha_{\GW}^\Bbbk$ in \eqref{GWconst}; that equality holds in the case when $\Bbbk = \mathbb{R}$ is a well-known result of Feige and Schechtman \cite{Feige1, Feige2}.
\end{proof}

To obtain the more familiar expression for $\alpha_\GW^\mathbb{R}$ in \cite{GW95}, note that
\[
\alpha_\GW^\mathbb{R} =\inf_{0 \le x \le 1} \frac{1+\varphi_\mathbb{R}(x)}{1+x} = \inf_{\pi/2 \le \theta\le \pi}\frac{2}{\pi} \frac{\theta}{1-\cos\theta} = \inf_{0< \theta\le \pi}\frac{2}{\pi} \frac{\theta}{1-\cos\theta}.
\]
Naturally, we suspect that equality also holds in \eqref{GWconst} when  $\Bbbk = \mathbb{C}$, i.e.,
\[
K_\GW^\mathbb{C} \overset{?}{=} \frac{1}{\alpha_\GW^\mathbb{C}}.
\]
While most of the construction in \cite{Feige1, Feige2} carries over to $\mathbb{C}$, the ultimate difficulty is in obtaining the value of $\lVert L \rVert_\theta^\mathbb{C}$ for the weighted Laplacian $L$ of the constructed graph ---  unlike $\lVert L \rVert_\theta^\mathbb{R}$, which is essentially maxcut,  $\lVert L \rVert_\theta^\mathbb{C}$ has no combinatorial interpretation.

Similar to our discussions at the end of Section~\ref{sec:RN}, since $\mathbb{L}^n \subseteq \mathbb{S}^n_\p(\mathbb{R})$, when combined with \eqref{eq:allequal} of Corollary~\ref{cor:nonnegRC} and \eqref{thetaC} of Corollary~\ref{corARC2}, the complex Goemans--Williamson inequality \eqref{GW} shows that for any $L\in \mathbb{L}^n$,
\[
\lVert L \rVert_\gamma^\mathbb{R} = \lVert L \rVert_\gamma^\mathbb{C} \le K_\GW^\mathbb{C} \lVert L \rVert_\theta^\mathbb{C} = K_\GW^\mathbb{C} \lVert L \rVert_{\gamma,2}^\mathbb{R}.
\]
We will have more to say about these inequalities and constants in Proposition~\ref{prop:GWd} where Theorem~\ref{GW95th} is extended to an arbitrary pair of Grothendieck $d$- and $p$-norms.

\subsection{Cones of diagonally dominant matrices}\label{sec:diagdom} 

As in the last section, all matrices considered in this section will be real but the norms can be taken over either $\mathbb{R}$ or $\mathbb{C}$. Again, we write $\mathbb{S}^n =\mathbb{S}^n(\mathbb{R})$ throughout the rest of this section.

Let $\mathbb{S}_{\DD}^n \coloneqq \{A \in \mathbb{S}^n : a_{ii}\ge \sum_{j\ne i}\lvert a_{ij} \rvert\}$ be the cone of symmetric diagonally dominant matrices. Clearly,
\[
\mathbb{L}^n\subseteq \mathbb{S}_{\DD}^n\subseteq \mathbb{S}^n_\p.
\]
The relation between the first two cones may be more precisely characterized.  Let $\mathbb{S}_{\DD}^n(\mathbb{R}_\p\!) \subseteq \mathbb{S}_{\DD}^n$ be the subcone of diagonally dominant matrices with nonnegative entries.
\begin{lemma}\label{SDDdecom}
Every $A\in \mathbb{S}_{\DD}^n$ has a unique decomposition $A =H + L$ with $H \in \mathbb{S}_{\DD}^n(\mathbb{R}_\p\!)$, $L \in\mathbb{L}^n$, and $h_{ij}\ell_{ij}=0$ whenever $i\ne j$, i.e.,
$\mathbb{S}_{\DD}^n= \mathbb{S}_{\DD}^n(\mathbb{R}_\p\!) + \mathbb{L}^n$.
\end{lemma}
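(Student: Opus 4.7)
The proof is essentially a constructive one: the condition $h_{ij}\ell_{ij}=0$ for $i\ne j$, together with the sign constraints forced by the defining properties of $\mathbb{S}_{\DD}^n(\mathbb{R}_\p\!)$ and $\mathbb{L}^n$, pins down the off-diagonal entries uniquely, and then the weighted-Laplacian condition $L\mathbbm{1}=0$ pins down the diagonal. So my plan is to write down this unique candidate, verify it lies in the two required cones, and then observe uniqueness.

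Concretely, for $i\ne j$ define
\[
h_{ij} \coloneqq \max(a_{ij},0), \qquad \ell_{ij} \coloneqq \min(a_{ij},0).
\]
These are the only real numbers satisfying $h_{ij}\ge 0$, $\ell_{ij}\le 0$, $h_{ij}+\ell_{ij}=a_{ij}$, and $h_{ij}\ell_{ij}=0$. Next, since any $L\in\mathbb{L}^n$ satisfies $L\mathbbm{1}=0$, the diagonal of $L$ must be $\ell_{ii} = -\sum_{j\ne i}\ell_{ij} = \sum_{j\ne i}\max(-a_{ij},0)\ge 0$; this already verifies that the candidate $L = (\ell_{ij})$ lies in $\mathbb{L}^n$ (off-diagonals are nonpositive and $L\mathbbm{1}=0$). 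Finally set $h_{ii} \coloneqq a_{ii} - \ell_{ii}$, which is forced by $A = H + L$.

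The two things left to check are (i) $h_{ii}\ge 0$ and (ii) the diagonal dominance $h_{ii}\ge \sum_{j\ne i} h_{ij}$. Both fall out of $A\in\mathbb{S}_{\DD}^n$: indeed,
\[
h_{ii} - \sum_{j\ne i} h_{ij} = a_{ii} - \sum_{j\ne i}\max(-a_{ij},0) - \sum_{j\ne i}\max(a_{ij},0) = a_{ii} - \sum_{j\ne i}|a_{ij}| \ge 0,
\]
and in particular $h_{ii}\ge \sum_{j\ne i} h_{ij}\ge 0$. Since $H$ has nonnegative off-diagonals by construction and its diagonal is nonnegative, we get $H\in\mathbb{S}_{\DD}^n(\mathbb{R}_\p\!)$. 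Symmetry of both $H$ and $L$ is immediate from symmetry of $A$.

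Uniqueness follows because any other decomposition $A = H' + L'$ with the stated properties must also have $h'_{ij} \ge 0$, $\ell'_{ij}\le 0$, and $h'_{ij}\ell'_{ij}=0$ for $i\ne j$, hence $h'_{ij} = h_{ij}$ and $\ell'_{ij} = \ell_{ij}$ off the diagonal; then $L'\mathbbm{1}=0$ forces $\ell'_{ii} = \ell_{ii}$ and thus $h'_{ii} = h_{ii}$. There is no real obstacle here — the whole argument is just a sign-splitting bookkeeping exercise — so the only thing to be careful about is stating the sign/Laplacian constraints cleanly enough that uniqueness is transparent.
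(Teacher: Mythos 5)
Your proof is correct and follows essentially the same route as the paper: both build $L$ by taking the off-diagonal negative part of $A$ (the paper packages this as $L = L_B$ with $B \in \mathbb{S}^n_\circ(\mathbb{R}_\p)$, while you write the entries $\ell_{ij} = \min(a_{ij},0)$, $\ell_{ii} = -\sum_{j\ne i}\ell_{ij}$ directly), set $H = A - L$, and then verify membership in the two cones and uniqueness via the forced sign-split of the off-diagonals. The only cosmetic difference is that you spell out the uniqueness argument in a bit more detail than the paper does; the construction itself is identical.
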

\begin{proof}
Let $B \in \mathbb{S}^n_\circ(\mathbb{R}_\p\!)$, i.e., nonnegative symmetric with zero diagonal, be defined by
\[
b_{ij}\coloneqq
\begin{cases}
-a_{ij}  &\text{if } a_{ij}<0 \text{ and } i \ne j,\\
0 &\text{if } a_{ij}\ge 0 \text{ or } i = j.
\end{cases}
\]
Then $L_B = \diag(B\mathbbm{1}) - B \in \mathbb{L}^n $ by \eqref{eq:LA}.  Let $H \coloneqq A - L_B$.  Then $h_{ij}\ge  0$ and $h_{ij}\ell_{ij}=0$ for $i\ne j$. Therefore $H \in  \mathbb{S}^n(\mathbb{R}_\p\!)$. Since $a_{ii}\ge\sum_{j\ne i}  h_{ij} + \ell_{ij}$,  we also have $H \in \mathbb{S}_{\DD}^n$.  Uniqueness follows since $B$ is uniquely determined by $A$ and thus so are $L_B$ and $H$.
\end{proof}

We now prove a Grothendieck inequality for diagonally dominant matrices that may be regarded as an inequality `in between' the Nesterov $\pi/2$-Theorem and the Goemans--Willamson inequality over $\mathbb{R}$, and `in between' the Ben-Tal--Nemirovski--Roos $4/\pi$-Theorem and the complex Goemans--Willamson inequality  over $\mathbb{C}$.
\begin{proposition}\label{interprop}
Let $\Bbbk = \mathbb{R}$ or $\mathbb{C}$. Let $\alpha_\GW^\Bbbk$ be as in \eqref{eq:alpha} and $a_0^\mathbb{R} =2/\pi$, $a_0^\mathbb{C} = \pi/4$.
Then for any $A\in \mathbb{S}_{\DD}^n$,
\begin{equation}\label{eq:SDD}
\lVert A \rVert_\gamma^\Bbbk \le \biggl(1+\frac{1-a_0^\Bbbk}{\alpha_\GW^\Bbbk}\biggr)\lVert A \rVert_\theta^\Bbbk.
\end{equation}
In \eqref{eq:SDD}, the seminorm $\lVert\,\cdot\,\rVert_\gamma$ may be replaced by the norms $\lVert\,\cdot\,\rVert_\Gamma$ or $\lVert\,\cdot\,\rVert_\G$ and the seminorm $\lVert\,\cdot\,\rVert_\theta$ may be replaced by the norms $\lVert\,\cdot\,\rVert_\Theta$ or $\lVert\,\cdot\,\rVert_{\infty,1}$.
\end{proposition}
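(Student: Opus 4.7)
The plan is to decompose $A$ using Lemma~\ref{SDDdecom} and run a single Gaussian rounding argument that estimates the $H$-summand by the Nesterov bound and the $L$-summand by the Goemans--Williamson bound. Write $A = H + L$ with $H \in \mathbb{S}_{\DD}^n(\mathbb{R}_\p\!)$ and $L \in \mathbb{L}^n$; note that $H, L \succeq 0$, so $A \succeq 0$, and by Proposition~\ref{posdefcase} all three Grothendieck norms of each type collapse to one. Thus it suffices to prove the inequality for $\lVert\,\cdot\,\rVert_\gamma^\Bbbk$ and $\lVert\,\cdot\,\rVert_\theta^\Bbbk$. By Corollary~\ref{cor:altGinfty1}, pick unit vectors $x_1,\dots,x_n \in \Bbbk^n$ with $G = G(x_1,\dots,x_n) \in \mathbb{G}^n$ realizing $\lVert A \rVert_\gamma = \tr(AG)$, which is nonnegative since $A, G \succeq 0$.

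Next, apply the Gaussian rounding $\delta_i = \sign\langle z, x_i\rangle$ with $z$ standard Gaussian on $\Bbbk^n$; by \eqref{2-eq1} we have $E[\delta^* M \delta] = \tr\bigl(M\,\Phi_\Bbbk(G)\bigr)$ for any Hermitian $M$, so splitting $A = H + L$ gives $E[\delta^*A\delta] = \tr\bigl(H\,\Phi_\Bbbk(G)\bigr) + \tr\bigl(L\,\Phi_\Bbbk(G)\bigr)$. For the $L$-summand, the Goemans--Williamson step from the proof of Theorem~\ref{GW95th}, namely $1 - \Re\varphi_\Bbbk(z) \ge \alpha_\GW^\Bbbk(1 - \Re z)$ applied entrywise, yields $\tr\bigl(L\,\Phi_\Bbbk(G)\bigr) \ge \alpha_\GW^\Bbbk\,\tr(LG)$. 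For the $H$-summand, the Nesterov step from the proof of Theorem~\ref{FRNineq} shows $\Phi_\Bbbk(G) \succeq a_0^\Bbbk G$, and since $H \succeq 0$ this gives $\tr\bigl(H\,\Phi_\Bbbk(G)\bigr) \ge a_0^\Bbbk\,\tr(HG)$. Adding the two and writing $\tr(AG) = \tr(HG) + \tr(LG)$,
\[
E[\delta^*A\delta] \;\ge\; a_0^\Bbbk\tr(HG) + \alpha_\GW^\Bbbk\tr(LG) \;=\; \alpha_\GW^\Bbbk\,\tr(AG) - \bigl(\alpha_\GW^\Bbbk - a_0^\Bbbk\bigr)\tr(HG).
\]

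To close the argument I bound $\tr(HG)$ from above using that $H$ has nonnegative entries and $\lvert g_{ij} \rvert \le 1$:
\[
\tr(HG) \;\le\; \sum_{i,j=1}^n h_{ij}\lvert g_{ij}\rvert \;\le\; \sum_{i,j=1}^n h_{ij} \;=\; \mathbbm{1}^\tp H\mathbbm{1} \;=\; \mathbbm{1}^\tp A\mathbbm{1} \;\le\; \lVert A \rVert_\theta,
\]
where the penultimate equality uses $L\mathbbm{1} = 0$ and the last inequality uses $\delta = \mathbbm{1} \in \mathbb{T}^n$. Similarly $E[\delta^*A\delta] \le \lVert A \rVert_\theta$ since $\delta \in \mathbb{T}^n$ (by the convention $\sign 0 = 1$) and $\delta^*A\delta \ge 0$. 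Combining these,
\[
\alpha_\GW^\Bbbk \lVert A \rVert_\gamma - \bigl(\alpha_\GW^\Bbbk - a_0^\Bbbk\bigr)\lVert A \rVert_\theta \;\le\; E[\delta^*A\delta] \;\le\; \lVert A \rVert_\theta,
\]
which rearranges to $\alpha_\GW^\Bbbk \lVert A \rVert_\gamma \le \bigl(1 + \alpha_\GW^\Bbbk - a_0^\Bbbk\bigr)\lVert A \rVert_\theta$, i.e.\ the claimed inequality \eqref{eq:SDD}. I do not expect any serious obstacle; the one point to verify is that the sign of the correction term is favorable, which reduces to the numerical fact that $\alpha_\GW^\Bbbk \ge a_0^\Bbbk$ (so $\tr(HG) \ge 0$ is used in the right direction) together with the entrywise bound on $\tr(HG)$ being controlled by $\lVert A\rVert_\theta$ via the Laplacian identity $L\mathbbm{1} = 0$.
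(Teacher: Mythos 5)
Your proof is correct and follows essentially the same route as the paper's: decompose $A = H + L$ via Lemma~\ref{SDDdecom}, apply Gaussian rounding, use $\Phi_\Bbbk(G)\succeq a_0^\Bbbk G$ on the $H$-part and $1-\Re\varphi_\Bbbk(z)\ge \alpha_\GW^\Bbbk(1-\Re z)$ on the $L$-part, and control $\tr(HG)$ by $\mathbbm{1}^\tp\! A\mathbbm{1}\le\lVert A\rVert_\theta$ using $L\mathbbm{1}=0$. The only organizational difference is that the paper introduces $t=\tr(LG)/\tr(AG)$, derives the two bounds $\lVert A\rVert_\theta \ge [a_0 + (\alpha_\GW - a_0)t]\lVert A\rVert_\gamma$ and $\lVert A\rVert_\theta\ge (1-t)\lVert A\rVert_\gamma$, and minimizes the max over $t\in[0,1]$, whereas you substitute the second bound directly into the first; both give the identical constant $1+(1-a_0^\Bbbk)/\alpha_\GW^\Bbbk$, and your check that $\alpha_\GW^\Bbbk>a_0^\Bbbk$ (needed to keep the substitution in the right direction) corresponds to the paper's implicit assumption that the two expressions cross on $[0,1]$.
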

\begin{proof}
The same proof will work for both  $\Bbbk = \mathbb{R}$ and $\mathbb{C}$ and we drop $\Bbbk$ in our notations below. We may assume that $A \ne 0$.
Let $G\in \mathbb{G}^n$ be such that $\lVert A \rVert_\gamma=\tr(AG)$, i.e., attaining the maximum in Corollary~\ref{cor:altGinfty1}. Let $A= H + L$ be the unique decomposition given by Lemma~\ref{SDDdecom}. Set $t=\tr(LG)/\tr(AG)\in[0,1]$. We claim that
\begin{equation}\label{ineqCgamthet}
\lVert A \rVert_{\gamma}\le \frac{1}{\max\{ a_0+(\alpha_\GW-a_0)t, 1-t\}}\lVert A \rVert_\theta.
\end{equation}
Let $\varphi$ be as in \eqref{eq:haa} or \eqref{defphiC}. Then
\begin{align*}
\lVert A \rVert_\theta&\ge \tr \bigl( A\varphi(G) \bigr) =\tr \bigl( H\varphi(G) \bigr)+\tr\bigl( L\varphi(G) \bigr)\ge a_0\tr(PG) + \alpha_\GW \tr(LG) \\
&=a_0 \tr(AG)+
(\alpha_\GW-a_0)\tr(LG)= [a_0 + (\alpha_\GW-a_0)t]\tr(AG)= [a_0 +(\alpha_\GW-a_0)t]\lVert A \rVert_\gamma.
\end{align*}
Since $H\in \mathbb{S}^n(\mathbb{R}_\p\!)$, by Lemma~\ref{lem:nonneg}\eqref{it:nonneg}, we have $\lVert H \rVert_\gamma=\mathbbm{1}^\tp H\mathbbm{1}$. Since $L\in \mathbb{L}^n$, we have $\mathbbm{1}^\tp L\mathbbm{1}=0$. Thus $ \lVert H \rVert_\gamma =\mathbbm{1}^\tp H\mathbbm{1}=\mathbbm{1}^\tp\! A\mathbbm{1}\le \lVert A \rVert_\theta$ and so
\[
(1-t)\lVert A \rVert_\gamma=(1-t)\tr(AG)=\tr(PG)\le \lVert H \rVert_\gamma\le \lVert A \rVert_\theta,
\]
giving us \eqref{ineqCgamthet}. 
The required inequality \eqref{eq:SDD} follows from \eqref{ineqCgamthet} by minimizing $\max\{a_0+(\alpha_\GW-a_0)t, 1-t\}$ over $t \in [0,1]$, observing that the minimum $\alpha_\GW/(1-a_0+\alpha_\GW)$ is achieved when $a_0+(\alpha_\GW-a_0)t=1-t$.
\end{proof}

\subsection{Mother of all Grothendieck inequalities}\label{sec:mother}

It is straightforward to combine Corollary~\ref{cor:dnorms} and \eqref{eq:CGI2} to obtain an (almost) all-encompassing inequality. Let $d,p,n\in\mathbb{N}$ with $1 \le d \le p \le n$ and $\mathsf{C} = \{C_n \subseteq \mathbb{S}^n : n \in \mathbb{N}\}$ be a family of cones. Then
\begin{equation}\label{eq:MGI}
\lVert A \rVert_{\gamma,p} \le K_{\mathsf{C},d,p} \lVert A \rVert_{\gamma,d}
\end{equation}
where
\begin{equation}\label{eq:MGC}
K_{\mathsf{C},d,p} \coloneqq  \sup_{n \in \mathbb{N}} \biggl[
\max_{A \in C_n \subseteq \mathbb{S}^n} \frac{\lVert A \rVert_{\gamma,p}}{\lVert A \rVert_{\gamma,d}} \biggr] \le K_\mathsf{C} \le K_\gamma.
\end{equation}
Again the result holds over both $\Bbbk=\mathbb{R}$ and $\mathbb{C}$. We have the following special cases:
\begin{enumerate}[\upshape (i)]
\item\label{it:GI} Grothendieck inequality \cite{Grothendieck, Lindenstrauss}:
\[
C_n = \bigl\{\begin{bsmallmatrix} 0 & B \\ B^* & 0 \end{bsmallmatrix} \in \mathbb{S}^n : B \in \Bbbk^{m \times (n - m)} \bigr\}, \qquad d = 1, \qquad p =n \to \infty;
\]

\item order-$p$ Grothendieck inequality \cite{Acin, Bene, Hirsch, Krivine2}:
\[
C_n = \bigl\{\begin{bsmallmatrix} 0 & B \\ B^* & 0 \end{bsmallmatrix} \in \mathbb{S}^n : B \in \Bbbk^{m \times (n - m)} \bigr\}, \qquad d=1, \qquad p \le m;
\]

\item generalized Grothendieck inequality \cite{Bri3}:
\[
C_n = \bigl\{\begin{bsmallmatrix} 0 & B \\ B^* & 0 \end{bsmallmatrix} \in \mathbb{S}^n : B \in \Bbbk^{m \times (n - m)} \bigr\}, \qquad d< p \le m;
\]

\item\label{it:SGI} symmetric Grothendieck inequality for $\gamma$-seminorm \eqref{eq:SGIs}:
\[
C_n = \mathbb{S}^n, \qquad d =1, \qquad p = n \to \infty;
\]

\item Nesterov $\pi/2$-Theorem \cite{Nes, Rietz} and Ben-Tal--Nemirovski--Roos $4/\pi$-Theorem \cite{Ben-Tal}:
\[
C_n = \mathbb{S}^n_\p, \qquad d = 1, \qquad p =n \to \infty;
\]

\item Goemans--Williamson inequality \cite{GW95}:
\[
C_n = \mathbb{L}^n, \qquad d = 1, \qquad p =n \to \infty;
\]

\item\label{it:rGI} rank-constrained positive semidefinite Grothendieck inequality \cite{Bri1, Bri2}:
\[
C_n = \mathbb{S}^n_\p, \qquad d =1, \qquad p \le n.
\]
\end{enumerate}
Missing from this list is the symmetric Grothendieck inequality for $\Gamma$-norm \eqref{eq:SGIn}, which cannot be obtained by restricting \eqref{eq:MGI} to any subspace or cone of $\mathbb{S}^n$. We may of course also define a $\Gamma$-norm version of \eqref{eq:MGI} and \eqref{eq:MGC} but since $\lVert \, \cdot \, \rVert_\Gamma = \lVert \, \cdot \, \rVert_\gamma$ on both $\mathbb{S}^n_\circ$ and $\mathbb{S}^n_\p$, they give the same inequalities  \eqref{it:GI}--\eqref{it:rGI} except for \eqref{it:SGI}, which becomes \eqref{eq:SGIn} instead.

We have the following extension of Corollary~\ref{cor:dnorms} to any family of cones.
\begin{lemma}\label{lem:CGC}
Let $d,p\in\mathbb{N}$ with $1 \le d \le p \le \infty$ and $\mathsf{C} = \{C_n \subseteq \mathbb{S}^n : n \in \mathbb{N}\}$ be a family of cones. Then
\[
K_{\mathsf{C},d,p} \le  K_{\mathsf{C},1,n} \le K_\mathsf{C}, \qquad
K_{\mathsf{C},1,\infty} = K_\mathsf{C}, \qquad
K_{\mathsf{C},d,p}^\mathbb{C} \ge K_{\mathsf{C},2d,2p}^\mathbb{R}.
\]
\end{lemma}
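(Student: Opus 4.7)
All three assertions are essentially bookkeeping consequences of Lemma~\ref{lem:d} (monotonicity $\lVert A \rVert_{\gamma,d} \le \lVert A \rVert_{\gamma,p}$ for $d \le p$, together with the boundary identifications $\lVert\,\cdot\,\rVert_{\gamma,1} = \lVert\,\cdot\,\rVert_\theta$ and $\lVert\,\cdot\,\rVert_{\gamma,n} = \lVert\,\cdot\,\rVert_\gamma$ on $\mathbb{S}^n$) combined with Proposition~\ref{prop:RCnorms} (the real-complex identity $\lVert A \rVert_{\gamma,d}^\mathbb{C} = \lVert A \rVert_{\gamma,2d}^\mathbb{R}$ for real $A$). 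The strategy is to reduce every $K$-constant to a ratio of $d$-seminorms and then compare pointwise in $A$ before taking maxima and suprema. The same arguments, applied in parallel to the $(\Gamma,d)$-norms, give the variant for $\Gamma$-norms.

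For the chain $K_{\mathsf{C},d,p} \le K_{\mathsf{C},1,n} \le K_\mathsf{C}$, I would fix $m \in \mathbb{N}$ and $A \in C_m$ and use $\lVert A \rVert_{\gamma,d} \ge \lVert A \rVert_{\gamma,1}$ (since $d \ge 1$) together with $\lVert A \rVert_{\gamma,p} \le \lVert A \rVert_{\gamma,n} \le \lVert A \rVert_\gamma$ (since $p \le n$) to dominate the ratio $\lVert A \rVert_{\gamma,p}/\lVert A \rVert_{\gamma,d}$ by $\lVert A \rVert_{\gamma,n}/\lVert A \rVert_{\gamma,1}$ and then by $\lVert A \rVert_\gamma/\lVert A \rVert_\theta$. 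Taking the max over $A \in C_m$ and sup over $m$ yields both inequalities. For $K_{\mathsf{C},1,\infty} = K_\mathsf{C}$, the $\le$ direction comes from the previous chain by letting $p \to \infty$. For the $\ge$ direction I would fix $m$ and $A \in C_m$ and apply the stabilization part of Lemma~\ref{lem:d}: for any $p \ge m$ one has $\lVert A \rVert_{\gamma,p} = \lVert A \rVert_\gamma$, hence
\[
\frac{\lVert A \rVert_\gamma}{\lVert A \rVert_\theta} = \frac{\lVert A \rVert_{\gamma,p}}{\lVert A \rVert_{\gamma,1}} \le K_{\mathsf{C},1,p} \le K_{\mathsf{C},1,\infty},
\]
and then take max over $A \in C_m$ and sup over $m$.

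For the real-to-complex comparison $K_{\mathsf{C},d,p}^\mathbb{C} \ge K_{\mathsf{C},2d,2p}^\mathbb{R}$, I would apply Proposition~\ref{prop:RCnorms} entrywise: for any $A \in C_n \subseteq \mathbb{S}^n(\mathbb{R})$,
\[
\frac{\lVert A \rVert_{\gamma,2p}^\mathbb{R}}{\lVert A \rVert_{\gamma,2d}^\mathbb{R}} = \frac{\lVert A \rVert_{\gamma,p}^\mathbb{C}}{\lVert A \rVert_{\gamma,d}^\mathbb{C}}.
\]
Because the real cone $C_n(\mathbb{R})$ sits inside its complex counterpart $C_n(\mathbb{C})$ (under our standing convention on $\mathsf{C}$), taking the max over $A \in C_n(\mathbb{R})$ on the right is bounded above by the max over $A \in C_n(\mathbb{C})$; supping over $n$ on both sides gives the claim.

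\textbf{Main obstacle.} There is no real obstacle; the argument is almost entirely formal. The only points requiring minor care are: (i) interpreting the limit $K_{\mathsf{C},1,\infty}$ and justifying the interchange of $\lim_p$ and $\sup_m$ in Step~2, which is clean because $p \mapsto K_{\mathsf{C},1,p}$ is monotone nondecreasing and the pointwise sequence stabilizes for each fixed $m$; and (ii) the implicit convention that the family $\mathsf{C}$ used for $K_{\mathsf{C},d,p}^\mathbb{R}$ embeds into the one used for $K_{\mathsf{C},d,p}^\mathbb{C}$, which holds in all of the motivating examples $\mathbb{S}^n_\p$, $\mathbb{L}^n$, $W_{m,n}$, etc.
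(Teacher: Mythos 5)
Your proof is correct and follows essentially the same route as the paper's: the first chain and the limit $K_{\mathsf{C},1,\infty}=K_\mathsf{C}$ come from the monotonicity and stabilization in Lemma~\ref{lem:d} applied to the ratio $\lVert A\rVert_{\gamma,p}/\lVert A\rVert_{\gamma,d}$ pointwise before taking suprema, and the real--complex comparison comes from $\lVert A\rVert_{\gamma,d}^\mathbb{C}=\lVert A\rVert_{\gamma,2d}^\mathbb{R}$ of Proposition~\ref{prop:RCnorms} together with the inclusion $\mathbb{S}^n(\mathbb{R})\subseteq\mathbb{S}^n(\mathbb{C})$. The paper's proof is just a terse two-sentence version of what you spell out.
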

\begin{proof}
The first inequality and the second limit follow from their definitions in \eqref{eq:CGC} and \eqref{eq:MGC}. The last inequality follows from \eqref{gammaC}, i.e., $\lVert A\rVert_{\gamma,d}^\mathbb{C}=\lVert A\rVert_{\gamma,2d}^\mathbb{R}$ for any $A\in\mathbb{S}^n(\mathbb{R})$, and the obvious inclusion $\mathbb{S}^n(\mathbb{R}) \subseteq \mathbb{S}^n(\mathbb{C})$.
\end{proof}

One may also obtain bounds for the constants $K_{\mathsf{C}, d, p}$ for specific families of cones $\mathsf{C}$ such as the positive semidefinite cones and the cones of weighted Laplacians.

For the family of positive semidefinite cones  $\mathsf{C} = \{\mathbb{S}_\p^n : n \in \mathbb{N}\}$, the following lower bound for $K_{\mathsf{C}, d, p}^\mathbb{R}$ was established by Bri\"et, Buhrman, and Toner in \cite[Theorem~1]{Bri3} and its limiting expression for $K_{\mathsf{C},d,\infty}^\mathbb{R}$ by Bri\"et, de Oliveira Filho, and Vallentin in \cite[Theorem~2]{Bri1}. We will state the analogues for $\mathbb{C}$. Our lower bound  for $K_{\mathsf{C}, d, p}^\mathbb{C}$ is a straightforward consequence of Lemma~\ref{lem:CGC} and the lower bound for $K_{\mathsf{C}, d, p}^\mathbb{R}$ in \cite[Theorem~1]{Bri3}; but we will provide an alternative simple proof for the limiting expression $K_{\mathsf{C},d,\infty}^\Bbbk$ that applies to both $\Bbbk = \mathbb{R}$ and $\mathbb{C}$ alike and that is nearly identical to our proof of  Nesterov $\pi/2$-Theorem and Ben-Tal--Nemirovski--Roos $4/\pi$-Theorem in Theorem~\ref{FRNineq}.
\begin{proposition}\label{lbKcd}
Let $d,p\in\mathbb{N}$ with $1 \le d \le p \le \infty$ and $\mathsf{C} = \{\mathbb{S}_\p^n : n \in \mathbb{N}\}$.  Then
\begin{equation}\label{eq:lowerbd}
K_{\mathsf{C},d,p}^\Bbbk\ge 
\begin{cases}
\dfrac{d}{p}\biggl[\dfrac{\Gamma\bigl((p+1)/2\bigr)\Gamma(d/2)}{\Gamma(p/2)\Gamma\bigl((d+1)/2\bigr)}\biggr]^2 &\Bbbk = \mathbb{R},\\[4ex]
\dfrac{d}{p}\biggl[\dfrac{\Gamma\bigl((2p+1)/2\bigr)\Gamma(d)}{\Gamma(p)\Gamma\bigl((2d+1)/2\bigr)}\biggr]^2  &\Bbbk = \mathbb{C}.
\end{cases}
\end{equation}
Furthermore, for any fixed $d\in \mathbb{N}$,
\begin{equation}\label{eq:limit}
K_{\mathsf{C},d,\infty}^\Bbbk = 
\begin{cases}
\dfrac{d\Gamma(d/2)^2}{2\Gamma\bigl((d+1)/2\bigr)^2} &\Bbbk = \mathbb{R},\\[4ex]
\dfrac{d\Gamma(d)^2}{\Gamma\bigl((2d+1)/2\bigr)^2}  &\Bbbk = \mathbb{C}.
\end{cases}
\end{equation}
\end{proposition}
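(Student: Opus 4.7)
My plan addresses the two claims in turn. For the upper bound in \eqref{eq:limit}, I generalize the Nesterov $\pi/2$ argument from Theorem~\ref{FRNineq}: given $A \in \mathbb{S}^n_\p$, Corollary~\ref{cor:altGinfty1} provides $G = G(x_1, \ldots, x_n) \in \mathbb{G}^n$ with $\tr(AG) = \lVert A\rVert_\gamma$, so sampling a Gaussian $Z \in \Bbbk^{d \times n}$ and setting $y_i \coloneqq \sign(Zx_i) \in \Bbbk^d$ produces $G(y_1,\ldots,y_n) \in \mathbb{G}^n_d$ and, via \eqref{defvarphic}, $\mathbb{E}_Z\tr(AG(y)) = \tr(A\,\Phi_d(G))$ where $\Phi_d(G)_{ij} \coloneqq \varphi_d^\Bbbk(G_{ij})$. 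Expanding $\varphi_d^\Bbbk$ via the nonnegative Schur power series from Lemma~\ref{varphicCform}\eqref{varphicCform2} and using that Schur products of positive semidefinite matrices remain positive semidefinite yields $\Phi_d(G) \succeq b\,G$, with $b$ the linear Taylor coefficient of $\varphi_d^\mathbb{R}$ over $\mathbb{R}$ and of $\varphi_{2d}^\mathbb{R}$ over $\mathbb{C}$. Since $A \succeq 0$, this gives $\lVert A\rVert_{\gamma,d} \ge b\,\lVert A\rVert_\gamma$; reading off $1/b$ from the $k = 0$ term of Lemma~\ref{varphicCform}\eqref{varphiexplform}, \eqref{varphiexplformC} produces exactly $d\Gamma(d/2)^2/(2\Gamma((d+1)/2)^2)$ over $\mathbb{R}$ and $d\Gamma(d)^2/\Gamma((2d+1)/2)^2$ over $\mathbb{C}$. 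The matching lower bound for \eqref{eq:limit} then follows from \eqref{eq:lowerbd} by letting $p \to \infty$ and using $\Gamma((p+1)/2)/\Gamma(p/2) \sim \sqrt{p/2}$ over $\mathbb{R}$ and $\Gamma(p+1/2)/\Gamma(p) \sim \sqrt{p}$ over $\mathbb{C}$.

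For \eqref{eq:lowerbd} I build an explicit witness. Set $n = p$, fix $\varepsilon > 0$, and let $x_1, \ldots, x_m \in \Bbbk^p$ be unit vectors $\varepsilon$-approximating Haar measure on $S^{p-1}$ as in Lemma~\ref{unformconvlem}; take $A \coloneqq (1/m^2)G(x_1, \ldots, x_m) \in \mathbb{S}^m_\p$. The heart of the argument is a Frobenius-duality identity generalizing the $d=1$ formula $\lVert A\rVert_\theta = (\tfrac{1}{m}\sum_i \lvert\langle x_i, v\rangle\rvert)^2$ proved in Theorem~\ref{FRNineq}:
\[
\sqrt{\lVert A\rVert_{\gamma,q}} \;=\; \frac{1}{m}\max_{V \in \Bbbk^{p \times q},\; \lVert V\rVert_F \le 1} \sum_{i=1}^m \lVert V^* x_i\rVert.
\]
This follows by writing $\lVert A\rVert_{\gamma,q} = (1/m^2)\max_Y \lVert XY^*\rVert_F^2$ with $X = [x_1|\cdots|x_m]$ and $Y \in \Bbbk^{q \times m}$ having unit columns, and combining the Frobenius duality $\lVert M\rVert_F = \sup_{\lVert V\rVert_F \le 1}\Re\tr(V^* M)$ with $\max_{y_i \in S^{q-1}}\Re\langle y_i, V^* x_i\rangle = \lVert V^* x_i\rVert$ (Cauchy--Schwarz, optimum at $y_i = V^* x_i/\lVert V^* x_i\rVert$). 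Passing $m \to \infty$ reduces the right-hand side to $\max_{\lVert V\rVert_F \le 1}\mathbb{E}_{x \sim \operatorname{Haar}(S^{p-1})}\lVert V^* x\rVert$, and an SVD $V = U\Sigma W^*$ together with rotational invariance of Haar measure turns this into maximizing the concave symmetric functional $a \mapsto \mathbb{E}\sqrt{\sum_k a_k\lvert x_k\rvert^2}$ over $\{a : \sum_k a_k \le 1,\; a_k \ge 0\}$, whose optimum sits at the equal-weight point $a_k = 1/\min(p,q)$, i.e.\ a scaled partial isometry. This evaluates to $\lVert A\rVert_{\gamma,p} \to 1/p$ and, for $d \le p$, $\lVert A\rVert_{\gamma,d} \to \mathbb{E}[r_d]^2/d$ with $r_d^2 = \lvert x_1\rvert^2 + \cdots + \lvert x_d\rvert^2$ having a Beta distribution (parameters $(d/2, (p-d)/2)$ over $\mathbb{R}$ and $(d, p-d)$ over $\mathbb{C}$); the resulting Gamma-function computation of $\mathbb{E}[r_d]$ makes $\lVert A\rVert_{\gamma,p}/\lVert A\rVert_{\gamma,d}$ match \eqref{eq:lowerbd}, with the finite-$m$ error absorbed exactly as in the sharpness half of Theorem~\ref{FRNineq}.

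The main technical obstacle is the rigorous justification of the Frobenius-duality identity over $\mathbb{C}$, where one must track conjugations carefully in $\lVert XY^*\rVert_F^2 = \tr(G_X G_Y)$, together with the concavity-symmetry reduction identifying the scaled partial isometry as the global optimizer of $\mathbb{E}\lVert V^* x\rVert$ subject to $\lVert V\rVert_F \le 1$; the quantitative $\varepsilon$-errors from Lemma~\ref{unformconvlem} must then be combined with the moment formulas of Lemma~\ref{lem:expectations} to recover the exact Gamma ratio in the limit.
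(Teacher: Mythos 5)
Your treatment of \eqref{eq:limit} is essentially the paper's: both establish $\Phi_d^\Bbbk(G)\succeq b_{1,d}^\Bbbk\,G$ for $G\in\mathbb{G}^n$ via the nonnegativity of the Schur-power coefficients in Lemma~\ref{varphicCform}, use $A\succeq 0$ to conclude $\lVert A\rVert_{\gamma,d}\ge b_{1,d}^\Bbbk\lVert A\rVert_\gamma$, read off $b_{1,d}^\Bbbk$ from Lemma~\ref{varphicCform}, and obtain the matching lower bound from \eqref{eq:lowerbd} via $\Gamma\bigl((p+1)/2\bigr)/\Gamma(p/2)\sim\sqrt{p/2}$. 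No difference of substance there.

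For \eqref{eq:lowerbd} you genuinely diverge: the paper simply cites \cite[Theorem~1]{Bri3} for $\mathbb{R}$ and deduces the $\mathbb{C}$ case from $K_{\mathsf{C},d,p}^\mathbb{C}\ge K_{\mathsf{C},2d,2p}^\mathbb{R}$ (Lemma~\ref{lem:CGC}), whereas you give a self-contained witness construction that extends the Alon--Naor sharpness argument from the proof of Theorem~\ref{FRNineq}. Your Frobenius-duality identity $\sqrt{\lVert A\rVert_{\gamma,q}}=(1/m)\max_{\lVert V\rVert_F\le 1}\sum_i\lVert V^*x_i\rVert$ for $A=(1/m^2)X^*X$ is correct (it reduces to the paper's $q=1$ identity), and the SVD/rotational-invariance reduction to maximizing the concave symmetric function $a\mapsto\mathbb{E}\sqrt{\sum_k a_k\lvert x_k\rvert^2}$ over the simplex, with maximizer at the centroid, is sound. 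With $r_d^2\sim\mathrm{Beta}(d/2,(p-d)/2)$ over $\mathbb{R}$ and $\mathrm{Beta}(d,p-d)$ over $\mathbb{C}$, the Gamma-ratio computation does reproduce \eqref{eq:lowerbd} exactly. What you gain is a unified, self-contained proof for both fields that subsumes the sharpness of the $\pi/2$- and $4/\pi$-theorems as the $d=1$ case; what you lose is that you must generalize Lemma~\ref{unformconvlem} from the scalar Lipschitz family $v\mapsto\lvert\langle\cdot,v\rangle\rvert^\alpha$ to the matrix family $V\mapsto\lVert V^*\cdot\rVert$. The $\varepsilon$-net argument in that lemma carries over verbatim because $x\mapsto\lVert V^*x\rVert$ is $1$-Lipschitz when $\lVert V\rVert_F\le 1$, but the statement as written does not literally apply to your sum, so you should state and prove the needed variant rather than invoking the lemma directly. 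Also note that for $\lVert A\rVert_{\gamma,p}\ge 1/p$ no approximation is needed at all --- taking $V=p^{-1/2}I_p$ in your duality formula gives the bound exactly --- so the uniform-convergence machinery is only required for the upper bound on $\lVert A\rVert_{\gamma,d}$.
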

\begin{proof}
The lower bound in \eqref{eq:lowerbd} for $\mathbb{R}$ was established in \cite[Theorem~1]{Bri3}.  The lower bound for $\mathbb{C}$ thus follows from  Lemma~\ref{lem:CGC}  since $K_{\mathsf{C},d,p}^\mathbb{C} \ge K_{\mathsf{C},2d,2p}^\mathbb{R}$. The limiting expression for $\mathbb{R}$  in \eqref{eq:limit} was established in \cite[Theorem~2]{Bri1} but  we will provide an alternative simple proof that works for both $\mathbb{R}$ and $\mathbb{C}$.

Taking limits as $p \to \infty$ in \eqref{eq:lowerbd}, we see that
\[
K_{\mathsf{C},d,\infty}^\mathbb{R} \ge
\dfrac{d\Gamma(d/2)^2}{2\Gamma\bigl((d+1)/2\bigr)^2},\qquad
K_{\mathsf{C},d,\infty}^\mathbb{C} \ge
\dfrac{d\Gamma(d)^2}{\Gamma\bigl((2d+1)/2\bigr)^2};
\]
and it remains to establish that `$\ge$' above may be replaced by `$\le$'. This will in turn follow if we can show that for any $A \in \mathbb{S}^n_\p(\mathbb{R})$,
\begin{equation}\label{eq:genN}
\lVert A\rVert_\gamma^\mathbb{R} \le \dfrac{d\Gamma(d/2)^2}{2\Gamma((d+1)/2)^2}  \lVert A\rVert_{\gamma,d}^\mathbb{R};
\end{equation}
and for any $A \in \mathbb{S}^n_\p(\mathbb{C})$,
\begin{equation}\label{eq:genBT}
\lVert A\rVert_\gamma^\mathbb{C} \le\dfrac{d\Gamma(d)^2}{\Gamma((2d+1)/2)^2}\lVert A\rVert_{\gamma,d}^\mathbb{C}.
\end{equation}
Let $\Bbbk =\mathbb{R}$ or $\mathbb{C}$. As in the proof of Theorem~\ref{FRNineq}, for any $A \in \mathbb{S}^n_\p(\Bbbk)$ and $G \in \mathbb{G}^n(\Bbbk)$, we have
\[
\Phi_d^\Bbbk(G) \coloneqq \bigl(\varphi_d^\Bbbk (g_{ij})\bigr) \succeq b_{1,d}^\Bbbk G,
\]
where $b_{1,d}^\Bbbk$ is the first Taylor coefficient of $\varphi_d^\Bbbk$ as in Lemma~\ref{varphicCform}. 
Using the characterization of $\lVert\,\cdot\,\rVert_\gamma$ in Corollary~\ref{cor:altGinfty1}, let $G \in \mathbb{G}^n(\Bbbk)$ be such that  $\tr(AG) =\lVert A \rVert_\gamma^\Bbbk$. Then 
\[
\lVert A\rVert_{\gamma, d}^\Bbbk \ge  \tr \bigl( A \Phi_d^\Bbbk(G)  \bigr)
\ge b_{1,d}^\Bbbk \tr(A G)= b_{1,d}^\Bbbk\lVert A\rVert_\gamma^\Bbbk.
\]
By Lemma~\ref{varphicCform}\eqref{varphiexplform},
\[
b_{1,d}^\mathbb{R}=\frac{2}{\sqrt{\pi}}
\frac{\Gamma((d+1)/2)^2\Gamma(3/2)}{\Gamma(d/2)\Gamma((d+2)/2)}
=\frac{2}{d}\frac{\Gamma((d+1)/2)^2}{\Gamma(d/2)^2};
\]
and so by Lemma~\ref{varphicCform}\eqref{varphicCform2},
\[
b_{1,d}^\mathbb{C} = b_{1,2d}^\mathbb{R} = \frac{1}{d}\frac{\Gamma((d+1/2)^2)}{\Gamma(d)^2}.
\]
These give \eqref{eq:genN} and \eqref{eq:genBT} respectively.
\end{proof}
For $d =1$, we get $K_{\mathsf{C},1,\infty}^\mathbb{R} = \pi/2$ and $K_{\mathsf{C},1,\infty}^\mathbb{C} = 4/\pi$, so \eqref{eq:genN} generalizes the Nesterov $\pi/2$-Theorem and \eqref{eq:genBT} generalizes the Ben-Tal--Nemirovski--Roos $4/\pi$-Theorem.

In \cite[Theorem~1]{Bri3}, the lower bounds for $K_{\mathsf{C},d,p}^\mathbb{R}$ in \eqref{eq:lowerbd} are stated for matrices in $\mathbb{R}^{n \times n}$ as opposed to $\mathbb{S}^n_\p(\mathbb{R})$ but note that their proof actually assumes the latter. Since $\Bbbk^{n \times n} \supseteq \mathbb{S}^n_\p$, any lower bound for the latter is automatically one for the former.

The following discussion for the family of cones of weighted Laplacians $\mathsf{C} = \{\mathbb{L}^n : n \in \mathbb{N}\}$ is essentially that of Bri\"et, de Oliveira Filho, and Vallentin in \cite[Section~6]{Bri1},  where the constant $\alpha_d^\mathbb{R}$ is denoted $v(d)$. Our two minor contributions here are to extend it to $\mathbb{C}$ and to relate the constants over $\mathbb{R}$ and $\mathbb{C}$. For $d \in \mathbb{N}$, we will let $K_{\GW,d}^\Bbbk > 0$ be sharpest constant so that
\[
\lVert L \rVert_\gamma^\Bbbk \le K_{\GW,d}^\Bbbk \lVert L \rVert_{\gamma,d}^\Bbbk
\]
for all $L \in \mathbb{L}^n$ and all $n \in \mathbb{N}$. Clearly, $K_{\GW,1}^\Bbbk = K_\GW^\Bbbk$, where the latter is as defined in Theorem~\ref{GW95th}.
\begin{proposition}\label{prop:GWd}
Let $\mathsf{C} = \{\mathbb{L}^n : n \in \mathbb{N}\}$. For each $d \in \mathbb{N}$, let $\varphi_d^\Bbbk$ be as in Lemma~\ref{varphicCform}, and
\begin{equation}\label{eq:alphad}
\alpha_d^\Bbbk\coloneqq \inf_{0\le x \le 1}\frac{1+\varphi_d^\Bbbk(x)}{1+x}.
\end{equation}
Then
\[
K_{\mathsf{C},d,\infty}^\Bbbk = K_{\GW,d}^\Bbbk  \le \frac{1}{\alpha_d^\Bbbk}
\qquad\text{and}
\qquad
K_{\GW,d}^\mathbb{C} = K_{\GW,2d}^\mathbb{R}.
\]
\end{proposition}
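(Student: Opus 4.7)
The plan is to break the proposition into three pieces, each a natural $d$-dimensional extension of an ingredient in Theorem~\ref{GW95th}. The identity $K_{\mathsf{C},d,\infty}^\Bbbk = K_{\GW,d}^\Bbbk$ is definition-chasing: since any $L\in\mathbb{L}^n$ lies in $\mathbb{S}^n_\p$, the combination of Corollary~\ref{cor:altGinfty1} and the stabilization in Lemma~\ref{lem:d} gives $\lVert L\rVert_{\gamma,p}=\lVert L\rVert_\gamma$ once $p\ge n$. The sequence $p\mapsto \sup_n\max_{L\in\mathbb{L}^n}\lVert L\rVert_{\gamma,p}/\lVert L\rVert_{\gamma,d}$ is then monotone in $p$ and stabilizes at $\sup_n\max_{L\in\mathbb{L}^n}\lVert L\rVert_\gamma/\lVert L\rVert_{\gamma,d}=K_{\GW,d}^\Bbbk$.

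For the upper bound $K_{\GW,d}^\Bbbk\le 1/\alpha_d^\Bbbk$ I would mimic the proof of Theorem~\ref{GW95th}, replacing the Gaussian vector $z\in\Bbbk^n$ and the scalar $\sign$ by a Gaussian matrix $Z\in\Bbbk^{d\times n}$ and the vector-valued map $u\mapsto\sign(Zu)\in\Bbbk^d$. Fix $L=L_A\in\mathbb{L}^n$ and use Corollary~\ref{cor:altGinfty1} to pick unit $x_1,\dots,x_n\in\Bbbk^n$ with $\lVert L\rVert_\gamma=\tr\bigl(L\,G(x_1,\dots,x_n)\bigr)=\sum_{i,j}a_{ij}(1-\Re\langle x_i,x_j\rangle)$. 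Setting $y_i(Z)\coloneqq\sign(Zx_i)$, the $y_i(Z)$'s are unit vectors in $\Bbbk^d$, so $G(y_1(Z),\dots,y_n(Z))\in\mathbb{G}^n_d$, and the characterization in \eqref{eq:normcorrd} yields $\lVert L\rVert_{\gamma,d}\ge\sum_{i,j}a_{ij}(1-\Re\langle y_i(Z),y_j(Z)\rangle)$ for every $Z$. Integrating against $G_{d,n}^\Bbbk(Z)\,dZ$ and invoking \eqref{defvarphic} then gives
\[
\lVert L\rVert_{\gamma,d}\ \ge\ \sum_{i,j} a_{ij}\bigl(1-\Re\varphi_d^\Bbbk(\langle x_i,x_j\rangle)\bigr)\ \ge\ \alpha_d^\Bbbk\sum_{i,j} a_{ij}\bigl(1-\Re\langle x_i,x_j\rangle\bigr)=\alpha_d^\Bbbk\lVert L\rVert_\gamma,
\]
where the middle step uses $a_{ij}\ge 0$ together with the pointwise bound $1-\Re\varphi_d^\Bbbk(z)\ge \alpha_d^\Bbbk(1-\Re z)$ for $\lvert z\rvert<1$. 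The pointwise bound in turn rests on the identification $\alpha_d^\Bbbk=\inf_{\lvert z\rvert<1}(1-\Re\varphi_d^\Bbbk(z))/(1-\Re z)$, which I would establish by lifting the two-case analysis from Theorem~\ref{GW95th} using the Taylor data in Lemma~\ref{varphicCform}: over $\mathbb{R}$, $\varphi_d^\mathbb{R}(x)=\sum_k b_{2k+1,d}x^{2k+1}$ with $b_{2k+1,d}\ge 0$ and $\sum_k b_{2k+1,d}=1$ makes $\varphi_d^\mathbb{R}$ convex on $[0,1]$ with $\varphi_d^\mathbb{R}(0)=0,\,\varphi_d^\mathbb{R}(1)=1$, hence $\varphi_d^\mathbb{R}(x)\le x$ there, forcing the infimum to be attained on $[-1,0]$ where the substitution $y=-x$ converts it to $\alpha_d^\mathbb{R}$; over $\mathbb{C}$, the analogous expansion $\varphi_d^\mathbb{C}(z)=\sum_k b_{2k+1,2d}\,z\lvert z\rvert^{2k}$ yields $\Re\varphi_d^\mathbb{C}(z)\le\Re z$ whenever $\Re z\ge 0$, and Case~II of Theorem~\ref{GW95th} then carries over verbatim.

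Finally, for $K_{\GW,d}^\mathbb{C}=K_{\GW,2d}^\mathbb{R}$ observe $\mathbb{L}^n\subseteq\mathbb{S}^n_\p(\mathbb{R})$: Corollary~\ref{cor:RCnorms} gives $\lVert L\rVert_\gamma^\mathbb{R}=\lVert L\rVert_\gamma^\mathbb{C}$ and \eqref{gammaC} in Proposition~\ref{prop:RCnorms} gives $\lVert L\rVert_{\gamma,d}^\mathbb{C}=\lVert L\rVert_{\gamma,2d}^\mathbb{R}$, so the ratios defining $K_{\GW,d}^\mathbb{C}$ and $K_{\GW,2d}^\mathbb{R}$ coincide term by term on $\mathbb{L}^n$ and equality follows on taking $\sup_n\max_L$. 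The main obstacle is thus the ratio identity for $\alpha_d^\Bbbk$, especially in its complex case; once that is in hand, everything reduces to bookkeeping, since the entire $d$-dependence is encoded in the non-negativity and normalization of the Taylor coefficients of $\varphi_d^\Bbbk$ supplied by Lemma~\ref{varphicCform}.
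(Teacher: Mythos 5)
Your proof is correct and follows essentially the same strategy as the paper's: the Gaussian-sign integral and the factor $\alpha_d^\Bbbk$ extracted from the concavity of $1-\varphi_d^\Bbbk$ (equivalently, the nonnegativity and normalization of the Taylor coefficients from Lemma~\ref{varphicCform}) are exactly the paper's ingredients. The one minor deviation is that the paper skips the complex pointwise bound entirely: since $\mathbb{L}^n\subseteq\mathbb{S}^n(\mathbb{R})$, it proves $K_{\GW,d}^\mathbb{R}\le 1/\alpha_d^\mathbb{R}$ for all $d$ using only real unit vectors, then invokes $K_{\GW,d}^\mathbb{C}=K_{\GW,2d}^\mathbb{R}$ together with $\varphi_d^\mathbb{C}(x)=\varphi_{2d}^\mathbb{R}(x)$ on $[-1,1]$ (hence $\alpha_d^\mathbb{C}=\alpha_{2d}^\mathbb{R}$) to deduce the complex bound, whereas you re-derive Case~II of Theorem~\ref{GW95th} for general $d$; both routes are valid, and yours is slightly more work but self-contained.
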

\begin{proof}
Since $\mathbb{L}^n \subseteq \mathbb{S}^n(\mathbb{R})$,  by Proposition~\ref{prop:RCnorms}, we have $\lVert L \rVert_{\gamma,d}^\mathbb{C}=\lVert L \rVert_{\gamma,2d}^\mathbb{R}$  and thus $K_{\GW,d}^\mathbb{C} = K_{\GW,2d}^\mathbb{R}$. By definition, we have $K_{\mathsf{C},d,\infty} = K_{\GW,d}$, and it remains to establish $1/\alpha_d^\Bbbk$ as an upper bound. By Lemma~\ref{varphicCform}\eqref{varphicCform2},  $\varphi_d^\mathbb{C}(x) = \varphi_{2d}^\mathbb{R}(x)$ for $x \in [-1,1]$, it suffices to show that $K_{\GW,d}^\mathbb{R} \le 1/\alpha_d^\mathbb{R}$ and the corresponding result for $\mathbb{C}$ will follow.  To avoid clutter, we  drop the superscript $\mathbb{R}$ in the rest of this proof.

The same proof that we gave for the $d=1$ case in Theorem~\ref{GW95th} applies here with minor modifications and we will just outline the main steps. Note that $f \coloneqq 1-\varphi_d$ is concave with $f(0)=1$ and $f(1)=0$ as in the $d=1$ case.
Let $A \in \mathbb{S}^n_\circ(\mathbb{R}_\p\!)$ and $L = L_A \in \mathbb{L}^n$. As in the proof of   Theorem~\ref{GW95th},
\[
\rVert L \rVert_\gamma =\sum_{i=1}^n\sum_{j=1}^n a_{ij}(1-\langle x_i,x_j\rangle)
\]
for some unit vectors $x_1,\dots, x_n \in \mathbb{R}^n$; and
\begin{align*}
\lVert L \rVert_{\gamma,d}&\ge  \int_{\mathbb{R}^{d \times n}} \! \sum_{i=1}^n\sum_{j=1}^n \ell_{ij}\bigl\langle \sign(Zx_i), \sign(Zx_j)\bigr\rangle G_{d,n}(Z)\,dZ\\
&=\sum_{i=1}^n\sum_{j=1}^n a_{ij}\bigl(1-\varphi_d(\langle x_i,x_j\rangle)\bigr)\ge \alpha_d\sum_{i=1}^n\sum_{j=1}^n a_{ij}(1-\langle x_i,x_j\rangle)=
\alpha_d\rVert L_A \rVert_\gamma. \qedhere
\end{align*}
\end{proof}
It also follows from Lemma~\ref{varphicCform}\eqref{varphicCform2} and \eqref{eq:alphad} that
\[
\alpha_d^\mathbb{C} = \alpha_{2d}^\mathbb{R}.
\]
In particular the complex Grothendieck constant in \eqref{eq:alpha}, $\alpha_\GW^\mathbb{C} = \alpha_1^\mathbb{C} = \alpha_2^\mathbb{R}$. In principle, one may use the explicit expression for $\varphi^\mathbb{R}_d(x)$ in  Lemma~\ref{varphicCform}\eqref{varphiexplform} to obtain the numerical value of $\alpha_d^\mathbb{R}$. This was in fact done for $d=2$ and $3$ in \cite[Section~6]{Bri1} using a different expression.

\section{Applications}\label{sec:apps}

We will discuss some consequences of Theorem~\ref{ubgenGC} to computer science and optimization. We will deduce some polynomial-time approximation bounds that we think are new.

\subsection{Maxcut}

Let $G = (V,E)$ be an undirected graph on the vertex set $V = \{1,\dots,n\}$. A \emph{weighted adjacency matrix} of $G$  is a matrix $A\in\mathbb{S}^n_\circ$ where $a_{ij}=0$ whenever $\{i,j\} \notin E$. In which case, $L_A$ as defined in \eqref{eq:LA} is the corresponding weighted graph Laplacian.

A \emph{cut} of $G$ is a partition of the vertex set $V = S\cup S^c$ into disjoint nonempty sets.  Given a weighted adjacency matrix $A \in\mathbb{S}^n_\circ$,  the weight of the cut is
\[
\cut(S, S^c) \coloneqq \sum\nolimits_{i\in S,\; j\in S^c} a_{ij} =\frac{1}{4} e_S^\tp L_Ae_S
\]
where $e_S = (e_1,\dots,e_n) \in\{-1,1\}^n$ is defined by $e_i=1$ if and only if $i\in S$. 
It follows from \eqref{LWxeq} that
\begin{equation}\label{eq:maxcut}
\max_{ S\subsetneq V} \lvert \cut(S, S^c) \vert =\frac{1}{4} \lVert L_A \rVert_\theta^\mathbb{R}.
\end{equation}
If the weights are nonnegative, i.e., $A \in \mathbb{S}^n_\circ(\mathbb{R}_\p\!)$, then $L_A \in \mathbb{S}^n_\p$ and we may drop the absolute value in the left-side of \eqref{eq:maxcut}, which is called the \emph{maxcut} of $G$ weighted by $A$ and is known to be NP-hard. The Goemans--Williamson inequality, i.e., combining \eqref{GW} and \eqref{GWconst} for $\Bbbk = \mathbb{R}$,
\begin{equation}\label{eq:GWineq}
\alpha_\GW^\mathbb{R} \lVert L_A \rVert_\gamma^\mathbb{R}\le \lVert L_A \rVert_\theta^\mathbb{R},
\end{equation}
yields a polynomial-time approximation to within a factor of $\alpha_\GW^\mathbb{R}\approx 0.87856$ as both $\lVert L_A \rVert_\gamma^\mathbb{R}$ and the unit vectors $x_1,\dots,x_n \in \mathbb{R}^n$ that attain its value as in \eqref{eq:seminorms2} may be computed to arbitrary accuracy in polynomial-time.   Incidentally, by Corollary~\ref{cor:RCnorms}, we have
\[
\alpha_\GW^\mathbb{C} \lVert L_A \rVert_\gamma^\mathbb{R}\le \lVert L_A \rVert_\theta^\mathbb{C},
\]
and so $\lVert L_A \rVert_\theta^\mathbb{C}$ can be approximated to within a factor of $\alpha_\GW^\mathbb{C}\approx 0.93494$ in polynomial-time. The catch is that, unlike its real counterpart, $\lVert L_A \rVert_\theta^\mathbb{C}$ does not give us the maxcut.

Goemans and Williamson \cite{GW95} also showed  that for
a randomly chosen unit vector $z\in \mathbb{R}^n$ and $S_z \coloneqq \{i\in V : \langle z, x_i\rangle \ge 0\}$, the expected value of $\cut(S_z, S_z^c)$ will be at least
$\alpha_\GW^\mathbb{R} \lVert L_A \rVert_\gamma^\mathbb{R} \le \lVert L_A \rVert_\theta^\mathbb{R}$. 

Consider a bipartite graph $G = (V,E)$, i.e., $V = V_1\cup V_2$ and $E\subseteq V_1\times V_2$.  Let $ V_1  =\{1,\dots,m\}$ and $ V_2 =\{m+1,\dots,m+n\}$. Then its weighted adjacency matrix takes the form
\[
A= \begin{bmatrix} 0 & B\\B^\tp & 0\end{bmatrix} \in \mathbb{S}^{m +n}_\circ
\]
for some $B \in\mathbb{R}^{m\times n}$. Note that $B$ is not required to be a nonnegative matrix. The \emph{cut norm} of $B$ is defined  \cite{Alon3} as
\[
 \lVert B \rVert_{\cut} \coloneqq \max_{I_1\subseteq V_1,\; I_2\subseteq V_2} \Bigl\lvert \sum\nolimits_{i\in I_1,\; j\in I_2} b_{ij} \Bigr\rvert.
\]
\begin{proposition}\label{prop:cutnorm}
Let $A \in \mathbb{S}^{m+n}_\circ$ and $B \in \mathbb{R}^{m \times n}$ be as above. Then
\begin{equation}\label{eq:thetacut}
\frac{1}{8} \lVert L_A \rVert_\theta \le \lVert B \rVert_{\cut}\le \frac{3}{8} \lVert L_A \rVert_\theta.
\end{equation}
\end{proposition}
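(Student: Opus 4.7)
The plan is to reduce both sides to the two scalars $c \coloneqq \mathbbm{1}^\tp B \mathbbm{1}$ and $M \coloneqq \lVert B \rVert_{\infty,1}$, and to bound $\lVert B \rVert_{\cut}$ in terms of them. First I would compute $\lVert L_A \rVert_\theta$ exactly. For $\delta = (\varepsilon, \eta) \in \{-1,+1\}^{m+n}$, the identity $\varepsilon_i^2 = \eta_j^2 = 1$ gives
\[
\delta^\tp L_A \delta = 2\bigl(c - \varepsilon^\tp B \eta\bigr),
\]
and since the sign flip $\varepsilon \mapsto -\varepsilon$ shows that the set $\{\varepsilon^\tp B \eta\}$ is symmetric about $0$ and contains $\pm M$, the maximum yields the closed form
\[
\lVert L_A \rVert_\theta = 2\max_{\varepsilon,\eta}\lvert c - \varepsilon^\tp B \eta\rvert = 2(|c| + M).
\]

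For the upper bound $\lVert B \rVert_{\cut} \le \tfrac{3}{8}\lVert L_A\rVert_\theta$, I would write the indicator of each $I_k$ as $\tfrac{1}{2}(\mathbbm{1}+\tilde\varepsilon_k)$, with $\tilde\varepsilon_1\in\{-1,+1\}^m$ and $\tilde\varepsilon_2\in\{-1,+1\}^n$, to obtain
\[
b(I_1,I_2) = \tfrac{1}{4}\bigl(c + \tilde\varepsilon_1^\tp B\mathbbm{1} + \mathbbm{1}^\tp B\tilde\varepsilon_2 + \tilde\varepsilon_1^\tp B\tilde\varepsilon_2\bigr).
\]
Each of the three bilinear forms on the right is bounded in absolute value by $M$, so $|b(I_1,I_2)| \le (|c|+3M)/4$, and since $(|c|+3M)/4 \le 3(|c|+M)/4 = \tfrac{3}{8}\lVert L_A\rVert_\theta$, the upper bound follows.

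For the lower bound $\lVert B \rVert_{\cut} \ge \tfrac{1}{8}\lVert L_A\rVert_\theta$, I would first assume without loss of generality that $c \ge 0$, since replacing $B$ by $-B$ preserves $|c|$, $M$, $\lVert B\rVert_{\cut}$, and $\lVert L_A\rVert_\theta$. Choose $\varepsilon^*,\eta^*$ with $\varepsilon^{*\tp}B\eta^* = -M$ (achievable by flipping a maximizer of $\lvert\varepsilon^\tp B\eta\rvert$ if necessary) and let $I_k^\pm$ denote the sign partition of $V_k$ induced by $\varepsilon^*,\eta^*$. Expanding both $c$ (all four blocks with $+$ sign) and $-M = \varepsilon^{*\tp}B\eta^*$ (alternating signs $+,-,-,+$) as sums over the four blocks $b(I_1^\pm,I_2^\pm)$ and subtracting, the diagonal blocks $b(I_1^+,I_2^+)$ and $b(I_1^-,I_2^-)$ cancel and leave
\[
b(I_1^+,I_2^-) + b(I_1^-,I_2^+) = \tfrac{1}{2}(c+M).
\]
Hence at least one of these two block sums has absolute value $\ge (c+M)/4 = \tfrac{1}{8}\lVert L_A\rVert_\theta$, which is the required lower bound on $\lVert B\rVert_{\cut}$.

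No Grothendieck-type inequality from earlier sections is needed — the argument is a pure combinatorial identity manipulation. The only mildly subtle step is the exact evaluation $\lVert L_A\rVert_\theta = 2(|c|+M)$; once it is in hand, the upper bound is the triangle inequality applied to the affine-to-linear substitution, and the lower bound is a standard four-block cancellation that exploits the freedom to negate $\varepsilon^*$ so that $\varepsilon^{*\tp}B\eta^*$ has sign opposite to $c$.
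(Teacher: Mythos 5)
Your proof is correct, and it reaches the result by a genuinely different (and cleaner) route than the paper's. The paper starts from the identity $\lVert L_A \rVert_\theta = 4\max_{S\subsetneq V}\lvert\cut(S,S^c)\rvert$ of \eqref{eq:maxcut} and derives both bounds by juggling several cut quantities against one another via set-theoretic substitutions (which in the printed proof are somewhat obscured by apparent $V_1 \leftrightarrow V_2$ typos). You instead isolate a closed-form evaluation that the paper never states: writing $c = \mathbbm{1}^\tp B\mathbbm{1}$ and $M = \lVert B\rVert_{\infty,1}$, you show $\lVert L_A\rVert_\theta = 2(\lvert c\rvert + M)$, which follows directly from $\delta^\tp L_A\delta = 2(c - \varepsilon^\tp B\eta)$ and the symmetry of the range of $\varepsilon^\tp B\eta$. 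Once that scalar reduction is in place, the upper bound is a one-line triangle inequality on the four-term $\pm1$ expansion of $b(I_1,I_2)$, and the lower bound is a transparent cancellation: decomposing both $c$ and $-M = \varepsilon^{*\tp}B\eta^*$ over the four sign blocks and subtracting leaves $b(I_1^+,I_2^-)+b(I_1^-,I_2^+) = (c+M)/2$, so one of the two cuts already achieves $(c+M)/4 = \tfrac{1}{8}\lVert L_A\rVert_\theta$. Both proofs are purely combinatorial and use no Grothendieck machinery, but yours makes explicit the intermediate fact $\lVert L_A\rVert_\theta = 2(\lvert c\rvert+M)$, which is both reusable and makes the final constants $1/8$ and $3/8$ easy to trace. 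The only point worth spelling out is the degenerate case $M=0$, which forces $B=0$ and $c=0$, so both sides vanish and the claim holds trivially; otherwise the flip $\varepsilon^* \mapsto -\varepsilon^*$ you invoke to arrange $\varepsilon^{*\tp}B\eta^* = -M$ is available, as you note.
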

\begin{proof} 
For the partition $S=I_1\cup (V_2\setminus I_2)$ and $S^c =(V_1 \setminus I_1)\cup  I_2$,
\[
\cut(S, S^c) = \sum\nolimits_{i\in I_1,\; j\in I_2}b_{ij}+\sum\nolimits_{i\in V_1\setminus I_1, j\in V_2\setminus I_2} b_{ij}.
\]
The equality \eqref{eq:maxcut} yields
\begin{align*}
\lVert L_A \rVert_\theta &= 4\max_{I_1\subseteq V_1,\; I_2\subseteq V_2} \Bigl\lvert \sum\nolimits_{i\in I_1,\; j\in I_2}b_{ij}+\sum\nolimits_{i\in V_1\setminus I_1, \; j\in V_2\setminus I_2} b_{ij} \Bigr\rvert \\
&\le
 4\max_{I_1\subseteq V_1,\; I_2\subseteq V_2} \Bigl( \Bigl\lvert\sum\nolimits_{i\in I_1,\; j\in I_2}b_{ij}\Bigr\rvert + \Bigl\lvert\sum\nolimits_{i\in V_1\setminus I_1, j\in V_2\setminus I_2} b_{ij}\Bigl\lvert \Bigr)\le  8\lVert B\rVert_{\cut},
\end{align*}
i.e., the left inequality in \eqref{eq:thetacut}.

Setting $I_1$ to be arbitrary and $I_2=V_1$ or setting $I_1=V_2$ and $I_2$ to be arbitrary gives
\[
\max\Bigl(\Bigl\lvert\sum\nolimits_{i\in I_1,\; j\in V_1} b_{ij}\Bigr\rvert, \Bigl\lvert\sum\nolimits_{i\in V_2,\; j\in I_2}b_{ij}\Bigr\rvert\Bigr)\le \frac{1}{4}\lVert L_A \rVert_\theta.
\]
Hence
\[
\Bigl\lvert\sum\nolimits_{i\in I_1,\; j\in V_1\setminus I_2} b_{ij}
-\sum\nolimits_{i\in V_2\setminus I_1,\; j\in I_2}b_{ij}\Bigr\rvert
=\Bigl\lvert\sum\nolimits_{i\in I_1,\; j\in V_1} b_{ij}-\sum\nolimits_{i\in V_2,\; j\in I_2}b_{ij}\Bigl\lvert
\le \frac{1}{2} \lVert L_A \rVert_\theta.
\]
Setting $I_1$ to be arbitrary and $I_2=V_2\setminus I_2$, we get
\[
\Bigl\lvert\sum\nolimits_{i\in I_1,\; j\in I_2}b_{ij}\Bigr\rvert \le \frac{3}{8} \lVert L_A \rVert_\theta,
\]
which yields the right inequality in \eqref{eq:thetacut}.
\end{proof}

Now note that the Goemans--Williamson inequality \eqref{eq:GWineq} does not apply to the cut norm as  $B \in \mathbb{R}^{m \times n}$ is not required to be nonnegative; so $A\in \mathbb{S}^{m +n}_\circ$ may have negative entries and $L_A\in \mathbb{S}^{m +n}$ is no longer guaranteed to be positive semidefinite. This is where the symmetric Grothendieck inequality \eqref{eq:SGIs} can be useful, since it does not require positive semidefiniteness. When applied to Proposition~\ref{prop:cutnorm}, we obtain the following polynomial-time approximation for the cut norm.
\begin{corollary}
Let $A \in \mathbb{S}^{m+n}_\circ$ and $B \in \mathbb{R}^{m \times n}$ be as above. Then
\[
\frac{1}{8K_\gamma^\mathbb{R}} \lVert L_A \rVert_\gamma \le  \lVert B \rVert_{\cut} \le \frac{3}{8} \lVert L_A \rVert_\gamma.
\]
\end{corollary}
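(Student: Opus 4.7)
The corollary is essentially a mechanical combination of Proposition~\ref{prop:cutnorm} with the symmetric Grothendieck inequality for the $\gamma$-seminorm \eqref{eq:SGIs}, together with the trivial comparison $\lVert A\rVert_\theta \le \lVert A\rVert_\gamma$ from \eqref{eq:tleg}. The plan is to read off both inequalities from the sandwich already established in Proposition~\ref{prop:cutnorm}, substituting one side of the $\theta$-$\gamma$ comparison in each direction.

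For the upper bound, I would begin with the right inequality of \eqref{eq:thetacut}, which gives $\lVert B\rVert_{\cut}\le \tfrac{3}{8}\lVert L_A\rVert_\theta$. Applying \eqref{eq:tleg} to the real symmetric matrix $L_A\in\mathbb{S}^{m+n}(\mathbb{R})$ yields $\lVert L_A\rVert_\theta \le \lVert L_A\rVert_\gamma$, and concatenating the two gives $\lVert B\rVert_{\cut}\le \tfrac{3}{8}\lVert L_A\rVert_\gamma$.

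For the lower bound, I would start with the left inequality of \eqref{eq:thetacut}, which gives $\tfrac{1}{8}\lVert L_A\rVert_\theta \le \lVert B\rVert_{\cut}$. Here I would invoke Theorem~\ref{ubgenGC}, specifically \eqref{eq:SGIs} applied to $L_A\in\mathbb{S}^{m+n}(\mathbb{R})$, which gives $\lVert L_A\rVert_\gamma \le K_\gamma^\mathbb{R}\,\lVert L_A\rVert_\theta$, i.e., $\lVert L_A\rVert_\theta \ge (K_\gamma^\mathbb{R})^{-1}\lVert L_A\rVert_\gamma$. Substituting yields $\lVert B\rVert_{\cut}\ge \tfrac{1}{8K_\gamma^\mathbb{R}}\lVert L_A\rVert_\gamma$, completing the argument.

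There is no real obstacle to overcome here --- the entire content is bookkeeping on top of Proposition~\ref{prop:cutnorm} and Theorem~\ref{ubgenGC}; the crucial point that required the symmetric Grothendieck inequality (rather than the classical one) is that $B$ is not assumed nonnegative, so $L_A$ need not be positive semidefinite and the Goemans--Williamson bound does not apply. The upshot, which is worth emphasizing in the surrounding exposition, is that $\lVert L_A\rVert_\gamma$ is computable to arbitrary accuracy in polynomial time by semidefinite programming, so this corollary upgrades Proposition~\ref{prop:cutnorm} from a bound in terms of the NP-hard quantity $\lVert L_A\rVert_\theta$ to a genuine polynomial-time approximation of $\lVert B\rVert_{\cut}$ with a universal (dimension-independent) approximation ratio bounded by $3K_\gamma^\mathbb{R}\le 3\sinh(\pi/2)$.
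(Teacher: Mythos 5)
Your proof is correct and matches the paper's intent exactly: the corollary is stated without an explicit proof as an immediate consequence of Proposition~\ref{prop:cutnorm} together with $\lVert L_A\rVert_\theta\le\lVert L_A\rVert_\gamma\le K_\gamma^\mathbb{R}\lVert L_A\rVert_\theta$, which is precisely the bookkeeping you carried out.
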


\subsection{Nonconvex quadratic programming}\label{subsec:Spread}

Another consequence of the symmetric Grothendieck inequality is a new convex relaxation bound that we will describe below after providing some context. Let $A \in \mathbb{S}^n(\mathbb{R})$ and consider the following ubiquitous quadratic programs in combinatorial optimization \cite{Alon1,Alon2,Alon3,Charikar,GW95,Khot,Khot2},
\begin{equation}\label{Maxprobcube}
q_\leq(A) \coloneqq \max_{x \in [-1,1]^n} x^\tp\! Ax ,\qquad
q_\eq(A) \coloneqq \max_{x \in \{-1,1\}^n} x^\tp\! Ax,
\end{equation}
i.e., maximization of a quadratic function over the unit cube and the vertices of the unit cube respectively. Both problems are known to be NP-hard \cite{deKlerk,Nes} and it is customary to consider the following standard  convex relaxations:
\begin{equation}\label{SDPmaxrel}
\begin{aligned}
r_\leq(A)  &\coloneqq \max\{\tr(AX) : x_{ii} \le 1, \; i = 1,\dots,n, \; X\succeq 0\},\\
r_\eq(A)&\coloneqq \max\{\tr(AX) : x_{ii} = 1, \; i = 1,\dots,n, \; X\succeq 0\}.
\end{aligned}
\end{equation}
The problems in  \eqref{SDPmaxrel} are semidefinite programs whose solutions can be found by interior-point algorithms to any arbitrary precision $\varepsilon>0$ in polynomial time \cite{NN94}.
Note that the identity matrix is within the sets of feasible matrices in the maximization problems \eqref{SDPmaxrel}.

If we let $A_0 \in \mathbb{S}^n_\circ(\mathbb{R})$ be the matrix obtained from $A\in \mathbb{S}^n(\mathbb{R})$ by replacing its diagonal entries with zeros, then 
\[
q_\eq(A)=q_\eq(A_0)+\tr A=q_\leq(A_0)+\tr A, \qquad r_\eq(A)=r_\eq(A_0)+\tr A=r_\leq(A_0)+\tr A.
\]
Nemirovski, Roos, and Terlaky \cite{NRT99}  proved a fundamental inequality that relates $r_\leq(A)$ and $q_\leq(A)$, namely,
\begin{equation}\label{NRTineqv0}
r_\leq(A) \le 2\log 2n \cdot q_\leq(A).
\end{equation}
When combined with \eqref{eq:nonnegdiag} that we will establish later, we get
\begin{equation}\label{NRTineqv1}
r_\eq(A)\le 2\log 2n \cdot q_\eq(A) \quad \textrm{if}\; \tr A\ge 0.
\end{equation}
The nonnegative trace condition cannot be omitted, e.g., the inequality does not hold for $A=-I$.
Megretski \cite{Meg01} improved the factor $2\log 2n$ in \eqref{NRTineqv0} and \eqref{NRTineqv1} to $2\log n$ for $n>60$. Without relying on  \cite{NRT99}, Charikar and Wirth \cite{Charikar}  obtained an analog of \eqref{NRTineqv1} for $A \in \mathbb{S}^n_\circ(\mathbb{R})$  and  coined the term ``Grothendieck-type inequality'' for \eqref{NRTineqv1}.
Given  $A  \in \mathbb{S}^n_\circ(\mathbb{R})$, Alon et al.\ \cite{Alon2} defined  a simple undirected graph $G$ by the zero pattern of  $A$, and thereby proved a variant of \eqref{NRTineqv1} where the factor $2\log 2n$ is replaced by $\Omega\bigl(\log\omega(G)\bigr)$,  with $\omega(G)$  the clique number of $G$.

All three estimates of the gap between $r_\eq(A)$ and $q_\eq(A)$ share the common feature  that the upper bounds of their ratio either  grow unbounded with the dimension of $A$ (e.g., $2 \log 2n$ or $2 \log n$) or at  least depends on $A$ (e.g., $\log\omega(G)$). In fact, these gaps are the smallest possible: Megretski  \cite[p.~2]{Charikar} showed that the best possible factor in \eqref{NRTineqv1} is $\Omega(\log n)$ while Alon et al.\ \cite{Alon2} showed that it is $\Omega\bigl(\log\omega(G)\bigr)$. With these in view, we find the following theorem somewhat surprising --- unlike the bounds of Alon et al., Charikar and Wirth, Megretski, the bounds below are in terms of universal constants independent of $n$ and $A$.
\begin{theorem}\label{thm:SGIqr}
For all $n \in \mathbb{N}$, any $A \in \mathbb{S}^n(\Bbbk)$, and both $\Bbbk = \mathbb{R}$ and $\mathbb{C}$,
\begin{equation}\label{eq:SGIqr}
\begin{gathered}
\max\,\{q_\eq(A), q_\eq(-A)\}\le \max\,\{r_\eq(A), r_\eq(-A)\}\le K^\mathbb{\Bbbk}_\gamma\max\,\{q_\eq(A), q_\eq(-A)\},\\
\max\,\{q_\leq(A), q_\leq(-A)\} \le \max\,\{r_\leq(A), r_\leq(-A)\} \le K^\mathbb{\Bbbk}_\Gamma\max\,\{q_\leq(A), q_\leq(-A)\}.
\end{gathered}
\end{equation}
These inequalities are sharp, i.e., $K^\mathbb{\Bbbk}_\gamma$ and $K^\mathbb{\Bbbk}_\Gamma$ are the smallest possible constants for \eqref{eq:SGIqr}.
\end{theorem}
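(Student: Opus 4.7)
The plan is to recognize that each of the four quantities $\max\{q_\eq(A), q_\eq(-A)\}$, $\max\{q_\leq(A), q_\leq(-A)\}$, $\max\{r_\eq(A), r_\eq(-A)\}$, $\max\{r_\leq(A), r_\leq(-A)\}$ is exactly one of the Grothendieck norms/seminorms introduced in Section~\ref{sec:Gnorms}. Once these identifications are in place, the two inequalities in \eqref{eq:SGIqr} become immediate consequences of Theorem~\ref{ubgenGC}, and the sharpness of $K_\gamma^\Bbbk, K_\Gamma^\Bbbk$ in \eqref{eq:SGIqr} matches their very definitions as the sharp constants in \eqref{eq:SGI}. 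There is no real obstacle beyond careful bookkeeping — the entire substance of the theorem has been packed into Theorem~\ref{ubgenGC} already.

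Concretely, I would first establish the four identities
\begin{align*}
\max\{q_\eq(A), q_\eq(-A)\} &= \lVert A \rVert_\theta, & \max\{q_\leq(A), q_\leq(-A)\} &= \lVert A \rVert_\Theta,\\
\max\{r_\eq(A), r_\eq(-A)\} &= \lVert A \rVert_\gamma, & \max\{r_\leq(A), r_\leq(-A)\} &= \lVert A \rVert_\Gamma,
\end{align*}
valid over both $\Bbbk = \mathbb{R}$ and $\mathbb{C}$; in the complex case, $q_\eq$ and $q_\leq$ must be read with $x\in\mathbb{T}^n$ and $x\in\mathbb{D}^n$ and quadratic form $x^*\!Ax$, which is real-valued by Hermiticity of $A$. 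The first two follow directly from $\delta^*\!A\delta = \sum_{i,j} a_{ij}\bar{\delta}_i\delta_j$ and the definitions \eqref{eq:GammaTheta}, \eqref{eq:seminorms2}, with the outer $\max\{\,\cdot\,, -(\cdot)\}$ absorbing the absolute value. For the third and fourth, the feasible sets in \eqref{SDPmaxrel} are exactly $\mathbb{G}^n = \{X\in\mathbb{S}^n_\p : x_{ii}=1\}$ and $\bbGamma^n = \{X\in\mathbb{S}^n_\p : x_{ii}\in[0,1]\}$ — the latter because the diagonal of a positive semidefinite matrix is automatically nonnegative — so Proposition~\ref{prop:correlation} (with $d=n$) gives $\lVert A\rVert_\gamma = \max_{X \in \mathbb{G}^n}\lvert\tr(AX)\rvert$ and $\lVert A\rVert_\Gamma = \max_{X \in \bbGamma^n}\lvert\tr(AX)\rvert$, with the absolute value again absorbed by passing to $\pm A$.

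Under these identifications, the left-hand inequalities of \eqref{eq:SGIqr} reduce to the trivial $\lVert A\rVert_\theta \le \lVert A\rVert_\gamma$ of \eqref{eq:tleg} and the trivial $\lVert A\rVert_\Theta \le \lVert A\rVert_\Gamma$ of \eqref{eq:GGI2}, while the right-hand inequalities become verbatim the two symmetric Grothendieck inequalities \eqref{eq:SGIs} and \eqref{eq:SGIn} of Theorem~\ref{ubgenGC}. For sharpness, the defining suprema
\[
K_\gamma^\Bbbk = \sup_{n\in\mathbb{N}} \max_{A\in\mathbb{S}^n(\Bbbk)} \frac{\lVert A\rVert_\gamma}{\lVert A\rVert_\theta}, \qquad K_\Gamma^\Bbbk = \sup_{n\in\mathbb{N}} \max_{A\in\mathbb{S}^n(\Bbbk)} \frac{\lVert A\rVert_\Gamma}{\lVert A\rVert_\Theta}
\]
translate, via the four identities, into the statement that no smaller constant can replace $K_\gamma^\Bbbk$ or $K_\Gamma^\Bbbk$ in \eqref{eq:SGIqr}, completing the proof. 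If one wishes to be explicit about a witnessing sequence, any near-extremal matrices for the symmetric Grothendieck inequalities automatically furnish near-extremal instances of $q_\eq, r_\eq$ (or $q_\leq, r_\leq$).
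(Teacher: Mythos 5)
Your proposal is correct and takes essentially the same route as the paper: the paper proves the four identities you list as Proposition~\ref{lastprop} (using precisely the observation that $q_\eq, q_\leq, r_\eq, r_\leq$ maximize $\tr(AX)$ over $\mathbb{G}^n_1, \bbGamma^n_1, \mathbb{G}^n, \bbGamma^n$ respectively, combined with \eqref{eq:normcorrd}), and then Theorem~\ref{thm:SGIqr} is noted to be a restatement of the symmetric Grothendieck inequalities of Theorem~\ref{ubgenGC} together with the definitions of $K_\gamma, K_\Gamma$ as the sharp constants. Your remark that $X\succeq 0$ forces $x_{ii}\ge 0$ so that the feasible set for $r_\leq$ is exactly $\bbGamma^n$ is the same detail the paper relies on implicitly.
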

While the objective function in \eqref{SDPmaxrel} is always real-valued and thus \eqref{SDPmaxrel} is well-defined over both $\mathbb{R}$ and $\mathbb{C}$, we will have to rewrite \eqref{Maxprobcube} as
\[
q_\leq(A) \coloneqq \max_{x \in \mathbb{D}^n} x^*\! Ax ,\qquad
q_\eq(A) \coloneqq \max_{x \in \mathbb{T}^n} x^*\! Ax,
\]
so that they apply to both  $\mathbb{R}$ and $\mathbb{C}$. Theorem~\ref{thm:SGIqr} and all discussions below will hold for both $\mathbb{R}$ and $\mathbb{C}$. By the next proposition, the inequalities in  \eqref{eq:SGIqr} are just restatements of the symmetric Grothendieck inequalities \eqref{eq:SGIs} and \eqref{eq:SGIn}.
\begin{proposition}\label{lastprop}
Let $A \in \mathbb{S}^n$. Then $q_\eq(A)\le q_\leq(A)$, $r_\eq(A)\le r_\leq(A) $, and
\begin{equation}\label{eq:qrrelations}
\begin{aligned}
\lVert A \rVert_\Theta&=\max\,\{q_\leq(A), q_\leq(-A)\},\qquad &\lVert A \rVert_\Gamma&=\max\,\{r_\leq(A), r_\leq(-A)\}, \\
\lVert A \rVert_\theta&=\max\,\{q_\eq(A), q_\eq(-A)\},\qquad &\lVert A \rVert_\gamma&=\max\,\{r_\eq(A), r_\eq(-A)\}.
\end{aligned}
\end{equation}
If in addition the diagonal entries $a_{11},\dots,a_{nn} \ge 0$, then
\begin{equation}\label{eq:nonnegdiag}
q_\eq(A)= q_\leq(A), \qquad r_\eq(A) = r_\leq(A).
\end{equation}
\end{proposition}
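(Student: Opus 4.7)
The strategy breaks naturally into three parts: dispose of the obvious inequalities, decode the four Grothendieck norms via their known characterizations, and then treat the nonnegative-diagonal collapse with a coordinate-by-coordinate extremality argument. The bounds $q_\eq(A)\le q_\leq(A)$ and $r_\eq(A)\le r_\leq(A)$ are immediate from the inclusions $\mathbb{T}^n\subseteq\mathbb{D}^n$ and $\mathbb{G}^n\subseteq\bbGamma^n$, the latter feasible sets being exactly those appearing in \eqref{SDPmaxrel}. The elementary fact powering the rest of the proof is that $\delta^*\!A\delta$ and $\tr(AG)$ are real whenever $A\in\mathbb{S}^n$ (by Hermiticity), so in each defining maximum I may apply $\lvert y\rvert=\max(y,-y)$ for $y\in\mathbb{R}$.

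For the identities in \eqref{eq:qrrelations}, writing
\[
\lVert A\rVert_\theta=\max_{\delta\in\mathbb{T}^n}\lvert\delta^*\!A\delta\rvert=\max\Bigl(\max_{\delta\in\mathbb{T}^n}\delta^*\!A\delta,\;\max_{\delta\in\mathbb{T}^n}\delta^*(-A)\delta\Bigr)
\]
gives the $\theta$-identity, and the identical manipulation with $\mathbb{D}^n$ in place of $\mathbb{T}^n$ gives the $\Theta$-identity. For the $\gamma$- and $\Gamma$-identities, the choice $d=n$ in Proposition~\ref{prop:correlation} (together with $\mathbb{G}^n_n=\mathbb{G}^n$ and $\bbGamma^n_n=\bbGamma^n$) yields $\lVert A\rVert_\gamma=\max\{\lvert\tr(AG)\rvert:G\in\mathbb{G}^n\}$ and $\lVert A\rVert_\Gamma=\max\{\lvert\tr(AG)\rvert:G\in\bbGamma^n\}$; splitting the absolute value in the same way, and noting that the feasible sets of $r_\eq(\pm A)$ and $r_\leq(\pm A)$ are precisely $\mathbb{G}^n$ and $\bbGamma^n$, completes the four identities.

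The substantive content lies in \eqref{eq:nonnegdiag}, where I must show $q_\leq(A)\le q_\eq(A)$ and $r_\leq(A)\le r_\eq(A)$ under $a_{ii}\ge 0$. For $q$, I would fix an optimizer $x\in\mathbb{D}^n$ of $q_\leq(A)$, freeze $x_j$ for $j\ne k$, and observe that
\[
x_k\longmapsto a_{kk}\lvert x_k\rvert^2+2\Re\Bigl(\bar{x}_k\sum_{j\ne k}a_{kj}x_j\Bigr)+\mathrm{const}
\]
is convex on $\mathbb{D}$ precisely because $a_{kk}\ge 0$; a convex function on a compact convex set attains its maximum at an extreme point, and $\extr(\mathbb{D})=\mathbb{T}$ in both fields, so one may push $x_k$ into $\mathbb{T}$ without decreasing the objective. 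Iterating over $k=1,\dots,n$ produces an optimizer in $\mathbb{T}^n$. For $r$, given an optimizer $X\in\bbGamma^n$, I would set $X'\coloneqq X+\sum_{k=1}^n(1-x_{kk})\,e_ke_k^*$: each summand is positive semidefinite with coefficient $(1-x_{kk})\ge 0$, so $X'\succeq X\succeq 0$; every diagonal entry of $X'$ equals $1$; and $\tr(AX')-\tr(AX)=\sum_k(1-x_{kk})a_{kk}\ge 0$. Hence $X'\in\mathbb{G}^n$ achieves at least the same objective, yielding $r_\leq\le r_\eq$. No step hides a real obstacle; the only point requiring care is recognizing that the rank-one diagonal update $e_ke_k^*$ simultaneously preserves positive semidefiniteness and the box constraint on the diagonal, which is exactly what converts nonnegativity of $a_{kk}$ into a monotone gain.
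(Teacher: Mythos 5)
Your proof is correct and follows essentially the same route as the paper: identify the four quantities with trace optimizations over $\mathbb{G}^n_1$, $\bbGamma^n_1$, $\mathbb{G}^n$, $\bbGamma^n$ (using the fact that $\delta^*\!A\delta$ and $\tr(AG)$ are real for Hermitian $A$ so the absolute value splits as a max of $\pm A$), then handle the nonnegative-diagonal collapse via a one-coordinate convexity argument for $q$. One small note on exposition: the paper disposes of $r_\eq(A)=r_\leq(A)$ with an unelaborated ``similar arguments will apply,'' whereas your rank-one diagonal update $X' = X + \sum_k (1-x_{kk})\,e_ke_k^*$ is a genuinely tidy, explicit construction that simultaneously lifts the diagonal to $1$, preserves positive semidefiniteness, and monotonically improves the objective because $a_{kk}\ge 0$; that is a nice concrete filling-in of a step the paper waves at.
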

\begin{proof}
The equalities in \eqref{eq:qrrelations} follow from \eqref{eq:normcorrd} and
\begin{align*}
q_\leq(A) &= \max\nolimits_{X  \in \bbGamma^n_1} \tr( AX), &
r_\leq(A) &= \max\nolimits_{X  \in \bbGamma^n} \tr( AX), \\
q_\eq(A) &= \max\nolimits_{X  \in \mathbb{G}^n_1} \tr( AX), &
r_\eq(A) &= \max\nolimits_{X  \in \mathbb{G}^n} \tr( AX).
\end{align*}

Let $a_{ii} \ge 0$, $i =1,\dots,n$. We will prove that $q_\eq(A) = q_\leq(A)$; similar arguments will apply to $r_\eq(A) = r_\leq(A)$; and the remaining two equalities in \eqref{eq:nonnegdiag} then follow from \eqref{eq:qrrelations}.  Let  $\max_{x \in \mathbb{D}^n} x^*\! Ax$ be attained at $x=(x_1,\dots,x_n) \in \mathbb{D}^n$, with a maximal number of coordinates $x_i\in \mathbb{T}$.  We claim that all $x_1,\dots,x_n \in \mathbb{T}$, i.e., $x \in \mathbb{T}^n$. Suppose to the contrary that $\lvert x_i \vert < 1$ for some $i$.

\underline{\textsc{Case I}: $\Bbbk=\mathbb{R}$.}\; Fixing all other coordinates except $x_i$, the quadratic function $x^\tp\! A x = a_{ii} x_i^2 + \cdots$ is convex in the variable $x_i$ as $a_{ii} \ge 0$. Thus its maximum is attained with $x_i\in\mathbb{T} =\{-1,1\}$, a contradiction.

\underline{\textsc{Case II}: $\Bbbk=\mathbb{C}$.}\; Fixing all other coordinates except $x_i$ and writing $x_i=u_i+\mathrm{i}v_i$ with $u_i^2 + v_i^2 < 1$, the quadratic function $x^*\! A x = a_{ii}(u_i^2 + v_i^2) + \cdots$ is convex in $(u_i,v_i)$ as $a_{ii} \ge 0$. Thus its maximum is attained with $x_i\in \mathbb{T} =  \{e^{\mathrm{i}\vartheta} \in\mathbb{C} : \vartheta\in [0, 2\pi)\}$, a contradiction.
\end{proof}

We will next deduce an analogue of Theorem~\ref{thm:SGIqr} with sum in place of max. For any $A \in \mathbb{S}^n$, we define the \emph{stretch} of $A$ as
\[
\str(A) \coloneqq \Bigl( \max\nolimits_{x \in \mathbb{T}^n} x^*\! Ax \Bigr) - \Bigl( \min\nolimits_{x \in \mathbb{T}^n} x^*\! Ax \Bigr) = q_\eq(A)+q_\eq(-A),
\]
and the \emph{spread} of $A$ as
\[
\spr(A) \coloneqq \Bigl( \max\nolimits_{X  \in \mathbb{G}^n} \tr( AX)\Bigr) - \Bigl( \min\nolimits_{X  \in \mathbb{G}^n} \tr( AX)\Bigr) = r_\eq(A)+r_\eq(-A).
\]
The notion of stretch appeared in  \cite{Charikar} but was not given a name. We next show that the stretch, despite being the difference of two NP-hard quantities, can be approximated up to a universal constant factor $K_\gamma$ by the spread, which is polynomial-time computable to arbitrary precision.
\begin{theorem}
Let  $A \in \mathbb{S}^n$. Then
\[
\str(A) \le \spr(A) \le K_\gamma \str(A).
\]
\end{theorem}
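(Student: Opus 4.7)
The lower bound $\str(A)\le\spr(A)$ is immediate: every $x\in\mathbb{T}^n$ gives $xx^*\in\mathbb{G}^n$, so $q_\eq(A)\le r_\eq(A)$ and $q_\eq(-A)\le r_\eq(-A)$; summing yields the inequality.

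For the upper bound, the plan is to encode the pair ``$\max$ minus $\min$'' into a single Grothendieck-norm calculation on a doubled matrix and then apply the symmetric Grothendieck inequality \eqref{eq:SGIs}. Specifically, set
\[
B \coloneqq \begin{bmatrix} A & 0 \\ 0 & -A \end{bmatrix}\in\mathbb{S}^{2n}.
\]
I claim that
\begin{equation}\label{eq:sketchkey}
\str(A)=\lVert B\rVert_\theta \qquad\text{and}\qquad \spr(A)=\lVert B\rVert_\gamma.
\end{equation}
Granting \eqref{eq:sketchkey}, the symmetric Grothendieck inequality for the $\gamma$-seminorm gives $\spr(A)=\lVert B\rVert_\gamma\le K_\gamma\lVert B\rVert_\theta=K_\gamma\str(A)$, as required.

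For the first identity in \eqref{eq:sketchkey}, split any $\delta\in\mathbb{T}^{2n}$ as $\delta=(\delta_1,\delta_2)$ with $\delta_1,\delta_2\in\mathbb{T}^n$; by block structure, $\delta^* B\delta=\delta_1^*\! A\delta_1-\delta_2^*\! A\delta_2$. Maximising over $\delta_1$ and $\delta_2$ independently (and noting that the sign can be flipped by swapping $\delta_1,\delta_2$) yields $\max_{\delta\in\mathbb{T}^{2n}}\lvert\delta^*B\delta\rvert=q_\eq(A)+q_\eq(-A)=\str(A)$. For the second identity, use the correlation-matrix characterisation in \eqref{eq:normcorrd}: any $X\in\mathbb{G}^{2n}$, viewed as $G(z_1,\dots,z_{2n})$, has $n\times n$ diagonal blocks $X_0=G(z_1,\dots,z_n)$ and $Y_0=G(z_{n+1},\dots,z_{2n})$ which lie in $\mathbb{G}^n$, and $\tr(BX)=\tr(AX_0)-\tr(AY_0)$. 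Conversely any pair $X_0,Y_0\in\mathbb{G}^n$ arises from such a block decomposition by placing the Gram vectors in two orthogonal copies of the ambient space; hence $\lVert B\rVert_\gamma=\max_{X_0,Y_0\in\mathbb{G}^n}\lvert\tr(AX_0)-\tr(AY_0)\rvert=r_\eq(A)+r_\eq(-A)=\spr(A)$.

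There is no real obstacle: the whole argument is the observation that a ``max $-$ min'' over a compact convex set is itself a Grothendieck seminorm of a doubled object, after which Theorem~\ref{ubgenGC} does the work. The only mildly delicate point is verifying that \emph{every} pair $(X_0,Y_0)\in\mathbb{G}^n\times\mathbb{G}^n$ can be realised as the diagonal blocks of some $X\in\mathbb{G}^{2n}$, which is handled by the orthogonal-embedding argument above (and the dimension is never an issue because $\lVert\,\cdot\,\rVert_\gamma$ has already stabilised by $d=n$, let alone $d=2n$).
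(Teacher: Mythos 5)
Your proof is correct, and it takes a genuinely different route from the paper's. The paper proves the upper bound by shifting $A$ to $A+\alpha I$ with $\alpha=\bigl(q_\eq(-A)-q_\eq(A)\bigr)/(2n)$, chosen so that $q_\eq(A+\alpha I)=q_\eq(-A-\alpha I)$; after this balancing, $\lVert A+\alpha I\rVert_\theta=\tfrac12\str(A)$, and combining $\spr(A)=\spr(A+\alpha I)\le 2\lVert A+\alpha I\rVert_\gamma\le 2K_\gamma\lVert A+\alpha I\rVert_\theta$ closes the argument. Your approach instead doubles up to $B=\begin{bsmallmatrix}A&0\\0&-A\end{bsmallmatrix}\in\mathbb{S}^{2n}$ and identifies $\str(A)=\lVert B\rVert_\theta$, $\spr(A)=\lVert B\rVert_\gamma$ directly, which is clean and exploits exactly the dimension-independence of $K_\gamma$ that makes the inequality meaningful. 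The block identities check out: any $G\in\mathbb{G}^{2n}$ has $n\times n$ diagonal blocks in $\mathbb{G}^n$ giving $\tr(BG)=\tr(AX_0)-\tr(AY_0)$, and conversely $\diag(X_0,Y_0)\in\mathbb{G}^{2n}$ for any $X_0,Y_0\in\mathbb{G}^n$ (your orthogonal-embedding remark, or simply the observation that a block-diagonal PSD matrix with unit diagonal is a correlation matrix). The net effect is the same constant $K_\gamma$; what you gain is avoiding the explicit shift calculation, at the modest cost of working in $\mathbb{S}^{2n}$ rather than $\mathbb{S}^n$ --- harmless since the symmetric Grothendieck constant is universal. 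The paper's shift trick, on the other hand, keeps everything in $\mathbb{S}^n$ and makes the factor-of-two bookkeeping explicit in the chain $\spr\le 2\lVert\cdot\rVert_\gamma\le 2K_\gamma\lVert\cdot\rVert_\theta=K_\gamma\str$.
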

\begin{proof}
The first inequality is obvious. We will prove the second one.
Let $\alpha\in\mathbb{R}$. Clearly,
\begin{align*}
q_\eq(A+\alpha I)&=q_\eq(A)+n\alpha, &q_\eq(-A-\alpha I)&=q_\eq(-A)-n\alpha,
&\spr(A+\alpha I)&=\spr(A),\\
r_\eq(A+\alpha I)&=r_\eq(A)+n\alpha, &r_\eq(-A-\alpha I)&=r_\eq(-A)-n\alpha,
&\str(A+\alpha I)&=\str(A).
\end{align*}
Now set $\alpha= \bigl(q_\eq(-A)-q_\eq(A)\bigr)/2n$.  Then $q_\eq(A+\alpha I)=q_\eq(-A-\alpha I) =\lVert A+\alpha I \rVert_\theta$ by \eqref{eq:qrrelations}.  By \eqref{eq:qrrelations} again and the symmetric Grothendieck inequality \eqref{eq:SGIs},
\begin{align*}
\spr(A) =\spr(A+\alpha I) &\le  r_\eq(A+\alpha I) + r_\eq(-A - \alpha I) \le 2\lVert A+\alpha I\rVert_\gamma\\
&\le 2K_\gamma \lVert A+\alpha I\rVert_\theta = K_\gamma \str(A+\alpha I)= K_\gamma \str(A),
\end{align*}
as required.
\end{proof}

\section*{Acknowledgment}
The work of the first author is partially supported by the Simons Collaboration Grant for Mathematicians.
The work of the second author is partially supported by NSF IIS 1546413 and the Eckhardt Faculty Fund.

\bibliographystyle{abbrv}

\begin{thebibliography}{10}

\bibitem{Acin}
A.~Ac\'in, N.~Gisin, and B.~Toner.
\newblock Grothendieck's constant and local models for noisy entangled quantum
  states.
\newblock {\em Phys. Rev. A (3)}, 73(6, part A):062105, 5, 2006.

\bibitem{Alon1}
N.~Alon and E.~Berger.
\newblock The {G}rothendieck constant of random and pseudo-random graphs.
\newblock {\em Discrete Optim.}, 5(2):323--327, 2008.

\bibitem{Alon2}
N.~Alon, K.~Makarychev, Y.~Makarychev, and A.~Naor.
\newblock Quadratic forms on graphs.
\newblock {\em Invent. Math.}, 163(3):499--522, 2006.

\bibitem{Alon3}
N.~Alon and A.~Naor.
\newblock Approximating the cut-norm via {G}rothendieck's inequality.
\newblock {\em SIAM J. Comput.}, 35(4):787--803, 2006.

\bibitem{barvinok}
A.~Barvinok.
\newblock A remark on the rank of positive semidefinite matrices subject to
  affine constraints.
\newblock {\em Discrete Comput. Geom.}, 25(1):23--31, 2001.

\bibitem{Ben-Tal}
A.~Ben-Tal, A.~Nemirovski, and C.~Roos.
\newblock Extended matrix cube theorems with applications to {$\mu$}-theory in
  control.
\newblock {\em Math. Oper. Res.}, 28(3):497--523, 2003.

\bibitem{BleiBook}
R.~Blei.
\newblock The {G}rothendieck inequality revisited.
\newblock {\em Mem. Amer. Math. Soc.}, 232(1093):vi+90, 2014.

\bibitem{Boyd}
S.~Boyd and L.~Vandenberghe.
\newblock {\em Convex optimization}.
\newblock Cambridge University Press, Cambridge, 2004.

\bibitem{Bri3}
J.~Bri\"et, H.~Buhrman, and B.~Toner.
\newblock A generalized {G}rothendieck inequality and nonlocal correlations
  that require high entanglement.
\newblock {\em Comm. Math. Phys.}, 305(3):827--843, 2011.

\bibitem{Bri1}
J.~Bri\"et, F.~M. de~Oliveira~Filho, and F.~Vallentin.
\newblock The positive semidefinite {G}rothendieck problem with rank
  constraint.
\newblock In {\em Automata, languages and programming. {P}art {I}}, volume 6198
  of {\em Lecture Notes in Comput. Sci.}, pages 31--42. Springer, Berlin, 2010.

\bibitem{Bri2}
J.~Bri\"et, F.~M. de~Oliveira~Filho, and F.~Vallentin.
\newblock Grothendieck inequalities for semidefinite programs with rank
  constraint.
\newblock {\em Theory Comput.}, 10:77--105, 2014.

\bibitem{Charikar}
M.~Charikar and A.~Wirth.
\newblock Maximizing quadratic programs: extending {G}rothendieck's inequality.
\newblock {\em Proceedings of the 45th Annual IEEE Symposium on Foundations of
  Computer Science}, pages 54--60, 2004.

\bibitem{Cho}
E.~Cho.
\newblock Inner product of random vectors.
\newblock {\em Int. J. Pure Appl. Math.}, 56(2):217--221, 2009.

\bibitem{kernel}
J.~P.~R. Christensen and P.~Ressel.
\newblock Positive definite kernels on the complex {H}ilbert sphere.
\newblock {\em Math. Z.}, 180(2):193--201, 1982.

\bibitem{Davie}
A.~M. Davie.
\newblock Lower bound for $k_g$.
\newblock Unpublished note, 1984.

\bibitem{Davie2}
A.~M. Davie.
\newblock Matrix norms related to {G}rothendieck's inequality.
\newblock In {\em Banach spaces ({C}olumbia, {M}o., 1984)}, volume 1166 of {\em
  Lecture Notes in Math.}, pages 22--26. Springer, Berlin, 1985.

\bibitem{deKlerk}
E.~de~Klerk.
\newblock The complexity of optimizing over a simplex, hypercube or sphere: a
  short survey.
\newblock {\em CEJOR Cent. Eur. J. Oper. Res.}, 16(2):111--125, 2008.

\bibitem{Bene}
P.~Divi\'{a}nszky, E.~Bene, and T.~V\'{e}rtesi.
\newblock Qutrit witness from the {G}rothendieck constant of order four.
\newblock {\em Phys. Rev.}, A(96), 2017.

\bibitem{Feige1}
U.~Feige and G.~Schechtman.
\newblock On the integrality ratio of semidefinite relaxations of {MAX} {CUT}.
\newblock In {\em Proceedings of the {T}hirty-{T}hird {A}nnual {ACM}
  {S}ymposium on {T}heory of {C}omputing}, pages 433--442. ACM, New York, 2001.

\bibitem{Feige2}
U.~Feige and G.~Schechtman.
\newblock On the optimality of the random hyperplane rounding technique for
  {MAX} {CUT}.
\newblock volume~20, pages 403--440. 2002.
\newblock Probabilistic methods in combinatorial optimization.

\bibitem{FLZ18}
S.~Friedland, L.-H. Lim, and J.~Zhang.
\newblock An elementary and unified proof of {G}rothendieck's inequality.
\newblock {\em Enseign. Math.}, 64(3-4):327--351, 2018.

\bibitem{GW95}
M.~X. Goemans and D.~P. Williamson.
\newblock Improved approximation algorithms for maximum cut and satisfiability
  problems using semidefinite programming.
\newblock {\em J. Assoc. Comput. Mach.}, 42(6):1115--1145, 1995.

\bibitem{Grothendieck}
A.~Grothendieck.
\newblock R\'esum\'e de la th\'eorie m\'etrique des produits tensoriels
  topologiques.
\newblock {\em Bol. Soc. Mat. S\~ao Paulo}, 8:1--79, 1953.

\bibitem{Haagerup}
U.~Haagerup.
\newblock A new upper bound for the complex {G}rothendieck constant.
\newblock {\em Israel J. Math.}, 60(2):199--224, 1987.

\bibitem{Hirsch}
F.~Hirsch, M.~T. Quintino, T.~V\'{e}rtesi, M.~Navascu\'{e}s, and N.~Brunner.
\newblock Better local hidden variable models for two-qubit werner states and
  an upper bound on the {G}rothendieck constant {$K_G(3)$}.
\newblock {\em Quantum}, 1(3), 2017.

\bibitem{HK05}
O.~Holtz and M.~Karow.
\newblock Real and complex operator norms.
\newblock {\em preprint}, 2005.
\newblock arXiv:0512608.

\bibitem{Jameson}
G.~J.~O. Jameson.
\newblock {\em Summing and nuclear norms in {B}anach space theory}, volume~8 of
  {\em London Mathematical Society Student Texts}.
\newblock Cambridge University Press, Cambridge, 1987.

\bibitem{Khot}
S.~Khot and A.~Naor.
\newblock Grothendieck-type inequalities in combinatorial optimization.
\newblock {\em Comm. Pure Appl. Math.}, 65(7):992--1035, 2012.

\bibitem{Khot2}
S.~Khot and A.~Naor.
\newblock Sharp kernel clustering algorithms and their associated
  {G}rothendieck inequalities.
\newblock {\em Random Structures Algorithms}, 42(3):269--300, 2013.

\bibitem{KO}
S.~Khot and R.~O'Donnell.
\newblock S{DP} gaps and {UGC}-hardness for max-cut-gain.
\newblock {\em Theory Comput.}, 5:83--117, 2009.

\bibitem{Krivine2}
J.-L. Krivine.
\newblock Constantes de {G}rothendieck et fonctions de type positif sur les
  sph\`eres.
\newblock {\em Adv. in Math.}, 31(1):16--30, 1979.

\bibitem{lemon}
A.~Lemon, A.~M.-C. So, and Y.~Ye.
\newblock Low-rank semidefinite programming: Theory and applications.
\newblock {\em Foundations and Trends in Optimization}, 2:1--156, 2016.

\bibitem{Lindenstrauss}
J.~Lindenstrauss and A.~Pe{\l}czy\'nski.
\newblock Absolutely summing operators in {$L_{p}$}-spaces and their
  applications.
\newblock {\em Studia Math.}, 29:275--326, 1968.

\bibitem{Meg01}
A.~Megretski.
\newblock Relaxations of quadratic programs in operator theory and system
  analysis.
\newblock In {\em Systems, approximation, singular integral operators, and
  related topics ({B}ordeaux, 2000)}, volume 129 of {\em Oper. Theory Adv.
  Appl.}, pages 365--392. Birkh\"{a}user, Basel, 2001.

\bibitem{NRT99}
A.~Nemirovski, C.~Roos, and T.~Terlaky.
\newblock On maximization of quadratic form over intersection of ellipsoids
  with common center.
\newblock {\em Math. Program.}, 86(3, Ser. A):463--473, 1999.

\bibitem{Nes}
Y.~Nesterov.
\newblock Semidefinite relaxation and nonconvex quadratic optimization.
\newblock {\em Optim. Methods Softw.}, 9(1-3):141--160, 1998.

\bibitem{NN94}
Y.~Nesterov and A.~Nemirovskii.
\newblock {\em Interior-point polynomial algorithms in convex programming},
  volume~13 of {\em SIAM Studies in Applied Mathematics}.
\newblock Society for Industrial and Applied Mathematics (SIAM), Philadelphia,
  PA, 1994.

\bibitem{pataki}
G.~Pataki.
\newblock On the rank of extreme matrices in semidefinite programs and the
  multiplicity of optimal eigenvalues.
\newblock {\em Math. Oper. Res.}, 23(2):339--358, 1998.

\bibitem{Rietz}
R.~E. Rietz.
\newblock A proof of the {G}rothendieck inequality.
\newblock {\em Israel J. Math.}, 19:271--276, 1974.

\bibitem{So}
A.~M.-C. So, J.~Zhang, and Y.~Ye.
\newblock On approximating complex quadratic optimization problems via
  semidefinite programming relaxations.
\newblock {\em Math. Program.}, 110(1, Ser. B):93--110, 2007.

\end{thebibliography}

\appendix

\section{Absolute value of inner product of random unit vectors}\label{appendix}

When establishing that the sharpness of the Nesterov $\pi/2$-Theorem and its complex analogue in Theorem~\ref{FRNineq}, we needed the first and second moments of the absolute value of inner product of random vectors over $\mathbb{R}$ and $\mathbb{C}$. To avoid distracting our readers, we have deferred these materials to this appendix. In the following,  $\Gamma$ and $\Beta$ will denote the Gamma and Beta functions.

We will write $\Sph^n(\Bbbk) \coloneqq \{x \in \Bbbk^{n+1} : \lVert x \rVert = 1\}$ for the $n$-sphere over $\Bbbk = \mathbb{R}$ or $\mathbb{C}$. We begin our discussion with $\mathbb{R}$.
In terms of spherical coordinates on $\mathbb{R}^{n+1}$ given by
$(r,\vartheta_1,\dots,\vartheta_n)$,  the surface area element of the real $n$-sphere $\Sph^n(\mathbb{R})$ is
\[
d\sigma_n^\mathbb{R} \coloneqq \sin^{n-1}(\vartheta_1)\sin^{n-2}(\vartheta_2)\cdots \sin(\vartheta_{n-1})\, d\vartheta_1 \, d\vartheta_2\cdots d\vartheta_n
\]
and the volume element of $\mathbb{R}^{n+1}$ is $r^n\, dr\,d\sigma_n^\mathbb{R}$. Recall that the surface area of $\Sph^n(\mathbb{R})$ is $2\pi^{(n+1)/2}/\Gamma((n+1)/2$. The \emph{spherical measure} on $\Sph^n(\mathbb{R})$ is defined as
\[
d\omega_n^\mathbb{R} \coloneqq \frac{\Gamma((n+1)/2)}{2\pi^{(n+1)/2}}d\sigma_n^\mathbb{R},
\]
which is clearly a probability measure on $\Sph^n(\mathbb{R})$. The corresponding probability distribution, called the \emph{uniform distribution} on $\Sph^n(\mathbb{R})$, is invariant under the action of orthogonal matrices.

When speaking of the above notions over $\mathbb{C}$, we simply identify $\mathbb{C}^n$ with $\mathbb{R}^{2n}$ and thus
\[
\Sph^n(\mathbb{C}) = \Sph^{2n+1}(\mathbb{R}), \qquad d\sigma_n^\mathbb{C} = d\sigma_{2n+1}^\mathbb{R},  \qquad d\omega_n^\mathbb{C} = d\omega_{2n+1}^\mathbb{R}.
\]

In the following, $\mathbb{D}$ will denote either the real unit disk $[-1,1]$ or the complex unit disk $\bigl\{\rho e^{\mathrm{i}\phi} \in \mathbb{C} : \rho \in [0,1],\; \phi \in [0,2\pi)\bigr\}$ as appropriate. The  cases $\alpha =2$ and a less precise  version of $\alpha =1$ of Lemma~\ref{lem:expectations} have appeared as \cite[Equations~4.5 and 4.6]{Alon3} for $\mathbb{R}$ and \cite[Lemma~3]{So} for $\mathbb{C}$. The version below holds for any $\alpha \in (0,\infty)$, not necessarily an integer.
\begin{lemma}\label{lem:expectations}
Let $U_n,V_n$ be independent random vectors uniformly distributed on the $n$-sphere $\Sph^n(\Bbbk) = \{x \in\Bbbk^{n+1} : \lVert x \rVert =1\}$.  Let $T_n: \Sph^n(\Bbbk) \times \Sph^n(\Bbbk)  \to \mathbb{D}$ be the random variable defined by the inner product $T_n = \langle U_n, V_n\rangle $.  Then the probability density function of $T_n$ is given by
\begin{equation}\label{eq:pdf}
f_{T_n}(t)=\begin{cases}
\dfrac{1}{\sqrt{\pi}}\dfrac{\Gamma\bigl((n+1)/2\bigr)}{\Gamma(n/2)}(1-t^2)^{(n-2)/2} & \Bbbk = \mathbb{R},\\[4ex]
\dfrac{n}{\pi} (1-\lvert t \rvert^2)^{n-1} &\Bbbk = \mathbb{C},
\end{cases}
\end{equation}
for all $t \in \mathbb{D}$. 
For any $\alpha > 0$,
\begin{equation}\label{eq:moments}
\mathbb{E}(\lvert \langle U_n,V_n\rangle\rvert^\alpha)=
\begin{cases}
\dfrac{1}{\sqrt{\pi}}\dfrac{\Gamma\bigl((n+1)/2\bigr)\Gamma\bigl((\alpha+1)/2\bigr)}{\Gamma\bigl((n+\alpha+1)/2\bigr)} &\Bbbk = \mathbb{R},\\[4ex]
\dfrac{\Gamma(\alpha/2+1)\Gamma(n+1)}{\Gamma(n+\alpha/2+1)} &\Bbbk = \mathbb{C}.
\end{cases}
\end{equation}
\end{lemma}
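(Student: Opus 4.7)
The plan is to exploit the rotational (resp.\ unitary) invariance of the uniform distribution on $\Sph^n(\Bbbk)$ to collapse the joint distribution of $(U_n,V_n)$ to a one-variable (resp.\ two-real-variable) problem, derive the density of $T_n$ as a marginal of the uniform measure on the sphere, and then evaluate the $\alpha$th moment by a change-of-variables that produces a Beta integral. Concretely, because the joint law of $(U_n,V_n)$ is invariant under the diagonal action $Q \cdot (U_n,V_n) = (QU_n, QV_n)$ for any $Q \in O(n+1)$ or $U(n+1)$, we may condition on $V_n$ and assume without loss of generality that $V_n = e_1$ is the first standard basis vector; then $T_n = \langle U_n, e_1\rangle$ is simply the first coordinate of a uniformly distributed vector $U_n \in \Sph^n(\Bbbk)$.

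To obtain \eqref{eq:pdf} in the real case, I would parametrize $U_n = (t, \sqrt{1-t^2}\,\omega)$ with $t \in [-1,1]$ and $\omega \in \Sph^{n-1}(\mathbb{R})$; the change of variables from the surface measure $d\sigma_n^\mathbb{R}$ to $(t,\omega)$ has Jacobian $(1-t^2)^{(n-2)/2}$ (straightforward computation, or equivalent to writing $U_n = G/\lVert G \rVert$ with $G$ Gaussian and reducing via independence of direction and radius). Integrating out $\omega$ and normalizing with $\int_{-1}^1 (1-t^2)^{(n-2)/2}\,dt = \Beta(1/2, n/2) = \sqrt{\pi}\,\Gamma(n/2)/\Gamma((n+1)/2)$ yields the claimed $f_{T_n}$. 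For the complex case, I would use the identification $\Sph^n(\mathbb{C}) = \Sph^{2n+1}(\mathbb{R})$ and parametrize $U_n = (x + \mathrm{i} y,\sqrt{1-x^2-y^2}\,\omega)$ with $(x,y) \in \mathbb{D}$ and $\omega \in \Sph^{n-1}(\mathbb{C})$; the analogous Jacobian is now $(1-x^2-y^2)^{n-1}$, and normalization $\int_{x^2+y^2 \le 1}(1-x^2-y^2)^{n-1}\,dx\,dy = \pi/n$ (polar coordinates and $u = \rho^2$) produces the constant $n/\pi$.

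With the densities in hand, the moment formula \eqref{eq:moments} reduces to a pair of one-dimensional integrals. Over $\mathbb{R}$, substituting $u = t^2$ gives
\[
\int_{-1}^1 \lvert t\rvert^\alpha (1-t^2)^{(n-2)/2}\,dt = \Beta\!\left(\tfrac{\alpha+1}{2},\tfrac{n}{2}\right) = \frac{\Gamma((\alpha+1)/2)\,\Gamma(n/2)}{\Gamma((n+\alpha+1)/2)},
\]
and the factor $\Gamma(n/2)$ cancels against the normalization of $f_{T_n}$, giving the stated formula. Over $\mathbb{C}$, writing $t = \rho e^{\mathrm{i}\phi}$ and then $u=\rho^2$ yields
\[
\int_\mathbb{D} \lvert t\rvert^\alpha (1-\lvert t\rvert^2)^{n-1}\,dt = \pi\,\Beta\!\left(\tfrac{\alpha}{2}+1,n\right) = \frac{\pi\,\Gamma(\alpha/2+1)\,\Gamma(n)}{\Gamma(n+\alpha/2+1)},
\]
and multiplying by the normalization $n/\pi$ and using $n\,\Gamma(n)=\Gamma(n+1)$ gives the second line of \eqref{eq:moments}.

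The only real obstacle is the Jacobian computation that produces the factors $(1-t^2)^{(n-2)/2}$ and $(1-x^2-y^2)^{n-1}$. I would handle this either via spherical coordinates introduced before the statement ($d\sigma_n^\mathbb{R} = \sin^{n-1}(\vartheta_1)\cdots d\vartheta_1\cdots d\vartheta_n$, from which the marginal density of $\cos\vartheta_1$ is immediate in the real case), or, cleaner and uniform across both fields, via the representation $U_n = G/\lVert G\rVert$ with $G$ standard Gaussian on $\Bbbk^{n+1}$ together with the independence of $\lVert G\rVert$ and $G/\lVert G\rVert$, whence the distribution of $\lvert (U_n)_1\rvert^2$ is a Beta$(1, n)$ (complex) or Beta$(1/2, n/2)$ (real) random variable; a change of variables then recovers $f_{T_n}$ directly. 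Everything else in the proof is bookkeeping with Gamma identities.
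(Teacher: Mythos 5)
Your proposal is correct and follows essentially the same route as the paper: reduce by invariance to the distribution of the first coordinate of a uniform point on $\Sph^n(\Bbbk)$, extract the density by parametrizing the sphere so that the first coordinate is explicit, and then evaluate the $\alpha$th moment as a Beta integral. The only cosmetic differences are that the paper simply cites a reference (Cho) for the real density rather than deriving it, and for the complex density it writes out an explicit Hopf-type coordinate system on $\Sph^{2n+1}(\mathbb{R})$ and computes the Jacobian determinant, whereas you slice the sphere as $U_n=(t,\sqrt{1-t^2}\,\omega)$ (resp.\ $(x+\mathrm{i}y,\sqrt{1-x^2-y^2}\,\omega)$) and normalize; these are the same computation in different notation, and your Gaussian-representation alternative is also a standard, correct shortcut.
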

\begin{proof}
For $\Bbbk = \mathbb{R}$,  \eqref{eq:pdf} may be found in \cite{Cho}.
It remains to prove \eqref{eq:pdf} for $\Bbbk = \mathbb{C}$. We identify $\mathbb{C}^{n+1} = \mathbb{R}^{2n+2}$ and introduce the following coordinates for $x = (x_1,\dots,x_{2n+2}) \in \mathbb{R}^{2n+2}$:
\begin{align*}
x_1&=\sqrt{r^2-\rho^2}\cos (\vartheta_1), &x_{2n-1}&=\sqrt{r^2-\rho^2}\sin (\vartheta_1)\cdots \sin (\vartheta_{2n-2})\cos (\vartheta_{2n-1}),\\
x_2&=\sqrt{r^2-\rho^2}\sin (\vartheta_1)\cos (\vartheta_2), &x_{2n}&=\sqrt{r^2-\rho^2}\sin (\vartheta_1)\cdots \sin (\vartheta_{2n-2})\sin(\vartheta_{2n-1}),\\
x_3&=\sqrt{r^2-\rho^2}\sin (\vartheta_1)\sin (\vartheta_2)\cos (\vartheta_3), &x_{2n+1}&=\rho\cos \phi,\\
&\qquad\qquad\qquad\vdots & x_{2n+2}&=\rho\sin \phi,
\end{align*}
where $r=\lVert x\rVert$, $\rho \in [0,r]$, $\vartheta_1,\dots,\vartheta_{2n-1} \in [0,\pi]$, and $\vartheta_{2n-1},\phi\in [0, 2\pi)$. This coordinate system combines the standard spherical coordinates on $\mathbb{R}^{2n}$ and the Hopf coordinates on $\Sph^3 \subseteq \mathbb{C}^2$.

The Jacobian $J=\partial (x_1,\dots,x_{2n+2})/\partial (r,\vartheta_1,\dots,\vartheta_{2n-1}, \rho, \phi)$ has determinant
\[
\det J=r(r^2-\rho^2)^{n-1}\rho \sin^{2n-2}(\vartheta_1)\sin^{2n-3}(\vartheta_2)\cdots \sin(\vartheta_{2n-2})\, d\vartheta_1\, d\vartheta_2\cdots d\vartheta_{2n-1};
\]
and since $r=1$ on $\Sph^n(\mathbb{C})=\Sph^{2n+1}(\mathbb{R})$, we get
\[
d\sigma_n^\mathbb{C}=(1-\rho^2)^{n-1}\rho\, d\rho\sin^{2n-2}(\vartheta_1)\sin^{2n-3}(\vartheta_2)\cdots \sin(\vartheta_{2n-2})\, d\vartheta_1 \, d\vartheta_2\cdots d\vartheta_{2n-1}\, d\phi.
\]
Integrating out $\vartheta_1,\dots,\vartheta_{2n-1}$ and dividing the result by the surface area of $\Sph^{2n+1}(\mathbb{R})$ yields $(n/\pi) (1-\rho^2)^{n-1}\rho\,d\rho\,d\phi$ as the surface element of $\mathbb{D}=\bigl\{\rho e^{\mathrm{i}\phi} \in \mathbb{C} : \rho \in [0,1],\; \phi \in [0,2\pi)\bigr\}$, and thus $(n/\pi) (1-\rho^2)^{n-1}$ is the expression for $f_{T_n}$ over $\mathbb{C}$ in polar coordinates.

Over $\mathbb{R}$, \eqref{eq:moments} follows from
\begin{align*}
\mathbb{E}(\lvert\langle U_n, V_n \rangle\rvert^\alpha) & =\frac{1}{\sqrt{\pi}}\frac{\Gamma\bigl((n+1)/2\bigr)}{\Gamma(n/2)}\int_{-1}^1 \lvert t \rvert^\alpha(1-t^2)^{(n-2)/2}\,dt\\
&=\frac{1}{\sqrt{\pi}}\frac{\Gamma\bigl((n+1)/2\bigr)}{\Gamma(n/2)}\int_0^1 s^{(\alpha-1)/2}(1-s)^{(n-2)/2}\,ds\\
&=\frac{1}{\sqrt{\pi}}\frac{\Gamma\bigl((n+1)/2\bigr)}{\Gamma(n/2)}\Beta\Bigl(\frac{\alpha+1}{2}, \frac{n}{2}\Bigr)
=\frac{1}{\sqrt{\pi}}\frac{\Gamma\bigl((n+1)/2\bigr)\Gamma\bigl((\alpha+1)/2\bigr)}{\Gamma\bigl((n+\alpha+1)/2\bigr)};
\end{align*}
and over $\mathbb{C}$, it follows from
\begin{align*}
\mathbb{E}(\lvert \langle U_n,V_n\rangle\rvert^\alpha) &=\frac{n}{\pi}\int_{0}^1\int_0^{2\pi}r^\alpha(1-r^2)^{n-1} \, rdrd\phi=n\int_0^1 s^{\alpha/2}(1-s)^{n-1}\, ds\\
&=n\Beta\Bigl(\frac{\alpha}{2}+1, n\Bigr)=n\frac{\Gamma(\alpha/2+1)\Gamma(n)}{\Gamma(n+\alpha/2+1)}. \qedhere
\end{align*}
\end{proof}
In their proof that \eqref{RNineq} is sharp, Alon and Naor \cite{Alon3} used the fact that over $\mathbb{R}$, the random variable  $\sqrt{n} \cdot T_n$ converges in distribution to the standard real Gaussian  $Z_n^\mathbb{R}$. While we did not need this in our proof of the sharpness of \eqref{RNineq} and \eqref{CRNineq}, we would like to point out here that this asymptotic normality follows immediately from \eqref{eq:pdf} for both $\mathbb{R}$ and $\mathbb{C}$. Over $\mathbb{R}$, direct calculation shows
\[
\mathbb{P}(\sqrt{n} \cdot T_n\le r)=
\frac{1}{\sqrt{2\pi}}\frac{\Gamma(n/2+1/2)}{\Gamma(n/2)\sqrt{n/2}}\int_{-\infty}^r \biggl(1-\frac{s^2}{n}\biggr)^{\!\! n/2-1} ds \to
\frac{1}{\sqrt{2\pi}}\int_{-\infty}^r e^{-s^2/2}\, ds =\mathbb{P}( Z_n^\mathbb{R} \le r) 
\]
as $n \to \infty$. Over $\mathbb{C}$,  observe that $r \sqrt{n} \cdot T_n$ has the same distribution as $\sqrt{n} \cdot T_n$ for any $\lvert r \rvert=1$, so it  suffices to show that $\lvert \sqrt{n} \cdot T_n \rvert$ converges in distribution to $\lvert Z_n^\mathbb{C} \rvert$:
\begin{align*}
\mathbb{P}(\lvert \sqrt{n} \cdot T_n\rvert\le r)&=\mathbb{P}\Bigl(\lvert T_n \rvert \le \frac{r}{\sqrt{n}}\Bigr)
=\frac{n}{\pi}\displaystyle\int_0^{\frac{r}{\sqrt{n}}} \int_0^{2\pi}(1-\rho^2)^{n-1}\rho\, d\rho\,d\theta\\
&=1-\biggl(1-\frac{r^2}{n}\biggr)^{\!\! n}\to  1-e^{-r^2}=\mathbb{P}(\lvert Z_n^\mathbb{C} \rvert\le r)
\end{align*}
as $n \to \infty$, for any $r\le \sqrt{n}$. Our proof of sharpness in Theorem~\ref{FRNineq} will however rely on the following.
\begin{lemma}\label{unformconvlem}
Let $U_n, V_n$ be independent random vectors uniformly distributed on the $n$-sphere $\Sph^n(\Bbbk)= \{x\in\Bbbk^{n+1} : \lVert x \rVert =1\}$. 
Let $\alpha \ge 1$ and $0 < \varepsilon < 1$. Then there exist $m \in \mathbb{N}$ and $x_1,\dots,x_m\in \Sph^n(\Bbbk)$ such that 
\begin{equation}\label{basinmunuy}
\begin{aligned}
\mathbb{E}( \vert \langle U_n, V_n \rangle\vert^\alpha) -(2\alpha+1)\varepsilon &\le\min_{v\in \Sph^n(\Bbbk)} \biggl[\frac{1}{m}\sum_{i=1}^m \lvert \langle x_i,v\rangle\rvert^\alpha\biggr]\\
&\le \max_{v\in \Sph^n(\Bbbk)} \biggl[\frac{1}{m}\sum_{i=1}^m \lvert \langle x_i,v\rangle\rvert^\alpha\biggr]
\le \mathbb{E}( \vert \langle U_n, V_n \rangle\vert^\alpha) +(2\alpha+1)\varepsilon.
\end{aligned}
\end{equation}
\end{lemma}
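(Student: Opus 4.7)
The plan is to construct the vectors $x_1,\dots,x_m$ probabilistically: sample $X_1,\dots,X_m$ i.i.d.\ uniformly on $\Sph^n(\Bbbk)$, show that the empirical average
\[
F_m(v) \coloneqq \frac{1}{m}\sum_{i=1}^m |\langle X_i, v\rangle|^\alpha
\]
concentrates around $\mu \coloneqq \mathbb{E}(|\langle U_n, V_n\rangle|^\alpha)$ \emph{uniformly} in $v \in \Sph^n(\Bbbk)$ with positive probability, and then pick any realization for which the uniform bound holds. The key inputs are rotational invariance of the uniform measure, Hoeffding concentration at each fixed $v$, and an $\varepsilon$-net plus Lipschitz smoothing to extend from a finite set of $v$'s to the whole sphere.

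First, rotational invariance implies that for every fixed $v \in \Sph^n(\Bbbk)$ the random variable $\langle X_i,v\rangle$ has the same law as $\langle U_n,V_n\rangle$, so $\mathbb{E}(F_m(v)) = \mu$ for all $v$. Since $|\langle X_i,v\rangle|^\alpha \in [0,1]$, Hoeffding's inequality gives
\[
\Pr\bigl(|F_m(v) - \mu| > \varepsilon\bigr) \le 2\exp(-2m\varepsilon^2)
\]
for each fixed $v$. Next, fix a $\delta$-net $N_\delta \subset \Sph^n(\Bbbk)$ in the Euclidean metric with $|N_\delta| \le (3/\delta)^D$, where $D$ is the real dimension of $\Sph^n(\Bbbk)$ (namely $n$ over $\mathbb{R}$ and $2n{+}1$ over $\mathbb{C}$). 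A union bound over $N_\delta$ yields that with probability at least $1 - 2|N_\delta|\exp(-2m\varepsilon^2)$, the inequality $|F_m(v') - \mu| \le \varepsilon$ holds simultaneously for \emph{all} $v' \in N_\delta$; for $m$ large enough (depending only on $n,\alpha,\varepsilon$) this probability is strictly positive, so such an instance $x_1,\dots,x_m$ exists.

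To upgrade the net bound to a bound uniform over $\Sph^n(\Bbbk)$, use that $t \mapsto t^\alpha$ is $\alpha$-Lipschitz on $[0,1]$ for $\alpha \ge 1$ and that $v \mapsto |\langle x,v\rangle|$ is $1$-Lipschitz: hence
\[
\bigl||\langle x,v\rangle|^\alpha - |\langle x,v'\rangle|^\alpha\bigr| \le \alpha\,\lVert v - v'\rVert
\]
for any unit $x$, so $|F_m(v) - F_m(v')| \le \alpha\delta$ whenever $\lVert v - v'\rVert \le \delta$. Choosing $\delta = 2\varepsilon$ (say) and combining with the net bound yields $|F_m(v) - \mu| \le \varepsilon + \alpha\delta = (2\alpha+1)\varepsilon$ for every $v \in \Sph^n(\Bbbk)$, which is exactly \eqref{basinmunuy}. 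The main (and essentially only) obstacle is the usual one in empirical-process arguments, namely controlling the supremum over the uncountable set $\Sph^n(\Bbbk)$; here it is handled cheaply because the integrand $|\langle x,v\rangle|^\alpha$ is bounded by $1$ (giving Hoeffding for free) and Lipschitz in $v$ (giving a polynomial-in-$1/\varepsilon$ net suffices), so $m$ can be chosen to depend only on $n,\alpha,\varepsilon$ and not on any further structure of the problem.
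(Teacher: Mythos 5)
Your proof is correct, and it takes a genuinely different route from the paper's. The paper's argument is deterministic: it partitions $\Sph^n(\Bbbk)$ into finitely many Borel pieces $F_1,\dots,F_p$ of diameter $O(\varepsilon)$, chooses $m$ so large that $m_k/m$ approximates $\omega_n(F_k)$ for each $k$, places $m_k$ points in $F_k$, and then bounds the discrepancy between the empirical average and the integral using the mean value theorem on each piece together with the Lipschitz estimate $\lvert t^\alpha - s^\alpha\rvert \le \alpha\lvert t - s\rvert$ on $[0,1]$. Your argument is the probabilistic method: sample $X_1,\dots,X_m$ i.i.d.\ uniform, note that $\lvert\langle X_i,v\rangle\rvert^\alpha\in[0,1]$ so Hoeffding gives pointwise concentration of $F_m(v)$ around $\mu$ with probability $1-2e^{-2m\varepsilon^2}$, union-bound over a finite $\delta$-net of the compact sphere, and then upgrade to a uniform bound using the very same Lipschitz estimate. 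Both proofs use that Lipschitz estimate (and hence the hypothesis $\alpha\ge 1$) in exactly the same role, and both land on the constant $(2\alpha+1)\varepsilon$. What each buys: your route is shorter and more standard (a bare-hands uniform law of large numbers via net $+$ Lipschitz smoothing), at the cost of being non-constructive; the paper's route is constructive and avoids probability altogether, at the cost of a more delicate bookkeeping of the partition. Neither needs a sharp covering-number bound, so the precise exponent in your net cardinality $(3/\delta)^D$ is immaterial to the argument.
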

\begin{proof}
To avoid clutter, we write $\Sph^n = \Sph^n(\Bbbk)$ and $d\omega_n = d\omega_n^\Bbbk$ as the same proof works for $\Bbbk = \mathbb{R}$ and $\mathbb{C}$ alike.
Let $\Sph^n$  be covered by closed $\varepsilon$-balls  $B_1,\dots, B_p\subseteq \Bbbk^{n+1}$, where none of the balls are redundant, i.e., $\Sph^n \subseteq  B_1 \cup \dots \cup B_p$ but  $\Sph^n \nsubseteq (B_1 \cup \dots \cup B_p)\setminus B_k$ for any $k$.  Let
\[
F_k \coloneqq \Sph^n \cap B_k \setminus \Bigl(\bigcup\nolimits_{j =1}^{k-1} B_j \Bigr), \qquad k =1,\dots,p.
\]
Then $F_1,\dots,F_p$ are disjoint Borel sets that partition $\Sph^n$.  We may assume that
\[
0 < \omega_n(F_1) \le \omega_n(F_2) \le\dots\le \omega_n(F_p)
\]
by choosing $B_1,\dots,B_p$ appropriately. Let $t_k \coloneqq \sum_{j=1}^k \omega_n(F_j)$, $k=1,\dots,p$, and  $t_0 \coloneqq 0$. Note that $t_p =\sum_{j=1}^p \omega_n(F_j)= \omega_n(\Sph^n) =1$ and so $0=t_0 <t_1<\dots <t_p=1$ is a partition of the interval $[0,1]$ with $\omega_n(F_k)=t_k-t_{k-1}$, $k= 1,\dots,p$.

Let $m \in \mathbb{N}$ be such that $1/m<\varepsilon \omega_n(F_1)$.   Let $m_k$ be the number of points  $j/m \in (t_{k-1}, t_k]$, $j =1,\dots,m$.  Then $(m_1 + \dots + m_k)/m\le t_k$, $k= 1,\dots,p$. Hence $\lvert \omega_n(F_k)-m_k/m\rvert \le 1/m\le \varepsilon \omega_n(F_1)\le \varepsilon \omega_n(F_k)$. Let $q_k \coloneqq m_1+\dots+ m_k$, $k =1,\dots,p$, and $q_0 \coloneqq 0$. Now choose $m$ distinct points $x_1, \dots, x_m \in \Sph^n$ by choosing any $m_k$ distinct points $x_{q_{k-1}+1},\dots, x_{q_k} \in F_k$, $k =1,\dots,p$. 

For a fixed $k\in \mathbb{N}$ and $u\in \Sph^n$, the mean value theorem yields the existence of $v_k\in \overline{F}_k$ such that $\int_{v\in F_k} \lvert \langle u, v \rangle\rvert^\alpha\,d\omega_n= \omega_n(F_k)\lvert \langle u, v_k \rangle\rvert^\alpha$.  As $x_{q_{k-1}+1},\dots, x_{q_k}\in F_k$, we have $\lVert v_k- x_j\rVert \le 2\varepsilon$ for all $j=q_{k-1}+1,\dots,q_k$.  Since $\lvert x^\alpha-y^\alpha \rvert \le \alpha\lvert x-y \rvert$ for any $x,y\in [0,1]$ and $\alpha \ge 1$, we see that
\begin{align*}
\bigl\lvert \lvert \langle u, v \rangle\rvert^\alpha - \lvert \langle u, w \rangle\rvert^\alpha\bigr\rvert &\le \alpha\bigl\lvert \lvert \langle u, v \rangle\rvert - \lvert \langle u, w \rangle\rvert\bigr\rvert  
\le \alpha \lvert \langle u, v \rangle - \langle u, w \rangle\rvert \le \alpha\lVert v-w\rVert
\end{align*}
for any $v,w\in \Sph^n$ and thus  $\bigl\lvert \lvert \langle u, v_k \rangle\rvert^\alpha - \lvert \langle u, x_j \rangle\rvert^\alpha \bigr\rvert \le 2\alpha\varepsilon$ for $j=q_{k-1}+1,\dots,q_k$.
Now since
\[
 \int_{v\in F_k}\lvert \langle u, v \rangle\rvert^\alpha \,d\omega_n= \omega_n(F_k) \cdot \lvert \langle u, v_k \rangle\rvert^\alpha =  \frac{m_k}{m}  \lvert \langle u, v_k \rangle\rvert^\alpha + \Bigl(\omega_n(F_k)- \frac{m_k}{m} \Bigr)\lvert \langle u, v_k \rangle\rvert^\alpha,
\]
we have
\begin{align*}
\biggl\lvert \int_{v\in F_k}\lvert \langle u, v \rangle\rvert^\alpha \,d\omega_n&-\frac{1}{m}\sum_{j=q_{k-1}+1}^{q_k} \lvert \langle u, v_j \rangle\rvert^\alpha \biggr\rvert\\
&\le \Bigl\lvert \omega_n(F_k)-\frac{m_k}{m}\Bigr\rvert \cdot \lvert \langle u, v_j \rangle\rvert^\alpha
+\frac{1}{m}\sum_{j=q_{k-1}+1}^{q_k} \bigl\lvert \lvert \langle u, v_k \rangle\rvert^\alpha-\lvert \langle u, x_j \rangle\rvert^\alpha \bigr\rvert\\
&\le \varepsilon \omega_n(F_k)+2\alpha\varepsilon \frac{m_k}{m},
\end{align*}
and so
\begin{align*}
\Biggl\lvert \int_{v\in \Sph^n} \lvert \langle u, v \rangle\rvert^\alpha \,d\omega_n -\frac{1}{m}\sum_{j=1}^m \lvert \langle u, x_j \rangle\rvert^\alpha \Biggr\rvert
&=\Biggl\lvert \sum_{k=1}^p\biggl(\int_{v\in F_k}\lvert \langle u, v \rangle\rvert^\alpha \,d\omega_n - \frac{1}{m}\sum_{j=q_{k-1}+1}^{q_k} \lvert \langle u, x_j \rangle\rvert^\alpha\biggr)\Biggr\rvert \\
&\le\sum_{k=1}^p \Bigl( \varepsilon \omega_n(F_k)+2\alpha\varepsilon \frac{m_k}{m}\Bigr)=(2\alpha+1)\varepsilon.
\end{align*}
Note that the orthogonal/unitary invariance of the spherical measure $\omega_n$ implies that the conditional expectation $\mathbb{E}(|\langle U_n, V_n\rangle|^\alpha \mid U_n=u)$ does not depend on the value $u$ and so
\[
\mathbb{E}( \lvert \langle U_n, V_n \rangle\rvert^\alpha) = \mathbb{E}(|\langle U_n,V_n \rangle |^\alpha  \mid U_n=u) = \int_{v\in \Sph^n} \lvert \langle u, v \rangle\rvert^\alpha \,d\omega_n,
\]
giving us \eqref{basinmunuy}.
\end{proof}
For $m$ sufficiently big, we expect $m$ uniformly distributed points $x_1,\dots, x_m \in \Sph^n$ and a partition $\Sph^n = F_1\cup \dots \cup F_p$ to approximately satisfy $m_k/m\approx \omega_n(F_k)$, where $m_k$ is the number of $x_i$'s falling on $F_k$.  Our construction above relies on a more precise version of this observation.

\end{document}